	\numberwithin{subsection}{section}
	\numberwithin{figure}{section}
	\numberwithin{table}{section}
	\pgfplotsset{compat=1.15}
	\definecolor{mycolor1}{RGB}{57,106,177}
	\newdimen\tikzwidth
	\newdimen\tikzheight
	\pgfplotsset{
    compat=newest,
    scale only axis,
    width=\tikzwidth, height=\tikzheight,
    legend style={font=\footnotesize,},
    legend cell align={left},
    xticklabel style={font=\footnotesize,},
    yticklabel style={font=\footnotesize,},
    x tick label style={
        /pgf/number format/.cd,
        fixed,
        fixed zerofill,
        precision=4,
        /tikz/.cd
    }
    hugeData/.style={
	    every axis plot/.append style={each nth point=100, filter discard warning=false},
    }
}
	\pgfmathsetmacro{\myxlow}{-2}
\pgfmathsetmacro{\myxhigh}{2}
\pgfmathsetmacro{\myiterations}{6}
\newtheoremstyle{1}
{6pt} % Space above
{0pt} % Space below
{\itshape} % Body font
{} % Indent amount
{\bfseries} % Theorem head font
{.} % Punctuation after theorem head
{.5em} % Space after theorem head
{} % Theorem head spec (can be left empty, meaning `normal')
\newtheoremstyle{2}
{6pt} % Space above
{0pt} % Space below
{} % Body font
{} % Indent amount
{\bfseries} % Theorem head font
{.} % Punctuation after theorem head
{.5em} % Space after theorem head
{} % Theorem head spec (can be left empty, meaning `normal')
\theoremstyle{1}
	\newtheorem{theorem}{Theorem}[section]
	\newtheorem*{theorem*}{Theorem}
	\newtheorem{lemma}[theorem]{Lemma}
	\newtheorem{proposition}[theorem]{Proposition}
	\newtheorem{conjecture}[theorem]{Conjecture}
	\newtheorem{corollary}[theorem]{Corollary}
	\newtheorem{definition}[theorem]{Definition}
	\newtheorem*{definition*}{Definition}
	\newtheorem*{proposition*}{Proposition}
	\newtheorem*{question*}{Question}
	\newtheorem{def/prop}[theorem]{Definition/Proposition}
\theoremstyle{2}
	\newtheorem{example}[theorem]{Example}
	\newtheorem{notation}[theorem]{Notation}
	\newtheorem{rmrk}[theorem]{Remark}
			\newtheorem*{rmrk*}{Remark}
\providecommand{\customgenericname}{}
\newcommand{\newcustomtheorem}[2]{%
  \newenvironment{#1}[1]
  {%
   \renewcommand\customgenericname{#2}%
   \renewcommand\theinnercustomgeneric{##1}%
   \innercustomgeneric
  }
  {\endinnercustomgeneric}
}
	\DeclareMathOperator{\Z}{\mathbb{Z}}
	\DeclareMathOperator{\Q}{\mathbb{Q}}
	\DeclareMathOperator{\N}{\mathbb{N}}
	\DeclareMathOperator{\F}{\mathbb{F}}
	\DeclareMathOperator{\C}{\mathbb{C}}
	\DeclareMathOperator{\FinVect}{FinVect}
	\DeclareMathOperator{\Hom}{Hom}
	\DeclareMathOperator{\Spec}{Spec}
	\DeclareMathOperator{\Ext}{Ext}
	\DeclareMathOperator{\Lie}{Lie}
	\DeclareMathOperator{\Sch}{Sch}
	\DeclareMathOperator{\Rep}{Rep}
	\DeclareMathOperator{\Ind}{Ind}
	\DeclareMathOperator{\Res}{Res}
	\DeclareMathOperator{\St}{St}
	\DeclareMathOperator{\GL}{GL}
	\DeclareMathOperator{\Sp}{Sp}
	\DeclareMathOperator{\ZipFlag}{-ZipFlag}
	\DeclareMathOperator{\Zip}{-Zip}
	\DeclareMathOperator{\Sh}{Sh}
	\DeclareMathOperator{\dR}{dR}
	\DeclareMathOperator{\std}{std}
	\DeclareMathOperator{\Sym}{Sym}
	\DeclareMathOperator{\Proj}{Proj}
	\DeclareMathOperator{\tor}{tor}
	\DeclareMathOperator{\Lc}{\mathcal{L}}
	\DeclareMathOperator{\Fc}{\mathcal{F}}
	\DeclareMathOperator{\E}{\mathcal{E}}
	\DeclareMathOperator{\Oc}{\mathcal{O}}
	\DeclareMathOperator{\Pb}{\mathbb{P}}
	\DeclareMathOperator{\W}{\mathcal{W}}
	\DeclareMathOperator{\Mod}{Mod}
	\DeclareMathOperator{\Loc}{Loc}
	\DeclareMathOperator{\pol}{pol}
	\DeclareMathOperator{\id}{id}
	\DeclareMathOperator{\ev}{ev}
	\DeclareMathOperator{\red}{red}
	\DeclareMathOperator{\Pol}{\normalfont \textsf{Pol}}
	\DeclareMathOperator{\End}{End}
	\DeclareMathSymbol{\shortminus}{\mathbin}{AMSa}{"39}
\newcommand{\DistTo}{\xrightarrow{
   \,\smash{\raisebox{-0.65ex}{\ensuremath{\scriptstyle\sim}}}\,}}
\title[Positivity, plethysm and hyperbolicity of Siegel varieties]{Positivity, plethysm and hyperbolicity of Siegel varieties in positive characteristic}
\author{Thibault Alexandre}
\begin{document}

\begin{abstract}
We study hyperbolicity properties of the moduli space of polarized abelian varieties (also known as the Siegel modular variety) in characteristic $p$. Our method uses the plethysm operation for Schur functors as a key ingredient and requires a new positivity notion for vector bundles in characteristic $p$ called $(\varphi,D)$-ampleness. Generalizing what was known for the Hodge line bundle, we also show that many automorphic vector bundles on the Siegel modular variety are $(\varphi,D)$-ample.
\end{abstract}

% ------------------------------------------------------------------------------
% Maketitle
% ------------------------------------------------------------------------------
\maketitle

% ------------------------------------------------------------------------------
% Begin document
% ------------------------------------------------------------------------------
\vspace{-1cm}
\tableofcontents
\vspace{0.1cm}
\section{Introduction}
\subsection{General picture}
Let $p$ be a prime number. This article is concerned with the interplay in characteristic $p$ of the following three topics: positivity of vector bundles in algebraic geometry, the plethysm operation on Schur functors (and symmetric functions) and hyperbolicity properties of the moduli space of polarized abelian varieties (also known as the Siegel modular variety). It is well-known that positive vector bundles can be used to prove hyperbolicity results - a crucial new observation in our work is a link between the plethysm operation and hyperbolicity properties of Siegel varieties.
\vspace{0.2cm}

\tikzstyle{block} = [rectangle, minimum height=3em, minimum width=6em]
\tikzstyle{block_hl} = [rectangle, minimum height=3em, minimum width=6em]

\begin{center}
\begin{tikzpicture}[auto, node distance=2cm,>=latex']
    % We start by placing the blocks
   \node[block_hl] at (0, -2)   (a) {$\textsc{Hyperbolicity of Siegel varieties}$};
   \node[block] at (3, 0)   (b) {$\textsc{Plethysm operation}$};
   \node[block] at (-3,0)   (c) {$\textsc{Positivity of vector bundles}$};
    \draw[dashed,<-] (a)   -- (b) node[at start] {};
    \draw[<-] (a)   -- (c) node[at start] {};
\end{tikzpicture}
\end{center}
\vspace{0.3cm}
\subsection{History and motivation}
\subsubsection*{Hyperbolicity over a number field}
One of the most celebrated result in Diophantine geometry is the following.
\begin{theorem*}[{\cite{MR718935}}]
Consider a geometrically integral smooth projective curve $C$ over a number field $K/\Q$. The following three assertions are equivalent.
\begin{enumerate}
\item For all finite extension $F/K$, the set of $F$-rational points of $C$ is finite.
\item Every holomorphic map $\C \rightarrow C_{\C}^{\text{an}}$ is constant.
\item The canonical bundle $\omega_C$ is big, equivalently the genus $g$ of $C$ satisfies $g \geq 2$.
\end{enumerate}
\end{theorem*}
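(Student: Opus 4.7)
The plan is to split the three-way equivalence into its complex-analytic content (2)$\Leftrightarrow$(3), the elementary converse $\neg$(3)$\Rightarrow\neg$(1), and the deep arithmetic direction (3)$\Rightarrow$(1). For (2)$\Leftrightarrow$(3) I would invoke uniformization of compact Riemann surfaces: when $g=0$ one has $C^{\text{an}}_{\C}\cong\mathbb{P}^1(\C)$, which admits non-constant holomorphic maps from $\C$ (for instance the composition of any affine coordinate with a non-constant entire function); when $g=1$ the universal cover of $C^{\text{an}}_{\C}$ is $\C$ itself, so the covering map is non-constant; and when $g\geq 2$ the universal cover is the upper half-plane $\mathbb{H}$, hence any holomorphic $f\colon\C\to C^{\text{an}}_{\C}$ lifts through the simply connected source to a holomorphic $\tilde f\colon\C\to\mathbb{H}$, which is bounded and therefore constant by Liouville's theorem.

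For $\neg$(3)$\Rightarrow\neg$(1), suppose $g\leq 1$. If $g=0$, then $C$ is a smooth conic over $\Q$, so after a base change to at most a quadratic extension $F/\Q$ it acquires a rational point and becomes isomorphic to $\mathbb{P}^1_F$, which has infinitely many $F$-points. If $g=1$, I take a finite extension $F_0/\Q$ containing a $\bar{\Q}$-point of $C$, making $C_{F_0}$ an elliptic curve $E$; since $E(\bar{\Q})$ is uncountable while $E(F)$ is finitely generated for any number field $F$ by the Mordell--Weil theorem, suitable further finite extensions must yield $F/F_0$ with $E(F)$ of positive rank, and in particular infinite.

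The remaining direction (3)$\Rightarrow$(1) is the Mordell conjecture, and this is the main obstacle by a very wide margin; I would simply invoke Faltings' theorem \cite{MR718935}. The strategy of the original proof proceeds in two steps: first, Parshin's covering construction reduces the finiteness of $C(F)$ for $g\geq 2$ to Shafarevich's conjecture by producing, from each rational point, a principally polarized abelian variety of bounded dimension with bad reduction confined to a fixed finite set of primes; second, this finiteness is established by combining the Tate conjecture for $\ell$-adic representations of abelian varieties over number fields with the controlled behaviour of the Faltings height under isogeny, which together rule out an infinite isogeny class within a fixed polarization type. Reproducing these Arakelov-theoretic inputs in a sketch is well beyond scope, so my proposal is to cite the result and proceed.
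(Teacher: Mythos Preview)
The paper does not prove this theorem at all: it is quoted in the introduction purely as historical motivation, with a bare citation to Faltings \cite{MR718935} and no accompanying argument. So there is no ``paper's own proof'' to compare against; your sketch is already far more detailed than anything the paper offers.

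Your outline is sound, with one slip worth flagging. In the $g=1$ case you assert that $E(\bar{\Q})$ is uncountable, but $\bar{\Q}$ itself is countable, so this is false. The intended conclusion survives: $E(\bar{\Q})$ contains points of infinite order (for instance because its torsion is exactly $(\Q/\Z)^2$ while $E(\bar{\Q})$ is divisible of infinite rank as an abelian group, or by any of several direct constructions), and any such point is defined over some number field $F$, giving $E(F)$ infinite. Replace ``uncountable'' by ``contains a point of infinite order'' and the argument goes through.
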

A curve satisfying these assertions is called hyperbolic and generalizing hyperbolicity to higher dimensional varieties is an open problem. One might be tempted to consider the following three definitions of hyperbolicity which are conjectured to be equivalent \cite[Conjecture 5.6/5.8]{MR828820}.
\begin{definition*}
Let $X$ denote a projective variety over a number field $K/\Q$.
\begin{enumerate}
\item $X$ is arithmetically hyperbolic if for all finite extension $F/K$, the set of $F$-rational points of $X$ is finite.
\item $X$ is Brody hyperbolic if every holomorphic map $\C \rightarrow X_{\C}^{\text{an}}$ is constant.
\item $X$ is algebraically hyperbolic\footnote{The terminology \emph{algebraically hyperbolic} is also used by Demailly and many others like Rousseau and Riedl for another notion of hyperbolicity.} if every integral subvariety $V$ of $X_{\C}$ is of general type, i.e. there exists a desingularization $\tilde{V} \rightarrow V$ such that $\omega_{\tilde{V}}$ is big.
\end{enumerate}
\end{definition*}
\vspace{0.2cm}
What can be said about the algebraic hyperbolicity of the moduli space $\mathcal{A}_{g,N} \rightarrow \Spec \Q$ of $g$-dimensional polarized abelian varieties with a full level $N$-structure? Since this moduli space is not proper, we should replace $(3)$ with the condition that all subvarieties are of \emph{log general type}. We recall that a variety $V$ is of log general type if there exists a proper desingularization $V \rightarrow \tilde{V}$ and a smooth projective variety $W$ together with an open embedding $\tilde{V} \subset W$ with $D := W \smallsetminus \tilde{V}$ a normal crossing divisor such that $\omega_{W}(D)$ is big. The moduli space $\mathcal{A}_{g,N}$ is known to be algebraically hyperbolic \cite{MR1803724} \cite{MR3859271}.
%The fine moduli space $\mathcal{M}_h$ of canonically polarized manifolds with Hilbert polynomial $h$ over the complex numbers has been extensively studied and is known to be Brody hyperbolic \cite{MR1311355} \cite{MR1464907} \cite{MR1713524} \cite{MR1978884}. This result can be applied to various moduli spaces like the moduli space $\mathfrak{M}_{g,n}$ of stable genus $g$ curves with $n$ marked points, the moduli space $\mathcal{C}$ of polarized Calabi-Yau manifolds or the moduli space $\mathcal{A}_{g,N}$ of polarized abelian varieties with a full level $N$-structure\footnote{Brody hyperbolicity for $\mathcal{A}_{g,N}$ can be directly deduced from Liouville's theorem on bounded domains.}. Actually, for abelian varieties, we also know that $\mathcal{A}_{g,N}$ is algebraically hyperbolic \cite{MR1803724} \cite{MR3859271} and arithmetically hyperbolic \cite{MR718935} \cite{MR766574}.
\subsubsection*{Algebraic hyperbolicity of $\mathcal{A}_{g,N}$ over a field of characteristic $p$}
Let's replace the base field $K$ by $k$, an algebraically closed field of characteristic $p$. Since desingularization techniques do not always exist in characteristic $p$, we will restrict ourselves to \emph{smooth} subvarieties. Assume that $p$ does not divide $N$ and consider a smooth projective toroidal compactification $\mathcal{A}^{\tor}_{g,N}$ of the Siegel modular variety over $k$ and write $D$ for its boundary as a normal crossing divisor. In this context, we will say that a smooth subvariety $\iota : V \hookrightarrow \mathcal{A}^{\tor}_{g,N}$ such that $\iota^{-1}D$ remains an effective Cartier divisor is said of log general type with respect to $D$ if the log canonical bundle $\omega_V (\iota^{-1}D)$ is big.
\begin{question*}[Characteristic $p$]
Is $\mathcal{A}^{\tor}_{g,N}$ algebraically hyperbolic over $k$? In other words, is every subvariety
\begin{equation*}
\iota : V \hookrightarrow \mathcal{A}^{\tor}_{g,N}
\end{equation*}
such that $\iota^{-1}D$ is well defined, of log general type with respect to $D$? 
\end{question*}
\vspace{0.2cm}
The answer to this question is in fact negative. In \cite{MR3618576}, Moret-Bailly constructs a non-isotrivial family $A \rightarrow \Pb^1$ of principally polarized supersingular abelian surfaces with a full level $N$-structure over the projective line over $\F_p$. This family yields a closed immersion $\Pb^1\rightarrow \mathcal{A}_{2,N}$ which contradicts the hyperbolicity of $\mathcal{A}^{\tor}_{2,N}$. The main objective of this article is to investigate the failure of hyperbolicity of Siegel varieties in positive characteristic.

\subsection{Our main result}
From now on, the letter $k$ will denote an algebraically closed field of characteristic $p$. To simplify our notation we will denote $\Sh$ the Siegel variety of genus $g$ over $k$ (instead of $\mathcal{A}_{g,N}$) and $D_{\red}$ the boundary of a smooth projective toroidal compactification $\Sh^{\tor}$. Motivated by the Green-Griffiths-Lang conjecture (\ref{GGL}), it is natural to expect that there is some exceptional locus $E \subset \Sh^{\tor}$ such that for any smooth subvariety $V$ not contained in the boundary, $V$ is of log general type if and only if $V \nsubseteq E$. Our main result about the hyperbolicity of the Siegel varieties is the following.
\begin{customthm}{1}[Corollary \ref{cor_hyperbolic}]
Assume that $p \geq g^2+3g+1$. Any subvariety $\iota : V \hookrightarrow \Sh^{\tor}$ of codimension $\leq g-1$ satisfying:
\begin{enumerate}
\item $V$ is smooth,
\item $\iota^{-1}D_{\red}$ is a normal crossing divisor,
\end{enumerate}
is of log general type with respect to $D$.
\end{customthm}
\vspace{0.2cm}
This indicates that the hypothetical exceptional locus $E \subset \Sh^{\tor}$ has a codimension strictly larger than $g-1$ and we believe it has exactly codimension $g$.
\begin{rmrk*}
\
\begin{enumerate}
\item The Theorem 1 is actually a corollary of the Theorem 4 which is stated at the end of the introduction.
\item For simplicity, we have restricted ourselves to smooth subvarieties but the Theorem 1 should also hold for non-smooth subvarieties if we use the definition of the logarithmic Kodaira dimension of a variety in positive characteristic which appears in \cite[p.46]{abramovich1994subvarieties} and \cite{luo1987kodaira, luo1988invariant} and we adapt our arguments using for example \cite[Lemma 5, p.46]{abramovich1994subvarieties} in the proof of our lemma \ref{lem_log_general}.
%\item If $g = 1$, and $p = 2, 3$ then there are families of non-isotrivial elliptic curves over $\mathbb{G}_m$. We can consider $y^2 = x^3+x^2 - t$ in characteristic $3$ and $y^2 + xy = x^3 +t$ in characteristic $2$ whose $j$-invariant is $1/t$. Since the $j$-invariant is not constant, it implies that the elliptic curves are not all isomorphic to each other. Since $\mathbb{G}_m$ is a smooth curve that is not of log-general type, the bound $p \geq g^2+3g+1$ is sharp for $g=1$. For $g > 1$, we do not know if there are counter-examples when $p < g^2+3g+1$.
\item When $g = 1$ and $p = 2$ or $3$, there exist families of non-isotrivial elliptic curves over the multiplicative group $\mathbb{G}_m$. Specifically, consider the families $y^2 = x^3+x^2 - t$ in characteristic $3$ and $y^2 + xy = x^3 +t$ in characteristic $2$ where $t \in \mathbb{G}_m$. In both cases, the $j$-invariant is $j = 1/t$, so the curves are non-isotrivial. Since $\mathbb{G}_m$ is a smooth curve not of log general type, these examples show that the bound $p \geq g^2 + 3g + 1 = 5$ in the theorem 1 is sharp when $g = 1$. For $g > 1$, it is not known whether counterexamples exist when $p < g^2 + 3g + 1$.
\item The codimension assumption in the theorem 1 indicates that the Siegel modular variety $\Sh^{\tor}$ exhibits an intermediate form of pseudo-hyperbolicity, as suggested by the intermediate Lang conjectures (see \cite{MR828820}). These conjectures predict that varieties of general type should not contain ``large'' subvarieties that are not of general type. Our result aligns with this expectation by showing that any smooth subvariety of codimension $\leq g - 1$ intersecting the boundary normally is of log general type.
\end{enumerate}
\end{rmrk*}
\subsection{A new positivity notion for vector bundles in characteristic $p$}\label{sect_pos_intro}
In order to prove Theorem 1, we introduce and study a positivity notion for vector bundles which is weaker than ampleness but stronger that nefness and bigness. Assume that $X$ is a projective scheme over $k$ and $D$ is an effective Cartier divisor on $X$. Since this positivity notion involves  the relative Frobenius map $\varphi : X \rightarrow X^{(p)}$, we have decided to call it $(\varphi,D)$-ampleness. 
\begin{definition*}
A vector bundle $\E$ over $X$ is said to be $(\varphi,D)$-ample if there is an integer $r_0 \geq 1$ such that for all integers $r \geq r_0$, the vector bundle $\E^{(p^r)}(-D) := {(\varphi^{r})}^{*}{(\varphi^{r})}_{*}\E \otimes \Oc_X(-D)$ is ample.
\end{definition*}
\vspace{0.2cm}
Our main motivation comes from the fact that the Hodge line bundle $\omega := \det \Omega$ is not always ample on a toroidal compactification $\Sh^{\tor}$ but it is nef and big with exceptional locus\footnote{Following \cite{keel1999basepoint}, the exceptional locus of a nef line bundle $\Lc$ is the closure, with reduced structure, of the union of all subvarieties $V$ such that $\Lc_{|V}$ is not big.} contained in the boundary $D_{\red}$. In fact, we even know that $\omega$ is $(\varphi,D)$-ample for some effective Cartier divisor $D$ whose associated reduced divisor is the boundary $D_{\red}$. Compared to nefness and bigness for vector bundles, we show that $(\varphi,D)$-ampleness behaves well as it is stable under direct sum, extension\footnote{Under a regular hypothesis on $X$}, quotient, tensor product, tensor roots, pullback by finite morphim and it satisfies descent along finite surjective morphism. Inspired by a result of Mourougane {\cite[Théorème 1]{MR1614576}} over $\C$ about the ampleness of the adjoint bundle $\pi_*(\Lc \otimes \ \omega_{Y/X})$ where $\pi : Y \rightarrow X$ is a surjective morphism and $\Lc$ is an ample line bundle on $Y$, we prove similar results in characteristic $p$ when $\pi$ is a flag bundle. \\

More precisely, let $G$ denote a connected split reductive algebraic group over $k$. Fix a Borel pair $(B,T)$ of $G$ and write $\rho$ for the half-sum of positive roots of $G$. Let $E$ be a $G$-torsor over $X$ and $\pi : Y \rightarrow X$ the flag bundle that parametrizes $B$-reduction of $E$. Recall that we can associate a line bundle $\Lc_{\lambda}$ on $Y$ to each character $\lambda$ of $T$. We prove the following.
\begin{customthm}{2}[Theorem \ref{th1} and \ref{th2}]
If $\mathcal{L}_{2\lambda+2\rho}$ is ample (resp. $(\varphi,\pi^{-1}D)$-ample) on $Y$, then $\pi_*\Lc_{\lambda}$ is an ample (resp. $(\varphi,D)$-ample) vector bundle on $X$.
\end{customthm}
\vspace{0.2cm}
Note that since $\omega_{Y/X} = \Lc_{-2\rho}$, our result can be seen as a characteristic $p$ version of the result of Mourougane. 
\subsection{Positivity of automorphic vector bundles on the Siegel variety}
We explain a direct application of Theorem 2 to automorphic vector bundles defined over the Siegel variety. Recall that the Hodge bundle is a rank $g$ vector bundle over the Siegel variety which is defined as $\Omega = e^*\Omega^1_{A/\Sh}$ where $e$ is the neutral section of the universal abelian scheme $f : A \rightarrow \Sh$. Denote $\pi : Y \rightarrow \Sh$ the flag bundle which parametrizes complete filtration of $\Omega$. Recall that for every character $\lambda$ of the standard maximal torus of $\GL_g$, we have an associated line bundle $\Lc_{\lambda}$ on $Y$ and a costandard automorphic vector bundle $\nabla(\lambda)$ over the Siegel variety which is isomorphic to $\pi_{*}\Lc_{\lambda}$. All these objets can be extended to a toroidal compactification $\Sh^{\tor}$. Following the idea of \cite{StrohPrep}, we know by {\cite[Theorem 5.11]{alexandre2022vanishing}} that certain line bundles $\Lc_{\lambda}$ are $(\varphi,D)$-ample on $Y$ where $D$ is some fixed\footnote{It depends on the choice of a polarization function on the polyhedral cone decomposition of a toroidal compactification.} effective Cartier divisor whose associated reduced divisor is the boundary $D_{\red}$. \\

We denote $G$\footnote{It is not the same $G$ as in section \ref{sect_pos_intro}.} the symplectic group $\Sp_{2g}$ over $k$, $W$ the Weyl group of $G$, $P \subset G$ the parabolic that stabilizes the Hodge filtration on the first de Rham cohomology of $A \rightarrow \Sh$, $\Delta \subset \Phi^+ \subset \Phi$ the set of (simple, positive) roots of $G$, $I \subset \Delta$ the type of $P$, $L$ the Levi subgroup of $P$, $\Phi_L^{+} \subset \Phi_L$ the set of (positive) roots of $L$ and $\rho_L = 1/2\sum_{\alpha \in \Phi_L^{+}} \alpha$. The following result is a direct application of Theorem 2.
\begin{customthm}{3}[Theorem \ref{th_automorphic_bundle}]
Let $\lambda$ be a dominant character of $T$. If $\gamma := 2\lambda+2\rho_L$ is
\begin{enumerate}
\item orbitally $p$-close, i.e.
\begin{equation*}
\max_{\alpha \in \Phi, w \in W, \langle \gamma,\alpha^{\vee} \rangle \neq 0} | \frac{\langle \gamma,w\alpha^{\vee}\rangle}{\langle \gamma,\alpha^{\vee} \rangle }| \leq p-1
\end{equation*}
\item  and $\mathcal{Z}_{\emptyset}$-ample, i.e.
\begin{equation*}
\langle \gamma, \alpha^{\vee}\rangle >0 \text{ for all } \alpha \in I \text{ and } \langle \gamma, \alpha^{\vee}\rangle <0 \text{ for all } \alpha \in \Phi^+ \backslash \Phi^+_L
\end{equation*}
\end{enumerate}
then the automorphic vector bundle $\nabla(\lambda)$ is $(\varphi,D)$-ample on $\Sh^{\tor}$. 
\end{customthm}
%\vspace{0.2cm}
\begin{rmrk*}
Positivity results for automorphic vector bundles were only known for line bundles which corresponds to the case where $\lambda$ is positive parallel, i.e. $\nabla(\lambda)$ is a positive power of the Hodge line bundle $\omega = \det \Omega = \nabla(\shortminus 1, \ldots, \shortminus 1)$.
\end{rmrk*}
\subsection{Schur functor and the plethysm operation}
Schur functors are certain endofunctors 
\begin{equation*}
S : \FinVect_{k} \rightarrow \FinVect_{k}
\end{equation*}
of the abelian category of finite dimensional $k$-vector spaces that generalize the constructions of exterior powers and symmetric powers of a vector space. Schur functors are indexed by integer partition or Young diagrams and they can be defined on the category of finite locally free modules over a scheme. We are interested in these functors because if $\lambda = (k_1 \geq \ldots \geq k_g \geq 0)$ is a $G$-dominant character, we can identify it with a Young diagram where the $i^{\text{th}}$-row has $k_i$ columns and we get an isomorphism 
\begin{equation*}
S_{\lambda}\Omega =  \nabla(-w_0\lambda)
\end{equation*}
where $w_0 \in W$ is the longest element of the Coxeter group $W$. The strategy to prove Theorem 1 is to show that the bundle $S_{\lambda}\Omega^1_{\Sh^{\tor}}(\log D_{\red})$ is $(\varphi,D)$-ample for specific choices of $\lambda$. Since $(\varphi,D)$-ampleness is stable by quotients, pullback by finite morphisms and $S_{\lambda}$ respects surjections, the $(\varphi,D)$-ampleness of $S_{\lambda}\Omega^1_{\Sh^{\tor}}(\log D_{\red})$ implies that the quotient 
\begin{equation*}
\iota^*S_{\lambda}\Omega^1_{\Sh^{\tor}}(\log D_{\red}) \twoheadrightarrow S_{\lambda}\Omega^1_V(\log \iota^{-1}D_{\red})
\end{equation*}
is also $(\varphi,\iota^{-1}D)$-ample for any smooth subvariety $\iota : V \hookrightarrow \Sh^{\tor}$ such that $\iota^{-1}D_{\red}$ is well defined as a normal crossing divisor. It follows from the general theory of Schur functors that the bundle $S_{\lambda}\Omega^1_V(\log \iota^{-1}D_{\red})$ is non-zero exactly when the dimension of $V$ is larger than the number of parts (also called the height $\text{ht}(\lambda)$) of $\lambda$. In this case, the determinant of $S_{\lambda}\Omega^1_V(\log \iota^{-1}D_{\red})$ is a tensor power of $\omega_{V}(\iota^{-1}D_{\red})$ and the $(\varphi,D)$-ampleness of $S_{\lambda}\Omega^1_{\Sh^{\tor}}(\log D_{\red})$ implies that $V$ is of log general type with respect to $D_{\red}$. By the Kodaira-Spencer isomorphism
\begin{equation*}
\rho_{\text{KS}} : \Sym^2 \Omega \DistTo \Omega^1_{\Sh^{\tor}}(\log D_{\red}),
\end{equation*}
we are reduced to study the composition of Schur functors $S_{\lambda} \circ \Sym^2$. \\

The correct category to study Schur functors in characteristic $p$ such as $S_{\lambda}$ is the category of strictly polynomial functors $\textsf{Pol}$ introduced by Friedlander and Suslin in \cite{MR1427618}. A strictly polynomial functor $T : \FinVect_{k} \rightarrow \FinVect_{k}$ over a field $k$ is polynomial in the sense that for any finite dimensional $k$-vector spaces $V,W$, the map
\begin{equation*}
T_{V,W} : \Hom_k(V,W) \rightarrow \Hom_k(T(V),T(W))
\end{equation*}
is a scheme morphism where we have enriched $\Hom_k(V,W)$ and $\Hom_k(T(V),T(W))$ with their natural scheme structure. Equipped with the classical tensor product $\otimes$, the category of strictly polynomial functors is a symmetric monoidal category whose Grothendieck group $K_0(\textsf{Pol})$ is the ring $\mathcal{R}$ of symmetric functions. A key feature of $\textsf{Pol}$ is that the functor composition $\circ$ defines a second (non-symmetric) monoidal structure on it. Recall that $\mathcal{R}$ possesses a natural basis $\left\{s_{\lambda}\right\}_{\lambda}$ indexed by the set of integer partition where each $s_{\lambda}$ is the class of the Schur functor $S_{\lambda}$. Over $\C$, it is well-known that $\textsf{Pol}$ is semi-simple ; in particular the composition of two Schur functors of partition $\lambda$ and $\mu$ can be splitted as a direct sum of Schur functors 
\begin{equation*}
S_{\lambda} \circ S_{\mu} = \bigoplus_{\eta} S_{\eta}^{\oplus c^{\eta}_{\lambda,\mu}}
\end{equation*}
where the coefficient $c^{\eta}_{\lambda,\mu}$ are given by the decomposition of $s_{\lambda}\circ s_{\mu}$ in the basis $\left\{s_{\lambda}\right\}_{\lambda}$ of $\mathcal{R}$. The problem of determining the coefficients $c^{\eta}_{\lambda,\mu}$ is known as plethysm. Over a field of characteristic $p$, semi-simplicity of $\textsf{Pol}$ fails but we may ask if the composition $S_\lambda \circ S_{\mu}$ admits at least a filtration where the graded pieces are isomorphic to Schur functors $S_{\eta}$. Unfortunately, Boffi \cite{MR1128609} and Touzé \cite[Corollary 6.10.]{MR3042627} have found counter-examples to the existence of such filtrations. For example, the plethysm $\Lambda^2 \circ \Lambda^2$ over $\F_2$ does not admit any such filtration. We avoid these counter-examples with a technical restriction on the prime $p$.
\begin{proposition*}[Proposition \ref{th_plethysm}]
Let $\lambda$ and $\mu$ be partitions of size $|\lambda|$ and $|\mu|$. If $p \geq 2|\lambda |-1$, the strict polynomial functor $S_{\lambda} \circ S_{\mu}$ admits a finite filtration
\begin{equation*}
0 =T^n \subsetneq T^{n-1} \subsetneq
 \cdots \subsetneq
 T^0 = S_{\lambda} \circ S_{\mu}
\end{equation*}
by strict polynomial functors of degree $|\lambda| |\mu |$ where the graded pieces are Schur functors.
\end{proposition*}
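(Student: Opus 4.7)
The strategy is to realize $S_\lambda \circ S_\mu$ as a direct summand of $S_\mu^{\otimes n}$, where $n = |\lambda|$, and to propagate a Schur filtration from the latter to the former.

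Since $p \geq 2|\lambda| - 1$ implies $p > n$, the group algebra $k S_n$ is semisimple (Maschke) and every hook length of $\lambda$, each bounded by $n$, is invertible in $k$. I can therefore rescale the Young symmetrizer $c_\lambda \in k S_n$ to an idempotent $e_\lambda$; equivalently, the Schur algebra $S(N,n)$ is semisimple for $N \geq n$. Schur--Weyl duality for strict polynomial functors then yields a canonical decomposition
\begin{equation*}
I^{\otimes n} \cong \bigoplus_{\nu \vdash n} V_\nu \otimes S_\nu
\end{equation*}
as $(kS_n, \Pol_n)$-bimodules, in which $V_\nu$ is the Specht module attached to $\nu$. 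In particular $S_\lambda$ is a direct summand of $I^{\otimes n}$ in $\Pol_n$, cut out by $e_\lambda$.

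Next, the precomposition endofunctor $F_\mu \colon \Pol \to \Pol$, $T \mapsto T \circ S_\mu$, is exact (pointwise evaluation at a fixed object preserves exactness), sends $I$ to $S_\mu$, and commutes with direct sums and tensor products. Applying $F_\mu$ to the Schur--Weyl decomposition gives
\begin{equation*}
S_\mu^{\otimes n} \cong \bigoplus_{\nu \vdash n} V_\nu \otimes (S_\nu \circ S_\mu),
\end{equation*}
so $S_\lambda \circ S_\mu$ appears as a direct summand of $S_\mu^{\otimes n}$ inside $\Pol_{n|\mu|}$.

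The functor $S_\mu$ tautologically admits a one-step Schur filtration, so by the Wang--Donkin--Mathieu theorem on tensor products of modules with good filtrations, iterated $n - 1$ times, $S_\mu^{\otimes n}$ admits a finite Schur filtration. The class of modules with a Schur filtration is closed under direct summands, via the $\Ext^1$-vanishing criterion $\Ext^1_{\Pol}(W_\nu, -) = 0$ over all Weyl functors $W_\nu$, which is manifestly stable under summands. Hence $S_\lambda \circ S_\mu$ inherits a finite filtration whose graded pieces are Schur functors, each of total degree $|\lambda| |\mu|$.

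The most delicate step is ensuring the full Schur--Weyl direct-summand decomposition in $\Pol_n$: this relies on the semisimplicity of the Schur algebra $S(N,n)$ for $N \geq n$, guaranteed precisely when $p > n$. The paper's slightly stronger bound $p \geq 2|\lambda| - 1$ presumably enters in obtaining a more explicit filtration via a direct construction, perhaps through an Akin--Buchsbaum--Weyman-type resolution of $S_\lambda$ by tensor products of symmetric powers composed with $S_\mu$, but the skeleton above already yields the existence of the filtration under the weaker assumption $p > |\lambda|$.
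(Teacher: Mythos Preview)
Your proof is correct and follows the same overall strategy as the paper: show that $S_\lambda$ is a direct summand of $I^{\otimes |\lambda|}$ in $\Pol_{|\lambda|}$, precompose with $S_\mu$ to realise $S_\lambda \circ S_\mu$ as a summand of $S_\mu^{\otimes |\lambda|}$, then invoke Mathieu's theorem on tensor products together with closure of $\nabla$-filtered modules under direct summands (Donkin's criterion). The only substantive difference lies in how the direct-summand step is justified. The paper's Lemma~\ref{lem7} first splits $V^{\otimes |\lambda|} \twoheadrightarrow \Sym^\lambda V$ and then splits $S_\lambda V \hookrightarrow \Sym^\lambda V$ by forcing the relevant $\Ext^1_{\GL(V)}$ to vanish via $p$-smallness of $(|\lambda|,0,\ldots,0)$; since $p$-smallness in $\GL_N$ with $N \geq |\lambda|$ requires $p \geq |\lambda|+N-1$, this produces the bound $p \geq 2|\lambda|-1$. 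Your argument instead observes that $p > |\lambda|$ already makes $k\mathfrak{S}_{|\lambda|}$, and hence the Schur algebra $S(N,|\lambda|) = \operatorname{End}_{k\mathfrak{S}_{|\lambda|}}(V^{\otimes |\lambda|})$, semisimple; thus $\Pol_{|\lambda|}$ is semisimple and $S_\lambda = L(\lambda)$ is automatically a summand of the generator $I^{\otimes |\lambda|}$. This is cleaner and, as you correctly note, yields the strictly weaker hypothesis $p > |\lambda|$. Contrary to your closing speculation, the paper's extra margin does not buy a more explicit filtration: it is simply an artifact of routing through the $p$-small criterion (Proposition~\ref{prop_psmall}) rather than invoking semisimplicity of the degree-$|\lambda|$ block directly.
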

\subsection{Plethysm and hyperbolicity}
Since $(\varphi,D)$-ampleness is stable under extension, we can use proposition \ref{th_plethysm} to see that $S_{\lambda}\Omega^1_{\Sh^{\tor}}(\log D_{\red})$ is $(\varphi,D)$-ample if the graded pieces $\nabla(\eta)$ that appears in the plethysm $S_{\lambda} \circ \Sym^2$ are $(\varphi,D)$-ample. It is worth pointing out that plethysm computations are really hard and there is no known general combinatorial rule to express the coefficient $c^{\eta}_{\lambda,\mu}$\footnote{At the beginning, we used a computer to find integer partitions $\lambda$ such that each automorphic bundle appearing in the plethysm $S_{\lambda} \circ \Sym^2$ is $(\varphi,D)$-ample (see appendix \ref{appendix2}). Starting with $g=2$, we have found that surfaces in the Siegel theefold are of log-general type. For each $g \in \{2,3,4\}$, the computer was able to find a partition $\lambda$ of height equal to $\dim \Sh - (g-1)$ such that $S_{\lambda} \circ \Sym^2 \Omega$ is $(\varphi,D)$-ample. For $g = 5$, the plethysm computation was too long to conclude and it became clear that we needed a different method.}. Moreover, determining effectively if an automorphic bundle $\nabla(\eta)$ is $(\varphi,D)$-ample with the Theorem 3 is also challenging as it involves the orbitally $p$-closeness condition. It is known since {\cite[Lemma 7]{MR2497590}} that the plethysm $\Lambda^k \circ \Sym^2$ belongs to one of the few cases where a general formula is known. With this formula and an upper bound of the orbitally $p$-closeness condition, we were able to show the following result. 
\begin{customthm}{4}[Theorem \ref{th_hyperbolic}]
Assume that $p \geq g^2+3g+1$. For all $k \geq g(g-1)/2+1$, the bundle $\Lambda^k\Sym^2\Omega = \Omega^k_{\Sh^{\tor}}(\log D_{\red})$ is $(\varphi,D)$-ample.
\end{customthm}
\vspace{0.2cm} 
In the case $g\in \{2,3\}$, we also prove that this bound on $k$ is optimal, which is some evidence that the hypothetical exceptional locus $E \subset \Sh^{\tor}$ has codimension $g$. 
\subsection{Organization of the paper}
In section \ref{sect_rep}, we recall some general results on algebraic representations of reductive groups in characteristic $p$. In section \ref{sect_schur}, we study the plethysm operation for Schur functors in characteristic $p$.  In section \ref{sect_pos}, we introduce and study the main properties of $(\varphi,D)$-ample vector bundles. In particular, we prove many stability properties that are summarized in the table \ref{table_pos}. In section \ref{sect_flag}, we recall the flag bundle construction associated to a general $G$-torsor. In section \ref{sect_push}, we prove that the adjoint bundle of an ample (resp. $(\varphi,D)$-ample) line bundle along a complete flag bundle, is an ample (resp. $(\varphi,D)$-ample) vector bundle. In section \ref{sect_auto}, we apply our result on the positivity of adjoint bundles to the case of automorphic vector bundles over the Siegel variety. In particular, the figure \ref{figure_amp} illustrate our ampleness result in the case $g=2$. In section \ref{sect_hyp}, we finish the proof of our main theorem about the partial hyperbolicity of the Siegel modular variety in characteristic $p$.

\subsection*{Acknowledgements}
I would like to thank Benoit Stroh for his invaluable support over the years and for his careful reading of an early version of this paper. I am very grateful to Yohan Brunebarbe for explaining me his results on the hyperbolicity of the Siegel variety over the complex numbers and to Arnaud Eteve for many helpful conversations about Schur functors. I also thank more broadly Sebastian Bartling, Diego Berger, Thibaut Ménès for their careful feedback. I would also like to acknowledge the anonymous referee for their careful reading of this paper and for noticing that Theorem 1 should hold for non-smooth subvarieties as well.
\section{Representations of algebraic groups}\label{sect_rep}
Recall that $k$ is an algebraically closed field of characteristic $p$. In this section, we recall some well-known results about algebraic representations of reductive groups over $k$ that can be found in \cite{MR2015057}. Let $G$ be a connected split reductive algebraic group over $k$. We choose a Borel pair $(B,T)$ of $G$, i.e. a Borel sugroup $B \subset G$ together with a maximal torus $T \subset G$ defined over $k$. Denote $(X^*,\Phi,X_*,\Phi^{\vee})$ the root datum of $G$ where $X^*$ is the group of characters of $T$, $X_*$ is the group of cocharacters of $T$, $\Phi$ is the set of roots of $G$, $\Phi^{\vee}$ is the set of coroots of $G$ and
\begin{equation*}
\langle \cdot ,\cdot \rangle : X^* \times X_* \rightarrow \mathbb{Z}
\end{equation*}
is the perfect pairing between the characters and the cocharacters of $T$. To any root $\alpha \in \Phi$, there is an associated coroot $\alpha^{\vee}$ such that $\langle \alpha, \alpha^{\vee} \rangle = 2$. This choice of $(B,T)$ determines a set of positive roots $\Phi^{+}$ and a set of simple roots $\Delta \subset \Phi^{+}$. To simplify the statement of the proposition \ref{prop13_sieg}, we follow a non-standard convention for the positive roots by declaring $\alpha \in \Phi$ to be positive if the root group $U_{-\alpha}$ is contained in $B$. A character $\lambda \in X^*$ is said to be $G$-dominant (or simply dominant if there is no ambiguity on the group $G$) if $\langle \lambda, \alpha^{\vee} \rangle \geq 0$ for all $\alpha \in \Phi^{+}$. We denote $\rho$ the half-sum of the positive roots. We denote $W$ the Weyl group of $G$, $l : W \rightarrow \N$ its length function and $w_0$ its longest element. Consider a collection $I \subset \Delta$ of simple roots. We denote $\Phi_I$ (resp. $\Phi_I^+$) the set of roots (resp. positive roots) obtained as $\Z$-linear combination of roots in $I$. We denote $W_I \subset W$ the subgroup generated by the reflections $s_{\alpha}$ where $\alpha \in I$ and ${}^IW \subset W$ the set of minimal length representatives of $W_I\backslash W$. We denote $\varphi : G \rightarrow G^{(p)}$ the relative Frobenius morphism of $G$ where $G^{(p)} = G \times_{k,\sigma} k$ is the pullback along the Frobenius map $\sigma : k \rightarrow k$ of $k$. Since any split reductive group is a base change of a split reductive group over $\mathbb{Z}$, the reductive group $G$ is isomorphic to $G^{(p)}$. For any $G$-module $M$, we define $M^{(p^r)}$ as the same module $M$ with a $G$-action twisted by $\varphi^r$. \\

Denote $\Rep_k(G)$ the category of algebraic representations of $G$ on finite dimensional $k$-vector spaces. We will use interchangeably the term $G$-module to denote any representation $V \in \Rep_k(G)$. It is well-known that this category is not semi-simple but we can still define some interesting highest weight representations. 
\begin{proposition}[{\cite[Part II, sect. 2.4]{MR2015057}}]
For any dominant $T$-character $\lambda$, there is a unique simple $G$-module of highest weight $\lambda$ that we denote $L(\lambda)$. 
\end{proposition}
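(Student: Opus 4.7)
The plan is to construct $L(\lambda)$ as the socle of an induced module and then use Frobenius reciprocity to get uniqueness. Concretely, for dominant $\lambda$ I would form the costandard module
\begin{equation*}
\nabla(\lambda) := \mathrm{Ind}_B^G(k_\lambda),
\end{equation*}
realized geometrically as the global sections of the $G$-equivariant line bundle on $G/B$ attached to $\lambda$ (with the paper's sign convention on positive roots). The first technical step is to show that $\nabla(\lambda)$ is a non-zero finite-dimensional $G$-module: finite-dimensionality is immediate from properness of $G/B$, while non-vanishing is the genuinely characteristic-$p$ input — it reduces to Kempf vanishing ($H^i=0$ for $i>0$ when $\lambda$ is dominant) together with an explicit highest weight vector.

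Next I would analyze the weights of $\nabla(\lambda)$. Using the Bruhat decomposition of $G/B$ and the filtration by Schubert cells, one shows that every weight $\mu$ of $\nabla(\lambda)$ satisfies $\mu \leq \lambda$ in the dominance order, and that the $\lambda$-weight space is one-dimensional. This is the key combinatorial property. In particular any nonzero $G$-submodule of $\nabla(\lambda)$ contains the unique line of weight $\lambda$, so $\nabla(\lambda)$ has a simple socle, which I define to be $L(\lambda)$. By construction $L(\lambda)$ is simple with highest weight $\lambda$.

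For uniqueness, let $V$ be any simple $G$-module with highest weight $\lambda$, and let $v\in V$ be a highest weight vector. Then the map $V \to k_\lambda$ projecting onto the $\lambda$-weight line is a nonzero $B$-equivariant morphism, so Frobenius reciprocity
\begin{equation*}
\mathrm{Hom}_G(V,\nabla(\lambda)) \;\cong\; \mathrm{Hom}_B(V,k_\lambda)
\end{equation*}
produces a nonzero $G$-equivariant map $V \to \nabla(\lambda)$. Simplicity of $V$ forces this map to be injective, so $V$ sits inside $\nabla(\lambda)$, and having a $\lambda$-highest weight it must land inside the socle $L(\lambda)$; simplicity then gives $V \cong L(\lambda)$. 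Finally, that every simple $G$-module has a (unique) dominant highest weight follows from the standard weight-theoretic argument: the set of weights of a finite-dimensional $G$-module is $W$-stable and finite, so it has a maximal element, and the corresponding weight vector generates a $B$-stable line which must be dominant by simplicity.

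The main obstacle is the non-vanishing and weight-multiplicity-one statement for $\nabla(\lambda)$: in characteristic $p$ one cannot invoke Weyl's character formula directly, and these facts rest on Kempf vanishing, which is the nontrivial geometric input. Everything else is formal from Frobenius reciprocity and the highest weight filtration.
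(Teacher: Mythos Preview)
The paper does not give its own proof of this proposition; it simply cites Jantzen's book. Your sketch is essentially the standard argument found there (Part II, 2.3--2.4), so in that sense your approach coincides with what the paper is invoking.

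Two small corrections are worth noting. First, the non-vanishing of $\nabla(\lambda)$ for dominant $\lambda$ does not rest on Kempf vanishing: one exhibits an explicit highest-weight section directly (for instance via the big Bruhat cell), and Kempf's theorem only concerns $H^i$ for $i>0$. Second, your inference that ``any nonzero $G$-submodule of $\nabla(\lambda)$ contains the unique line of weight $\lambda$'' does not follow merely from $\lambda$ being the highest weight with multiplicity one --- a module such as $L(\lambda)\oplus L(\mu)$ with $\mu<\lambda$ already shows this fails in general. The correct input is that the space of $U^+$-invariants $\nabla(\lambda)^{U^+}$ (for $U^+$ the unipotent radical of the Borel opposite to $B$) is one-dimensional; since any nonzero $G$-submodule $M$ has $M^{U^+}\neq 0$, it must contain this line. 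Equivalently, Frobenius reciprocity gives $\Hom_G(L,\nabla(\lambda))=\Hom_B(L,k_\lambda)$, which is at most one-dimensional for simple $L$, forcing the socle to be simple. With these adjustments your outline is correct.
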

\begin{definition}[{\cite[Part I, sect. 5.8]{MR2015057}}]
For any character $\lambda$ of $T$, we denote $\Lc_{\lambda}$ the line bundle on the flag variety $G/B$ defined as the $B$-quotient of the vector bundle $G \times_k \mathbb{A}^1 \rightarrow G$ where $B$ acts on $G \times_k \mathbb{A}^1$ by
\begin{equation*}
(g,x)b = (gb^{-1},\lambda(b^{-1})x)
\end{equation*}
and where $\lambda$ is extended by zero on the unipotent radical of $B$.
\end{definition}
\vspace*{0.2cm}
Recall Kempf's vanishing theorem.
\begin{proposition}[{\cite[Part II, sect. 4.5]{MR2015057}}]\label{prop4}
Let $\lambda$ be a dominant character. We have
\begin{equation*}
H^i(G/B,\Lc_{\lambda}) = 0
\end{equation*}
for every integer $i >0$.
\end{proposition}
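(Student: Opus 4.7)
The plan is to prove Proposition~\ref{prop4} by the Frobenius-splitting method of Mehta--Ramanathan, combined with ordinary Serre vanishing. The strategy has three steps: first, to construct a Frobenius splitting of $G/B$ compatible with a suitable effective divisor; second, to extract an iterated cohomological injection that trades $\Lc_\lambda$ for a much more positive line bundle at the cost of passing through $\varphi$; and third, to apply Serre vanishing to the target of this injection.

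For the first step, I would recall that the flag variety $G/B$ admits a Frobenius splitting $\sigma \colon \varphi_* \Oc_{G/B} \to \Oc_{G/B}$ which is compatible with an effective Cartier divisor $D$ whose associated line bundle is $\Lc_{(p-1)\rho}$. Following the classical construction, one picks a nonzero $B$-semi-invariant global section of $\Lc_\rho$ whose zero locus is the union of the codimension-one Schubert subvarieties; the $(p-1)$st power of this section, combined with the identification $\omega_{G/B} \cong \Lc_{-2\rho}$ and the Cartier operator on top differential forms, yields both the splitting and its compatibility with $D$.

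For the second step, tensoring the $D$-compatible splitting with any line bundle $\Lc$ and using the projection formula together with the finiteness of $\varphi$ produces a canonical split injection on cohomology
\[
H^i(G/B, \Lc) \hookrightarrow H^i(G/B, \Lc^{\otimes p} \otimes \Lc_{(p-1)\rho})
\]
for every $i \geq 0$. Iterating this $n$ times with $\Lc = \Lc_\lambda$ yields, for every $n \geq 0$, a split injection
\[
H^i(G/B, \Lc_\lambda) \hookrightarrow H^i(G/B, \Lc_{p^n \lambda + (p^n - 1)\rho}).
\]

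For the third step, I observe that for $\lambda$ dominant the weight $\mu := \lambda + \rho$ is regular dominant, so $\Lc_\mu$ is ample on $G/B$. Writing $\Lc_{p^n \lambda + (p^n - 1)\rho} = \Lc_{p^n \mu - \rho} = \Lc_\mu^{\otimes p^n} \otimes \Lc_{-\rho}$ and applying Serre vanishing to the coherent sheaf $\Lc_{-\rho}$ with respect to the ample $\Lc_\mu$ provides an integer $N_0$ such that $H^i(G/B, \Lc_{-\rho} \otimes \Lc_\mu^{\otimes m}) = 0$ for every $m \geq N_0$ and every $i > 0$; choosing $n$ with $p^n \geq N_0$ makes the right-hand side of the injection vanish, hence $H^i(G/B, \Lc_\lambda) = 0$. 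The hard part will be Step~1, namely the Mehta--Ramanathan construction of the $D$-compatible Frobenius splitting; once this is in hand, Steps~2 and~3 are formal manipulations. The twist by $(p-1)\rho$ in the target is essential: it effectively replaces the possibly singular $\lambda$ by the regular dominant weight $\mu = \lambda + \rho$, whereas the bare Frobenius splitting would only yield $H^i(\Lc_\lambda) \hookrightarrow H^i(\Lc_{p^n \lambda})$, which suffices only when $\lambda$ is already regular dominant.
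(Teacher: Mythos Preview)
Your argument is correct; this is the Mehta--Ramanathan proof of Kempf vanishing via Frobenius splitting. The paper itself gives no proof and simply cites Jantzen, whose argument (due to Andersen--Haboush) is the close cousin of yours: rather than a split injection, one proves the $G$-equivariant isomorphism $H^i(G/B,\Lc_{p^r(\lambda+\rho)-\rho})\cong \St_r\otimes H^i(G/B,\Lc_\lambda)^{(p^r)}$ (this is exactly the paper's Proposition~\ref{prop1}, established via the injectivity of $\St_r$ as a module for the Frobenius kernel) and then applies Serre vanishing precisely as in your Step~3. Both routes share the same endgame; yours trades the representation theory of $G_r$ for the geometry of the splitting.

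Two small points on your sketch of Step~1. First, the $(p-1)$st power of a section of $\Lc_\rho$ lands in $H^0(\Lc_{(p-1)\rho})$, whereas a splitting section must live in $H^0(\omega_{G/B}^{1-p})=H^0(\Lc_{2(p-1)\rho})$; the standard construction takes $(f_+f_-)^{p-1}$ with $f_\pm$ a highest and a lowest weight vector of $H^0(\Lc_\rho)$, and it is one of the two factors that records the vanishing along your $D$. Second, $\rho$ need not lie in $X^*(T)$ when $G$ is not simply connected, so $\Lc_\rho$ may not be defined; this is harmless since one may replace $G$ by its simply connected cover without changing $G/B$ (the paper makes the same reduction in the proofs of Theorems~\ref{th1} and~\ref{th2}).
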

\vspace*{0.2cm}
\begin{definition}[{\cite[Part II, sect. 2]{MR2015057}}]
Let $\lambda$ be a character of $T$. The costandard $G$-module $\nabla(\lambda)$ of highest weight $\lambda$ is defined as the global section group $H^0(G/B,\Lc_\lambda)$ where $G$ acts through left translation. The standard $G$-module $\Delta(\lambda)$ of highest weight $\lambda$ is defined as $\nabla(-w_0\lambda)^{\vee}$ where $\vee$ denotes the linear dual in $\Rep_k(G)$.
\end{definition}
\begin{proposition}[{\cite[Part II, sect. 2.6]{MR2015057}}]
The $G$-modules $\nabla(\lambda)$ and $\Delta(\lambda)$ are non-zero exactly when $\lambda$ is dominant. Moreover, their highest $T$-weight is $\lambda$.
\end{proposition}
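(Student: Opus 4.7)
The plan is to prove the assertions for $\nabla(\lambda)$ first and deduce those for $\Delta(\lambda)=\nabla(-w_0\lambda)^\vee$ by duality. The deduction uses that $\lambda$ is dominant if and only if $-w_0\lambda$ is, and that on any finite-dimensional $G$-module $V$ with a unique highest weight $\mu$ of multiplicity one, the longest Weyl element $w_0$ sends $\mu$ to the lowest weight of $V$; hence the highest weight of $V^\vee$ is $-w_0\mu$. Applied to $\nabla(-w_0\lambda)$ this yields highest weight $-w_0(-w_0\lambda)=\lambda$ for $\Delta(\lambda)$.

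For the direction $\lambda$ dominant $\Rightarrow$ $\nabla(\lambda)\neq 0$ together with the identification of the highest weight, I would combine Kempf's vanishing (Proposition \ref{prop4}) with a character computation. For dominant $\lambda$, Kempf's vanishing gives $\nabla(\lambda)\cong R\Gamma(G/B,\Lc_\lambda)$ in the derived category of $T$-modules, so the $T$-character of $\nabla(\lambda)$ agrees with the $T$-equivariant Euler characteristic of $\Lc_\lambda$ on $G/B$. Computing the latter by localization at the $T$-fixed points $wB$, $w\in W$, yields the Weyl character
\begin{equation*}
\chi(\lambda)=\frac{\sum_{w\in W}(-1)^{l(w)} e^{w(\lambda+\rho)-\rho}}{\prod_{\alpha>0}(1-e^{-\alpha})},
\end{equation*}
which is non-zero whenever $\lambda$ is dominant and has $\lambda$ as its unique maximal weight with multiplicity one; this simultaneously settles non-vanishing and the highest-weight claim.

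Conversely, assume $\nabla(\lambda)\neq 0$ and pick a weight vector $v$ of weight $\mu$ maximal for the dominance order. For each simple root $\alpha$, restricting the action to the $\SL_2$-subgroup generated by the root subgroups $U_{\pm\alpha}$ exhibits $v$ as a highest weight vector of weight $\langle \mu,\alpha^\vee\rangle$; $\SL_2$-theory forces $\langle \mu,\alpha^\vee\rangle\geq 0$, so $\mu$ is dominant, and the character computation then identifies $\mu=\lambda$. The main obstacle is rigorously establishing the localization identity in positive characteristic, where analytic Atiyah-Bott is unavailable; it can be handled either by a Demazure-Bott-Samelson resolution of $G/B$ that reduces the computation to the $\Pb^1=\SL_2/B$ case (where the relevant cohomology groups are directly computable), or by $T$-equivariant Grothendieck-Riemann-Roch applied to the smooth projective variety $G/B$.
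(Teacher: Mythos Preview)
The paper itself gives no proof here---it merely cites Jantzen---so there is nothing to compare against directly; I evaluate your argument on its own merits.

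Your forward direction ($\lambda$ dominant $\Rightarrow$ $\nabla(\lambda)\neq 0$ with highest weight $\lambda$) via Kempf's vanishing plus the Weyl character formula is valid, if considerably heavier than the elementary route.

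The converse direction has a real gap. You correctly argue that any maximal weight $\mu$ of $\nabla(\lambda)$ must be dominant, but then write ``the character computation then identifies $\mu=\lambda$''. That character computation (Kempf plus localization) was carried out \emph{under the hypothesis that $\lambda$ is dominant}, which is exactly what you are trying to prove. For an arbitrary $\lambda$ you have no vanishing of higher cohomology, so the Euler characteristic $\chi(\lambda)$ says nothing about the weights of $H^0$ alone; in particular nothing in your argument links $\mu$ back to $\lambda$.

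The standard fix bypasses characters entirely. Viewing $\nabla(\lambda)=H^0(G/B,\Lc_\lambda)$ as the space of regular functions $f:G\to k$ satisfying the appropriate $B$-equivariance, any $f$ invariant under the unipotent radical $U^+$ of the opposite Borel is determined by $f(e)$ (because the big cell $U^+B$ is dense in $G$), and $f(e)$ carries $T$-weight $\lambda$. Thus $\nabla(\lambda)^{U^+}$ is at most one-dimensional of weight $\lambda$; if $\nabla(\lambda)\neq 0$ it is exactly one-dimensional (any nonzero $G$-module has nonzero $U^+$-invariants), so $\lambda$ is the unique highest weight and hence dominant. This argument simultaneously gives the highest-weight claim and the multiplicity-one statement you need for the duality passage to $\Delta(\lambda)$. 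Alternatively, one can argue directly on the line bundle: if $\langle\lambda,\alpha^\vee\rangle<0$ for some simple $\alpha$, then $\Lc_\lambda$ restricts to a negative line bundle on each fibre $P_\alpha/B\cong\Pb^1$ of $G/B\to G/P_\alpha$, forcing $H^0(G/B,\Lc_\lambda)=0$.
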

\vspace*{0.2cm}
It follows directly from their definition that $\nabla(\lambda)$ and $\Delta(\lambda)$ have the same weights but they are usually not simple and not isomorphic. We give a condition on the size of the highest weight of a standard/costandard module to be simple.
\begin{proposition}[{\cite[Part II, sect. 5.6]{MR2015057}}]\label{prop_psmall}
If $\lambda$ is a $p$-small character, i.e. 
\begin{equation*}
\forall \alpha \in \Phi^{+} \ \langle \lambda + \rho, \alpha^{\vee} \rangle \leq p,
\end{equation*}
then we have isomorphisms
\begin{equation*}
\nabla(\lambda) = \Delta(\lambda) = L(\lambda).
\end{equation*}
\end{proposition}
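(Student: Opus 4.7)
The plan is to deduce both isomorphisms from the strong linkage principle (\cite[Part II, sect. 6.13]{MR2015057}). First I would rephrase the $p$-smallness hypothesis geometrically: for a dominant $\lambda$, the inequalities $\langle \lambda + \rho, \alpha^{\vee}\rangle \leq p$, together with the automatic $\langle \lambda + \rho, \alpha^{\vee}\rangle \geq 1$ coming from dominance, say exactly that $\lambda$ lies in the closure $\overline{C}$ of the fundamental alcove
$$C = \{\mu \in X^*(T) \otimes_{\Z} \R \,:\, 0 < \langle \mu + \rho, \alpha^{\vee}\rangle < p \text{ for all } \alpha \in \Phi^{+}\}$$
for the dot action $w \cdot \mu = w(\mu + \rho) - \rho$ of the affine Weyl group $W_p = W \ltimes p\Z\Phi^{\vee}$.

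Next I would apply the strong linkage principle: any composition factor $L(\mu)$ of $\nabla(\lambda)$ satisfies $\mu \uparrow \lambda$ in the sense of Andersen, which in particular forces $\mu \in W_p \cdot \lambda$ together with $\mu \leq \lambda$ in the dominance order on $X^*(T)$. Since $\overline{C}$ is a fundamental domain for the dot action of $W_p$, the weight $\lambda$ is the unique representative of $W_p \cdot \lambda$ inside $\overline{C}$. A short check shows that any dominant $\mu \in W_p \cdot \lambda$ with $\mu \leq \lambda$ must already lie in $\overline{C}$ — otherwise reflecting it across the wall it crosses produces a strictly larger dominant weight, contradicting $\mu \leq \lambda$ — so $\mu = \lambda$. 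Hence $L(\lambda)$ is the only composition factor of $\nabla(\lambda)$, with multiplicity one because $\lambda$ appears with multiplicity one as a $T$-weight. This gives $\nabla(\lambda) = L(\lambda)$.

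For the standard module I would use $\Delta(\lambda) = \nabla(-w_0\lambda)^{\vee}$ and observe that $-w_0$ preserves $p$-smallness. Indeed $-w_0$ permutes $\Phi^{+}$ and fixes $\rho$, so for every $\alpha \in \Phi^{+}$,
$$\langle -w_0\lambda + \rho, \alpha^{\vee}\rangle = \langle -w_0(\lambda + \rho), \alpha^{\vee}\rangle = \langle \lambda + \rho, -w_0^{-1}\alpha^{\vee}\rangle \leq p,$$
since $-w_0^{-1}\alpha^{\vee}$ is again a positive coroot. As $-w_0\lambda$ is dominant, the previous paragraph gives $\nabla(-w_0\lambda) = L(-w_0\lambda)$; dualizing and using $L(\mu)^{\vee} = L(-w_0\mu)$ yields $\Delta(\lambda) = L(\lambda)$.

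The main obstacle is the strong linkage principle itself, which is the deepest input (usually proved via translation functors and the Jantzen sum formula). Once it is taken as a black box, the only genuine content left is the combinatorial fact that $\overline{C}$ is a fundamental domain for the $W_p$-dot action, and the handling of $\Delta(\lambda)$ reduces to the short duality argument above.
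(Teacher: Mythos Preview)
The paper does not supply a proof of this proposition at all; it simply records the statement and cites Jantzen, so there is no in-text argument to compare against. Your route via the strong linkage principle is a perfectly standard and correct way to obtain the result (indeed this is essentially how one deduces it in Jantzen's Chapter~II.6), and your handling of $\Delta(\lambda)$ via $-w_0$-duality is clean.

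There is, however, a genuine gap in the alcove step. You assert that if a dominant $\mu \in W_p\cdot\lambda$ with $\mu \leq \lambda$ fell outside $\overline{C}$, then ``reflecting it across the wall it crosses produces a strictly larger dominant weight, contradicting $\mu \leq \lambda$.'' But producing some $\mu' > \mu$ does not contradict $\mu \leq \lambda$; nothing prevents $\mu < \mu' \leq \lambda$ a priori. The argument as written does not close. The correct direction is the opposite one: any dominant $\mu$ can be reflected \emph{down} through successive affine walls, staying dominant at each step, until it lands in $\overline{C}$ (this is the standard fact that the representative of a $W_p$-orbit in $\overline{C}$ is minimal among the dominant elements of that orbit; see Jantzen II.6.11 or the Bourbaki alcove geometry). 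The endpoint of this descent lies in $W_p\cdot\lambda \cap \overline{C} = \{\lambda\}$, hence equals $\lambda$, which gives $\lambda \leq \mu$. Combined with your hypothesis $\mu \leq \lambda$ this forces $\mu = \lambda$, and then the rest of your argument goes through. Alternatively, one can look at the \emph{last} reflection in a strong-linkage chain $\mu_{r-1} < s_{\alpha,np}\cdot\mu_{r-1} = \lambda$ with $\alpha \in \Phi^+$ and observe directly that it forces $\langle \lambda+\rho,\alpha^\vee\rangle > np$, which for $n\geq 1$ violates $p$-smallness and for $n\leq 0$ already shows the chain never existed (since $\langle\lambda+\rho,\alpha^\vee\rangle\geq 1$). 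Either fix is short; the point is only that your reflection went the wrong way.
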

\begin{rmrk}
If $\lambda$ is $p$-small and $\mu \leq \lambda$, then $\mu$ is also $p$-small.
\end{rmrk}
\vspace*{0.2cm}
In positive characteristic, there is a very special algebraic representation called the Steinberg representation $\St_r$. The Steinberg representation is a self-dual simple $G$-module whose highest weight is never $p$-small.
\begin{definition}[{\cite[Part II, sect. 3.18]{MR2015057}}]
Assume $p \neq 2$ or $\rho \in X^*(T)$. For each $r\geq 1$, we define the Steinberg module as
\begin{equation*}
\St_r := \nabla((p^r-1)\rho).
\end{equation*}
\end{definition}
\begin{proposition}[{\cite[Part II, sect. 3.19]{MR2015057}}]
We have isomorphisms
\begin{equation*}
\nabla((p^r-1)\rho) = \Delta((p^r-1)\rho) = L((p^r-1)\rho).
\end{equation*}
In particular, $\St_r$ is a simple $G$-module. 
\end{proposition}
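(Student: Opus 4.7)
The plan is to reduce the three-way identification to showing that the costandard module $\nabla((p^r-1)\rho)$ is already simple, and then to exploit the self-duality of its highest weight. The main input will be Steinberg's tensor product theorem and a dimension count, together with the calculation that $-w_0\rho = \rho$.

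First I would compute $\dim \nabla((p^r-1)\rho)$. By Kempf's vanishing theorem (Proposition \ref{prop4}), the higher cohomology of $\mathcal{L}_{(p^r-1)\rho}$ on $G/B$ vanishes, so the dimension equals the Euler characteristic, which can be computed by the Weyl character formula. A direct computation gives
\begin{equation*}
\dim \nabla((p^r-1)\rho) = \prod_{\alpha \in \Phi^+} \frac{\langle (p^r-1)\rho + \rho, \alpha^{\vee}\rangle}{\langle \rho, \alpha^{\vee}\rangle} = p^{rN},
\end{equation*}
where $N = |\Phi^+|$.

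Next I would produce a simple $G$-module of the same dimension $p^{rN}$ inside $\nabla((p^r-1)\rho)$. Since $L((p^r-1)\rho)$ is always a submodule of $\nabla((p^r-1)\rho)$, it suffices to show $\dim L((p^r-1)\rho) = p^{rN}$. Writing
\begin{equation*}
(p^r-1)\rho = (p-1)\rho + p(p-1)\rho + \cdots + p^{r-1}(p-1)\rho,
\end{equation*}
Steinberg's tensor product theorem gives
\begin{equation*}
L((p^r-1)\rho) \cong L((p-1)\rho) \otimes L((p-1)\rho)^{(p)} \otimes \cdots \otimes L((p-1)\rho)^{(p^{r-1})},
\end{equation*}
so the problem reduces to showing $\dim L((p-1)\rho) = p^{N}$. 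This is the main obstacle, and I expect it to require restricting to the Frobenius kernel $G_1$: one identifies $L((p-1)\rho)|_{G_1}$ as the unique simple $G_1$-module of $p$-restricted highest weight $(p-1)\rho$, which is simultaneously projective and injective in $\text{Rep}(G_1)$ of dimension $p^N$ (this is the classical content of the Steinberg module for Frobenius kernels, cf.\ Jantzen, Part II, Ch.~3). Comparing dimensions then yields $L((p^r-1)\rho) = \nabla((p^r-1)\rho)$.

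Finally I would handle the identification with $\Delta((p^r-1)\rho)$ by a duality argument. Since $w_0$ sends $\Phi^+$ to $\Phi^-$, we have $-w_0\rho = \rho$, hence $-w_0 (p^r-1)\rho = (p^r-1)\rho$. The dual of a simple module of highest weight $\lambda$ is simple of highest weight $-w_0\lambda$, so $L((p^r-1)\rho)$ is self-dual. Combined with the definition $\Delta(\lambda) = \nabla(-w_0\lambda)^{\vee}$, this gives
\begin{equation*}
\Delta((p^r-1)\rho) = \nabla((p^r-1)\rho)^{\vee} = L((p^r-1)\rho)^{\vee} = L((p^r-1)\rho),
\end{equation*}
completing the proof.
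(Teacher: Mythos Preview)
The paper does not give its own proof of this proposition; it merely records the statement with a citation to Jantzen, Part~II, \S3.19. Your proposal is therefore not being compared against an in-paper argument but against the standard literature proof, and what you have written is essentially that argument: compute $\dim\nabla((p^r-1)\rho)=p^{rN}$ via the Weyl dimension formula, reduce to $r=1$ via Steinberg's tensor product theorem, invoke the Frobenius-kernel result $\dim L((p-1)\rho)=p^N$, and finish with the self-duality coming from $-w_0\rho=\rho$.

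The one place where your write-up is a sketch rather than a proof is step~3: you correctly identify that $\dim L((p-1)\rho)=p^N$ is the crux and that it goes through the representation theory of $G_1$, but you do not carry this out. If you want a self-contained argument you would need to show that $L((p-1)\rho)$ restricted to $G_1$ remains simple (which holds for all $p$-restricted weights), and then that this particular simple $G_1$-module is projective/injective in $\Rep(G_1)$, forcing its dimension to equal $\dim k[G_1]^{1/p^N}=p^N$. This is exactly Jantzen~II.3.18, so your citation is appropriate, but be aware that this step is where the actual content lies; everything else is formal.
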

\vspace*{0.2cm}
We come to the main proposition that justifies our interest in the Steinberg representation.
\begin{proposition}[{\cite[Part. 2, Chap. 3, Sect. 19]{MR2015057}}]\label{prop1}
Let $\lambda$ be a character and $r \geq 1$ an integer. For all $i \geq 0$, we have an isomorphism of $G$-modules
\begin{equation*}
H^i(G/B,\mathcal{L}_{(p^r-1)\rho} \otimes \Lc_{p^r\lambda}) = \St_r \otimes H^i(G/B,\Lc_{\lambda})^{(p^r)}.
\end{equation*}
\end{proposition}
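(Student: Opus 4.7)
The plan is to deduce the isomorphism from the projection formula for the relative Frobenius morphism $\varphi^r \colon G/B \to (G/B)^{(p^r)}$, combined with an equivariant triviality statement for the Frobenius pushforward of $\Lc_{(p^r-1)\rho}$.

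First, I would exploit that $G$ is split reductive, hence defined over $\mathbb{Z}$, to canonically identify $G \simeq G^{(p^r)}$ and $(G/B)^{(p^r)} \simeq G/B$. Under these identifications the pullback by $\varphi^r$ multiplies characters by $p^r$, so $(\varphi^r)^*\Lc_\lambda \simeq \Lc_{p^r\lambda}$, and the line bundle of interest rewrites as
\[
\Lc_{(p^r-1)\rho} \otimes \Lc_{p^r\lambda} \simeq \Lc_{(p^r-1)\rho} \otimes (\varphi^r)^*\Lc_\lambda.
\]
Since $\varphi^r$ is finite flat (in particular affine, so no higher direct images appear), the projection formula yields an isomorphism of $G^{(p^r)}$-equivariant sheaves on $(G/B)^{(p^r)}$:
\[
(\varphi^r)_*\bigl(\Lc_{(p^r-1)\rho} \otimes \Lc_{p^r\lambda}\bigr) \simeq (\varphi^r)_*\Lc_{(p^r-1)\rho} \otimes \Lc_\lambda.
\]

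The crucial step is to show that $(\varphi^r)_*\Lc_{(p^r-1)\rho}$ is $G^{(p^r)}$-equivariantly isomorphic to the trivial bundle $\St_r \otimes_k \Oc_{(G/B)^{(p^r)}}$. The ranks match: both equal $p^{r\dim G/B}$, on the left as the degree of $\varphi^r$ and on the right as $\dim \St_r$ by Weyl's dimension formula. Since $G^{(p^r)}$-equivariant vector bundles on the flag variety $(G/B)^{(p^r)}$ are classified by the $B^{(p^r)}$-module structure of their fibers at $e B^{(p^r)}$, it suffices to identify these fibers. The scheme-theoretic fiber of $\varphi^r$ over $e B^{(p^r)}$ is $G_r/B_r$, where $G_r$ and $B_r$ denote the $r$-th Frobenius kernels, and the fiber of the pushforward is $H^0(G_r/B_r, \Lc_{(p^r-1)\rho}) \simeq \Ind_{B_r}^{G_r}(k_{(p^r-1)\rho})$, which by a classical fact in the representation theory of Frobenius kernels (as developed in Jantzen's book) is isomorphic to $\St_r$ as a $G_r B$-module.

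Granting this equivariant triviality, the conclusion follows by taking cohomology and applying the tensor identity for the trivial $G^{(p^r)}$-module $\St_r$:
\[
H^i\bigl((G/B)^{(p^r)}, \St_r \otimes \Lc_\lambda\bigr) \simeq \St_r \otimes H^i\bigl((G/B)^{(p^r)}, \Lc_\lambda\bigr) \simeq \St_r \otimes H^i(G/B, \Lc_\lambda)^{(p^r)},
\]
where the last isomorphism expresses the compatibility of sheaf cohomology with the Frobenius twist of the underlying scheme. The main obstacle is the equivariant trivialization of $(\varphi^r)_*\Lc_{(p^r-1)\rho}$; this is essentially equivalent to the injectivity (equivalently projectivity) of $\St_r$ as a $G_r B$-module, one of the central structural results in the modular representation theory of reductive groups.
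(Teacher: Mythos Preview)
Your argument is correct and is precisely the geometric form of the proof given in Jantzen \cite[Part II, 3.19]{MR2015057}, which the paper simply cites: your Frobenius pushforward along $\varphi^r$ is Jantzen's induction $\Ind_B^{G_rB}$, your projection formula is his tensor identity, and the equivariant triviality $(\varphi^r)_*\Lc_{(p^r-1)\rho}\simeq \St_r\otimes\Oc$ is exactly the statement $\Ind_B^{G_rB}((p^r-1)\rho)\simeq \St_r$ as $G_rB$-modules. One small correction of emphasis: the key input you need for that trivialization is not the projectivity of $\St_r$ over $G_rB$ but rather its simplicity as a $G_r$-module (so that the induced module, which has the right dimension and socle, must coincide with it); projectivity is a consequence, not the ingredient.
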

We recall the notion of $\nabla$-filtration and $\Delta$-filtration.
\begin{definition}\label{def6}
Let $V$ be a $G$-module. A $\nabla$-filtration is a filtration of $V$ where each graded piece is a costandard module. A $\Delta$-filtration is a filtration of $V$ where each graded piece is a standard module.
\end{definition}
\begin{rmrk}
The category $\Rep_k(G)$ has the structure of a highest weight category\footnote{An introduction to this framework is given in \cite{riche:tel-01431526}.}. Within this framework, tilting modules are defined as modules that admits both a $\nabla$- and a $\Delta$-filtration.
\end{rmrk}
\vspace{0.2cm}
The following proposition, due to Mathieu, states the existence of a $\nabla$-filtration for the tensor product $\nabla(\lambda) \otimes \nabla(\mu)$ of costandard modules.
\begin{proposition}[{\cite[Theorem 1]{MR1072820}}]\label{prop8_van}
Consider two dominant characters $\lambda, \mu$ in $X^*(T)$. Then the tensor product $\nabla(\lambda) \otimes \nabla(\mu)$ admits a $\nabla$-filtration $(V^i)_{i \geq 0}$ with graded pieces  
\begin{equation*}
V^i/V^{i+1} \simeq \nabla(\lambda+\mu_i)
\end{equation*}
where $(\mu_i)_{i}$ is a certain subcollection of weights of $\nabla(\mu)$.
\end{proposition}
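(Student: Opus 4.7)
The plan is to deduce the proposition from Mathieu's theory of good filtrations, building on Polo's theorem on ``excellent filtrations'' of restricted costandard modules. By the tensor identity (Frobenius reciprocity), one has
\[
\nabla(\lambda) \otimes \nabla(\mu) \;\cong\; \Ind_B^G\bigl(\nabla(\mu)|_B \otimes k_\lambda\bigr),
\]
so the problem reduces to finding a $B$-module filtration of $\nabla(\mu)|_B$ with one-dimensional graded pieces $k_{\mu_i}$ indexed by the weights of $\nabla(\mu)$, chosen in an order for which induction behaves well enough to yield the desired $\nabla$-filtration after inducing up.

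The first step is Polo's theorem: the restricted costandard module $\nabla(\mu)|_B$ admits an excellent $B$-filtration whose graded pieces are precisely the characters $k_{\mu_i}$, with $\mu_i$ running (with multiplicity) over the weights of $\nabla(\mu)$. Tensoring by the one-dimensional $B$-module $k_\lambda$ then produces a $B$-filtration of $\nabla(\mu)|_B \otimes k_\lambda$ whose graded pieces are the characters $k_{\lambda+\mu_i}$.

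The second step is to apply $\Ind_B^G = H^0(G/B,\cdot)$ stage by stage and verify that the graded of the resulting filtration is the one asserted. From each short exact sequence of $B$-modules $0 \to F^j \to F^{j+1} \to k_{\lambda+\mu_{i_j}} \to 0$ one extracts the long exact sequence of derived induction
\[
\cdots \to R^i\Ind_B^G(F^j) \to R^i\Ind_B^G(F^{j+1}) \to H^i(G/B, \Lc_{\lambda+\mu_{i_j}}) \to \cdots .
\]
The main obstacle is that the shifted weight $\lambda+\mu_{i_j}$ need not be dominant for every $j$, so higher cohomology $H^{>0}(G/B, \Lc_{\lambda+\mu_{i_j}})$ may fail to vanish and could ruin the induction. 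Polo orders the excellent filtration precisely so that these ``bad'' contributions cancel across consecutive quotients via the connecting maps, leaving only the well-behaved $\nabla(\lambda+\mu_i)$ graded pieces announced in the proposition (those for which $\lambda+\mu_i$ is dominant, hence the statement that the $\mu_i$ form a \emph{subcollection} of the weights of $\nabla(\mu)$). Producing this ordering — and, ultimately, the compatible Frobenius splittings of Schubert subvarieties of $G/B$ that underlie its existence — is the hard part of the argument.
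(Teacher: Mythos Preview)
The paper itself gives no proof of this proposition: it is simply recorded with a citation to Mathieu, so there is nothing in the paper to compare against beyond the reference.

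Your outline via the tensor identity $\nabla(\lambda)\otimes\nabla(\mu)\cong\Ind_B^G(\nabla(\mu)|_B\otimes k_\lambda)$ and Polo's excellent filtrations is a reasonable heuristic, but it contains a genuine gap. You are right that $\nabla(\mu)|_B$ carries an excellent filtration with one-dimensional quotients $k_{\mu_i}$, and that tensoring by $k_\lambda$ produces a $B$-filtration with quotients $k_{\lambda+\mu_i}$. The problem is your claim that ``Polo orders the excellent filtration precisely so that these bad contributions cancel across consecutive quotients.'' This is not how excellent filtrations work. The ordering in Polo's theorem is adapted to the weights $\mu_i$ themselves; after the shift by $\lambda$, the resulting filtration of $\nabla(\mu)|_B\otimes k_\lambda$ is no longer excellent in general, and there is no built-in cancellation mechanism in the long exact sequences of $R^\bullet\Ind_B^G$. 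Showing that the higher derived induction vanishes on this shifted $B$-module is essentially equivalent to Mathieu's theorem itself, so the argument as written assumes what it sets out to prove. Your final paragraph tacitly concedes this by deferring the ``hard part'' to Frobenius splittings without indicating how they enter.

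A cleaner way to organise the statement is to separate two logically distinct steps. First, the \emph{existence} of a $\nabla$-filtration on $\nabla(\lambda)\otimes\nabla(\mu)$: this is the deep input, and Mathieu's proof goes through Frobenius splittings (specifically, a $B$-canonical splitting of $G/B$, or equivalently a compatible splitting of $G/B\times G/B$), not through Polo's theorem. Second, once \emph{some} $\nabla$-filtration is known to exist, its graded multiplicities are determined by the character $\ch\nabla(\lambda)\cdot\ch\nabla(\mu)$, which is independent of the characteristic. One may therefore compute in characteristic zero, where Brauer's (or Klimyk's) formula expresses the product as $\sum_{\mu_i}\chi_{\lambda+\mu_i}$ with $\mu_i$ ranging over the weights of $\nabla(\mu)$; after the standard Weyl-group cancellations this leaves exactly a subcollection of weights $\mu_i$ with $\lambda+\mu_i$ dominant, which is the description in the proposition.
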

\begin{rmrk}
\begin{enumerate}
\item Not all the weights $\mu^\prime \leq \mu$ of $\nabla(\mu)$ such that $\lambda + \mu^\prime$ is dominant will appear in the $\nabla$-filtration.
\item By duality, we deduce that tensor products of standard modules $\Delta(\lambda) \otimes \Delta(\mu)$ admit a $\Delta$-filtration.
\end{enumerate}
\end{rmrk}
\begin{corollary}\label{cor2}
Consider two $G$-modules $V$ and $W$. If $V$ and $W$ admit a $\nabla$-filtration, then $V \otimes W$ admits also a $\nabla$-filtration.
\end{corollary}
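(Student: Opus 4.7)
The plan is to reduce everything to Proposition \ref{prop8_van} (Mathieu's theorem) by a double induction on the lengths of the filtrations of $V$ and $W$, combined with the general fact that $\nabla$-filtrations are closed under extensions.

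First I would record the following extension stability lemma: if $0 \to A \to B \to C \to 0$ is a short exact sequence of $G$-modules and both $A$ and $C$ admit $\nabla$-filtrations, then so does $B$. The proof is a direct concatenation: if $C = C^0 \supsetneq C^1 \supsetneq \cdots \supsetneq C^m = 0$ is a $\nabla$-filtration of $C$, define $B^i$ as the preimage of $C^i$ in $B$ for $0 \le i \le m$, so that $B^i/B^{i+1} \simeq C^i/C^{i+1}$ is costandard and $B^m \simeq A$. Prepending a given $\nabla$-filtration of $A$ yields a $\nabla$-filtration of $B$.

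Next I would perform the double induction. Write $V = V^0 \supsetneq V^1 \supsetneq \cdots \supsetneq V^n = 0$ with $V^i/V^{i+1} \simeq \nabla(\lambda_i)$. Tensoring the short exact sequence
\begin{equation*}
0 \to V^1 \to V \to \nabla(\lambda_0) \to 0
\end{equation*}
with $W$ (exactness is preserved since we are over a field) gives
\begin{equation*}
0 \to V^1 \otimes W \to V \otimes W \to \nabla(\lambda_0) \otimes W \to 0.
\end{equation*}
By the induction hypothesis on $n$, the module $V^1 \otimes W$ admits a $\nabla$-filtration, and by the extension lemma above it suffices to prove that $\nabla(\lambda_0) \otimes W$ admits one. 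For this I run a parallel induction on the length of the filtration of $W$: decomposing $W$ as an extension of $\nabla(\mu_0)$ by $W^1$ and tensoring with $\nabla(\lambda_0)$ reduces us, via the extension lemma once again, to the statement that $\nabla(\lambda_0) \otimes \nabla(\mu_0)$ admits a $\nabla$-filtration — which is exactly Proposition \ref{prop8_van}.

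The argument is essentially formal; the only nontrivial input is Mathieu's theorem, and the only point requiring slight care is checking that one may glue a $\nabla$-filtration of a submodule with a lifted $\nabla$-filtration of the quotient. There is no real obstacle once the extension lemma is in place.
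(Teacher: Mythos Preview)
Your proof is correct and is exactly the standard argument the paper has in mind: the corollary is stated immediately after Proposition~\ref{prop8_van} without proof, and the intended deduction is precisely the double induction on filtration lengths you wrote out, together with the observation that $\nabla$-filtrations are stable under extensions. There is nothing to add.
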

\vspace*{0.2cm}
The following cohomological criterion is very useful to detect when a $G$-module possesses a $\nabla$-filtration.
\begin{proposition}[Donkyn criterion]\label{propDonkyn}
Consider a $G$-module $V$. The following assertions are equivalent.
\begin{enumerate}
\item $V$ admits a $\nabla$-filtration.
\item For any dominant character $\lambda$ and $i>0$, $\Ext_G^i(\Delta(\lambda),V) = 0$.
\item For any dominant character $\lambda$, $\Ext_G^1(\Delta(\lambda),V) = 0$.
\end{enumerate} 
\end{proposition}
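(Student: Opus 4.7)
The plan is to prove the cycle $(1) \Rightarrow (2) \Rightarrow (3) \Rightarrow (1)$; the middle implication is tautological, and the first reduces to a standard computation. Concretely, for $(1) \Rightarrow (2)$ the key input is the Ext-orthogonality
\begin{equation*}
\Ext^i_G(\Delta(\lambda), \nabla(\mu)) = 0 \quad \text{for all dominant } \lambda, \mu \text{ and all } i \geq 1,
\end{equation*}
which follows from the realization $\nabla(\mu) = H^0(G/B, \Lc_\mu)$, Kempf vanishing (Proposition \ref{prop4}), and the identification of $\Ext^*_G(\Delta(\lambda), \Ind^G_B -)$ with $\Ext^*_B(\Delta(\lambda), -)$ via higher derived induction. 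Given a $\nabla$-filtration of $V$ with layers $\nabla(\mu_j)$, induction on the length of the filtration via the long exact sequences of $\Ext^*_G(\Delta(\lambda), -)$ attached to each filtration step yields $(2)$.

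The implication $(2) \Rightarrow (3)$ is immediate. For $(3) \Rightarrow (1)$ I would argue by induction on $\dim V$, with $V = 0$ the base case. For the inductive step, pick a weight $\mu$ of $V$ maximal in the dominance order (automatically dominant) and set $n_\mu = \dim V_\mu$. Maximality of $\mu$ forces $V_\mu$ to lie in $V^{U}$, where $U$ is the unipotent radical of $B$, so the universal property of the Weyl module produces $n_\mu$ linearly independent maps $\Delta(\mu) \to V$. The hypothesis $\Ext^1_G(\Delta(\mu'), V) = 0$ for all dominant $\mu'$ supplies exactly the obstruction vanishing needed to lift each such map through the canonical map $\Delta(\mu) \to \nabla(\mu)$, by using the description of $\nabla(\mu)$ as the injective envelope of $L(\mu)$ in the truncated subcategory $\Rep_k(G)_{\leq \mu}$. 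The resulting map $\nabla(\mu)^{\oplus n_\mu} \to V$ is injective because its restriction to the socle $L(\mu)^{\oplus n_\mu}$ is so, and essentiality of socles in $\nabla(\mu)$ propagates injectivity. Setting $V' := V/\nabla(\mu)^{\oplus n_\mu}$, the Ext-orthogonality applied to $0 \to \nabla(\mu)^{\oplus n_\mu} \to V \to V' \to 0$ shows that $V'$ again satisfies $(3)$; since $\dim V' < \dim V$, the induction hypothesis provides a $\nabla$-filtration of $V'$ which lifts to one on $V$ with bottom layer $\nabla(\mu)^{\oplus n_\mu}$.

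The main obstacle is precisely the construction of the embedding $\nabla(\mu)^{\oplus n_\mu} \hookrightarrow V$ from the bare $\Ext^1$-vanishing hypothesis: every other step is routine manipulation of long exact sequences. This is the substantive content of Donkin's criterion and rests on the highest weight category structure of $\Rep_k(G)$ in the sense of Cline-Parshall-Scott; alternatively, dualizing via $\Delta(\lambda) = \nabla(-w_0\lambda)^{\vee}$ reduces the statement to the symmetric one for $\Delta$-filtrations, which may be handled in parallel.
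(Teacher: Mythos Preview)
The paper does not actually prove this statement; it simply cites Jantzen, \emph{Representations of Algebraic Groups}, Part~II, \S4.16. So there is nothing to compare your argument against except the standard proof found there, which your sketch is clearly modeled on. Your treatment of $(1)\Rightarrow(2)$ via the Ext-orthogonality $\Ext^i_G(\Delta(\lambda),\nabla(\mu))=0$ and d\'evissage along the filtration is correct, and $(2)\Rightarrow(3)$ is trivial.

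For $(3)\Rightarrow(1)$ your overall plan---induction on $\dim V$, peel off a copy of $\nabla(\mu)^{\oplus n_\mu}$, pass to the quotient---is the right one, and you rightly flag the embedding step as the crux. But your proposed justification for that step does not work as written. First, if $\mu$ is only maximal for the dominance \emph{partial} order, $V$ need not lie in the truncated category $\Rep_k(G)_{\leq\mu}$ at all (there may be other maximal weights incomparable to $\mu$), so invoking properties of that category is premature; one must refine to a total order. Second, and more seriously, injectivity of $\nabla(\mu)$ in the truncated category controls maps \emph{into} $\nabla(\mu)$, not out of it, so it cannot by itself produce a map $\nabla(\mu)\to V$. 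Third, the phrase ``lift each such map through the canonical map $\Delta(\mu)\to\nabla(\mu)$'' is not a well-posed lifting problem: that map has image $L(\mu)$, so any factorization $\Delta(\mu)\to\nabla(\mu)\to V$ would force the original $\Delta(\mu)\to V$ to kill $\mathrm{rad}\,\Delta(\mu)$, which is not automatic. Indeed, for $V=\Delta(\mu)$ with $\Delta(\mu)\neq\nabla(\mu)$ one has $\Hom_G(\nabla(\mu),\Delta(\mu))=0$, so the embedding genuinely fails there; this shows hypothesis~(3) must enter the embedding argument in an essential way, whereas your sketch invokes only the injectivity of $\nabla(\mu)$, which holds regardless of~(3).

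A clean way to repair this (and closer to what Jantzen actually does) is to build the filtration from the \emph{top}: with $\mu$ maximal for a total order refining dominance, the evaluation map $e\colon V\to\nabla(\mu)^{\oplus n_\mu}$ is surjective. One checks that $e$ is an isomorphism on $\mu$-weight spaces, and then the exact sequence $\Hom_G(\Delta(\nu),\nabla(\mu)^{\oplus n_\mu})\to\Hom_G(\Delta(\nu),\mathrm{coker}\,e)\to\Ext^1_G(\Delta(\nu),V)=0$ forces $\Hom_G(\Delta(\nu),\mathrm{coker}\,e)=0$ for every $\nu$, hence $\mathrm{coker}\,e=0$. The kernel then inherits~(3) and the induction proceeds. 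Here hypothesis~(3) is visibly what makes the argument go through.
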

\begin{proof}
See \cite[Part II, sect. 4.16]{MR2015057}.
\end{proof}
\begin{corollary}\label{cor1}
Consider two $G$-modules $V$ and $W$. If $V$ admits a $\nabla$-filtration and $W$ is a direct factor of $V$, then $W$ admits a $\nabla$-filtration.
\end{corollary}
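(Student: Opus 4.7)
The plan is to deduce this directly from the Donkin criterion. Since $W$ is a direct factor of $V$, we can write $V \simeq W \oplus W'$ as $G$-modules for some complementary $G$-submodule $W'$. The key observation is that the functors $\Ext^i_G(\Delta(\lambda), -)$ commute with finite direct sums, so for every dominant character $\lambda$ and every integer $i \geq 1$ we get a decomposition
\begin{equation*}
\Ext^i_G(\Delta(\lambda), V) \simeq \Ext^i_G(\Delta(\lambda), W) \oplus \Ext^i_G(\Delta(\lambda), W').
\end{equation*}

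Next, I apply the Donkin criterion to $V$: since $V$ admits a $\nabla$-filtration by hypothesis, the left-hand side vanishes for all $i \geq 1$ and all dominant $\lambda$. Consequently each direct summand on the right vanishes, and in particular $\Ext^1_G(\Delta(\lambda), W) = 0$ for every dominant $\lambda$. A second application of the Donkin criterion, this time in the reverse direction and to the module $W$, then yields the existence of a $\nabla$-filtration on $W$.

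There is essentially no obstacle here: the entire argument reduces to the additivity of $\Ext^i_G(\Delta(\lambda), -)$ on direct sums and two invocations of the already-stated Donkin criterion. The only thing to be careful about is that a ``direct factor'' is genuinely meant in the sense of an internal direct summand in $\Rep_k(G)$, so that the splitting of $V$ really does induce a splitting of the Ext groups; this is immediate from the definition.
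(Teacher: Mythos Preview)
Your proof is correct and follows exactly the approach implicit in the paper: the corollary is placed immediately after the Donkin criterion precisely so that it can be deduced from it via the additivity of $\Ext^i_G(\Delta(\lambda),-)$, and the paper gives no further argument. Nothing needs to be added.
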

\section{Plethysm for Schur functors in positive characteristic}\label{sect_schur}
\subsection{Over the complex numbers}
Classically, Schur functors are certain endofunctors 
\begin{equation*}
S : \FinVect_{\C} \rightarrow \FinVect_{\C}
\end{equation*}
of the abelian category of finite dimensional complex vector spaces. The first example is given by the $n^{\text{th}}$-symmetric power $\Sym^n$ which sends a vector space $V$ to the space of $\mathfrak{S}_n$-coinvariants $(V^{\otimes n})_{\mathfrak{S}_n}$ where $\mathfrak{S}_n$ acts on $V^{\otimes n}$ by permuting the factors. A second example is given by the $n^{\text{th}}$-exterior power $\Lambda^n$ which sends a vector space $V$ to the space of $\mathfrak{S}_n$-coinvariants $(V^{\otimes n})_{\mathfrak{S}_n}$ where an element $\sigma \in \mathfrak{S}_n$ acts on $V^{\otimes n}$ by antisymmetrization
\begin{equation*}
\sigma(v_1 \otimes \cdots \otimes v_n) = \varepsilon(\sigma)(v_{\sigma(1)} \otimes \cdots \otimes v_{\sigma(n)})\footnote{This definition is not correct over a field of characteric $p$ if $p \geq n$. We should instead consider a quotient by the ideal generated by tensors of the form $x_1 \otimes \cdots \otimes x_n$ where $x_i = x_j$ for some $i\neq j$.}.
\end{equation*}
In general, we consider a finite dimensional representation $\pi$ of the symmetric group $\mathfrak{S}_n$ for some integer $n \geq 1$ and we define the Schur functor 
\begin{equation*}
S_\pi : \FinVect_{\C} \rightarrow \FinVect_{\C}
\end{equation*}
associated to $\pi$ as 
\begin{equation*}
S_\pi (V) = (V^{\otimes n} \otimes \pi)_{\mathfrak{S}_n}
\end{equation*}
where $\mathfrak{S}_n$ acts via permutation on the first factor. It is well-known that irreducible representations $\pi$ of ${\mathfrak{S}_n}$ are in bijection with partitions $\lambda = (\lambda_1 \geq \lambda_2 \geq \cdots \geq \lambda_r \geq 0)$ of $n$. This bijection is made explicit by sending a partition $\lambda$ of $n$ to the Specht module $\Sp_{\lambda}$ of shape $\lambda$. We call $S_{\lambda} = S_{\Sp_{\lambda}}$, the Schur functor of weight $\lambda$. It is well-know that for any two partitions $\lambda$ and $\mu$, we have a direct sum decomposition
\begin{equation}\label{eq_pleth}
S_\lambda \circ S_\mu \simeq \bigoplus_{\eta} S_{\eta}^{\oplus c^{\eta}_{\lambda,\mu}}
\end{equation}
in the category of endofunctors of $\FinVect_{\C}$. The problem of determining the coefficients $c^{\eta}_{\lambda,\mu}$ is called plethysm\footnote{The term "plethysm" was suggested to Littlewood by M. L. Clark after the Greek word plethysmos, or $\pi\lambda\eta\theta\upsilon\sigma\mu o\varsigma$, which means "multiplication" in modern Greek (though apparently the meaning goes back to ancient Greek). The related term plethys in Greek means "a big number" or "a throng", and this in turn comes from the Greek verb plethein, which means "to be full", "to increase", "to fill", etc.}.
\begin{example}\label{ex_plethysm}
There is no known combinatorial rule for computing the coefficients $c^{\eta}_{\lambda,\mu}$. To illustrate how hard is the plethysm problem, we give the following examples.
\begin{enumerate}
\item $S_{(2,1)} \circ S_{(1,1)} = S_{(2, 1, 1, 1, 1)}\oplus S_{(2, 2, 1, 1)} \oplus S_{(3, 2, 1)}$.
\item The composition $S_{(4,2)} \circ S_{(3,1)}$ involves $1,238$ different partitions $\eta$ with a maximum multiplicity $c^{\eta}_{\lambda,\mu}$ of $8,408$. Counted with multiplicity, there are $958,705$ endofunctors in the direct sum.
\item The composition $S_{(3,2,1)} \circ S_{(4,2)}$ involves $11,938$ different partitions $\eta$ with a maximum multiplicity $c^{\eta}_{\lambda,\mu}$ of $9,496,674$. Counted with multiplicity, there are\\ $4,966,079,903$ endofunctors in the direct sum.
\end{enumerate}
\end{example}
\subsection{Strict polynomial functors}
In their founder article \cite{MR1427618}, Friedlander and Suslin introduced the category of strict polynomial functors $\textsf{Pol}$ over $k$. This functor category is well-behaved compared to the category of endofunctors of $\FinVect_k$. In particular, when $n\geq d$, they prove an equivalence of categories
\begin{equation*}
\Pol_d\simeq S(n,d)-\Mod
\end{equation*}
between the category $\textsf{Pol}_d$ of strict polynomial functors homogeneous of degree $d$ and the category of modules over the Schur algebra $S(n,d)$\footnote{Let $A_n = k[\text{Mat}_n]$ denote the Hopf algebra freely generated by the $k$-vector space $\text{Mat}_n$ of $n\times n$ matrices where the non-commutative comultiplication is induced by the matrix multiplication on $\text{Mat}_n$. Let  $A(n,d) \subset A_n$ denote the subalgebra of homogeneous polynomials of degree $d$. The Schur algebra $S(n,d)$ is then defined as the linear dual of $A(n,d)$.}. If $V$, $W$ are finite dimensional vector spaces over $k$, we denote by $\Hom_{\pol}(V,W)$ the abelian group of scheme morphisms over $k$ between $\underline{V}$ and $\underline{W}$. To clarify, we have $\Hom_{\pol}(V,W) = \Sym^*(V^{\vee})\otimes_k W$ and elements of $\Hom_{\pol}(V,W)$ are called polynomial maps between $V$ and $W$.
\begin{definition}[{\cite[Definition 2.1]{MR1427618}}]
A strict polynomial functor $$T : \FinVect_k \rightarrow \FinVect_k$$ is a pair of functions, the first of which assigns to each $V \in \FinVect_k$ a vector space $T(V) \in \FinVect_k$ and the second assigns a polynomial map $$T_{V,W} \in \Hom_{\pol}(\Hom_k(V,W),\Hom_k(T(V),T(W)))$$ to each $V,W$. These two functions should satisfy the usual conditions of the definition of a functor
\begin{enumerate}
\item For any vector space $V \in \FinVect_k$, we have $T_{V,V}(\id_V) = \id_{T(V)}$
\item For any $U,V,W$, the following diagram of polynomial maps commute
\begin{equation*}
\begin{tikzcd}
\Hom_k(V,W) \times \Hom_k(U,V) \arrow[d,"T_{V,W}\times T_{U,V}"] \arrow[r] & \Hom_k(U,W) \arrow[d,"T_{U,W}"] \\
\Hom_k(T(V),T(W)) \times \Hom_k(T(U),T(V)) \arrow[r] & \Hom_k(T(U),T(W))
\end{tikzcd}
\end{equation*}
\end{enumerate}
Let $T : \FinVect_k \rightarrow \FinVect_k$ be a strict polynomial functor. We say that $T$ is homogeneous of degree $d$ if for all vector spaces $V,W$, the polynomial map
$$T_{V,W} \in \Hom_{\pol}(\Hom_k(V,W),\Hom_k(T(V),T(W)))$$
has degree $d$. We denote $\Pol$ the category of strict polynomial functors of finite degree where the morphism are morphism between the underlying functors.
\end{definition}
\begin{proposition}[{\cite[Proposition 2.6]{MR1427618}}]
The category of strict polynomial functors (of finite degree) decomposes
\begin{equation*}
\Pol = \bigoplus_{d \geq 0} \Pol_d
\end{equation*}
where $\Pol_d$ is the full subcategory of $\Pol$ consisting of strict polynomial functors homogeneous of degree $d$. In particular, there are no extension between two strict polynomial functors homogeneous of different degrees.
\end{proposition}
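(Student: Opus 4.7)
The plan is to use the scaling action of $k$ together with the polynomial nature of the structural maps $T_{V,W}$ to extract canonical orthogonal idempotents onto homogeneous components. For any $V\in\FinVect_k$, restricting $T_{V,V}$ along the scalar embedding $k\hookrightarrow\Hom_k(V,V)$, $\lambda\mapsto\lambda\,\id_V$, yields a polynomial map $k\to\Hom_k(T(V),T(V))$ of finite degree, which expands canonically as
\begin{equation*}
T_{V,V}(\lambda\,\id_V)=\sum_{d\geq 0}\lambda^d\pi_V^{(d)},
\end{equation*}
with almost all $\pi_V^{(d)}$ zero. The canonicity here relies on the intrinsic grading of the coordinate ring of the affine space $\Hom_k(V,V)$, not on any basis.

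First I would verify that the $\pi_V^{(d)}$ form a complete system of orthogonal idempotents. The composition axiom applied to $(\lambda\,\id_V,\mu\,\id_V)$, combined with $(\lambda\,\id_V)\circ(\mu\,\id_V)=(\lambda\mu)\,\id_V$, gives $\sum_{d,e}\lambda^d\mu^e\,\pi_V^{(d)}\circ\pi_V^{(e)}=\sum_d(\lambda\mu)^d\pi_V^{(d)}$, and matching bidegrees yields $\pi_V^{(d)}\circ\pi_V^{(e)}=\delta_{de}\pi_V^{(d)}$; the identity axiom at $\lambda=1$ gives $\sum_d\pi_V^{(d)}=\id_{T(V)}$. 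This produces the canonical decomposition $T(V)=\bigoplus_d T^{(d)}(V)$ with $T^{(d)}(V):=\im\pi_V^{(d)}$. To promote $V\mapsto T^{(d)}(V)$ to a strict polynomial functor of degree $d$, I would decompose each $T_{V,W}$ into homogeneous pieces $T_{V,W}^{(d)}$ of degree $d$, and apply the composition axiom to the identities $\lambda f=(\lambda\,\id_W)\circ f=f\circ(\lambda\,\id_V)$. Extracting the coefficient of $\lambda^d$ gives
\begin{equation*}
T_{V,W}^{(d)}(f)=\pi_W^{(d)}\circ T_{V,W}(f)=T_{V,W}(f)\circ\pi_V^{(d)}=\pi_W^{(d)}\circ T_{V,W}(f)\circ\pi_V^{(d)},
\end{equation*}
so $T_{V,W}^{(d)}$ restricts to a polynomial map $\Hom_k(V,W)\to\Hom_k(T^{(d)}(V),T^{(d)}(W))$, homogeneous of degree $d$. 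The identity and composition axioms for $T^{(d)}$ follow by restricting and projecting those of $T$ degree by degree, giving $T=\bigoplus_d T^{(d)}$ in $\Pol$.

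For the absence of morphisms between distinct degrees, any natural transformation $\eta:T\to T'$ with $T\in\Pol_d$ and $T'\in\Pol_{d'}$, $d\neq d'$, satisfies $\eta_V\circ T_{V,V}(\lambda\,\id_V)=T'_{V,V}(\lambda\,\id_V)\circ\eta_V$, which reads $\lambda^d\eta_V=\lambda^{d'}\eta_V$ as a polynomial identity in $\lambda$, forcing $\eta_V=0$ since $k$ is infinite. The vanishing of higher $\Ext$ groups then follows from the exactness of the idempotent endofunctors $T\mapsto T^{(d)}$ on $\Pol$: every short exact sequence splits along the grading. The main subtlety I expect is ensuring that the expansion $T_{V,W}=\sum_d T_{V,W}^{(d)}$ is genuinely a decomposition of scheme morphisms (not merely of the underlying set-theoretic maps), which reduces to the canonical grading of the symmetric algebra $\Sym^\bullet\Hom_k(V,W)^\vee$ on the coordinate ring of the affine space parameterizing the argument; once this is granted, all remaining verifications are essentially formal manipulations of polynomial identities.
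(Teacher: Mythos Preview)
The paper does not supply its own proof of this proposition; it merely cites \cite[Proposition 2.6]{MR1427618}. Your argument is essentially the one given there: decompose each $T_{V,W}$ into its homogeneous components via the canonical grading on $\Sym^\bullet\Hom_k(V,W)^\vee$, use the composition axiom applied to scalar multiples of the identity to identify these components with the projections $\pi^{(d)}$, and deduce the orthogonal idempotent decomposition. Your treatment of the subtlety (that the homogeneous decomposition is at the level of scheme morphisms, not merely set-theoretic maps) is correct and is exactly the point where the ``strict'' in strict polynomial functor matters. The argument for $\Ext^1$-vanishing via exactness of $T\mapsto T^{(d)}$ is also the standard one, and suffices for the statement as phrased.
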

\begin{example}
We give some simple example of strict polynomial functors.
\begin{enumerate}
\item The $n^{\text{th}}$-tensor power $(\cdot)^{\otimes n}$ which sends a $k$-vector space $V$ to $V^{\otimes n}$ is homogeneous of degree $n$.
\item The $n^{\text{th}}$-symmetric power $\Sym^n$ which sends a $k$-vector space $V$ to the space of $\mathfrak{S}_n$-coinvariants $(V^{\otimes n})_{\mathfrak{S}_n}$ where $\mathfrak{S}_n$ acts on $V^{\otimes n}$ by permuting the factors is homogeneous of degree $n$.
\item The $n^{\text{th}}$-divided power $\Gamma^n$ which sends a $k$-vector space $V$ to the space of $\mathfrak{S}_n$-invariants $(V^{\otimes n})^{\mathfrak{S}_n}$ where $\mathfrak{S}_n$ acts on $V^{\otimes n}$ by permuting the factors is homogeneous of degree $n$.
\item The $n^{\text{th}}$-exterior power $\Lambda^n$ which sends a vector space $V$ to the quotient space $V^{\otimes n}/I$ where $I$ is the ideal generated by elements $x_1\otimes \cdots \otimes x_n$ such that $x_i = x_j$ for some $i \neq j$ is homogeneous of degree $n$.
\item If $\text{char}(k)= p$, the Frobenius twist $(\cdot)^{(p)}$ which sends a $k$-vector space $V$ to its pullback $V \otimes_{k,\sigma} k$ by the Frobenius map $\sigma : k \rightarrow k$ is homogeneous of degree $p$.
\end{enumerate}
\end{example}
\begin{rmrk}
The functors $\Sym^n$ and $\Gamma^n$ are isomorphic over a field of characteristic $0$ but not over a field of characteristic $p>0$ when $n\geq p$.
\end{rmrk}
\vspace*{0.2cm}
Following \cite{MR658729}, we now define Schur functors and Weyl functors that are indexed by partitions $\lambda$ as strict polynomial functors. 
\begin{definition}
Given a partition $\lambda = (k_1 \geq k_2 \geq \hdots \geq k_r > 0)$, we write $|\lambda | = \sum_{i=1}^r k_r$ for its size of $\lambda$ and $\text{ht}(\lambda) = r$ for its height.
\end{definition}
\begin{definition}
We represent a partition $\lambda = (k_1 \geq k_2 \geq \hdots \geq k_r > 0)$ with a diagram containing $r$ rows and such that for each $i$, the $i^{\text{th}}$-row contains $k_i$ columns. Such a representation is called a Young diagram.
\end{definition}
\begin{example}
The Young diagram of the partition $\lambda = (4,2,1)$ is
\begin{equation*}
\begin{tikzcd}[ampersand replacement=\&]
    \begin{ytableau}
	\phantom{1} & \phantom{1} & \phantom{1}  & \phantom{1}  \\
		\phantom{1} & \phantom{1} \\
			\phantom{1} 
    \end{ytableau}
\end{tikzcd}
\end{equation*}
Its size is $7$ and its height is $3$.
\end{example}

\begin{definition}
Given a partition $\lambda = (k_1 \geq k_2 \geq \hdots \geq k_r > 0)$, we define its conjugate partition $\lambda^\prime = (k_1^\prime \geq k_2^\prime \geq \hdots \geq k_{s}^\prime > 0)$ as the partition where $k_i^\prime$ is the number of terms of $k_j$ that are greater or equal to $i$. Note that $\lambda$ and $\lambda^\prime$ have the same size. Any integer $l$ between $1$ and $|\lambda|$ determines a unique position $(i,j)$ in the Young diagram of $\lambda$ such that $l = k_1 + \cdots k_{i-1} + j$. Then, we define a permutation on $|\lambda |$-letters $\sigma_\lambda \in \mathfrak{S}_{|\lambda |}$ by setting
\begin{equation*}
\sigma_\lambda(l) = k^\prime_1 + \cdots k^\prime_{i-1} + j.
\end{equation*}
Note that we have $\sigma_{\lambda^\prime} = \sigma_\lambda^{-1}$.
\end{definition}
\begin{example}
The conjugate partition of $\lambda = (8,4,2)$ 
\begin{equation*}
\begin{tikzcd}[ampersand replacement=\&]
   \lambda =  \begin{ytableau}
	{1} & {2} & {3}  & {4} & 	{5} & {6} & {7}  & {8}  \\
		{9} & {10} & {11}  & {12} \\
			{13} & {14} 
    \end{ytableau}
\end{tikzcd}
\end{equation*}
is $\lambda^\prime = (3,3,2,2,1,1,1,1)$,
\begin{equation*}
\begin{tikzcd}[ampersand replacement=\&]
    \lambda^\prime = \begin{ytableau}
	{1} & {2} & {3}  \\
		{4} & {5} & {6}  \\
	{7} & {8}  \\
	{9} & {10}  \\
	{11}  \\
	{12}  \\
	{13}  \\
	{14}  \\
    \end{ytableau}
\end{tikzcd}
\end{equation*}
and we have
\begin{equation*}
\sigma_\lambda = 
\begin{pmatrix}
	1 & 2 & 3 & 4 & 5 & 6 & 7 & 8 & 9 & 10 & 11 & 12 & 13 & 14 \\
	1 & 4 & 7 & 9 & 11 & 12 & 13 & 14 & 2 & 5 & 8 & 10 & 3 & 6
\end{pmatrix}.
\end{equation*}
\end{example}
\begin{definition}
Let $\lambda = (k_1 \geq k_2 \geq \hdots \geq k_r > 0)$ denote a partition, $\lambda^\prime = (k^\prime_1 \geq k_2^\prime \geq \hdots \geq k_s^\prime > 0)$ its conjugate partition and $V$ a finite dimensional vector space over $k$. We define $S_\lambda V$ as the image of the map
\begin{equation*}
\begin{tikzcd}
\bigotimes_{1 \leq j \leq s} \Lambda^{k_j^\prime} {V} \arrow[r,"\Delta^{\otimes s}"] & {V}^{\otimes |\lambda |} \arrow[r,"\sigma_{\lambda}"] & {V}^{\otimes |\lambda |} \arrow[r,"\nabla^{\otimes r}"] & \bigotimes_{1 \leq i \leq r} \Sym^{k_i} V
\end{tikzcd}
\end{equation*}
where $\Delta : \Lambda^l{V} \rightarrow {V}^{\otimes l}$ is the comultiplication given by
\begin{equation*}
\Delta(v_1\wedge \cdots \wedge v_l) = \sum_{\sigma \in \mathfrak{S}_l}\epsilon(\sigma)v_{\sigma(1)}\otimes \cdots \otimes v_{\sigma(l)},
\end{equation*}
$\nabla :  {V}^{\otimes l} \rightarrow \Sym^l {V}$ is the multiplication given by 
\begin{equation*}
\nabla(v_1\otimes \cdots \otimes v_l) = v_1 \cdots v_l,
\end{equation*}
and $\sigma_\lambda : {V}^{\otimes |\lambda |} \rightarrow {V}^{\otimes |\lambda |}$ is given by
\begin{equation*}
\sigma_\lambda(v_1\otimes \cdots v_{|\lambda |}) = v_{\sigma_\lambda(1)}\otimes \cdots \otimes v_{\sigma_\lambda(|\lambda |)}.
\end{equation*}
We define $W_\lambda {V}$ as the image of the map
\begin{equation*}
\begin{tikzcd}
\bigotimes_{1 \leq i \leq r} \Gamma^{k_i} {V}  \arrow[r,"\Delta^{\otimes r}"] & {V}^{\otimes |\lambda |} \arrow[r,"\sigma_{\lambda^\prime}"] & {V}^{\otimes |\lambda |} \arrow[r,"\nabla^{\otimes s}"] & \bigotimes_{1 \leq j \leq s} \Lambda^{k^\prime_j} {V}
\end{tikzcd}
\end{equation*}
where $\nabla : {V}^{\otimes l} \rightarrow \Lambda^l{V}$ is the canonical quotient map and $\Delta : \Gamma^{l}{V} \rightarrow {V}^{\otimes l}$ is the canonical inclusion. Note that we consider both the exterior and the symmetric algebras as Hopf algebras. These construction are functorial in $V$ and define strict polynomial functors $S_\lambda$ and $W_\lambda$ that are homogeneous of degree $|\lambda |$.
\end{definition}
\begin{example}
We give the following examples:
\begin{enumerate}
\item If $\lambda = (n)$, then $S_\lambda = \Sym^n$ and $W_\lambda = \Gamma^n$. 
\item If $\lambda = (1,\cdots,1)$ is a partition of $n$, then  $S_\lambda = W_\lambda = \Lambda^n$.
\end{enumerate}
\end{example}
\begin{proposition}
Let $\lambda$ be a partition of $d$ and $V \in \FinVect_k$. We have an isomorphism
\begin{equation*}
{S_{\lambda}(V)}^{\vee} = W_\lambda(V^{\vee})
\end{equation*}
which is functorial in $V$.
\end{proposition}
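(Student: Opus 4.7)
The plan is to dualize the defining diagram of $S_\lambda(V)$ and recognize the result as the defining diagram of $W_\lambda(V^{\vee})$.

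First I would recall the natural isomorphisms in $\FinVect_k$:
\begin{equation*}
(V^{\otimes n})^{\vee} \cong (V^{\vee})^{\otimes n}, \qquad (\Sym^n V)^{\vee} \cong \Gamma^n(V^{\vee}), \qquad (\Lambda^n V)^{\vee} \cong \Lambda^n(V^{\vee}).
\end{equation*}
The middle isomorphism is the standard duality between the coinvariants $(V^{\otimes n})_{\mathfrak{S}_n} = \Sym^n V$ and the invariants $(V^{\otimes n})^{\mathfrak{S}_n} = \Gamma^n V$; the third comes from the basis-free perfect pairing $\langle f_1 \wedge \cdots \wedge f_n, v_1 \wedge \cdots \wedge v_n \rangle = \det(f_i(v_j))$. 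Under these identifications, the quotient $\nabla \colon V^{\otimes n} \to \Sym^n V$ is dual to the inclusion $\Delta \colon \Gamma^n(V^{\vee}) \to (V^{\vee})^{\otimes n}$, and the Leibniz expansion of $\det(f_i(v_j))$ shows that the antisymmetrization $\Delta \colon \Lambda^n V \to V^{\otimes n}$ is dual to the wedge multiplication $\nabla \colon (V^{\vee})^{\otimes n} \to \Lambda^n(V^{\vee})$. Finally, the dual of $\sigma_\lambda$ acting on $V^{\otimes |\lambda|}$ becomes the action of $\sigma_\lambda^{-1} = \sigma_{\lambda'}$ on $(V^{\vee})^{\otimes |\lambda|}$.

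Applying these identifications to the dual of the defining composition
\begin{equation*}
\bigotimes_{1 \leq j \leq s} \Lambda^{k'_j} V \xrightarrow{\Delta^{\otimes s}} V^{\otimes |\lambda|} \xrightarrow{\sigma_\lambda} V^{\otimes |\lambda|} \xrightarrow{\nabla^{\otimes r}} \bigotimes_{1 \leq i \leq r} \Sym^{k_i} V
\end{equation*}
of $S_\lambda V$ turns it, term by term and arrow by arrow, into the composition
\begin{equation*}
\bigotimes_{1 \leq i \leq r} \Gamma^{k_i}(V^{\vee}) \xrightarrow{\Delta^{\otimes r}} (V^{\vee})^{\otimes |\lambda|} \xrightarrow{\sigma_{\lambda'}} (V^{\vee})^{\otimes |\lambda|} \xrightarrow{\nabla^{\otimes s}} \bigotimes_{1 \leq j \leq s} \Lambda^{k'_j}(V^{\vee})
\end{equation*}
defining $W_\lambda(V^{\vee})$. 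To finish, I would invoke the elementary fact that for any linear map $f \colon A \to B$ between finite-dimensional vector spaces, $(\im f)^{\vee}$ is canonically isomorphic to $\im f^{\vee}$ (the surjection $B^{\vee} \twoheadrightarrow (\im f)^{\vee}$ factors through $\im f^{\vee} \subset A^{\vee}$). This yields $S_\lambda(V)^{\vee} \cong W_\lambda(V^{\vee})$, and functoriality in $V$ is automatic because every map above is natural in $V$.

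The one step that is not purely formal is checking that $\nabla$ and $\Delta$ are genuinely mutually dual in characteristic $p$, where $\Sym^n$ and $\Gamma^n$ differ as soon as $n \geq p$. This is the heart of the argument but reduces to the duality between the quotient $V^{\otimes n} \twoheadrightarrow (V^{\otimes n})_{\mathfrak{S}_n}$ and the inclusion $((V^{\vee})^{\otimes n})^{\mathfrak{S}_n} \hookrightarrow (V^{\vee})^{\otimes n}$, together with its alternating analogue for $\Lambda^n$; both hold in any characteristic.
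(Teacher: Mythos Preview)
Your proof is correct and follows exactly the approach sketched in the paper, which simply says the result ``follows from the fact that $\Sym^n(V)^{\vee} = \Gamma^n(V^{\vee})$ and $\Lambda^n(V)^{\vee} = \Lambda^n(V^{\vee})$.'' You have merely spelled out the details the paper leaves implicit: that $\nabla$ and $\Delta$ are mutually dual, that $\sigma_\lambda$ dualizes to $\sigma_{\lambda'}$, and that images dualize to images.
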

\begin{proof}
Follows from the fact that $\Sym^n(V)^{\vee}  = \Gamma^n(V^{\vee})$ and $\Lambda^n(V)^{\vee}  = \Lambda^n(V^{\vee})$.
\end{proof}
\vspace*{0.2cm}
We state the main result of this section.
\begin{proposition}[{\cite[Lemma 3.4]{MR1427618}}]\label{prop_eval}
Let $d \geq 1$ be an integer and $V$ a vector space of dimension $n$. If $n\geq d$, the evaluation functor at $V$
\begin{equation*}
\begin{tikzcd}[row sep =small]
\ev_V : \Pol \arrow[r] & \Rep_k(GL(V)) \\
T \arrow[r,mapsto] & T(V)
\end{tikzcd}
\end{equation*}
restricts to an equivalence of category between $\Pol_d$ and the category $\Rep_k(GL(V))^{\Pol}_d$ of polynomial\footnote{Recall that a rational representation $M$ of $GL(V)$ is polynomial if its action of the algebraic group $GL(V)$ extends to an action of the algebraic monoid $\End(V)$.} representations of $\GL(V)$ where $\mathbb{G}_m$ acts by $z \mapsto z^d$. Moreover, through this equivalence, the Schur functor $S_\lambda$ maps to the costandard module $\nabla(\lambda)$ and the Weyl functor $W_\lambda$ maps to the standard module $\Delta(\lambda)$ where we see $\lambda = (k_1, \ldots, k_r)$ as a character of the standard maximal torus of $\GL(V)$
\begin{equation*}
\lambda : \begin{pmatrix}
t_1 & & \\
& \ddots & \\
& & t_n
\end{pmatrix} \mapsto t_1^{k_1} \cdots t_r^{k_r}.
\end{equation*}
\end{proposition}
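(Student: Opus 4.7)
The plan is to establish the equivalence in two steps: first show that both sides are Morita equivalent to modules over the Schur algebra $S(n,d) := \mathrm{End}_{k\mathfrak{S}_d}((k^n)^{\otimes d})$, then match the distinguished objects. For the classical side, Schur–Weyl theory gives a well-known equivalence $\Rep_k(\GL(V))_d \simeq S(n,d)\text{-Mod}$ via $M \mapsto \Hom_{\GL(V)}(V^{\otimes d}, M)$, valid for $n \geq 1$ (with no further restriction needed on $n$ versus $d$ in one direction, but the surjectivity of the comparison map requires $n \geq d$ to ensure all partitions of $d$ of height at most $n$ appear).

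For the strict polynomial side, I would introduce the representable functor $\Gamma^{d,V} := \Gamma^d(\Hom_k(V,-))$ and prove three facts: (i) by Yoneda for strict polynomial functors, $\Hom_{\Pol_d}(\Gamma^{d,V}, T) \cong T(V)$ naturally in $T \in \Pol_d$; (ii) $\Gamma^{d,V}$ is a projective generator of $\Pol_d$ as soon as $n = \dim V \geq d$ (generation follows because every strict polynomial functor of degree $d$ is a quotient of a direct sum of copies of $(-)^{\otimes d}$, and $(-)^{\otimes d}$ is a direct summand of $\Gamma^{d,V}$ when $n \geq d$, using the diagonal embedding $\mathfrak{S}_d \hookrightarrow \mathfrak{S}_n$); (iii) $\mathrm{End}_{\Pol_d}(\Gamma^{d,V}) \cong \Gamma^d(\mathrm{End}_k(V)) \cong S(n,d)$, via the standard duality $\Gamma^d(M_n(k)) \simeq S(n,d)$. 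Standard Morita theory then yields $\Pol_d \simeq S(n,d)\text{-Mod}$, and chasing through the identifications shows $\ev_V$ is precisely the composite equivalence. The main obstacle in this part is carefully verifying projectivity and generation of $\Gamma^{d,V}$; this is where the hypothesis $n \geq d$ is essential, since for $n<d$ one loses the Specht modules indexed by partitions of height $>n$.

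Finally, I would identify $S_\lambda(V) = \nabla(\lambda)$ and $W_\lambda(V) = \Delta(\lambda)$. By Kempf's vanishing (Proposition \ref{prop4}) applied to the parabolic $P_\lambda \supset B$ corresponding to the stabilizer of the flag attached to $\lambda$, the costandard module $\nabla(\lambda)$ can be realized as $H^0(\GL(V)/P_\lambda, \Lc_\lambda)$, which by standard computations (e.g. the Borel–Weil type description) admits a presentation as the image of the composite map
\begin{equation*}
\bigotimes_j \Lambda^{k'_j} V \longrightarrow V^{\otimes |\lambda|} \longrightarrow \bigotimes_i \Sym^{k_i} V
\end{equation*}
given by comultiplication, permutation by $\sigma_\lambda$, and multiplication — this is precisely the definition of $S_\lambda(V)$. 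Equivalently, one checks that $S_\lambda(V)$ has a unique (up to scalar) highest weight vector of weight $\lambda$ and that it has a good filtration with the right character, matching the Weyl character formula for $\nabla(\lambda)$. The case of $W_\lambda$ then follows by applying $\vee$ to the isomorphism $S_\lambda(V)^\vee = W_\lambda(V^\vee)$ and using $\Delta(\lambda) = \nabla(-w_0\lambda)^\vee$, noting that $-w_0\lambda$ corresponds to the partition $\lambda$ read on the dual torus. This last identification is a direct computation once the equivalence of categories is in hand.
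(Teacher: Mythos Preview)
The paper does not supply its own proof of this proposition; it is quoted directly from Friedlander--Suslin \cite[Theorem 3.2]{MR1427618}, and your outline is essentially the argument given there (projective generator $\Gamma^{d,V}$, endomorphism ring the Schur algebra, Morita theory).

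One step in your sketch is incorrect, however, and should be replaced. Your claim that ``every strict polynomial functor of degree $d$ is a quotient of a direct sum of copies of $(-)^{\otimes d}$'' fails in positive characteristic. For example, when $p \le d$ the divided power functor $\Gamma^d$ is not such a quotient: one computes $\Hom_{\Pol_d}\bigl((-)^{\otimes d}, \Gamma^d\bigr) \cong k$ (it is the weight $(1,\ldots,1)$ space of $\Gamma^d(k^d)$), and evaluating the unique nonzero map at $k$ gives multiplication by $d! = 0$, so no sum of such maps can be surjective. The correct and simpler argument for generation is already available from your Yoneda identity: since $\Hom_{\Pol_d}(\Gamma^{d,V}, T) = T(V)$, the functor $\Gamma^{d,V}$ is a generator precisely when $T(V)=0$ forces $T=0$. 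For $n = \dim V \ge d$ this holds by a weight argument: every weight of the polynomial $\GL_n$-module $T(k^n)$ has non-negative entries summing to $d$, hence at most $d$ nonzero entries, so any nonzero weight space already lives in $T(k^d) \subset T(k^n)$.

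With this fix your sketch is sound. The identification $S_\lambda(V) \cong \nabla(\lambda)$ (and dually $W_\lambda(V) \cong \Delta(\lambda)$) is the content of Akin--Buchsbaum--Weyman \cite{MR658729}, which the paper also cites; your Borel--Weil description of the image presentation is one of the standard routes to it.
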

\begin{rmrk}
All the results of this section are valid if we replace $\FinVect_k$ with the category $\Loc(X)$ of locally free sheaves of finite rank over a $k$-scheme $X$.
\end{rmrk}
\subsection{A plethysm in positive characteristic under additional hypothesis}
Take two Schur functors $S_{\lambda}$ and $S_{\mu}$ as strict polynomial functors over $k$ and consider the composition $S_{\lambda} \circ S_{\mu}$. It is a strict polynomial functor homogeneous of degree $|\lambda| |\mu |$. Since the category of algebraic representation of $\GL_n$ is not semisimple, we have no reason to hope for a decomposition of $S_{\lambda} \circ S_{\mu}$ as a direct sum of Schur functors. One might hope, that there exists at least a filtration of $S_{\lambda} \circ S_{\mu}$ where the graded pieces are Schur functors. Unfortunately, Boffi \cite{MR1128609} and Touzé \cite[Corollary 6.10.]{MR3042627} have found counter-examples to the existence of such filtrations for plethysm of the form $\Sym^k \circ \Sym^d$, $\Lambda^k \circ \Sym^d$ and $\Sym^k \circ \Lambda^d$ with $d \geq 3$ and $p | k$. More precisely, Touzé has found an obstruction to the existence of such filtration that lives in the $p$-torsion of the homology of the Eilenberg-Mac Lane space $K(\Z,d)$. In this section, we prove the following existence result. 
\begin{proposition}\label{th_plethysm}
Let $\lambda$ and $\mu$ be partitions. If $p \geq 2|\lambda |-1$, the strict polynomial functor $S_{\lambda} \circ S_{\mu}$ admits a finite filtration
\begin{equation*}
0 =T^n \subsetneq T^{n-1} \subsetneq
 \cdots \subsetneq
 T^0 = S_{\lambda} \circ S_{\mu}
\end{equation*}
by strict polynomial functors of degree $|\lambda| |\mu |$ where the graded pieces are Schur functors.
\end{proposition}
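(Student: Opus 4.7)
The plan is to transfer the statement to algebraic representations of a sufficiently large general linear group via Proposition \ref{prop_eval}, and then combine Mathieu's theorem (Proposition \ref{prop8_van}) with Schur--Weyl duality to extract $S_\lambda \circ S_\mu$ as a direct summand of a tensor power whose $\nabla$-filtration is already under control.

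First I would choose a vector space $V$ with $n := \dim V \geq |\lambda||\mu|$. Proposition \ref{prop_eval} then provides an equivalence
\begin{equation*}
\ev_V : \Pol_{|\lambda| |\mu|} \DistTo \Rep_k(\GL(V))_{|\lambda| |\mu|}
\end{equation*}
sending each Schur functor $S_\eta$ to the costandard module $\nabla(\eta)$. Under this equivalence, a filtration of $S_\lambda \circ S_\mu$ whose graded pieces are Schur functors corresponds exactly to a $\nabla$-filtration (Definition \ref{def6}) of the $\GL(V)$-module $S_\lambda(\nabla(\mu)) = (S_\lambda \circ S_\mu)(V)$, so it suffices to produce such a $\nabla$-filtration.

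Setting $W := S_\mu V = \nabla(\mu)$, I would apply Mathieu's theorem iteratively via Corollary \ref{cor2} to conclude that the $|\lambda|$-fold tensor power $W^{\otimes |\lambda|}$ admits a $\nabla$-filtration as a $\GL(V)$-module. The symmetric group $\mathfrak{S}_{|\lambda|}$ acts on $W^{\otimes |\lambda|}$ by permutation of factors, commuting with the diagonal action of $\GL(V)$. Since $p \geq 2|\lambda|-1 > |\lambda|$ we have $p \nmid |\lambda|!$, hence $k[\mathfrak{S}_{|\lambda|}]$ is semisimple; the $\Sp_\lambda$-isotypic component of the commuting $\GL(V) \times \mathfrak{S}_{|\lambda|}$-action on $W^{\otimes |\lambda|}$ is then canonically isomorphic to $S_\lambda(W) \otimes \Sp_\lambda$, so $S_\lambda(W)$ is a $\GL(V)$-equivariant direct summand of $W^{\otimes |\lambda|}$. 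Corollary \ref{cor1} endows it with a $\nabla$-filtration, and pulling back along $\ev_V^{-1}$ produces the desired filtration of $S_\lambda \circ S_\mu$.

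The main obstacle is the functorial identification, in positive characteristic, of the Schur functor $S_\lambda$ as defined in the paper (the image of $\bigotimes_j \Lambda^{k_j^\prime}(-) \to \bigotimes_i \Sym^{k_i}(-)$) with the $\Sp_\lambda$-isotypic summand of $(-)^{\otimes |\lambda|}$. This rests on the Young symmetriser $c_\lambda \in k[\mathfrak{S}_{|\lambda|}]$ being, up to the scalar $\prod (\text{hook lengths})$, an idempotent; since each hook length of $\lambda$ is bounded by $|\lambda|$, the condition $p > |\lambda|$ already suffices. The stated hypothesis $p \geq 2|\lambda| - 1$ is strictly stronger than what this route requires, suggesting either that the author follows a different, more uniform strategy, or that this threshold is chosen for compatibility with later applications to the automorphic setting.
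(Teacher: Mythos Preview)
Your overall architecture coincides with the paper's: transfer to $\GL(V)$-modules via Proposition~\ref{prop_eval}, realise $S_\lambda(S_\mu V)$ as a direct summand of $(S_\mu V)^{\otimes |\lambda|}$, apply Mathieu (Corollary~\ref{cor2}) to the tensor power, then Corollary~\ref{cor1} to the summand. The difference lies entirely in the direct-summand step, which the paper isolates as Lemma~\ref{lem7}.

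The paper's Lemma~\ref{lem7} proceeds in two moves: an explicit section of $V^{\otimes|\lambda|}\twoheadrightarrow\Sym^\lambda V$ (valid once $p>\lambda_1$), then an $\Ext^1$-vanishing argument to split $S_\lambda V\hookrightarrow\Sym^\lambda V$. The second move requires $(|\lambda|,0,\dots,0)$ to be $p$-small for $\GL_n$, i.e.\ $p\ge|\lambda|+n-1$; combined with the constraint $n\ge|\lambda|$ from Proposition~\ref{prop_eval} this produces the bound $p\ge 2|\lambda|-1$.

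Your symmetric-group route is sharper and needs only $p>|\lambda|$, as you suspect. The point you flag as ``the main obstacle''---identifying the paper's image construction with the $\Sp_\lambda$-isotypic piece---is genuine, and idempotence of $c_\lambda$ alone does not settle it (it gives \emph{a} summand, not \emph{this} one). The clean fix bypasses Young symmetrisers entirely: once $k[\mathfrak S_{|\lambda|}]$ is semisimple, so is the Schur algebra $S(n,|\lambda|)=\mathrm{End}_{\mathfrak S_{|\lambda|}}((k^n)^{\otimes|\lambda|})$ and hence $\Pol_{|\lambda|}$ itself; then $S_\lambda$, being visibly a subquotient of $(\cdot)^{\otimes|\lambda|}$, is automatically a direct summand. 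So the threshold $2|\lambda|-1$ is indeed an artefact of the paper's $p$-smallness argument rather than an intrinsic obstruction.
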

\vspace*{0.2cm}
We start with the following lemma.
\begin{lemma}\label{lem7}
If $p \geq 2|\lambda |-1$, then $S_{\lambda}$ is a direct summand of $(\cdot)^{\otimes |\lambda |}$ in $\Pol_{|\lambda |}$.
\end{lemma}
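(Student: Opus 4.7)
The plan is to transport the statement through the equivalence $\ev_V : \Pol_n \xrightarrow{\sim} \Rep_k(\GL(V))_n$ of Proposition \ref{prop_eval}, taking $n := |\lambda|$ and $V := k^n$. Under this equivalence, $S_\lambda$ corresponds to $\nabla(\lambda)$ and $(\cdot)^{\otimes n}$ corresponds to $V^{\otimes n}$, so the lemma reduces to exhibiting $\nabla(\lambda)$ as a $\GL(V)$-direct summand of $V^{\otimes n}$.

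I would first observe that the hypothesis $p \geq 2n-1$ is tailored precisely so that every dominant weight $\mu$ occurring in $V^{\otimes n}$ is $p$-small. Indeed such a $\mu$ is a partition of $n$ of height $\leq n$, hence $\mu_1 \leq n$, and for each positive root $\alpha = e_i - e_j$ ($i < j$) one has
\begin{equation*}
\langle \mu + \rho, \alpha^\vee \rangle = (\mu_i - \mu_j) + (j-i) \leq \mu_1 + (n-1) \leq 2n-1 \leq p.
\end{equation*}
Proposition \ref{prop_psmall} then gives $\nabla(\mu) = \Delta(\mu) = L(\mu)$ for every such $\mu$, and in particular $\nabla(\lambda) = L(\lambda)$ is simple.

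The main step is to show that $V^{\otimes n}$ is actually semisimple. Since $V$ is itself a costandard module it carries a trivial $\nabla$-filtration, and iterating Corollary \ref{cor2} supplies a $\nabla$-filtration of $V^{\otimes n}$ whose successive quotients are of the form $\nabla(\mu) = L(\mu)$ for partitions $\mu$ of $n$ of height $\leq n$. The Donkin criterion applied to each $\nabla(\nu)$ yields $\Ext^1_{\GL(V)}(\Delta(\mu),\nabla(\nu)) = 0$, and using $L(\mu) = \Delta(\mu)$, $L(\nu) = \nabla(\nu)$ this upgrades to $\Ext^1(L(\mu), L(\nu)) = 0$ between any two filtration factors. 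The successive extensions therefore split, giving a direct sum decomposition $V^{\otimes n} \simeq \bigoplus_\mu L(\mu)^{m_\mu}$.

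The multiplicities are then recovered from characters: the classical expansion $(x_1 + \cdots + x_n)^n = \sum_\mu f^\mu s_\mu$ in the Schur basis, combined with $s_\mu = \ch \nabla(\mu)$, forces $m_\mu = f^\mu \geq 1$ for every partition $\mu$ of $n$ of height $\leq n$. In particular $\nabla(\lambda)$ is a direct summand of $V^{\otimes n}$, and pulling back via $\ev_V$ gives the desired summand decomposition of $S_\lambda$ in $\Pol_n$. The crux of the argument is the $\Ext^1$-vanishing used to turn the $\nabla$-filtration into a genuine direct sum; this is where the $p$-smallness hypothesis really intervenes, and once the elementary weight estimate above is in place no serious obstacle remains.
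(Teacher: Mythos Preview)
Your argument is correct and uses the same core ingredients as the paper: the evaluation equivalence of Proposition~\ref{prop_eval}, the $p$-smallness bound $\langle \mu+\rho,\alpha^\vee\rangle \le 2|\lambda|-1$ to identify $L(\mu)=\nabla(\mu)=\Delta(\mu)$, and the Ext-vanishing $\Ext^1(\Delta(\mu),\nabla(\nu))=0$ from the Donkin criterion.

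The routes differ in one respect. The paper factors the problem through $\Sym^{\lambda}V$: it first splits the surjection $V^{\otimes |\lambda|}\twoheadrightarrow \Sym^{\lambda}V$ by an explicit averaging section (valid once $p>k_1$), and then uses the Ext-vanishing only to split $S_{\lambda}V\hookrightarrow \Sym^{\lambda}V$. You bypass this intermediate step and prove the stronger fact that $V^{\otimes n}$ is outright semisimple under the hypothesis, by producing a $\nabla$-filtration of $V^{\otimes n}$ and splitting it wholesale. Your version is a bit cleaner and more conceptual, and it makes the appearance of $\nabla(\lambda)$ as a summand immediate from the character identity $\ch(V^{\otimes n})=\sum_\mu f^\mu s_\mu$ with $f^\lambda\ge 1$; the paper's version has the minor advantage of isolating the explicit section of $\Sym^{\lambda}V$, which also explains the sharper remark that for $S_\lambda=\Lambda^n$ the bound $p>n$ suffices.
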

\begin{rmrk}
If $S_{\lambda} = \Lambda^n$, then it is enough to ask $p > n$.
\end{rmrk}
\begin{proof}
Write $\lambda = (k_1 \geq \cdots \geq k_r > 0)$. By proposition \ref{prop_eval}, it is enough to prove that $S_{\lambda}V$ is a direct summand of $V^{\otimes |\lambda |}$ in the category of $\GL(V)$-modules for one $k$-vector space of dimension greater than $|\lambda |$. Consider a vector space $V$ of dimension $n$. Notice that the surjection
\begin{equation*}
\nabla^{\otimes r} : V^{\otimes  |\lambda |} \rightarrow \Sym^{\lambda} V := \bigotimes_{1 \leq i \leq r} \Sym^{k_i} V
\end{equation*}
admits a section when $p > \max_i k_i = k_1$. Indeed, we define it as $s = s_1 \otimes \cdots \otimes s_r$ where
\begin{equation*}
s_i(v_1v_2\cdots v_r) = \frac{1}{k_i!}\sum_{\sigma \in \mathfrak{S}_{k_i}}v_{\sigma(1)}\otimes \cdots \otimes v_{\sigma(k_i)}.
\end{equation*}
By definition $S_{\lambda}V$ is a sub-$\GL(V)$-module of $\Sym^{\lambda} V$ and we would like to find a condition on $p$ that guarantees it is also a direct summand. The following exact sequence of $\GL(V)$-modules
\begin{equation*}
\begin{tikzcd}
0 \arrow[r] & S_{\lambda}V \arrow[r] & \Sym^{\lambda} V \arrow[r] & \Sym^{\lambda} V/S_{\lambda}V \arrow[r] & 0
\end{tikzcd}
\end{equation*}
is split if we can show that $\Ext^1_{\GL(V)}(\Sym^{\lambda} V/S_{\lambda}V,S_{\lambda}V)$ vanishes. Let $\tilde{\lambda}$ denote the character $(|\lambda |, 0, \cdots, 0)$ of the standard maximal torus of $\GL(V)$. Since $\Sym^{\lambda}V$ is of highest weight $\tilde{\lambda}$, the $\GL(V)$-modules $S_{\lambda}V$ and $\Sym^{\lambda} V/S_{\lambda}V$ are filtered by simple modules $L(\nu)$'s with $\nu \leq \tilde{\lambda}$. Under the assumption that $\tilde{\lambda}$ is $p$-small, proposition \ref{prop_psmall} implies that we have isomorphisms
\begin{equation*}
L(\nu) = \nabla(\nu) = \Delta(\nu)
\end{equation*}
for all characters $\nu$ satisfying $\nu \leq \tilde{\lambda}$. Since $\Ext^1_{\GL(V)}(\Sym^{\lambda} V/S_{\lambda}V,S_{\lambda}V)$ is the limit of a spectral sequence involving Ext groups
\begin{equation*}
\Ext^1_{\GL(V)}(L(\nu),L(\nu^\prime)) = \Ext^1_{\GL(V)}(\Delta(\nu),\nabla(\nu^\prime)),
\end{equation*}
that vanishes by proposition \ref{propDonkyn}, $\Ext^1_{\GL(V)}(\Sym^{\lambda} V/S_{\lambda}V,S_{\lambda}V)$ must vanish. In conclusion, we have the desired splitting, provided that $\tilde{\lambda}$ is $p$-small, i.e. 
\begin{equation*}
\begin{aligned}
p \geq \max_{\alpha \in \Phi^+} \langle \tilde{\lambda} + \rho, \alpha^{\vee} \rangle &=  \max_{ 1 \leq i < j \leq n} \langle \tilde{\lambda} + \rho, \varepsilon_i -\varepsilon_j \rangle \\
&=  \max_{ 1 \leq i < j \leq n} \langle (|\lambda |+\frac{n-1}{2}, \frac{n-3}{2}, \cdots, -\frac{n-1}{2} ), \varepsilon_i -\varepsilon_j \rangle \\
&= |\lambda |+n-1.
\end{aligned}
\end{equation*}
Since our argument is valid only when $n \geq |\lambda |$, we get the bound $p \geq 2| \lambda |-1$.
\end{proof}
\begin{proof}[Proof of proposition \ref{th_plethysm}]
By proposition \ref{prop_eval}, it is enough to show that $S_{\lambda}\circ S_{\mu} (V)$ admits a $\nabla$-filtration as a $\GL(V)$-module where $V$ is one vector space of dimension greater that $|\lambda ||\mu |$. Consider a vector space $V$ of dimension $n\geq |\lambda ||\mu |$. By lemma \ref{lem7}, $S_{\lambda}(S_{\mu}V)$ is a direct summand of $(S_{\mu}V)^{\otimes |\lambda |}$ as $\GL(S_{\mu}V)$-modules. After restriction to the category of $\GL(V)$-modules through the map $\GL(V) \rightarrow \GL(S_{\mu}V)$ induced by $S_{\mu}$, $S_{\lambda}(S_{\mu}V)$ is again a direct summand of $(S_{\mu}V)^{\otimes |\lambda |}$. By corollary \ref{cor2}, the $\GL(V)$-module $(S_{\mu}V)^{\otimes |\lambda |}$ admits a $\nabla$-filtration. By corollary \ref{cor1}, the $\GL(V)$-module $S_{\lambda}\circ S_{\mu} (V)$ admits a $\nabla$-filtration.
\end{proof}
\begin{rmrk}
Under the assumption of the proposition \ref{th_plethysm}, the partitions (counted with multiplicity) of the Schur functors appearing in the graded pieces of the filtration of $S_{\lambda} \circ S_{\mu}$ are the same as the one appearing in the decomposition \eqref{eq_pleth} over the complex numbers. This is just a consequence of the $\Z$-linearity of the $\nabla(\lambda)$'s in the space ${X^*(T)}^{W}$ of $W$-invariants characters, but we reprove it directly. First note that the weights of the $\GL(V)$-module $S_{\lambda}\circ S_{\mu}(V)$ where $V$ is a vector space of dimension $\geq |\lambda | |\mu |$ do not depend on the characteristic of the base field of $V$. Then, we are left to check that a direct sum $M = \bigoplus_{\lambda} \nabla(\lambda)^{\oplus c_{\lambda}}$ of costandard modules is uniquely determined by its characters $\text{ch}(M) = \sum_{\eta} d_{\eta} \eta$. We prove it with a descending induction on the number of distinct factors of $M$. Consider the highest weight $\eta_0$ appearing in the sum $\text{ch}(M)$. Clearly, $\nabla(\eta_0)$ is a direct factor of the module $M$ because $\eta_0$ cannot appear in the weights of a costandard module $\nabla(\lambda)$ with $\lambda < \eta_0$. Moreover, the multiplicity of $\nabla(\eta_0)$ in $M$ is exactly $d_{\eta_0}$ and we can pursue the induction with $M^\prime =  \bigoplus_{\lambda < \eta_0} \nabla(\lambda)^{\oplus c_{\lambda}}$. 
\end{rmrk}
\section{Positive vector bundles}\label{sect_pos}
In positive characteristic, Hartshorne has defined in \cite{MR193092} two non-equivalent notions of ampleness for vector bundles. The first notion is simply called ampleness, the second, strictly stronger, is called $p$-ampleness. Furthermore, Kleiman has defined in \cite{MR251044} a third notion, again strictly stronger, called cohomological $p$-ampleness. For the convenience of the reader, we recall some well known results about globally generated sheaves and ampleness notions in positive characteristic.\\

In subsection \ref{sect1}, we consider an effective Cartier divisor $D$ and we define a positivity notion for vector bundles called $(\varphi,D)$-ampleness. In the case of line bundles, this notion is equivalent to being nef and big with $D$ as exceptional divisor. Let $X$ be a projective scheme over $k$. We write $\varphi : X \rightarrow X^{(p)}$ for the relative geometric Frobenius of $X$. If $\Fc$ is a sheaf on $X$ and $r \geq 1$ is an integer, we write $\Fc^{(p^r)} := {(\varphi^r)}^*{(\varphi^r)}_*\Fc$. We endow the finite dimensional $\mathbb{R}$-vector space $A_1(X)$ of $1$-cycles on $X$ modulo linear equivalence with a norm $\lVert \cdot \rVert$. If $C$ is a projective curve and $\E$ is a vector bundle on $C$, we denote by $\delta(\E)$ the minimum of the degrees of quotient line bundles of $\E$.

\subsection{Globally generated sheaves}\label{sect_glob}
\begin{definition}
We say that a coherent sheaf $\Fc$ is globally generated at $x \in X$ if the canonical map
\begin{equation*}
H^0(X,\Fc)\otimes_k \Oc_{X} \rightarrow \Fc
\end{equation*}
is surjective at $x \in X$. We say $\Fc$ is globally generated over $U \subset X$ if it is globally generated at $x$ for all $x \in U$. 
\end{definition}
\vspace*{0.2cm}
The following lemma is well-known.
\begin{lemma}\label{prop10}
Let $x$ be a point of $X$. We have the following assertions.
\begin{enumerate}
\item The direct sum of two globally generated sheaves at $x$ is globally generated at $x$.
\item Let $\Fc \rightarrow \Fc^\prime$ be a morphism of coherent sheaves wich is sujective at $x$. If $\Fc$ is globally generated at $x$, then so is $\Fc^\prime$.
\item The tensor product of two globally generated sheaves at $x$ is globally generated at $x$.
\item The pullback of a globally generated sheaf at $x$ is globally generated at $x$.
\end{enumerate}
\end{lemma}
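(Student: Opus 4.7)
The plan is to verify each assertion by tracing the canonical evaluation map $H^0(X,\mathcal{F}) \otimes_k \Oc_X \to \mathcal{F}$ through the relevant operation and checking surjectivity at the stalk at $x$. All four parts follow from standard sheaf-theoretic manipulations, so I expect this lemma to be essentially routine; there is no single hard step, but (3) requires a brief stalkwise argument while (4) requires invoking right-exactness of pullback.

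For (1), I would observe that $H^0(X, \Fc \oplus \Fc') = H^0(X,\Fc) \oplus H^0(X,\Fc')$ and the evaluation map for the direct sum splits as the direct sum of the two evaluation maps; since each summand is surjective at $x$, so is the sum. For (2), given a morphism $u : \Fc \to \Fc'$ that is surjective at $x$, I would consider the commutative square whose rows are the evaluation maps and whose vertical arrows are $H^0(u) \otimes \id$ and $u$; the top row and the right column are both surjective at $x$, forcing the bottom row to be surjective at $x$ as well.

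For (3), I would pick sections $s_1,\dots,s_n \in H^0(X,\Fc)$ whose images generate $\Fc_x$ and sections $t_1,\dots,t_m \in H^0(X,\Fc')$ whose images generate $\Fc'_x$, and then check that the sections $s_i \otimes t_j \in H^0(X,\Fc \otimes \Fc')$ generate the stalk $(\Fc \otimes \Fc')_x = \Fc_x \otimes_{\Oc_{X,x}} \Fc'_x$; this follows because the tensor product of two surjections of modules over $\Oc_{X,x}$ is again surjective.

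For (4), given a morphism $f : Y \to X$ and a point $y \in Y$ with $f(y) = x$, I would apply $f^*$ to the evaluation map $H^0(X,\Fc) \otimes_k \Oc_X \to \Fc$, which is surjective at $x$; since $f^*$ is right exact, the resulting map $H^0(X,\Fc) \otimes_k \Oc_Y \to f^*\Fc$ is surjective at $y$, and this map factors through the canonical evaluation map $H^0(Y,f^*\Fc) \otimes_k \Oc_Y \to f^*\Fc$ via the pullback of sections $H^0(X,\Fc) \to H^0(Y,f^*\Fc)$, whence the latter is also surjective at $y$ by the same argument as in (2).
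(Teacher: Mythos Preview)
Your proof is correct. Parts (1), (2), and (4) match the paper's argument essentially verbatim: direct sum of evaluation maps, the commutative square, and right-exactness of pullback.

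For (3) you take a slightly different route than the paper. You argue directly at the stalk: choose finitely many global sections of $\Fc$ and of $\Fc'$ generating the respective stalks, and observe that the tensor of two surjections of $\Oc_{X,x}$-modules is surjective. The paper instead bootstraps from (1) and (2): it uses that $X$ is projective to identify $H^0(X,\Fc) \simeq k^{\oplus d}$, tensors the evaluation map by $\id_{\Fc'}$ to obtain a surjection $(\Fc')^{\oplus d} \to \Fc \otimes \Fc'$ at $x$, notes that $(\Fc')^{\oplus d}$ is globally generated at $x$ by (1), and concludes via (2). Your argument is marginally more general (it does not use finite-dimensionality of $H^0$, only coherence of $\Fc$ to extract finitely many generators of the stalk), while the paper's is a bit more structural in that it reduces (3) formally to the already-proved (1) and (2). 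Both are entirely standard and equally acceptable here.
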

\begin{proof}
Left to the reader.
\end{proof}
%\begin{proof}
%Assertion $(1)$ follows directly from the definition. For assertion $(2)$, we consider the following commutative diagram
%\begin{equation*}
%\begin{tikzcd}
%H^0(X,\Fc)\otimes_k\Oc_X \arrow[d] \arrow[r] & \Fc \arrow[d] \\
%H^0(X,\Fc^\prime)\otimes_k\Oc_X \arrow[r] & \Fc^\prime 
%\end{tikzcd}
%\end{equation*}
%and since the top horizontal and the right vertical maps are surjective at $x$, the bottom horizontal map must be surjective at $x$. For $(3)$, we first assume that the natural map 
%\begin{equation*}
%\varphi : H^0(X,\Fc)\otimes_k \Oc_{X} \rightarrow \Fc
%\end{equation*}
%is surjective at $x$. Since $\Fc$ is coherent, the $k$-vector space $H^0(X,\Fc)$ is finite dimensional and we can choose a finite basis
%\begin{equation*}
%H^0(X,\Fc) \simeq k^{\oplus d}.
%\end{equation*}
%By tensoring $\varphi$ with the identity of $\Fc^\prime$, we get a map of $\Oc_X$-modules
%\begin{equation*}
%(\Fc^\prime)^{\oplus d} \simeq k^{\oplus d}\otimes_k \Oc_{X} \otimes \Fc^\prime \rightarrow \Fc \otimes \Fc^\prime
%\end{equation*}
%which is surjective at $x$. Since $\Fc^\prime$ is globally generated at $x$, so is $(\Fc^\prime)^{\oplus d}$ and we conclude with $(1)$. Assertion $(4)$ is a direct consequence of the right-exactness of the pullback functor. 
%\end{proof}
\subsection{Ample bundles}\label{sect2}
\begin{definition}
We say that a line bundle $\Lc$ over $X$ is ample if the following equivalent propositions are satisfied.
\begin{enumerate}
\item For all coherent sheaf $\Fc$ on $X$, there is an integer $n_0$ such that $\Fc \otimes \Lc^{\otimes n}$ is globally generated for all $n \geq n_0$.
\item For all coherent sheaf $\Fc$ on $X$, there is an integer $n_0$ such that the cohomology groups $H^i(X,\Fc \otimes \Lc^{\otimes n})$ vanishes for all $i>0$, $n \geq n_0$.
\item For any subvariety $V \subset X$, we have
\begin{equation*}
c_1(\Lc)^{\dim V} \cdot [V] > 0
\end{equation*}
in the Chow ring of $X$.
\end{enumerate} 
\end{definition}
\begin{proof}
For the equivalence of the definitions, see \cite[Prop 1.1/1.2/1.4]{MR193092}.
\end{proof}
From now on, we fix an ample line bundle $\Oc_X(1)$ on $X$ and we write $\Fc(m)$ instead of $\Fc \otimes \Oc_X(1)^{\otimes m}$ for any coherent sheaf $\Fc$ on $X$ and integer $m$. We recall the definition of relative ample line bundles.
\begin{definition}
Let $Y$ be a projective scheme over a base scheme $S$. Write $f : Y \rightarrow S$ for the structure morphism. We say that a line bundle $\mathcal{L}$ on $Y$ is $f$-ample if the following equivalent propositions are satisfied.
\begin{enumerate}
\item For all coherent sheaf $\Fc$ on $Y$, there is an integer $n_0$ such that the adjunction morphism $f^*f_*(\Fc \otimes \Lc^{\otimes n}) \rightarrow \Fc \otimes \Lc^{\otimes n}$ is surjective for all $n \geq n_0$.
\item For all coherent sheaf $\Fc$ on $Y$, there is an integer $n_0$ such that the higher direct image sheaves $R^if_*(\Fc \otimes \Lc^{\otimes n})$ vanishes for all $i>0$, $n \geq n_0$. 
\end{enumerate} 
\end{definition}
\begin{proof}
For the equivalence of the definitions, see \cite[Theorem 1.7.6]{MR2095472} or \cite[\href{https://stacks.math.columbia.edu/tag/02O1}{Lemma 02O1}]{stacks-project}.
\end{proof}
\begin{definition}[\cite{MR193092}]
We say that a vector bundle $\E$ over $X$ is ample if the universal line bundle $\Oc(1)$ is ample on the projective bundle $\Pb(\E)$. Note that the universal line bundle $\Oc(1)$ is equal to the associated line bundle $\Lc_{\lambda}$ with $\lambda = (1,0,\cdots,0)$ for the canonical isomorphism $X^*(T) \simeq \mathbb{Z}^n$ where $T$ is the standard maximal torus of $\GL_n$.
\end{definition}
\begin{proposition}\label{amp_criterion}
Let $\E$ be a vector bundle on $X$. The following assertions are equivalent.
\begin{enumerate}
\item $\E$ is ample on $X$.
\item For all coherent sheaf $\Fc$ on $X$, there is an integer $n_0$ such that $\Fc \otimes \Sym^{n}\E$ is globally generated for all $n \geq n_0$.
\item For all coherent sheaf $\Fc$ on $X$, there is an integer $n_0$ such that the cohomology groups $H^i(X,\Fc \otimes \Sym^{n}\E)$ vanishes for all $i>0$, $n \geq n_0$.
\item There exists a real number $\varepsilon > 0$ such that for all finite morphism $g : C \rightarrow X$ where $C$ is a curve, we have
\begin{equation*}
\delta(g^*\E) \geq \varepsilon \lVert g_*C \rVert.
\end{equation*}
Recall that $\delta(g^*\E)$ is the minimum of the degrees of quotient line bundles of $g^*\E$ and $\lVert \cdot \rVert$ denotes a norm on $A_1(X)$, the $k$-vector space of $1$-cycles modulo linear equivalence.
\end{enumerate}
\end{proposition}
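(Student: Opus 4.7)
The plan is to work on the projective bundle $\pi : Y := \Pb(\E) \to X$ equipped with its universal line bundle $\Oc_Y(1)$, for which one has the projection formula $\pi_*(\pi^*\Fc \otimes \Oc_Y(n)) = \Fc \otimes \Sym^n \E$ and vanishing $R^i\pi_*(\pi^*\Fc \otimes \Oc_Y(n)) = 0$ for all $i > 0$ and $n \geq 0$. By definition, (1) means $\Oc_Y(1)$ is ample, so the equivalences between (1), (2) and (3) will follow from the corresponding well-known characterizations of line bundle ampleness on $Y$ together with the Leray spectral sequence.

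For (1) $\Rightarrow$ (3), I apply the cohomological characterization of ampleness of $\Oc_Y(1)$ to the sheaf $\pi^*\Fc$ on $Y$: this gives $H^i(Y, \pi^*\Fc \otimes \Oc_Y(n)) = 0$ for $n \gg 0$, $i > 0$, and the Leray spectral sequence (which degenerates because the higher direct images vanish for $n \geq 0$) identifies this group with $H^i(X, \Fc \otimes \Sym^n \E)$. Conversely, to prove (3) $\Rightarrow$ (1), one first shows that every coherent sheaf $\G$ on $Y$ admits a (not necessarily finite) surjection $\bigoplus_j \pi^*\Fc_j \otimes \Oc_Y(-m_j) \twoheadrightarrow \G$ for suitable $\Fc_j$ on $X$ and integers $m_j \geq 0$; iterating and chasing the long exact sequences of cohomology, the vanishing assumption (3) propagates to yield $H^i(Y, \G \otimes \Oc_Y(n)) = 0$ for $n \gg 0$, hence ampleness of $\Oc_Y(1)$. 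The equivalence (2) $\Leftrightarrow$ (3) is the standard Serre-type argument using the yoga of short exact sequences $0 \to \Kc \to \Oc_X^{\oplus N} \to \Fc \to 0$ and the twist trick, already used for line bundles in subsection \ref{sect2}.

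For (1) $\Leftrightarrow$ (4), the bridge is again $\pi : Y \to X$, combined with Seshadri's criterion on $Y$: $\Oc_Y(1)$ is ample iff there exists $\varepsilon' > 0$ such that $\Oc_Y(1) \cdot \tilde{C} \geq \varepsilon' \lVert \pi_*\tilde{C} \rVert$ for every irreducible curve $\tilde{C} \subset Y$ not contracted by $\pi$ (and curves contracted by $\pi$ sit in fibres $\Pb^{r-1}$ where $\Oc_Y(1)$ restricts to an honest ample class). An irreducible curve $\tilde{C} \subset Y$ together with its normalization $\nu : C \to \tilde{C}$ gives a finite morphism $g := \pi \circ \tilde{C} \circ \nu : C \to X$ and a rank-one quotient of $g^*\E$ (the restriction of the tautological quotient along $\tilde{C}$), whose degree equals $\Oc_Y(1) \cdot \tilde{C}$; conversely any quotient line bundle of $g^*\E$ for a finite map $g : C \to X$ defines, via a section $C \to \Pb(g^*\E) \to Y$, an irreducible curve on $Y$. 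Under this dictionary $\delta(g^*\E)$ equals the infimum of $\Oc_Y(1) \cdot \tilde{C}$ over such lifts, and $\pi_*\tilde{C} = g_*C$; so Seshadri's criterion on $Y$ translates exactly into (4).

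The technically most delicate step is the last equivalence (1) $\Leftrightarrow$ (4): even granting Seshadri on $Y$, one must handle curves contracted by $\pi$ and, in the reverse direction, ensure that the infimum in the definition of $\delta$ is realized by a bona fide quotient line bundle (which follows from properness of the relative $\Pb(g^*\E) \to C$ together with the fact that sections of a projective bundle over a curve correspond to line bundle quotients). The other equivalences are standard manipulations with the projective bundle and Leray; the proof of (3) $\Rightarrow$ (1) is where a little care is needed to reduce an arbitrary coherent sheaf on $Y$ to pullbacks from $X$.
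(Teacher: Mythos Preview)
The paper does not actually prove this proposition: its ``proof'' consists solely of citations to Hartshorne \cite{MR193092} for $(1)\Leftrightarrow(2)\Leftrightarrow(3)$ and to Barton \cite{MR289525} for $(1)\Leftrightarrow(4)$. Your sketch unpacks precisely those references---passing to the projective bundle $\pi:\Pb(\E)\to X$, using the projection formula and the Leray spectral sequence for the first three equivalences, and reducing the numerical criterion to one for $\Oc_Y(1)$---so in substance you are following the same route as the cited sources.

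Two small points. First, what you call ``Seshadri's criterion on $Y$'' is not Seshadri's criterion (which bounds $L\cdot\tilde C$ from below by a multiple of the maximal multiplicity of a point on $\tilde C$); it is Barton's own criterion for line bundles, comparing $L\cdot\tilde C$ to a norm of the class $[\tilde C]$ in $N_1(Y)_{\R}$. The translation to condition (4) on $X$ then uses the decomposition $N_1(\Pb(\E))_{\R}\simeq N_1(X)_{\R}\oplus\R\cdot[\ell]$ with $\ell$ a line in a fibre: the fibral part is handled by relative ampleness of $\Oc_Y(1)$, and the horizontal part gives exactly the inequality $\delta(g^*\E)\geq\varepsilon\lVert g_*C\rVert$. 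Second, your direct argument for $(2)\Leftrightarrow(3)$ via Serre-type d\'evissage is a bit awkward because $\Sym^n\E$ is not a tensor power; Hartshorne instead proves $(1)\Leftrightarrow(2)$ and $(1)\Leftrightarrow(3)$ separately on $\Pb(\E)$, which is cleaner. None of this affects correctness.
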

\begin{proof}
See \cite[Proposition 3.2/3.3]{MR193092} for a complete proof of $$(1) \Leftrightarrow (2) \Leftrightarrow (3).$$For $(1) \Leftrightarrow (4)$, this numerical criterion is due to Barton \cite{MR289525}.
\end{proof}

\begin{proposition}\label{prop7}
We have the following assertions
\begin{enumerate}
\item Let $\E$ and $\E^\prime$ be two ample vector bundles on $X$. Then $\E \oplus \E^\prime$ is ample.
\item Consider an extension of vector bundles on $X$
\begin{equation*}
\begin{tikzcd}
0 \arrow[r] & \E_1 \arrow[r] & \E \arrow[r] & \E_2 \arrow[r] & 0
\end{tikzcd}
\end{equation*}
where $\E_1$ and $\E_2$ are ample. Then $\E$ is ample.
\item Let $\E$ and $\E^\prime$ be two vector bundles on $X$ such that $\E$ is ample and $\E^\prime$ is globally generated over $X$. Then the tensor product $\E \otimes \E^\prime$ is an ample vector bundle.
\item Let $\E \rightarrow \E^\prime$ be a surjective morphism of $\Oc_X$-modules between two vector bundles. If $\E$ is ample, then so is $\E^\prime$.
\item The tensor product of ample vector bundles over $X$ is ample. 
\end{enumerate}
\end{proposition}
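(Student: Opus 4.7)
The plan is to prove the five assertions in the order $(4), (1), (3), (2), (5)$, which reflects the natural logical dependencies, using throughout the three equivalent characterizations of ampleness furnished by Proposition~\ref{amp_criterion}: global generation of $\Fc \otimes \Sym^n \E$, cohomological vanishing of $H^p(X, \Fc \otimes \Sym^n \E)$, and Barton's numerical criterion.

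For $(4)$, a surjection $\E \twoheadrightarrow \E'$ induces a surjection $\Sym^n \E \twoheadrightarrow \Sym^n \E'$ for every $n \geq 0$; tensoring with any coherent $\Fc$ and applying Lemma~\ref{prop10} transfers global generation from $\Fc \otimes \Sym^n \E$ to $\Fc \otimes \Sym^n \E'$ for $n \gg 0$. For $(1)$, one uses the characteristic-free decomposition $\Sym^n(\E \oplus \E') = \bigoplus_{i+j=n} \Sym^i \E \otimes \Sym^j \E'$: given a coherent $\Fc$ and a pair $(i, j)$ with $i + j = n$, one of the following strategies produces global generation of $\Fc \otimes \Sym^i \E \otimes \Sym^j \E'$ --- applying ampleness of $\E$ to the coherent sheaf $\Fc \otimes \Sym^j \E'$ when $i$ dominates, applying ampleness of $\E'$ to $\Fc \otimes \Sym^i \E$ when $j$ dominates, or combining both with stability of global generation under tensor product when $i, j$ are both large --- and collecting the finitely many thresholds yields a uniform $n_0$ beyond which the full direct sum is globally generated. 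For $(3)$, the global generation of $\E'$ gives a surjection $\Oc_X^{\oplus r} \twoheadrightarrow \E'$, so tensoring with $\E$ yields $\E^{\oplus r} \twoheadrightarrow \E \otimes \E'$; the source is ample by $(1)$ and the quotient is ample by $(4)$.

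For $(2)$, the main cohomological step, the extension $0 \to \E_1 \to \E \to \E_2 \to 0$ induces a canonical decreasing filtration $\Sym^n \E = F^0 \supset F^1 \supset \cdots \supset F^{n+1} = 0$ whose associated graded is $\bigoplus_{i+j=n} \Sym^i \E_1 \otimes \Sym^j \E_2 = \Sym^n(\E_1 \oplus \E_2)$. By $(1)$, $\E_1 \oplus \E_2$ is ample, so for every coherent $\Fc$ the cohomological criterion gives $H^p(X, \Fc \otimes \Sym^n(\E_1 \oplus \E_2)) = 0$ for all $p > 0$ and $n \geq n_0(\Fc)$; splitting the direct sum, the vanishing holds on every graded piece. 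A descending induction along $F^\bullet$, using the long exact cohomology sequences of $0 \to F^{i+1} \otimes \Fc \to F^i \otimes \Fc \to (F^i/F^{i+1}) \otimes \Fc \to 0$, then propagates the vanishing to $\Sym^n \E$ itself, establishing ampleness of $\E$ via criterion $(3)$ of Proposition~\ref{amp_criterion}.

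For $(5)$, the key elementary observation is the characteristic-independent splitting $\Sym^2(\E \oplus \E') = \Sym^2 \E \oplus (\E \otimes \E') \oplus \Sym^2 \E'$, which exhibits $\E \otimes \E'$ as a direct summand, hence as a quotient, of $\Sym^2(\E \oplus \E')$. Granting $(1)$, $\E \oplus \E'$ is ample, and by $(4)$ it then suffices to prove that $\Sym^n$ of an ample bundle is ample (specifically for $n = 2$). This final step is the main obstacle I anticipate: I would attack it through Barton's numerical criterion, bounding $\delta(\Sym^n g^*\E)$ from below in terms of $\delta(g^*\E)$ by comparing both quantities to the Harder--Narasimhan slopes of $g^*\E$ on a test curve $C$, or alternatively by invoking Hartshorne's projective-bundle argument \cite[Cor.~5.3]{MR193092} which treats symmetric powers of ample bundles directly.
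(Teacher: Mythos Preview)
Your arguments for $(1)$--$(4)$ are correct and essentially reproduce Hartshorne's proofs from \cite{MR193092}; the paper itself simply cites these references without reproducing the arguments, so on those four assertions your outline is more detailed than the paper's treatment.

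For $(5)$ there is a genuine gap. The characteristic-free splitting $\Sym^2(\E\oplus\E') \simeq \Sym^2\E \oplus (\E\otimes\E') \oplus \Sym^2\E'$ is correct, so the problem does reduce to showing that $\Sym^2$ of an ample bundle is ample. But both of your proposed attacks on this reduced statement break down in positive characteristic. The Harder--Narasimhan approach needs $\mu_{\min}(\Sym^2 F) \geq 2\,\mu_{\min}(F)$ on each test curve, which rests on semistability being preserved under symmetric powers; this fails in characteristic $p$ --- for instance when $p=2$ the Frobenius twist $F^{(2)}$ embeds in $\Sym^2 F$, and a destabilising subsheaf of $F^{(2)}$ then destabilises $\Sym^2 F$ as well. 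Your alternative, citing \cite[Cor.~5.3]{MR193092}, does not resolve the issue either: Hartshorne's 1966 paper does not establish that symmetric or tensor powers of ample bundles remain ample in characteristic $p$, and indeed had it done so, your direct-summand trick would have rendered Barton's later theorem \cite{MR289525} superfluous. The paper's proof of $(5)$ is precisely to invoke \cite[Theorem~3.3]{MR289525}, whose argument uses the numerical criterion (assertion $(4)$ of Proposition~\ref{amp_criterion}) together with a curve-by-curve analysis that cannot be short-circuited through ordinary Harder--Narasimhan theory in positive characteristic.
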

\begin{proof}
See \cite[Proposition 2.2/Corollary 2.5]{MR193092} for assertions $(1),(3),(4)$, \cite[Corollary 3.4]{MR193092} for assertion $(4)$ and \cite[Theorem. 3.3]{MR289525} for assertion $(5)$.
\end{proof}

\begin{proposition}\label{squareampclassic}
If $\E$ is a vector bundle such that $\E^{\otimes n}$ is ample for some $n\geq 1$, then $\E$ is also ample.
\end{proposition}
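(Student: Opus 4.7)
The plan is to apply the numerical Barton criterion, assertion (4) of Proposition \ref{amp_criterion}. Since $\E^{\otimes n}$ is assumed ample, there exists a real number $\varepsilon > 0$ such that for every finite morphism $g : C \to X$ from a projective curve, we have $\delta(g^*(\E^{\otimes n})) \geq \varepsilon \lVert g_*C \rVert$. We will use this hypothesis to produce an analogous estimate for $\E$ itself.

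The key step is to compare $\delta(g^*\E)$ with $\delta(g^*(\E^{\otimes n}))$. Given any quotient line bundle $g^*\E \twoheadrightarrow L$ on the curve $C$, tensoring this surjection with itself $n$ times produces a surjection
\begin{equation*}
g^*(\E^{\otimes n}) = (g^*\E)^{\otimes n} \twoheadrightarrow L^{\otimes n}.
\end{equation*}
Since degrees of line bundles on a curve are additive under tensor product, we obtain
\begin{equation*}
n \deg(L) = \deg(L^{\otimes n}) \geq \delta(g^*(\E^{\otimes n})) \geq \varepsilon \lVert g_*C \rVert.
\end{equation*}
Taking the infimum over all quotient line bundles $L$ of $g^*\E$ yields $\delta(g^*\E) \geq (\varepsilon/n) \lVert g_*C \rVert$.

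This inequality, valid uniformly for all finite morphisms $g : C \to X$ from a projective curve, is exactly assertion (4) of Proposition \ref{amp_criterion} for $\E$ with positive constant $\varepsilon/n$, and therefore $\E$ is ample. No step should be a serious obstacle: the argument is essentially a direct unwinding of the numerical criterion together with the multiplicativity of degree. An alternative route, avoiding the numerical criterion, would be to use characterization (2) and the natural surjection $\Sym^m(\E^{\otimes n}) \twoheadrightarrow \Sym^{mn}\E$ (composed with the multiplication map of the symmetric algebra) to transfer global generation from the Veronese-type powers $\Sym^m(\E^{\otimes n})$ to $\Sym^{k}\E$ for all $k$ sufficiently large, after absorbing the residues $s = 0, \ldots, n-1$ into the auxiliary coherent sheaf $\Fc$; but this is a notational detour and the Barton criterion gives the most direct path.
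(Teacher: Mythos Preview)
Your proof via Barton's numerical criterion is correct and clean: the inequality $n\,\delta(g^*\E)\geq \delta\bigl((g^*\E)^{\otimes n}\bigr)$ follows exactly as you say, by tensoring a minimal quotient line bundle with itself, and then the criterion applies with constant $\varepsilon/n$.

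The paper's proof takes a different, shorter route: it observes that $\Sym^n\E$ is a quotient of $\E^{\otimes n}$, hence ample by Proposition~\ref{prop7}(4), and then invokes \cite[Proposition 2.4]{MR193092}, which is precisely the statement that $\Sym^n\E$ ample implies $\E$ ample. So the paper outsources the core step to Hartshorne, whereas you reprove it directly using the numerical criterion already recorded in Proposition~\ref{amp_criterion}. Your approach has the advantage of being self-contained within the paper's framework; the paper's approach has the advantage of brevity. Your alternative sketch via the surjection $\Sym^m(\E^{\otimes n})\twoheadrightarrow \Sym^{mn}\E$ is essentially what lies behind Hartshorne's Proposition 2.4, so it is closer in spirit to the paper's argument.
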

\begin{proof}
Assume that $\E^{\otimes n}$ is ample. As a quotient of $\E^{\otimes n}$, $\Sym^n\E$ is ample and we conclude with \cite[Proposition 2.4]{MR193092}.
\end{proof}
\begin{proposition}\label{prop11}
Let $f : Y \rightarrow X$ be a finite morphism of projective schemes and $\E$ be an ample vector bundle on $X$. If $\E$ is ample on $X$, then $f^*\E$ is ample on $Y$. If furthermore $f$ is assumed surjective, then the converse holds.
\end{proposition}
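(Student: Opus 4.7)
The plan is to reduce the statement for vector bundles to the corresponding (classical) statement for line bundles via the projective bundle construction. Recall that by definition $\E$ is ample on $X$ if and only if the universal line bundle $\Oc_{\Pb(\E)}(1)$ is ample on $\Pb(\E)$.

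First I would observe that there is a canonical isomorphism $\Pb(f^*\E) \simeq \Pb(\E) \times_X Y$, and that under this identification the universal line bundle $\Oc_{\Pb(f^*\E)}(1)$ is the pullback of $\Oc_{\Pb(\E)}(1)$ along the projection $\tilde{f} : \Pb(f^*\E) \to \Pb(\E)$. Since $\tilde f$ is obtained from $f$ by base change along the projective morphism $\Pb(\E) \to X$, it is finite; and it is surjective precisely when $f$ is surjective. Thus the proposition reduces to the analogous statement for the line bundle $\Oc_{\Pb(\E)}(1)$ under the finite morphism $\tilde f$.

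For the line bundle case, I would invoke the classical fact (for example \cite[Exercise III.5.7]{stacks-project}-style, or EGA II): if $\pi : Z \to W$ is a finite morphism of projective schemes and $\mathcal{M}$ is a line bundle on $W$, then $\mathcal{M}$ ample implies $\pi^*\mathcal{M}$ ample, and when $\pi$ is additionally surjective the converse holds. The forward direction is immediate from the Nakai--Moishezon criterion (or alternatively from the global generation definition, using that finite morphisms are affine so $\pi_*$ is exact and preserves global generation up to twisting). The surjective direction is the harder step: one uses that for $\pi$ finite surjective, every closed subvariety $V \subset W$ is the image of a closed subvariety $V' \subset Z$ of the same dimension, and intersection numbers satisfy $(\pi^*\mathcal{M})^{\dim V'} \cdot [V'] = \deg(\pi|_{V'}) \cdot \mathcal{M}^{\dim V} \cdot [V]$, so positivity transfers from $\pi^*\mathcal{M}$ to $\mathcal{M}$.

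Combining these ingredients, applying the line bundle statement to $\tilde f$ and $\Oc_{\Pb(\E)}(1)$ immediately yields both directions of the proposition. The only real obstacle is the descent direction for line bundles, and this is handled by the intersection-theoretic (Nakai--Moishezon) argument above; everything else is formal from the projective bundle construction.
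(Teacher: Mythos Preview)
Your argument is correct: the reduction to the line-bundle case via $\Pb(\E)$ and the base-change identification $\Pb(f^*\E)\simeq \Pb(\E)\times_X Y$ is the standard route, and the Nakai--Moishezon step for descent along finite surjective morphisms is exactly the right ingredient. The paper itself gives no argument here---it simply cites Lazarsfeld's \emph{Positivity I} (Proposition~1.2.9 and Corollary~1.2.24)---so your write-up is in fact more detailed than the paper's own proof, but follows the same underlying logic those references encode.
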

\begin{proof}
See \cite[Proposition 1.2.9]{MR2095471} and  \cite[Corollary 1.2.24]{MR2095471}.
\end{proof}
\begin{corollary}\label{prop8}
Let $\E$ be a vector bundle and $r\geq 1$ an integer. Then $\E$ is ample if and only if $\E^{(p^r)}$ is ample.
\end{corollary}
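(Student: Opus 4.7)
The plan is to derive this immediately from Proposition \ref{prop11} applied to the Frobenius. The substantive content is already done in that proposition; the corollary just specializes it to one specific finite surjective morphism, namely the $r$-th iterate of the Frobenius.

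First, I would identify the right morphism. Since $k$ is algebraically closed (hence perfect), the absolute Frobenius $F : X \to X$ is a $k$-scheme endomorphism: it factors as $X \xrightarrow{\varphi} X^{(p)} \xrightarrow{\sim} X$, where the first map is the relative Frobenius (a finite surjection on any scheme of finite type over a field of characteristic $p$) and the second is the isomorphism induced by $\sigma : k \to k$ being an automorphism. Iterating, the $r$-th absolute Frobenius $F^r : X \to X$ is a composition of finite surjective morphisms with isomorphisms, so it is again finite and surjective. Moreover, $X$ is projective by hypothesis, so $F^r$ is a finite surjective morphism of projective schemes, exactly the setup of Proposition \ref{prop11}.

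Second, I would unwind the definition to identify $(F^r)^*\E$ with the Frobenius twist $\E^{(p^r)}$ used throughout the paper, which is simply the pullback of $\E$ along the $r$-fold Frobenius (equivalently, $(\varphi^r)^*\E$ after identifying $X^{(p^r)} \simeq X$). This is just bookkeeping of conventions.

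Finally, Proposition \ref{prop11} applied to $F^r : X \to X$ asserts that, since $F^r$ is finite and surjective, $\E$ is ample on $X$ if and only if $(F^r)^*\E$ is ample on $X$. Via the identification $(F^r)^*\E = \E^{(p^r)}$, this is precisely the statement of the corollary. There is no real obstacle here; the only care needed is in checking that the iterated Frobenius is well-defined as a $k$-morphism and remains finite surjective, which is standard.
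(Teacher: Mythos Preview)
Your proposal is correct and follows exactly the same approach as the paper: apply Proposition \ref{prop11} to the (iterated) Frobenius, which is finite and surjective. The paper's proof is the one-line version of what you wrote; your additional care about identifying the absolute Frobenius as a $k$-morphism and matching $(F^r)^*\E$ with $\E^{(p^r)}$ is fine but not strictly needed given the conventions already set up in the paper.
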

\begin{proof}
Since the Frobenius map is finite surjective, it follows from the previous proposition.
\end{proof}
\begin{definition}[\cite{MR193092}]\label{def2}
We say that a vector bundle $\E$ on $X$ is $p$-ample if for all coherent sheaf $\Fc$ on $X$, there is an integer $r_0$ such that $\Fc \otimes \E^{(p^r)}$ is globally generated for all $r \geq r_0$.
\end{definition}
\begin{lemma}\label{lem6}
For any coherent sheaf $\Fc$ and $m \geq 0$ large enough, we can write $\Fc$ as a quotient of $\Oc_X(-m)^{\oplus s}$ for a suitable $s \geq 1$.
\end{lemma}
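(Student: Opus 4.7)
The plan is to use the ampleness of the fixed polarization $\Oc_X(1)$ together with finite-dimensionality of global sections of a coherent sheaf on a projective scheme.

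First, I would invoke the definition of ampleness applied to $\Oc_X(1)$: for the coherent sheaf $\Fc$, there is an integer $m_0$ such that for every $m \geq m_0$, the twist $\Fc(m) = \Fc \otimes \Oc_X(1)^{\otimes m}$ is globally generated. This gives a canonical surjection
\begin{equation*}
H^0(X,\Fc(m)) \otimes_k \Oc_X \twoheadrightarrow \Fc(m).
\end{equation*}

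Next, since $X$ is projective over $k$ and $\Fc(m)$ is coherent, the $k$-vector space $H^0(X,\Fc(m))$ is finite-dimensional; denote $s := \dim_k H^0(X,\Fc(m))$. Choosing a basis identifies $H^0(X,\Fc(m)) \otimes_k \Oc_X$ with $\Oc_X^{\oplus s}$, yielding a surjection $\Oc_X^{\oplus s} \twoheadrightarrow \Fc(m)$.

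Finally, since tensoring with the line bundle $\Oc_X(-m)$ is exact and in particular preserves surjections, we obtain
\begin{equation*}
\Oc_X(-m)^{\oplus s} \twoheadrightarrow \Fc,
\end{equation*}
as required. There is no real obstacle here; the statement is a direct combination of the global generation consequence of ampleness and the finiteness of $H^0$ for coherent sheaves on a projective scheme.
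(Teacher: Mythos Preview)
Your proof is correct and follows essentially the same approach as the paper: use ampleness of $\Oc_X(1)$ to make $\Fc(m)$ globally generated, obtain a surjection $\Oc_X^{\oplus s} \twoheadrightarrow \Fc(m)$, and tensor by $\Oc_X(-m)$. You are merely more explicit than the paper in justifying why $s$ is finite via the finite-dimensionality of $H^0(X,\Fc(m))$.
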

\begin{proof}
Choose $m \geq 0$ large enough such that $\Fc(m)$ is globally generated over $X$. We get a sujective morphism
\begin{equation*}
\Oc_X^{\oplus s} \rightarrow \Fc(m)
\end{equation*}
for some $s \geq 1$ and then we tensor by $\Oc_X(-m)$.
\end{proof}
\begin{proposition}\label{prop3}
In the definition \ref{def2}, we can restrict ourselves to coherent sheaves of the form $\Fc = \Oc_X(-m)$ for all $m \geq 0$ large enough.
\end{proposition}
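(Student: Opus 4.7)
The plan is to show that checking the $p$-ample condition on line bundles of the form $\Oc_X(-m)$ with $m$ large is enough to recover the full $p$-ample condition. Since the forward implication (if $\E$ is $p$-ample then in particular $\Oc_X(-m)\otimes \E^{(p^r)}$ is globally generated for $r$ large) is trivial, the content is the reverse implication.

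The approach is to reduce an arbitrary coherent sheaf $\Fc$ to a direct sum of twists $\Oc_X(-m)$. First I would invoke Lemma \ref{lem6} to pick $m \geq 0$ large enough that (a) the hypothesis provides an integer $r_0 \geq 1$ with $\Oc_X(-m) \otimes \E^{(p^r)}$ globally generated for all $r \geq r_0$, and (b) there exists a surjection of $\Oc_X$-modules
\begin{equation*}
\Oc_X(-m)^{\oplus s} \twoheadrightarrow \Fc
\end{equation*}
for some integer $s \geq 1$. (Note that Lemma \ref{lem6} allows $m$ to be chosen arbitrarily large, so we can simultaneously satisfy both requirements by taking the maximum of the two bounds.)

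Next, I would tensor this surjection with the vector bundle $\E^{(p^r)}$, which preserves surjectivity since tensoring is right-exact, yielding a surjection
\begin{equation*}
\bigl(\Oc_X(-m) \otimes \E^{(p^r)}\bigr)^{\oplus s} \twoheadrightarrow \Fc \otimes \E^{(p^r)}.
\end{equation*}
For $r \geq r_0$ the factor $\Oc_X(-m) \otimes \E^{(p^r)}$ is globally generated by hypothesis, so its $s$-fold direct sum is globally generated by Lemma \ref{prop10}(1), and then the quotient $\Fc \otimes \E^{(p^r)}$ is globally generated by Lemma \ref{prop10}(2). This gives the required integer $r_0$ for $\Fc$ and establishes $p$-ampleness of $\E$ in the sense of Definition \ref{def2}.

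There is no real obstacle here: the argument is a routine combination of the elementary closure properties for global generation assembled in Lemma \ref{prop10} together with the resolution provided by Lemma \ref{lem6}. The only mild subtlety is to ensure that the single threshold $m$ can be chosen large enough to meet both the hypothesis of the proposition and the conclusion of Lemma \ref{lem6} simultaneously, which is automatic since both are cofinal conditions on $m$.
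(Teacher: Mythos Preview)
Your proof is correct and follows essentially the same approach as the paper: use Lemma \ref{lem6} to present $\Fc$ as a quotient of $\Oc_X(-m)^{\oplus s}$, then combine the closure properties of global generation under direct sums and quotients from Lemma \ref{prop10}. Your version is slightly more explicit about choosing a single $m$ that simultaneously satisfies the hypothesis and Lemma \ref{lem6}, but the argument is the same.
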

\begin{proof}
We use the lemma \ref{lem6} to write $\Fc$ as a quotient of $\Oc_X(-m)^{\oplus s}$ for a suitable $s \geq 1$. Take $n$ large enough such that $\Oc_X(-m) \otimes \E^{(p^r)}$ is globally generated. Since the quotient of a globally generated sheaf is globally generated, we get that $\Fc \otimes \E^{(p^r)}$ is globally generated.
\end{proof}
\begin{proposition}\label{pamptoamp}
If $\E$ is $p$-ample on $X$, then $\E$ is ample. 
\end{proposition}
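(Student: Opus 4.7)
The strategy is to exhibit some Frobenius twist $\E^{(p^r)}$ as a quotient of a direct sum of ample line bundles, conclude that $\E^{(p^r)}$ is ample, and then invoke Corollary \ref{prop8} to deduce that $\E$ itself is ample. The entire proof is an assembly of results already recorded in this subsection; the $p$-ampleness hypothesis is used only through the convenient reformulation given in Proposition \ref{prop3}.

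To set things up, I would first apply Proposition \ref{prop3} to the test sheaf $\Fc = \Oc_X(-m)$: for any $m \geq 0$ large enough, there exists $r_0$ such that $\Oc_X(-m) \otimes \E^{(p^r)}$ is globally generated for all $r \geq r_0$. Fixing one such pair $(m, r)$, global generation produces a surjection $\Oc_X^{\oplus N} \twoheadrightarrow \Oc_X(-m) \otimes \E^{(p^r)}$ for some $N \geq 1$. Since tensoring by the line bundle $\Oc_X(m)$ is exact, we obtain a surjection of vector bundles
\begin{equation*}
\Oc_X(m)^{\oplus N} \twoheadrightarrow \E^{(p^r)}.
\end{equation*}

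The source is ample: $\Oc_X(1)$ is ample by choice of polarization, hence so is $\Oc_X(m)$, and the direct sum of ample line bundles is ample by Proposition \ref{prop7}(1). Therefore $\E^{(p^r)}$ is ample as a quotient of an ample vector bundle, by Proposition \ref{prop7}(4). Finally, Corollary \ref{prop8} says that $\E^{(p^r)}$ ample implies $\E$ ample, which finishes the proof. There is no genuine obstacle in this argument; everything reduces to bookkeeping once Proposition \ref{prop3} has been used to select a test sheaf against which the $p$-ampleness of $\E$ can be leveraged together with the stability properties of Proposition \ref{prop7} and the Frobenius descent of Corollary \ref{prop8}.
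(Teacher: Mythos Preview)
Your proof is correct and follows essentially the same route as the paper: both produce a Frobenius twist $\E^{(p^r)}$ as a quotient of a sum of copies of a positive power of $\Oc_X(1)$, conclude it is ample, and descend via Corollary~\ref{prop8}. The only cosmetic differences are that the paper takes $m=1$ directly from the definition of $p$-ampleness (Proposition~\ref{prop3} is not needed, since you are using the forward direction) and invokes Proposition~\ref{prop7}(3) (ample $\otimes$ globally generated is ample) in place of your combination of (1) and (4).
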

\begin{proof}
%See \cite[Proposition 6.3]{MR193092}.
Choose $n$ large enough such that $\E^{(p^n)}(-1)$ is globally generated. We deduce that $\E^{(p^n)}$ is quotient of $\Oc_X(1)^{\oplus s}$ for a suitable $s\geq 1$. By the assertion $(3)$ of proposition \ref{prop7}, $\E^{(p^n)}$ is ample and by corollary \ref{prop8}, $\E$ is ample.
\end{proof}
\begin{rmrk}
The converse to the previous proposition is false in general (see \cite{MR296078} for a counter example). However, in the special case where $\E$ is a line bundle or $X$ is curve, it holds by \cite[Proposition 7.3]{MR193092}.
\end{rmrk}
\begin{proposition}\label{prop12}
We have the following assertions:
\begin{enumerate}
\item Let $\E$ and $\E^\prime$ be two $p$-ample vector bundles on $X$. Then $\E \oplus \E^\prime$ is $p$-ample.
\item Let $\E$ and $\E^\prime$ be two vector bundles on $X$ such that $\E$ is $p$-ample and $\E^\prime$ is globally generated over $X$. Then, the tensor product $\E \otimes \E^\prime$ is a $p$-ample vector bundle.
\item Let $\E \rightarrow \E^\prime$ be a surjective morphism of $\Oc_X$-modules between two vector bundles. If $\E$ is $p$-ample, then $\E^\prime$ is also $p$-ample.
\item The tensor product of $p$-ample vector bundles over $X$ is $p$-ample. 
\end{enumerate}
\end{proposition}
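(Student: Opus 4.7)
The plan is to mirror, in the Frobenius-twisted setting, the arguments given for the analogous ampleness statements in Proposition \ref{prop7}. The key formal ingredients I will use repeatedly are that the Frobenius twist $(\cdot)^{(p^r)}$ is right-exact and commutes with direct sums and tensor products, and that it preserves global generation: a surjection $\Oc_X^{\oplus s}\twoheadrightarrow \Fc$ Frobenius-twists to a surjection $\Oc_X^{\oplus s} = \Oc_X^{(p^r),\oplus s} \twoheadrightarrow \Fc^{(p^r)}$, using $\Oc_X^{(p^r)}=\Oc_X$. With these formal properties in hand, each assertion follows from Definition \ref{def2} combined with the corresponding stability of globally generated sheaves recorded in Lemma \ref{prop10}.

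For (1), fix a coherent $\Fc$ and choose $r_0$ large enough that for all $r\geq r_0$ both $\Fc\otimes\E^{(p^r)}$ and $\Fc\otimes(\E^\prime)^{(p^r)}$ are globally generated. Since Frobenius twist commutes with direct sums, $\Fc\otimes(\E\oplus\E^\prime)^{(p^r)}=(\Fc\otimes\E^{(p^r)})\oplus(\Fc\otimes(\E^\prime)^{(p^r)})$, and assertion (1) of Lemma \ref{prop10} concludes. For (2), given a coherent $\Fc$, pick $r_0$ so that $\Fc\otimes\E^{(p^r)}$ is globally generated for $r\geq r_0$; the Frobenius twist $(\E^\prime)^{(p^r)}$ of the globally generated bundle $\E^\prime$ is again globally generated, so
\begin{equation*}
\Fc\otimes(\E\otimes\E^\prime)^{(p^r)}=\bigl(\Fc\otimes\E^{(p^r)}\bigr)\otimes(\E^\prime)^{(p^r)}
\end{equation*}
is globally generated by assertion (3) of Lemma \ref{prop10}.

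For (3), the right-exactness of the Frobenius twist turns a surjection $\E\twoheadrightarrow\E^\prime$ into a surjection $\E^{(p^r)}\twoheadrightarrow(\E^\prime)^{(p^r)}$; tensoring with $\Fc$ and invoking assertion (2) of Lemma \ref{prop10} transfers global generation from $\Fc\otimes\E^{(p^r)}$ to $\Fc\otimes(\E^\prime)^{(p^r)}$. Finally, for (4), I apply the $p$-ampleness of $\E^\prime$ to the trivial coherent sheaf $\Fc=\Oc_X$ to find $r_1$ with $(\E^\prime)^{(p^r)}$ globally generated for all $r\geq r_1$, and simultaneously pick $r_2$ such that $\Fc\otimes\E^{(p^r)}$ is globally generated for $r\geq r_2$; taking $r\geq \max(r_1,r_2)$ and arguing exactly as in (2) delivers the desired global generation of $\Fc\otimes(\E\otimes\E^\prime)^{(p^r)}$. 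I do not anticipate any serious obstacle; the only mild subtlety is keeping the bookkeeping of the various thresholds $r_i$ straight, but this is handled trivially by passing to their maximum.
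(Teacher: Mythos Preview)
Your proposal is correct. The paper defers assertions (1), (2) and (4) to Hartshorne \cite{MR193092} and only writes out the proof of (3), which is identical to yours; your direct arguments for (1), (2) and (4) are the standard ones and match what Hartshorne does, so there is no substantive difference in approach.
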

\begin{proof}
See \cite[Proposition 6.4/Corollary 6.7]{MR193092} for assertions $(1),(2)$ and $(4)$. Hartshorne does not state assertion $(3)$, so we give a proof. Let $\Fc$ be a coherent sheaf and $r_0 \geq 1$ be an integer such that $\Fc \otimes \E^{(p^r)}$ is globally generated for all $r \geq r_0$. For all $r \geq r_0$, the surjective morphism $\E \rightarrow \E^\prime$ induces a surjective morphism of $\Oc_X$-modules
\begin{equation*}
\Fc \otimes \E^{(p^r)}\rightarrow \Fc \otimes {(\E^\prime)}^{(p^r)}
\end{equation*}
and from assertion $(2)$ of lemma \ref{prop10}, the module $\Fc \otimes {(\E^\prime)}^{(p^r)}$ is globally generated over $X$.
\end{proof}
There is no known cohomological criterion for $p$-ampleness. However, Kleiman has defined in \cite{MR251044} the strictly\footnote{See again \cite{MR296078} for an example of $p$-ample vector bundle that is not cohomologically $p$-ample} stronger notion of cohomological $p$-ampleness. 
\begin{definition}
We say that a vector bundle $\E$ on $X$ is cohomologically $p$-ample if for all coherent sheaves $\Fc$ on $X$, there is an integer $r_0$ such that the cohomology groups $H^i(X,\Fc \otimes \E^{(p^r)})$ vanishes for all $i>0$, $r \geq r_0$.
\end{definition}
\begin{proposition}
If $\E$ is cohomologically $p$-ample on $X$, then $\E$ is $p$-ample.
\end{proposition}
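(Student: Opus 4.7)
The plan is to deduce global generation from cohomology vanishing via the classical Castelnuovo--Mumford regularity criterion applied to the fixed ample line bundle $\Oc_X(1)$. Recall that a coherent sheaf $\mathcal{G}$ on $X$ is called $0$-regular if $H^i(X,\mathcal{G}(-i))=0$ for all $i>0$, and that a $0$-regular coherent sheaf is globally generated on $X$ (Mumford's lemma). Since $X$ is projective of finite dimension $d:=\dim X$, the Grothendieck vanishing theorem ensures $H^i(X,\mathcal{H})=0$ for every coherent sheaf $\mathcal{H}$ and every $i>d$, so the regularity condition only has to be checked in the range $1\leq i\leq d$.

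First, fix a coherent sheaf $\Fc$ on $X$. For each $i\in\{1,\dots,d\}$ the sheaf $\Fc(-i)$ is again coherent, so the cohomological $p$-ampleness of $\E$ supplies an integer $r_i\geq 1$ with
\begin{equation*}
H^i\bigl(X,\Fc(-i)\otimes \E^{(p^r)}\bigr)=0\qquad\text{for all }r\geq r_i.
\end{equation*}
Set $r_0:=\max_{1\leq i\leq d}r_i$. Then for every $r\geq r_0$ the sheaf $\mathcal{G}_r:=\Fc\otimes\E^{(p^r)}$ satisfies
\begin{equation*}
H^i\bigl(X,\mathcal{G}_r(-i)\bigr)=H^i\bigl(X,\Fc(-i)\otimes \E^{(p^r)}\bigr)=0
\end{equation*}
for all $1\leq i\leq d$, and vanishes in higher degrees by Grothendieck. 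Hence $\mathcal{G}_r$ is $0$-regular, and consequently globally generated on $X$. Since $\Fc$ was arbitrary, this is exactly the definition of $p$-ampleness of $\E$.

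There is no real obstacle beyond invoking Castelnuovo--Mumford regularity for coherent sheaves on a projective scheme endowed with an ample line bundle (which, unlike on $\Pb^n$, requires only Grothendieck vanishing and the standard inductive proof). The key structural point worth emphasising is that one does not need to restrict $\Fc$ to line bundles of the form $\Oc_X(-m)$ as in Proposition \ref{prop3}: the regularity argument handles every coherent sheaf uniformly, using only finitely many ($d$ of them) vanishing hypotheses coming from cohomological $p$-ampleness.
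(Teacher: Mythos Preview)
Your argument via Castelnuovo--Mumford regularity is correct and supplies a self-contained proof, whereas the paper simply defers to Kleiman's original reference \cite[Proposition 9]{MR251044} without reproducing any argument. So there is nothing to compare at the level of strategy; your route is the standard one and is essentially what lies behind Kleiman's result.

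One small technical caveat: Mumford's lemma that a $0$-regular coherent sheaf is globally generated is usually stated for a \emph{globally generated} ample line bundle (the proof uses a general hyperplane section), and the paper only fixes $\Oc_X(1)$ to be ample. This is easily remedied: choose $N\geq 1$ with $\Oc_X(N)$ very ample and run your argument with the twists $\Fc(-Ni)$ in place of $\Fc(-i)$; the rest is unchanged. With that adjustment the proof is complete.
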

\begin{proof}
See \cite[Proposition 9]{MR251044}.
\end{proof}
To the best of our knowledge, the following statements does not appear in the literature so we state them and provide a proof.  
\begin{lemma}\label{prop13}
A direct sum of cohomologically $p$-ample vector bundle is cohomologically $p$-ample.
\end{lemma}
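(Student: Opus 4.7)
The plan is essentially straightforward and proceeds by induction, so it suffices to handle the case of two summands $\E$ and $\E'$. The key observation is that the Frobenius twist commutes with direct sums: for every $r \geq 1$,
\begin{equation*}
(\E \oplus \E')^{(p^r)} = (\varphi^r)^*(\E \oplus \E') = (\varphi^r)^*\E \oplus (\varphi^r)^*\E' = \E^{(p^r)} \oplus (\E')^{(p^r)}.
\end{equation*}

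Next I would tensor with an arbitrary coherent sheaf $\Fc$ (using that $\otimes$ distributes over finite direct sums) and take cohomology, which also commutes with finite direct sums on a Noetherian scheme:
\begin{equation*}
H^i\bigl(X, \Fc \otimes (\E \oplus \E')^{(p^r)}\bigr) \;=\; H^i(X, \Fc \otimes \E^{(p^r)}) \;\oplus\; H^i(X, \Fc \otimes (\E')^{(p^r)}).
\end{equation*}

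Finally, by assumption, there exist integers $r_{\E}$ and $r_{\E'}$ (depending on $\Fc$) such that each summand on the right vanishes for $i>0$ once $r \geq r_{\E}$ and $r \geq r_{\E'}$ respectively. Setting $r_0 := \max(r_{\E}, r_{\E'})$ gives the required uniform bound, and induction on the number of summands finishes the proof. There is no genuine obstacle here: the whole argument just records that cohomological $p$-ampleness is defined through a condition that is stable under finite direct sums on both the sheaf side and the cohomology side.
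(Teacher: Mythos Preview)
Your proof is correct and follows exactly the same approach as the paper: the paper's proof is the one-line observation that $H^i(X,\Fc\otimes (\E \oplus \E')^{(p^r)}) = H^i(X,\Fc\otimes \E^{(p^r)}) \oplus H^i(X,\Fc\otimes {\E'}^{(p^r)})$, and you have simply spelled out the steps (Frobenius commutes with $\oplus$, tensor distributes, cohomology commutes with finite $\oplus$, take the max of the two bounds) that make this isomorphism work.
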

\begin{proof}
It follows directly from the isomorphism
\begin{equation*}
H^i(X,\Fc\otimes (\E \oplus \E^\prime)^{(p^r)}) = H^i(X,\Fc\otimes \E^{(p^r)}) \oplus H^i(X,\Fc\otimes {\E^{\prime}}^{(p^r)})
\end{equation*}
\end{proof}
\begin{lemma}\label{prop14}
Let $f : Y \rightarrow X$ be a finite morphism of projective schemes and $\E$ be a cohomologically $p$-ample vector bundle on $X$. Then $f^*\E$ is cohomologically $p$-ample on $Y$.
\end{lemma}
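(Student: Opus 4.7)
The plan is to reduce cohomology on $Y$ to cohomology on $X$ via the projection formula, exploiting the fact that $f$ is finite (hence affine, so $R^i f_*$ vanishes for $i>0$ on coherent sheaves) and that Frobenius pullback commutes with ordinary pullback.

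Concretely, let $\mathcal{G}$ be an arbitrary coherent sheaf on $Y$. First I would note the compatibility $(f^*\E)^{(p^r)} = f^*(\E^{(p^r)})$, which follows from the fact that the relative Frobenius $\varphi_Y$ and $\varphi_X$ sit in a commutative square with $f$ (since $f$ is defined over $k$), hence $(\varphi_Y^r)^* f^* = f^* (\varphi_X^r)^*$. Then by the projection formula for the finite morphism $f$, one has
\begin{equation*}
f_*\bigl(\mathcal{G} \otimes (f^*\E)^{(p^r)}\bigr) = f_*\mathcal{G} \otimes \E^{(p^r)}.
\end{equation*}

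Next I would apply the Leray spectral sequence. Since $f$ is affine, $R^i f_*$ vanishes on coherent sheaves for $i>0$, so the Leray spectral sequence degenerates and gives
\begin{equation*}
H^i\bigl(Y, \mathcal{G} \otimes (f^*\E)^{(p^r)}\bigr) = H^i\bigl(X, f_*\mathcal{G} \otimes \E^{(p^r)}\bigr)
\end{equation*}
for every $i \geq 0$ and $r \geq 1$. The sheaf $f_*\mathcal{G}$ is coherent on $X$ because $f$ is finite, so by the cohomological $p$-ampleness of $\E$ applied to $f_*\mathcal{G}$, there exists $r_0$ such that the right-hand side vanishes for all $r\geq r_0$ and $i>0$. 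This yields the desired vanishing on $Y$.

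There is no real obstacle here; the only point requiring a moment of care is the compatibility between $f^*$ and the Frobenius twist, but this is automatic once one recalls that $f$ is a $k$-morphism and $\E^{(p^r)}$ is defined through the relative Frobenius over $k$.
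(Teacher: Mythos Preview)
Your proof is correct and follows essentially the same route as the paper: Leray spectral sequence degeneration for the finite (hence affine) map $f$, projection formula, coherence of $f_*\mathcal{G}$, and then cohomological $p$-ampleness of $\E$ applied to $f_*\mathcal{G}$. The only difference is cosmetic: you make the identification $(f^*\E)^{(p^r)} = f^*(\E^{(p^r)})$ explicit, whereas the paper uses it silently.
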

\begin{proof}
Let $\Fc$ be a coherent sheaf on $Y$. Since $f$ is finite, the Leray spectral sequence degenerates at page $2$ and we have isomorphisms
\begin{equation*}
H^i(X,f_*(\Fc \otimes f^*\E^{(p^r)})) = H^i(Y,\Fc\otimes f^*\E^{(p^r)})
\end{equation*}
for all $i \geq 0$ and $r \geq 0$. Since $f$ is finite, the pushforward $f_*\Fc$ is a coherent $\Oc_X$-module and the projection formula implies that
\begin{equation*}
f_*(\Fc \otimes f^*\E^{(p^r)}) = f_*\Fc \otimes \E^{(p^r)}.
\end{equation*}
Since $\E$ is cohomologically $p$-ample on $X$, there is an integer $r_0 \geq 1$ such that
\begin{equation*}
H^i(X, f_*\Fc \otimes \E^{(p^r)}) = 0 = H^i(Y,\Fc\otimes (f^*\E)^{(p^r)})
\end{equation*} 
for all $i>0$ and $r\geq r_0$. In particular, $f^*\E$ is cohomologically $p$-ample on $Y$.
\end{proof}
\subsection{$(\varphi,D)$-ample bundles}\label{sect1}
If $D$ is a Cartier divisor, we write $\mathcal{O}_X(D)$ for the associated line bundle. If $\mathcal{F}$ is a coherent sheaf on $X$, then we simply write $\mathcal{F}(D)$ instead of $\mathcal{F}\otimes \mathcal{O}_X(D)$. We consider an \emph{effective} Cartier divisor $D$ on $X$ and we define the notion of $(\varphi,D)$-ampleness for vector bundles over $X$.
\begin{definition}\label{def3}
Let $\E$ be a vector bundle over $X$. We say that $\E$ is $(\varphi,D)$-ample if there is an integer $r_0 \geq 1$ such that for all integer $r \geq r_0$, the vector bundle $\E^{(p^r)}(-D)$ is ample.
\end{definition}
\vspace*{0.2cm}
In the case of line bundles, $(\varphi,D)$-ampleness has the following characterization.
\begin{proposition}\label{D-amp}
Let $\Lc$ be a line bundle over $X$. Then $\Lc$ is $(\varphi,D)$-ample if and only if $\Lc$ is nef and there is an integer $n_0 \geq 1$ such that $\mathcal{L}^{\otimes n_0}(-D)$ is ample.
\end{proposition}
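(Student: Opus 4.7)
The plan is to reduce both implications to the observation that for a line bundle $\Lc$ one has $\Lc^{(p^r)} \simeq \Lc^{\otimes p^r}$, so that $(\varphi,D)$-ampleness of $\Lc$ is equivalent to: there exists $r_0 \geq 1$ such that $\Lc^{\otimes p^r}(-D)$ is ample for every $r \geq r_0$. Once this equivalence is made explicit, each direction is an elementary numerical manipulation.

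For the ``$\Leftarrow$'' direction, I would assume $\Lc$ is nef and that $\Lc^{\otimes n_0}(-D)$ is ample for some $n_0 \geq 1$. Choose $r_0$ large enough that $p^{r_0} \geq n_0$; then for every $r \geq r_0$ I decompose
\begin{equation*}
\Lc^{\otimes p^r}(-D) \;=\; \Lc^{\otimes n_0}(-D) \,\otimes\, \Lc^{\otimes (p^r - n_0)}.
\end{equation*}
The first factor is ample by hypothesis and the second is nef as a non-negative tensor power of the nef line bundle $\Lc$. Invoking the classical fact that the tensor product of an ample line bundle with a nef line bundle is again ample (a direct consequence of Nakai--Moishezon, or equivalently of the fact that the nef cone is the closure of the ample cone in the N\'eron--Severi space), one concludes that $\Lc^{\otimes p^r}(-D)$ is ample, hence $\Lc$ is $(\varphi,D)$-ample.

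For the ``$\Rightarrow$'' direction, assume $\Lc$ is $(\varphi,D)$-ample with threshold $r_0$. Setting $n_0 := p^{r_0}$ immediately gives an integer with $\Lc^{\otimes n_0}(-D)$ ample. To show nefness of $\Lc$, fix any irreducible curve $C \subset X$. For every $r \geq r_0$, ampleness of $\Lc^{\otimes p^r}(-D)$ forces strictly positive intersection with $C$, i.e.
\begin{equation*}
p^r (\Lc \cdot C) \,-\, D \cdot C \;>\; 0, \qquad \text{so} \qquad \Lc \cdot C \;>\; p^{-r} (D \cdot C).
\end{equation*}
Letting $r \to \infty$ yields $\Lc \cdot C \geq 0$, which is the numerical criterion for nefness.

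There is essentially no obstacle in this argument. The only classical input is the stability of ampleness of a line bundle under tensoring with a nef line bundle, which is well known and can be cited directly; the rest is purely formal.
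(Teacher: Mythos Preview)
Your proof is correct and follows essentially the same approach as the paper: the paper uses the identical decomposition $\Lc^{\otimes p^r}(-D) = \Lc^{\otimes n_0}(-D) \otimes \Lc^{\otimes(p^r-n_0)}$ for the backward direction, and for nefness it argues by contradiction from the same intersection inequality $p^r(\Lc\cdot C) - D\cdot C > 0$ (your limit argument and the paper's contradiction argument are logically equivalent reformulations of the same computation).
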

\begin{proof}
Note that $\Lc^{(p^r)} = \Lc^{\otimes p^r}$ for all $r \geq 0$. Assume that $r_0 \geq 1$ is an integer such that $\Lc^{\otimes p^r}(-D)$ is ample for all $r \geq r_0$. If $\mathcal{L}$ was not nef, we could find a subcurve $C \subset X$ such that the intersection product
\begin{equation*}
c_1(\mathcal{L})\cdot [C]
\end{equation*}
is negative. It would imply that the intersection product
\begin{equation*}
c_1(\mathcal{L}^{\otimes p^r}(-D)) \cdot [C] = p^r \underbrace{\left( c_1(\mathcal{L})\cdot [C]\right)}_{<0} - {D}\cdot [C]
\end{equation*}
is negative for some $r \geq r_0$ large enough, which contradicts the ampleness of $\mathcal{L}^{\otimes p^r}(-D)$. Inversely, we assume that $\Lc$ is nef and there exists an integer $n_0 \geq 1$ such that $\mathcal{L}^{\otimes n_0}(-D)$ is ample. Let $r$ be an integer such that
\begin{equation*}
r \geq \log_pn_0
\end{equation*}
and consider
\begin{equation*}
\Lc^{(p^r)}(-D) = \Lc^{\otimes p^r}(-D) = \Lc^{\otimes n_0}(-D) \otimes \Lc^{\otimes p^r-n_0}
\end{equation*}
which is ample as the tensor product of an ample line bundle with a nef line bundle.
\end{proof}
\begin{rmrk}
In the case of line bundles we will drop the $\varphi$ from the notation and simply say that the line bundle is $D$-ample.
\end{rmrk}
\begin{proposition}\label{prop20}
Let $\mathcal{L}$ be a line bundle over $X$. The following proposition are equivalent:
\begin{enumerate}
\item $\mathcal{L}$ is nef and big
\item There exists an effective Cartier divisor $H$ on $X$, such that $\mathcal{L}$ is $H$-ample.
\end{enumerate}
\end{proposition}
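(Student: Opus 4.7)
The plan is to prove the equivalence by invoking Kodaira's lemma for the direction $(1) \Rightarrow (2)$, and a direct global-sections comparison for $(2) \Rightarrow (1)$. In both directions I use Proposition \ref{D-amp} to translate $H$-ampleness into the concrete statement that $\mathcal{L}$ is nef and that some tensor power $\mathcal{L}^{\otimes n_0}$ can be written as (ample) $\otimes \, \mathcal{O}_X(H)$ for $H$ an effective Cartier divisor.

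For $(2) \Rightarrow (1)$, suppose $\mathcal{L}$ is nef and set $\mathcal{A} := \mathcal{L}^{\otimes n_0}(-H)$ with $\mathcal{A}$ ample and $H$ effective Cartier. Nefness of $\mathcal{L}$ is one of the two required conditions, so it remains to establish bigness. Let $\sigma \in H^0(X, \mathcal{O}_X(H))$ be the canonical section vanishing along $H$. For every $m \geq 1$, multiplication by $\sigma^m$ yields an injection
\begin{equation*}
\mathcal{A}^{\otimes m} \hookrightarrow \mathcal{A}^{\otimes m} \otimes \mathcal{O}_X(mH) = \mathcal{L}^{\otimes n_0 m},
\end{equation*}
so $h^0(X, \mathcal{L}^{\otimes n_0 m}) \geq h^0(X, \mathcal{A}^{\otimes m})$. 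Since $\mathcal{A}$ is ample, asymptotic Riemann--Roch gives $h^0(X, \mathcal{A}^{\otimes m}) \sim c\, m^{\dim X}$ for some $c > 0$, which shows $\mathcal{L}^{\otimes n_0}$ is big and therefore $\mathcal{L}$ is big.

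For $(1) \Rightarrow (2)$, assume $\mathcal{L}$ is nef and big. Pick any ample line bundle $\mathcal{A}$ on $X$. Kodaira's lemma, applied to the big line bundle $\mathcal{L}$ (componentwise if $X$ is reducible), produces an integer $n_0 \geq 1$ with $H^0(X, \mathcal{L}^{\otimes n_0} \otimes \mathcal{A}^{-1}) \neq 0$. A nonzero section of this line bundle vanishes along an effective Cartier divisor $H$ on $X$, so we obtain an isomorphism $\mathcal{L}^{\otimes n_0}(-H) \simeq \mathcal{A}$, which is ample. Combined with the nefness of $\mathcal{L}$, Proposition \ref{D-amp} then yields $H$-ampleness of $\mathcal{L}$.

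The substantive ingredient is Kodaira's lemma, which is classical and characteristic-free. The only mild care concerns the projective scheme $X$ being potentially non-integral: one applies Kodaira's lemma on each top-dimensional irreducible component and combines the resulting effective divisors into a single effective Cartier divisor $H$ on $X$. No deeper obstacle is expected.
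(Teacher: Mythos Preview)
Your proof is correct and follows essentially the same approach as the paper's: both directions hinge on Proposition~\ref{D-amp} together with the standard equivalence between bigness and the existence of a decomposition $\mathcal{L}^{\otimes n_0} \simeq (\text{ample}) \otimes (\text{effective})$ (Kodaira's lemma). The only differences are cosmetic: for $(2)\Rightarrow(1)$ the paper simply quotes that ample-tensor-effective implies big, whereas you spell this out via section growth; for $(1)\Rightarrow(2)$ the paper invokes the decomposition without naming Kodaira's lemma explicitly.
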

\begin{proof}
Assume that there exists an effective Cartier divisor $H$ on $X$ such that $\mathcal{L}$ is $H$-ample. We have seen in proposition \ref{D-amp} that $\Lc$ is nef and there is an integer $n_0 \geq 1$ such that $\Lc^{\otimes n_0}(-H)$ is ample. Moreover, since we can write $\mathcal{L}^{\otimes n_0}$ as a tensor product
\begin{equation*}
\mathcal{L}^{\otimes n_0} = \mathcal{L}^{\otimes n_0}(-H) \otimes \mathcal{O}_X(H)
\end{equation*}
of an ample line bundle with an effective line bundle, $\mathcal{L}$ is big. We are left to show the implication $(i) \Rightarrow (ii)$. Since $\mathcal{L}$ is big, there exists an integer $n_0 \geq 1$ and an ample line bundle $\mathcal{A}$ such that $\mathcal{L}^{\otimes n_0}\otimes \mathcal{A}^{-1} = \mathcal{O}_X(H)$ with $H$ an effective divisor. In particular, the line bundle $\mathcal{L}^{\otimes n_0}(-H)$ is ample. We conclude with proposition \ref{D-amp}.
\end{proof}
We prove some stability properties of $(\varphi,D)$-ample vector bundles. We first prove the following easy lemma.
\begin{lemma}\label{lem_delta}
Let $C$ be a projective curve and $\E$ be a vector bundle on $C$. Recall that $\delta(\E)$ denotes the minimum of degrees of quotient line bundles of $\E$. Then, we have
\begin{enumerate}
\item If $\Lc$ is a line bundle on $C$, then $\delta(\E \otimes \mathcal{L}) = \delta(\E) + \deg \mathcal{L}$.
\item If $f : C^\prime \rightarrow C$ is a finite morphism of degree $d$ with $C^\prime$ a projective curve, then $d\delta(\E) \geq \delta(f^*\E)$.
\end{enumerate}
\end{lemma}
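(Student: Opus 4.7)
The plan is to prove both assertions by exhibiting the relevant quotient line bundles directly and then chasing degrees; no cohomological input is needed.

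For (1), observe that tensoring by the line bundle $\mathcal{L}$ is an exact auto-equivalence of the category of coherent sheaves on $C$, and its inverse is tensoring by $\mathcal{L}^{-1}$. Hence the assignment $(\E \twoheadrightarrow \mathcal{M}) \mapsto (\E \otimes \mathcal{L} \twoheadrightarrow \mathcal{M} \otimes \mathcal{L})$ is a bijection between quotient line bundles of $\E$ and quotient line bundles of $\E \otimes \mathcal{L}$. Since $\deg(\mathcal{M}\otimes \mathcal{L}) = \deg \mathcal{M} + \deg \mathcal{L}$, taking the minimum over such quotients yields
\begin{equation*}
\delta(\E \otimes \mathcal{L}) = \delta(\E) + \deg \mathcal{L}.
\end{equation*}

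For (2), choose a quotient line bundle $\E \twoheadrightarrow \mathcal{M}$ achieving the minimum, so that $\deg \mathcal{M} = \delta(\E)$. Pulling back by the finite morphism $f : C^\prime \to C$ is right exact, so $f^*\E \twoheadrightarrow f^*\mathcal{M}$ remains surjective, and $f^*\mathcal{M}$ is again a line bundle on $C^\prime$. By the projection formula for the degree of a pullback along a finite morphism of degree $d$, we have $\deg(f^*\mathcal{M}) = d\deg\mathcal{M} = d\,\delta(\E)$. By definition of $\delta(f^*\E)$ as a minimum over quotient line bundles, this forces
\begin{equation*}
\delta(f^*\E) \leq d\,\delta(\E),
\end{equation*}
which is the desired inequality.

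Neither step presents any real obstacle; the only small point to remember is that tensoring with a line bundle preserves the rank-one quotient property (so the correspondence in (1) really is a bijection onto quotient line bundles, not just quotient coherent sheaves) and that pullback along a finite morphism preserves surjectivity and line-bundleness (so the candidate quotient in (2) really is a quotient line bundle of $f^*\E$).
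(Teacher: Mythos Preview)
Your proof is correct and follows essentially the same approach as the paper. For (1) the paper proves the two inequalities separately (first $\delta(\E\otimes\Lc)\le\delta(\E)+\deg\Lc$ by tensoring a minimizing quotient, then the reverse by replacing $\E$ with $\E\otimes\Lc$ and $\Lc$ with $\Lc^{-1}$), while you phrase it more cleanly as a bijection of quotient line bundles; for (2) the arguments are identical.
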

\begin{proof}[Proof of the lemma \ref{lem_delta}]
For $(1)$, take a line bundle $\E \twoheadrightarrow \Lc^{\prime}$ such that $\delta(\E) = \deg \Lc^\prime$. If we tensor it by $\Lc$, we get $\delta(\E \otimes \Lc) \leq \deg \Lc^\prime + \deg \Lc =  \delta(\E) + \deg \mathcal{L}$. The same argument applied to $\E \otimes \Lc^{-1}$ shows the reverse inequality. For $(2)$, take a line bundle $\E \twoheadrightarrow \Lc^{\prime}$ such that $\delta(\E) = \deg \Lc^\prime$. The pullback $f^*$ induces a quotient map $f^*\E \twoheadrightarrow f^*\Lc^{\prime} = {\Lc^{\prime}}^{\otimes d}$ which shows that $\delta(f^*\E) \leq d\deg\Lc^{\prime} = d\delta(\E)$.
\end{proof}
\begin{proposition}\label{nD}
Let $\E$ be a vector bundle on $X$ and $n\geq 1$ an integer. The following assertion are equivalent.
\begin{enumerate}
\item $\E$ is $(\varphi,D)$-ample
\item $\E$ is $(\varphi,nD)$-ample
\end{enumerate}
\end{proposition}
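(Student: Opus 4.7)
The plan is to reduce the equivalence to a statement about a line bundle by passing to the projective bundle $\pi : Y = \Pb(\E) \to X$ with tautological quotient $H = \Oc_Y(1)$, where the classical fact that ample plus nef is ample will suffice.

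The key reduction is that for every integers $r, m \geq 0$, the vector bundle $\E^{(p^r)}(-mD)$ is ample on $X$ if and only if the line bundle $H^{\otimes p^r}(-m\pi^*D)$ is ample on $Y$. Indeed, by definition $\E^{(p^r)}(-mD)$ is ample iff $\Oc_{\Pb(\E^{(p^r)})}(1) \otimes \pi_r^*\Oc_X(-mD)$ is ample on $\Pb(\E^{(p^r)})$; the relative Frobenius $F^r_{Y/X} : Y \to \Pb(\E^{(p^r)})$ is a finite surjective morphism over $X$ satisfying $(F^r_{Y/X})^* \Oc_{\Pb(\E^{(p^r)})}(1) = H^{\otimes p^r}$ (a local check on fibers), so Proposition \ref{prop11} gives the equivalence. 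Taking $m = 1$ and $m = n$, the claim reduces to showing that the line bundle $H$ is $(\varphi, \pi^*D)$-ample on $Y$ if and only if $H$ is $(\varphi, n\pi^*D)$-ample on $Y$.

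Under either assumption, $H$ is nef: if $H \cdot C < 0$ for some curve $C \subset Y$, then $(p^r H - m\pi^*D) \cdot C \to -\infty$ as $r \to \infty$, contradicting the ampleness of $H^{\otimes p^r}(-m\pi^*D)$ for arbitrarily large $r$. Combining the nefness of $H$ with the classical fact that the sum of an ample and a nef Cartier divisor is ample, both implications follow. For $(1) \Rightarrow (2)$, observe that $(H^{\otimes p^{r_0}}(-\pi^*D))^{\otimes n} = H^{\otimes np^{r_0}}(-n\pi^*D)$ is ample; for any $r'$ with $p^{r'} \geq np^{r_0}$ we have
\begin{equation*}
p^{r'} H - n\pi^*D \;=\; (np^{r_0} H - n\pi^*D) \,+\, (p^{r'} - np^{r_0})\, H,
\end{equation*}
which is ample plus nef, hence ample. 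For $(2) \Rightarrow (1)$, the analogous decomposition
\begin{equation*}
np^{r'} H - n\pi^*D \;=\; (p^{r_0} H - n\pi^*D) \,+\, (np^{r'} - p^{r_0})\, H
\end{equation*}
shows that $n(p^{r'}H - \pi^*D)$ is ample as an integer divisor for $r'$ large enough, and Proposition \ref{squareampclassic} applied to the corresponding line bundle concludes that $p^{r'}H - \pi^*D$ is itself ample.

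The main obstacle is setting up the reduction, specifically verifying the identification $(F^r_{Y/X})^* \Oc_{\Pb(\E^{(p^r)})}(1) = H^{\otimes p^r}$ which expresses how the relative Frobenius of the projective bundle interacts with Frobenius twists of $\E$. Once this is in hand, the remainder is a routine manipulation using nefness of $H$ and the ``ample plus nef is ample'' principle for Cartier divisors.
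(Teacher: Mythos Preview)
Your argument is correct and takes a genuinely different route from the paper. The paper works directly with Barton's numerical criterion for ampleness of vector bundles (assertion (4) of Proposition~\ref{amp_criterion}): for each $r$ it manipulates the quantity $\delta(g^*\E^{(p^r)}(-mD)) = \delta(g^*\E^{(p^r)}) - m\,D\cdot C$ for test curves $g:C\to X$, splitting into the cases $D\cdot C \leq 0$ and $D\cdot C > 0$ and using Lemma~\ref{lem_delta} together with the Frobenius pullback bound $p^{s}\delta(\cdot) \geq \delta((\cdot)^{(p^{s})})$. Your approach instead absorbs the vector-bundle difficulty into a single geometric reduction: passing to $Y=\Pb(\E)$ and identifying $\Pb(\E^{(p^r)})$ with the relative Frobenius twist $Y^{(p^r/X)}$, so that ampleness of $\E^{(p^r)}(-mD)$ becomes ampleness of the line bundle $H^{\otimes p^r}(-m\pi^*D)$, after which the statement is the elementary line-bundle fact (Proposition~\ref{D-amp} in spirit) that ample plus nef is ample. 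Your route is cleaner and avoids the case analysis and the $\varepsilon$-bookkeeping of Barton's criterion; the paper's route has the advantage of staying on $X$ and not invoking the identification $(F^r_{Y/X})^*\Oc_{\Pb(\E^{(p^r)})}(1)=H^{\otimes p^r}$, which, while standard, is the one step in your argument that deserves the explicit justification you gave.
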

\begin{proof}
Assume that $\E$ is $(\varphi,D)$-ample and consider $r_0 \geq 1$ such that $$\E^{(p^r)}(-D)$$ is ample for all $r \geq r_0$. By Barton's numerical criterion of ampleness recalled in assertion $(4)$ of proposition \ref{amp_criterion}, for all $r \geq r_0$, we have a real number $\varepsilon_r > 0$ such that for all finite morphism $g : C \rightarrow X$ where $C$ is a smooth projective curve over $k$, we have
\begin{equation*}
\delta(g^*\E^{(p^r)}(-D)) \geq \varepsilon_r\lVert g_*C \rVert
\end{equation*} 
which is equivalent to
\begin{equation*}
\delta(g^*\E^{(p^r)}) -D\cdot C \geq \varepsilon_r\lVert g_*C \rVert
\end{equation*}
where $D\cdot C$ is the degree of the line bundle $g^*\Oc_X(D) = \Oc_X(D)_{|C}$ (it is also equal to the intersection number of $D$ with $C$). If $D\cdot C\leq 0$, then 
\begin{equation*}
\begin{aligned}
\delta(g^*\E^{(p^r)}(-nD)) &= \delta(g^*\E^{(p^r)})-D\cdot C - (n-1)D\cdot C \\
&\geq \delta(g^*\E^{(p^r)})-D\cdot C \\
&\geq \varepsilon_r\lVert g_*C \rVert
\end{aligned}
\end{equation*}
for all $r\geq r_0$. If $D\cdot C > 0$, we take $r_1 \geq r_0$ such that $r_1 \geq r_0 + \log_p(n)$ and let $r\geq r_1$ be an integer. Since $\E^{(p^r)}(-D)$ is ample, the bundle 
\begin{equation*}
(\E^{(p^{r_0})}(-D))^{(p^{r-r_0})} = \E^{(p^r)}(-p^{r-r_0}D)
\end{equation*}
is ample and we have
\begin{equation*}
\delta(g^*\E^{(p^r)}(-p^{r-r_0}D)) \geq \varepsilon_r^\prime \lVert g_*C \rVert
\end{equation*}
for some real number $\varepsilon_r^\prime > 0$. Thus,
\begin{equation*}
\begin{aligned}
\delta(g^*\E^{(p^r)}(-nD)) &= \delta(g^*\E^{(p^r)}(-p^{r-r_0}D)) +(p^{r-r_0}-n)D\cdot C \\
&\geq  \delta(g^*\E^{(p^r)}(-p^{r-r_0}D)) \\
&\geq \varepsilon_r^\prime \lVert g_*C \rVert.
\end{aligned}
\end{equation*}
In conclusion, we have
\begin{equation*}
\delta(g^*\E^{(p^r)}(-nD)) \geq \min(\varepsilon_r,\varepsilon_r^\prime) \lVert g_*C \rVert
\end{equation*}
for all $r \geq r_1$ and all $g : C \rightarrow X$, which means that $\E$ is $(\varphi,nD)$-ample. Inversely, consider an integer $r_0 \geq 1$ such that for all $r \geq r_0$, we have a real number $\varepsilon_r > 0$ such that for all finite morphism $g : C \rightarrow X$ where $C$ is a smooth projective curve over $k$, we have
\begin{equation*}
\delta(g^*\E^{(p^r)}) -nD\cdot C \geq \varepsilon_r\lVert g_*C \rVert.
\end{equation*}
If $D\cdot C\geq 0$, we have 
\begin{equation*}
\begin{aligned}
\delta(g^*\E^{(p^r)}(-D)) &= \delta(g^*\E^{(p^r)}(-nD)) +(n-1)D\cdot C \\
&\geq \delta(g^*\E^{(p^r)}(-nD)) \\
&\geq \varepsilon_r\lVert g_*C \rVert
\end{aligned}
\end{equation*}
for all $r\geq r_0$.
Consider an integer $r_1 \geq r_0 + \log_pn$. If $D\cdot C< 0$, we have 
\begin{equation*}
\begin{aligned}
p^{r-r_0}\delta(g^*\E^{(p^{r_0})}(-D)) &\geq \delta(g^*\E^{(p^r)}(-p^{r-r_0}D)) \\
&\geq \delta(g^*\E^{(p^r)}(-nD)) + (n-p^{r-r_0})D\cdot C \\
&\geq \delta(g^*\E^{(p^r)}(-nD)) \\
&\geq \varepsilon_r\lVert g_*C \rVert.
\end{aligned}
\end{equation*}
for all $r \geq r_1$. In conclusion, we have
\begin{equation*}
\delta(g^*\E^{(p^r)}(-D)) \geq \frac{\varepsilon_r}{p^{r-r_0}} \lVert g_*C \rVert
\end{equation*}
for all $r \geq r_1$ and all $g : C \rightarrow X$, which means that $\E$ is $(\varphi,D)$-ample.
\end{proof}
\begin{proposition}\label{prop15}
Let $\E$ and $\E^\prime$ be two $(\varphi,D)$-ample vector bundle on $X$. Then, $\E \oplus \E^\prime$ is $(\varphi,D)$-ample.
\end{proposition}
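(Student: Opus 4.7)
The proof will be essentially immediate from the definition of $(\varphi,D)$-ampleness together with the fact that a direct sum of ample vector bundles is ample (assertion (1) of Proposition \ref{prop7}). The key observation is that Frobenius pullback and tensoring with a line bundle both commute with direct sums, so the Frobenius twist of a direct sum decomposes as a direct sum of Frobenius twists.

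My plan is as follows. By the definition of $(\varphi,D)$-ampleness applied to $\E$ and $\E^\prime$, I would choose integers $r_0, r_0^\prime \geq 1$ such that $\E^{(p^r)}(-D)$ is ample for every $r \geq r_0$ and ${\E^\prime}^{(p^r)}(-D)$ is ample for every $r \geq r_0^\prime$. Setting $R = \max(r_0, r_0^\prime)$, I would then identify
\begin{equation*}
(\E \oplus \E^\prime)^{(p^r)}(-D) \simeq \E^{(p^r)}(-D) \oplus {\E^\prime}^{(p^r)}(-D)
\end{equation*}
for all $r \geq R$, using that the Frobenius pullback ${(\varphi^r)}^*$ is additive and that tensoring by $\Oc_X(-D)$ distributes over direct sums.

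The conclusion follows because the right-hand side is a direct sum of two ample vector bundles and is therefore ample by assertion (1) of Proposition \ref{prop7}. Hence $(\E \oplus \E^\prime)^{(p^r)}(-D)$ is ample for every $r \geq R$, which is exactly the statement that $\E \oplus \E^\prime$ is $(\varphi,D)$-ample. There is no real obstacle: once the trivial compatibility of Frobenius twist with direct sum is written down, everything reduces to the already-established stability of ampleness under direct sums.
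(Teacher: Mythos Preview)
Your proposal is correct and follows essentially the same approach as the paper's own proof: the paper likewise picks an $r_0$ large enough so that both $\E^{(p^r)}(-D)$ and ${(\E^\prime)}^{(p^r)}(-D)$ are ample, uses the identification $(\E \oplus \E^{\prime})^{(p^r)}(-D) = \E^{(p^r)}(-D) \oplus {(\E^\prime)}^{(p^r)}(-D)$, and concludes by assertion (1) of Proposition \ref{prop7}.
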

\begin{proof}
Let $r_0 \geq 1$ be an integer such that for all $r \geq r_0$, the bundles $\E^{(p^r)}(-D)$ and ${(\E^\prime)}^{(p^r)}(-D)$ are ample. For all $r \geq r_0$, we have
\begin{equation*}
(\E \oplus \E^{\prime})^{(p^r)}(-D) = \E^{(p^r)}(-D) \oplus {(\E^\prime)}^{(p^r)}(-D)
\end{equation*}
which is ample by the assertion $(1)$ of proposition \ref{prop7}.
\end{proof}
\begin{proposition}\label{prop19}
Consider an extension of vector bundles on $X$
\begin{equation*}
\begin{tikzcd}
0 \arrow[r] & \E_1 \arrow[r] & \E \arrow[r] & \E_2 \arrow[r] & 0
\end{tikzcd}
\end{equation*}
where $\E_1$ and $\E_2$ are $(\varphi,D)$-ample and assume that $X$ is regular over $k$. Then $\E$ is $(\varphi,D)$-ample.
\end{proposition}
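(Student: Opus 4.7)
The plan is to pull back the given extension by the Frobenius, tensor with $\Oc_X(-D)$, and then invoke assertion (2) of Proposition \ref{prop7}, which states that an extension of ample vector bundles is ample.

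First I would use Kunz's theorem: since $X$ is regular over $k$, the relative Frobenius $\varphi : X \to X^{(p)}$ is flat, hence so is $\varphi^r$ for every $r \geq 1$. Flatness of $\varphi^r$ guarantees that the Frobenius twist $(\cdot)^{(p^r)} = (\varphi^r)^*(\varphi^r)_*$ is an exact functor and sends locally free sheaves to locally free sheaves. Applying $(\cdot)^{(p^r)}$ to the given short exact sequence therefore produces a short exact sequence of vector bundles on $X$
\[
0 \to \E_1^{(p^r)} \to \E^{(p^r)} \to \E_2^{(p^r)} \to 0,
\]
and tensoring with the invertible (hence flat) sheaf $\Oc_X(-D)$ preserves exactness, yielding
\[
0 \to \E_1^{(p^r)}(-D) \to \E^{(p^r)}(-D) \to \E_2^{(p^r)}(-D) \to 0.
\]

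Next, the $(\varphi,D)$-ampleness hypothesis on $\E_1$ and $\E_2$ furnishes integers $r_1, r_2 \geq 1$ such that $\E_i^{(p^r)}(-D)$ is ample on $X$ for every $r \geq r_i$, $i = 1,2$. Setting $r_0 := \max(r_1, r_2)$, both outer terms of the above extension are ample for every $r \geq r_0$, so assertion (2) of Proposition \ref{prop7} applies and shows that the middle term $\E^{(p^r)}(-D)$ is ample for all $r \geq r_0$. This is exactly the definition of $(\varphi,D)$-ampleness for $\E$.

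The only genuinely subtle point, and the one that forces the regularity hypothesis, is the well-behavedness of the Frobenius twist on vector bundles: without $X$ regular, $(\varphi^r)_*\E_i$ may fail to be locally free on $X^{(p^r)}$ (by Kunz, local freeness of $(\varphi^r)_*\Oc_X$ characterizes regularity of $X$), in which case $\E_i^{(p^r)}$ would not even be a vector bundle and the above exact sequence would not make sense within the framework of ample bundles. Once this exactness is secured, the remainder of the argument is purely formal, relying only on the known stability of ampleness under extension.
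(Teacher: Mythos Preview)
Your argument is correct and follows exactly the same route as the paper's proof: invoke Kunz's theorem to get flatness of the Frobenius on a regular scheme, use this to preserve the short exact sequence under $(\cdot)^{(p^r)}$ and tensoring with $\Oc_X(-D)$, then apply assertion~(2) of Proposition~\ref{prop7} for $r \geq r_0 = \max(r_1,r_2)$. The only minor remark is that in your final paragraph the emphasis is slightly misplaced: pullback of a locally free sheaf along any morphism is locally free, so the genuine role of regularity (via Kunz) is to guarantee \emph{exactness} of the Frobenius pullback, not merely that $\E_i^{(p^r)}$ is a vector bundle.
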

\begin{proof}
On a regular scheme, the Frobenius morphism is flat by \cite{MR252389} or \cite[\href{https://stacks.math.columbia.edu/tag/0EC0}{Lemma 0EC0}]{stacks-project}. As a consequence, we have an integer $r_0\geq 1$ and an exact sequence
\begin{equation*}
\begin{tikzcd}
0 \arrow[r] & (\E_1)^{(p^r)}(-D) \arrow[r] & (\E)^{(p^r)}(-D) \arrow[r] &(\E_2)^{(p^r)}(-D)\arrow[r] & 0
\end{tikzcd}
\end{equation*}
of vector bundles on $X$ where $ (\E_1)^{(p^r)}(-D) $ and $(\E_2)^{(p^r)}(-D)$ are ample for all $r \geq r_0$. We conclude with assertion $(2)$ of proposition \ref{prop7}.
\end{proof}
\begin{proposition}\label{prop16}
Let $\E \rightarrow \E^\prime$ be a surjective morphism of $\Oc_X$-modules between two vector bundles. If $\E$ is $(\varphi,D)$-ample, then $\E^\prime$ is also $(\varphi,D)$-ample.
\end{proposition}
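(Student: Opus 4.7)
The plan is to reduce the statement to the analogous fact for usual ampleness, namely assertion $(4)$ of Proposition \ref{prop7}, which says that a quotient of an ample vector bundle is ample. So the task boils down to verifying that all the constructions involved in the definition of $(\varphi,D)$-ampleness (Frobenius twisting and tensoring with $\Oc_X(-D)$) preserve surjectivity.

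First I would apply the hypothesis: choose $r_0 \geq 1$ such that $\E^{(p^r)}(-D)$ is ample on $X$ for every $r \geq r_0$. Next, from the given surjection $\E \twoheadrightarrow \E^\prime$ I would produce a surjection $\E^{(p^r)} \twoheadrightarrow (\E^\prime)^{(p^r)}$ for each $r \geq r_0$. This is immediate because Frobenius pullback $(\varphi^r)^{*}$ is right exact (it is a pullback along a morphism of schemes), so it preserves epimorphisms of coherent sheaves.

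Then I would tensor with the line bundle $\Oc_X(-D)$: tensoring with a line bundle is exact and hence preserves surjections, giving a surjection
\begin{equation*}
\E^{(p^r)}(-D) \twoheadrightarrow (\E^\prime)^{(p^r)}(-D)
\end{equation*}
for each $r \geq r_0$. Since the source is ample by the choice of $r_0$, assertion $(4)$ of Proposition \ref{prop7} implies that the quotient $(\E^\prime)^{(p^r)}(-D)$ is also ample. As this holds for every $r \geq r_0$, the bundle $\E^\prime$ is $(\varphi,D)$-ample by Definition \ref{def3}, which concludes the proof.

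There is essentially no obstacle here: the argument is a direct translation of the quotient-stability of ordinary ampleness, and it does not require any regularity hypothesis on $X$ (in contrast with the extension statement in Proposition \ref{prop19}, where flatness of Frobenius was needed to keep the sequence exact). The only point worth emphasising is the right-exactness of Frobenius pullback, which is why surjections, but not injections in general, descend through the twist $(\cdot)^{(p^r)}$.
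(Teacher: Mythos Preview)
Your proof is correct and follows essentially the same approach as the paper: choose $r_0$ so that $\E^{(p^r)}(-D)$ is ample, observe that the surjection $\E \twoheadrightarrow \E^\prime$ induces a surjection $\E^{(p^r)}(-D) \twoheadrightarrow (\E^\prime)^{(p^r)}(-D)$, and conclude by the stability of ampleness under quotients. Your version is slightly more detailed in justifying why the surjection is preserved (right-exactness of Frobenius pullback and exactness of tensoring with a line bundle), which is a welcome clarification.
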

\begin{proof}
Let $r_0 \geq 1$ be an integer such that for all $r \geq r_0$, the bundle $\E^{(p^r)}(-D)$ is ample. For all $r\geq r_0$, the surjective morphism $\E \rightarrow \E^\prime$ induces a surjection
\begin{equation*}
\E^{(p^r)}(-D) \rightarrow {(\E^\prime)}^{(p^r)}(-D)
\end{equation*}
and we conclude with the assertion $(3)$ of proposition \ref{prop7}.
\end{proof}
\begin{proposition}\label{prop17}
The tensor product of $(\varphi,D)$-ample vector bundles is $(\varphi,D)$-ample.
\end{proposition}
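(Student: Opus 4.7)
The plan is to reduce the statement to the ampleness of a tensor product of two ample bundles, at the cost of doubling the divisor, and then invoke Proposition \ref{nD} to absorb the factor of two.

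Concretely, let $\E$ and $\E'$ be $(\varphi,D)$-ample vector bundles on $X$. By definition, I can choose a single integer $r_0 \geq 1$ such that for every $r \geq r_0$ both $\E^{(p^r)}(-D)$ and $(\E')^{(p^r)}(-D)$ are ample on $X$. By assertion (5) of Proposition \ref{prop7} (tensor product of ample vector bundles is ample), their tensor product
\begin{equation*}
\E^{(p^r)}(-D) \otimes (\E')^{(p^r)}(-D) \;=\; \bigl(\E \otimes \E'\bigr)^{(p^r)}(-2D)
\end{equation*}
is ample for every $r \geq r_0$, where in the equality I have used that Frobenius pullback and twisting by $\Oc_X(-D)$ both commute with tensor products of locally free sheaves.

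This shows directly that $\E \otimes \E'$ is $(\varphi,2D)$-ample. To conclude that it is in fact $(\varphi,D)$-ample, I invoke Proposition \ref{nD} with $n=2$, which asserts that $(\varphi,D)$-ampleness and $(\varphi,nD)$-ampleness are equivalent for any positive integer $n$. The main nontrivial ingredient in the whole argument is really Proposition \ref{nD}, which has already been established via Barton's numerical criterion; once that is in hand, the present statement is a two-line combination of it with the classical stability of ampleness under tensor product. I do not anticipate any genuine obstacle beyond checking that the various exponents in $p^r$ are large enough, which is automatic since $r_0$ may be chosen uniformly for the two input bundles.
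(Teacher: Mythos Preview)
Your proof is correct and follows essentially the same approach as the paper: tensor the two ample bundles $\E^{(p^r)}(-D)$ and $(\E')^{(p^r)}(-D)$ to get $(\E\otimes\E')^{(p^r)}(-2D)$ ample via Proposition~\ref{prop7}(5), then invoke Proposition~\ref{nD} to pass from $(\varphi,2D)$-ampleness to $(\varphi,D)$-ampleness.
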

\begin{proof}
Let $r_0 \geq 1$ be an integer such that for all $r \geq r_0$, the bundles $\E^{(p^r)}(-D)$ and ${(\E^\prime)}^{(p^r)}(-D)$ are ample. For all $r\geq r_0$, we have
\begin{equation*}
(\E \otimes \E^{\prime})^{(p^r)}(-2D) = \E^{(p^r)}(-D) \otimes {(\E^\prime)}^{(p^r)}(-D)
\end{equation*}
which is ample by assertion $(5)$ of proposition \ref{prop7}. It shows that $\E\otimes\E^{\prime}$ is $(\varphi,2D)$-ample and we conclude with the proposition \ref{nD}.
\end{proof}
\begin{proposition}\label{squarephiD}
If $\E$ is a vector bundle such that $\E^{\otimes n}$ is $(\varphi,D)$-ample for some $n\geq 1$, then $\E$ is also $(\varphi,D)$-ample.
\end{proposition}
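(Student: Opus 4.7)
The plan is to reduce to the classical statement (Proposition \ref{squareampclassic}) that ampleness of $\mathcal{F}^{\otimes n}$ implies ampleness of $\mathcal{F}$, after replacing $D$ by $nD$ so that the twist can be ``factored'' into the $n$-fold tensor power.

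First I would invoke Proposition \ref{nD} to upgrade the hypothesis: since $\E^{\otimes n}$ is $(\varphi,D)$-ample, it is also $(\varphi,nD)$-ample. Concretely, there exists $r_0 \geq 1$ such that for every $r \geq r_0$ the vector bundle
\begin{equation*}
(\E^{\otimes n})^{(p^r)}(-nD)
\end{equation*}
is ample on $X$.

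Next I would use the compatibility of Frobenius pullback with tensor product together with the standard projection-formula identity to rewrite
\begin{equation*}
(\E^{\otimes n})^{(p^r)}(-nD) \;=\; (\E^{(p^r)})^{\otimes n} \otimes \mathcal{O}_X(-D)^{\otimes n} \;=\; \bigl(\E^{(p^r)}(-D)\bigr)^{\otimes n}.
\end{equation*}
Thus for every $r \geq r_0$ the $n$-fold tensor power $(\E^{(p^r)}(-D))^{\otimes n}$ is ample.

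Finally I would apply Proposition \ref{squareampclassic} to the vector bundle $\E^{(p^r)}(-D)$: since one of its tensor powers is ample, the bundle itself is ample. This holds for every $r \geq r_0$, which by Definition \ref{def3} is exactly what it means for $\E$ to be $(\varphi,D)$-ample. There is no real obstacle here beyond keeping track of the twist; the key observation is the algebraic identity $(\E^{\otimes n})^{(p^r)}(-nD) = (\E^{(p^r)}(-D))^{\otimes n}$, which is precisely what makes the $nD$-reformulation provided by Proposition \ref{nD} usable.
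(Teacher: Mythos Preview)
Your proof is correct and follows exactly the same route as the paper: invoke Proposition~\ref{nD} to pass from $D$ to $nD$, use the identity $(\E^{\otimes n})^{(p^r)}(-nD) = (\E^{(p^r)}(-D))^{\otimes n}$, and apply Proposition~\ref{squareampclassic}.
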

\begin{proof}
Assume that $\E^{\otimes n}$ is $(\varphi,D)$-ample. By proposition \ref{nD}, $\E^{\otimes n}$ is also $(\varphi,nD)$-ample. Let $r_0 \geq 1$ be an integer such that for all $r \geq r_0$, the bundle ${(\E^{\otimes n})}^{(p^r)}(-nD)$ is ample. For all $r \geq r_0$, the bundle
\begin{equation*}
{(\E^{(p^r)}(-D))}^{\otimes n} = {(\E^{\otimes n})}^{(p^r)}(-nD)
\end{equation*}
is ample. Thus, the bundle $\E^{(p^r)}(-D)$ is ample for all $r \geq r_0$ by proposition \ref{squareampclassic}.
\end{proof}
\begin{proposition}\label{prop18}
Let $f : Y \rightarrow X$ be a finite morphism of projective schemes such that the pullback $f^{-1}D$ is defined as an effective Cartier divisor of $Y$\footnote{By \cite[\href{https://stacks.math.columbia.edu/tag/02OO}{Lemma 02OO}]{stacks-project}, it is is the case when $f(x) \notin D$ for any weakly associated point $x$ of $X$ or when $f$ is flat.} and $\E$ be a $(\varphi,D)$-ample bundle on $X$. Then $f^*\E$ is $(\varphi,f^{-1}D)$-ample on $Y$. If furthermore, $f$ is assumed surjective, then the converse holds.
\end{proposition}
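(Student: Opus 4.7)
The plan is to reduce both implications directly to the analogous statement for ample bundles (proposition \ref{prop11}), using two compatibilities: the relative Frobenius is natural with respect to morphisms of $k$-schemes, and $f^*\Oc_X(D)=\Oc_Y(f^{-1}D)$ under the hypothesis that $f^{-1}D$ is well-defined as an effective Cartier divisor. Together these give, for every $r\ge 1$, a canonical isomorphism
\begin{equation*}
f^*\bigl(\E^{(p^r)}(-D)\bigr) \;\simeq\; (f^*\E)^{(p^r)}(-f^{-1}D).
\end{equation*}

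For the forward direction, I assume $\E$ is $(\varphi,D)$-ample and pick $r_0\ge 1$ such that $\E^{(p^r)}(-D)$ is ample on $X$ for every $r\ge r_0$. By the finiteness of $f$ and the first half of proposition \ref{prop11}, the pullback $f^*(\E^{(p^r)}(-D))$ is ample on $Y$; by the displayed identification this is exactly $(f^*\E)^{(p^r)}(-f^{-1}D)$, so $f^*\E$ is $(\varphi,f^{-1}D)$-ample.

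For the converse, under the extra hypothesis that $f$ is surjective, the argument is run in reverse: if $(f^*\E)^{(p^r)}(-f^{-1}D)$ is ample on $Y$ for all $r\ge r_0$, then $f^*(\E^{(p^r)}(-D))$ is ample, and the second half of proposition \ref{prop11} descends ampleness of a pullback along a finite surjective morphism back to $X$. Hence $\E^{(p^r)}(-D)$ is ample for every $r\ge r_0$, which is the $(\varphi,D)$-ampleness of $\E$.

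There is essentially no obstacle beyond bookkeeping. The only point that deserves a line of justification is the compatibility $f^*\circ(\varphi^r)^*\simeq(\varphi^r)^*\circ f^*$, which follows from the commutative square between $f$ and the relative Frobenius maps of $X$ and $Y$ (this uses nothing beyond the fact that the relative Frobenius is a natural transformation on the category of $k$-schemes). Once this identification and $f^*\Oc_X(D)=\Oc_Y(f^{-1}D)$ are in place, the statement is formally a translation of proposition \ref{prop11} into the $(\varphi,D)$-ample setting.
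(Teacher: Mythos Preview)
Your proof is correct and follows essentially the same approach as the paper: both reduce to proposition \ref{prop11} via the identification $f^*(\E^{(p^r)}(-D)) \simeq (f^*\E)^{(p^r)}(-f^{-1}D)$, with the converse handled by the descent half of that same proposition. Your write-up is in fact more careful than the paper's in justifying the Frobenius-pullback compatibility.
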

\begin{proof}
Let $r_0 \geq 1$ be an integer such that for all $r \geq r_0$, the bundle $\E^{(p^r)}(-D)$ is ample. For all $r\geq r_0$, the bundle
\begin{equation*}
f^*(\E^{(p^r)}(-D)) = {(f^*\E)}^{(p^r)}(-f^{-1}D)
\end{equation*}
is ample by proposition \ref{prop11}. If $f$ is assumed surjective, the converse holds by proposition \ref{prop11} again.
\end{proof}
The table \ref{table_pos} summarizes the different stability properties of ampleness, $p$-ampleness, cohomological $p$-ampleness and $(\varphi,D)$-ampleness.
\begin{table}
\begin{center}
\begin{tabular}{|c|c|c|c|c|}
\hline
  & \begin{tabular}{@{}c@{}}cohomologically\\ $p$-ample \end{tabular} & $p$-ample  & Ample  &  $(\varphi,D)$-ample \\
  \hline
  \begin{tabular}{@{}c@{}}
  Stability of \\
  direct sum \end{tabular}  & \ref{prop13} &  \ref{prop12} & \ref{prop7}   & \ref{prop15} \\
  \hline
  \begin{tabular}{@{}c@{}}
  Stability of \\
  extension \end{tabular} & ? & ? & \ref{prop7}  & \ref{prop19} ($X$ regular) \\
  \hline
 \begin{tabular}{@{}c@{}}
  Stability of \\
  quotient \end{tabular}   & ? & \ref{prop12} & \ref{prop7}  & \ref{prop16} \\
  \hline
 \begin{tabular}{@{}c@{}}
  Stability of \\
  tensor product \end{tabular} & ? & \ref{prop12} & \ref{prop7}  & \ref{prop17} \\
  \hline
  \begin{tabular}{@{}c@{}}Stability of\\ tensor roots \end{tabular}  & ? & ? &\ref{squareampclassic} & \ref{squarephiD} \\
\hline
  \begin{tabular}{@{}c@{}}Stability of \\ pullback by \\ finite morphism \end{tabular}   &  \ref{prop14} & ? & \ref{prop11} & \ref{prop18} ($f^{-1}D$ defined)\\
  \hline
\begin{tabular}{@{}c@{}} Descent along \\ finite surjective \\morphism \end{tabular}   &  ?  & ?  & \ref{prop11} & \ref{prop18} \\
\hline
\end{tabular}
\end{center}
\vspace{0.2cm}
\caption{Main properties of the different positivity notions, from the strongest to the weakest one.}
\label{table_pos}
\end{table}
We explain the relationship between $(\varphi,D)$-ampleness and other positivity notion. 
\begin{proposition}
Let $\E$ be a vector bundle on $X$. Then,
\begin{equation*}
\begin{tikzcd}[column sep = small, row sep = small]
&  \E \text{ is } L\text{-big} & \\
\E \text{ is ample } \arrow[r,Rightarrow] & \E \text{ is } (\varphi,D)\text{-ample} \arrow[d,Rightarrow] \arrow[u,Rightarrow]\arrow[r,Rightarrow] & \E \text{ is } \text{nef} \\
& \E^{(p^r)} \text{ is } V\text{-big for some }r \geq 1 &
\end{tikzcd}
\end{equation*}
\end{proposition}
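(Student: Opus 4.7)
The proposition packages four implications that I would verify separately. Only \emph{$\E$ ample $\Rightarrow$ $\E$ is $(\varphi,D)$-ample} requires a genuine argument; the remaining three follow rather formally from the definitions together with Barton's numerical criterion of proposition \ref{amp_criterion}, the identification $\Lc^{(p^r)} = \Lc^{\otimes p^r}$ for line bundles, and the ``ample $+$ effective $\Rightarrow$ big'' principle of proposition \ref{prop20}.

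For \emph{$\E$ ample $\Rightarrow$ $\E$ is $(\varphi,D)$-ample}, the plan is to verify Barton's criterion for $\E^{(p^r)}(-D)$ when $r$ is large. From $\E$ ample one obtains $\varepsilon > 0$ with $\delta(g^*\E) \geq \varepsilon \lVert g_*[C]\rVert$ for every finite morphism $g : C \to X$ from a smooth projective curve. Composing $g$ with the $r$-fold absolute Frobenius $F_X^r : X \to X$ gives another such morphism satisfying $(F_X^r \circ g)^*\E = g^*\E^{(p^r)}$ and $(F_X^r \circ g)_*[C] = p^r \cdot g_*[C]$ (absolute Frobenius acts as multiplication by $p$ on $A_1(X)$, since it restricts on any curve to a degree $p$ self-map). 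Barton applied to this composed morphism then yields
\begin{equation*}
\delta(g^*\E^{(p^r)}) \geq \varepsilon \, p^r \, \lVert g_*[C]\rVert.
\end{equation*}
The correction coming from the twist by $\Oc_X(-D)$, namely $D\cdot g_*[C]$, is bounded in absolute value by a constant multiple of $\lVert g_*[C]\rVert$ depending only on $D$ and the chosen norm on $A_1(X)$. For $r$ sufficiently large, this produces a uniform positive lower bound $\delta(g^*\E^{(p^r)}(-D)) \geq (\varepsilon p^r - c)\lVert g_*[C]\rVert$, and Barton's criterion again gives ampleness of $\E^{(p^r)}(-D)$.

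For the other three implications I would argue as follows. \emph{$(\varphi,D)$-ample $\Rightarrow$ nef} is by contradiction: a quotient line bundle $g^*\E \twoheadrightarrow \Lc$ with $\deg \Lc < 0$ would Frobenius-pull back to a quotient $g^*\E^{(p^r)} \twoheadrightarrow \Lc^{\otimes p^r}$, forcing $\delta(g^*\E^{(p^r)}(-D)) \leq p^r \deg \Lc - D\cdot g_*[C]\to -\infty$, incompatible with ampleness of $\E^{(p^r)}(-D)$. \emph{$(\varphi,D)$-ample $\Rightarrow$ $L$-big} passes to the projective bundle: ampleness of $\E^{(p^r)}(-D)$ means $\Oc_{\Pb(\E^{(p^r)})}(1)\otimes \pi^*\Oc_X(-D)$ is ample; twisting back by the effective class $\pi^*\Oc_X(D)$ produces a big line bundle on $\Pb(\E^{(p^r)})$ via proposition \ref{prop20}; pulling back along the finite surjective relative Frobenius $\Pb(\E) \to \Pb(\E)^{(p^r)} = \Pb(\E^{(p^r)})$ transfers this to bigness of $\Oc_{\Pb(\E)}(p^r)$, hence to bigness of $\Oc_{\Pb(\E)}(1)$. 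Finally \emph{$(\varphi,D)$-ample $\Rightarrow$ $\E^{(p^r)}$ is $V$-big for some $r$} is direct: choosing $r$ so that $\E^{(p^r)}(-D)$ is ample, the symmetric power $\Sym^m \E^{(p^r)}\otimes \Oc_X(-mD)$ is globally generated for $m \gg 0$, displaying $\E^{(p^r)}$ as ample modulo an effective twist.

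The principal technical obstacle is the bookkeeping in the first implication: one must be confident that the paper's notation $\E^{(p^r)}$ agrees with the absolute Frobenius pullback $F_X^{r,*}\E$ (under the identification $X^{(p^r)} \simeq X$ afforded by the algebraic closure of $k$), and that $(F_X^r)_*$ multiplies one-cycle classes by $p^r$. Once these compatibilities are in place, each of the four implications reduces to a short numerical inequality or a direct application of a previously established proposition.
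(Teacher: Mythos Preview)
Your argument is correct. The chief difference from the paper concerns the two middle implications. For \emph{ample $\Rightarrow$ $(\varphi,D)$-ample}, the paper simply cites \cite[Proposition~3.1]{MR289525}, whereas you reconstruct the content of that result from Barton's numerical criterion; your computation is essentially what that proposition records. For \emph{$(\varphi,D)$-ample $\Rightarrow$ nef} and \emph{$\Rightarrow$ $L$-big}, the paper gives a single stroke: it uses the tautological surjection $\pi^*\E \twoheadrightarrow \Oc_{\Pb(\E)}(1)$, invokes stability of $(\varphi,D)$-ampleness under quotients (proposition~\ref{prop16}) to deduce that $\Oc_{\Pb(\E)}(1)$ is $(\varphi,\pi^{-1}D)$-ample, and then reads off nef and big from proposition~\ref{prop20}. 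This is slicker, but it tacitly assumes that $\pi^*\E$ is $(\varphi,\pi^{-1}D)$-ample, a step not literally covered by proposition~\ref{prop18} since $\pi$ is not finite. Your separate arguments---a direct degree contradiction for nefness, and passing through $\Pb(\E^{(p^r)})$ via the relative Frobenius of $\Pb(\E)/X$ for $L$-bigness---sidestep this, at the cost of the Frobenius bookkeeping you flag (the needed fact is that the relative Frobenius $\Pb(\E)\to\Pb(F_X^{r,*}\E)$ is finite surjective and pulls $\Oc(1)$ back to $\Oc(p^r)$, which is standard). For the $V$-bigness implication your sketch and the paper's argument coincide; the paper is just slightly more explicit that $\Sym^n\E^{(p^r)}\otimes\Oc_X(-1)$ is globally generated on $X\smallsetminus D$, so the augmented base locus is contained in the support of $D$.
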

\begin{proof}
The first implication follows directly from \cite[Proposition 3.1]{MR289525}. Now, assume that $\E$ is $(\varphi,D)$-ample and consider the universal line bundle $\Oc(1)$ on the projective bundle $\Pb(\E)$. We have a surjective map $\pi^*\E \rightarrow \Oc(1)$ and since $(\varphi,D)$-ampleness is stable under quotient by proposition \ref{prop16}, $\Oc(1)$ is in particular nef and big by proposition \ref{prop20}. It shows that $\E$ is nef and $L$-big. Take $r \geq 1$ such that $\E^{(p^r)}(-D)$ is ample. We deduce that there is an integer $n \geq 1$ such that 
\begin{equation*}
\Sym^n(\E^{(p^r)}(-D)) \otimes \Oc_X(-1) = \Sym^n(\E^{(p^r)})(-nD) \otimes \Oc_X(-1) 
\end{equation*}
is globally generated. Since $\Sym^n(\E^{(p^r)}) \otimes \Oc_X(-1)$ can be expressed as a tensor product of a globally generated vector bundle with $\Oc_X(nD)$, it is globally generated on the complementary open subset of the support of $D$. It implies that the augmented base locus of $\E^{(p^r)}$ is not equal to $X$, i.e. that $\E^{(p^r)}$ is $V$-big.
\end{proof}

\section{Flag bundle associated to a $G$-torsor}\label{sect_flag}
\subsection{Higher direct image}
In this subsection only, $\pi : Y \rightarrow X$ is a general scheme morphism. We recall some generalities about cohomology and higher direct image.
\begin{proposition}
For any $\Oc_Y$-module $\Fc$, there is a spectral sequence starting at page 2:
\begin{equation*}
E_2^{i,j} = H^i(X,R^i\pi_*(\Fc)) \Rightarrow H^{i+j}(Y,\Fc).
\end{equation*}
\end{proposition}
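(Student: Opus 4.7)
The statement is the classical Leray spectral sequence, so the plan is to realize it as an instance of the Grothendieck spectral sequence for the composition of two left-exact functors.

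The key observation is that the global sections functor on $Y$ factors as
\begin{equation*}
\Gamma(Y,-) = \Gamma(X,-) \circ \pi_*
\end{equation*}
on the category of $\Oc_Y$-modules. To apply the Grothendieck spectral sequence, I would verify the acyclicity condition: $\pi_*$ sends injective $\Oc_Y$-modules to $\Gamma(X,-)$-acyclic $\Oc_X$-modules. The standard way to do this is to show that $\pi_*$ preserves flasqueness (if $\mathcal{I}$ is injective hence flasque, then for any open $U \subset X$ the restriction maps $\pi_*\mathcal{I}(U) = \mathcal{I}(\pi^{-1}U) \to \mathcal{I}(\pi^{-1}V) = \pi_*\mathcal{I}(V)$ are surjective), and that flasque sheaves are acyclic for $\Gamma(X,-)$.

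Given these inputs, the Grothendieck spectral sequence machinery produces, for any $\Oc_Y$-module $\Fc$, a convergent first-quadrant spectral sequence
\begin{equation*}
E_2^{i,j} = (R^i\Gamma(X,-))(R^j\pi_*\Fc) = H^i(X,R^j\pi_*\Fc) \Rightarrow R^{i+j}\Gamma(Y,-)(\Fc) = H^{i+j}(Y,\Fc),
\end{equation*}
which is exactly the desired statement (correcting the apparent typo in the indexing).

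The main obstacle, if one wants a self-contained argument, is constructing the Grothendieck spectral sequence itself; this is usually done via a Cartan--Eilenberg resolution $\Fc \to \mathcal{I}^{\bullet,\bullet}$ in the category of $\Oc_Y$-modules, applying $\pi_*$ termwise, and then taking the two spectral sequences associated to the resulting double complex's total complex $\Gamma(X,\pi_*\mathcal{I}^{\bullet,\bullet}) = \Gamma(Y,\mathcal{I}^{\bullet,\bullet})$. Since both ingredients (existence of Cartan--Eilenberg resolutions and preservation of flasques/acyclics by $\pi_*$) are standard, I would simply cite a reference such as \cite[\href{https://stacks.math.columbia.edu/tag/01F6}{Tag 01F6}]{stacks-project} rather than reproduce the construction.
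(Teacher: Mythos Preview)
Your proposal is correct and takes essentially the same approach as the paper: the paper's proof is simply a one-line citation to the Stacks project (Tag 01F2), and you likewise reduce to a Stacks project reference after correctly sketching the standard Grothendieck spectral sequence argument for the composition $\Gamma(Y,-) = \Gamma(X,-)\circ\pi_*$. Your explanation is in fact more informative than the paper's bare citation.
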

\begin{proof}
See \cite[Lemma 01F2]{stacks-project}.
\end{proof}
We recall the projection formula:
\begin{proposition}
Let $\Fc$ be a $\Oc_Y$-module, $\E$ a locally free $\Oc_X$-module of finite rank and $i \geq 0$ an integer. The natural map
\begin{equation*}
R^i\pi_*\Fc \otimes_{\Oc_X} \E \rightarrow R^i\pi_*(\Fc \otimes_{\Oc_Y} \pi^*\E)
\end{equation*}
is an isomorphism.
\end{proposition}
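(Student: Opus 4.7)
The statement is the projection formula for higher direct images tensored with a locally free sheaf of finite rank on the base. The plan is to reduce to the trivially true case where $\E$ is free by a local argument on $X$.

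First I would observe that the formation of both $R^i\pi_*(-)$ and $\otimes_{\Oc_X}(-)$, as well as the natural map between them, is compatible with restriction to open subsets of $X$. Concretely, if $U \subset X$ is open and $V = \pi^{-1}(U)$, then $R^i\pi_*\Fc|_U = R^i(\pi|_V)_*(\Fc|_V)$, and the natural map is functorial under such restriction. Consequently, whether the morphism
\begin{equation*}
R^i\pi_*\Fc \otimes_{\Oc_X} \E \longrightarrow R^i\pi_*(\Fc \otimes_{\Oc_Y} \pi^*\E)
\end{equation*}
is an isomorphism can be checked Zariski-locally on $X$.

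Next I would choose, for each point of $X$, an open neighborhood $U$ on which $\E$ admits a trivialization $\E|_U \simeq \Oc_U^{\oplus n}$; this is possible because $\E$ is locally free of finite rank. On such a $U$, we have $\pi^*\E|_V \simeq \Oc_V^{\oplus n}$, so that
\begin{equation*}
\Fc|_V \otimes_{\Oc_V} \pi^*\E|_V \simeq \Fc|_V^{\oplus n},
\qquad
R^i\pi_*\Fc|_U \otimes_{\Oc_U} \E|_U \simeq (R^i\pi_*\Fc|_U)^{\oplus n}.
\end{equation*}
Because higher direct images commute with finite direct sums (arising from the fact that $R^i\pi_*$ is an additive functor on $\Oc_Y$-modules), we have $R^i\pi_*(\Fc|_V^{\oplus n}) \simeq (R^i\pi_*\Fc|_U)^{\oplus n}$.

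The final step is to verify that under these trivializations the natural map is carried to the identity on $(R^i\pi_*\Fc|_U)^{\oplus n}$. Unwinding the definition of the projection map on each summand, each component reduces to the identity map $R^i\pi_*\Fc|_U \to R^i\pi_*\Fc|_U$ induced by the identity on $\Fc|_V$, so the displayed arrow is an isomorphism on $U$. Gluing over a cover of $X$ by such trivializing opens yields the result globally. The only subtle point — and the one I would pay attention to — is checking that the projection map is indeed the standard one coming from applying $R^i\pi_*$ to the projection formula map $\Fc \otimes_{\Oc_Y} \pi^*\E \to \Fc \otimes_{\Oc_Y} \pi^*\E$ combined with the unit/counit of adjunction for $(\pi^*,\pi_*)$, so that it is manifestly compatible with finite direct sums; once this naturality is in hand, the argument is routine.
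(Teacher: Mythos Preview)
Your argument is correct and is the standard one: localize on $X$ to trivialize $\E$, then use additivity of $R^i\pi_*$ on finite direct sums. The paper does not give its own proof of this proposition; it simply cites \cite[Lemma 01E8]{stacks-project}, whose proof is precisely the reduction you describe.
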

\begin{proof}
See \cite[Lemma 01E8]{stacks-project}.
\end{proof}
We recall the following lemma that appears also in \cite{alexandre2022vanishing}.
\begin{lemma}\label{lem1}
Consider two Artin stacks $\mathcal{X}$ and $\mathcal{Y}$ over $k$ and a proper representable morphism $\pi : \mathcal{Y} \rightarrow \mathcal{X}$. Consider a coherent sheaf $\mathcal{F}$ over $\mathcal{Y}$ which is flat over $\mathcal{X}$ and such that for any geometric point $x : \Spec K \rightarrow \mathcal{X}$ fitting in the following cartesian diagram
\begin{equation*}
\begin{tikzcd}
\mathcal{Y}_x := \mathcal{Y}\times_{\mathcal{X},x}\Spec K \arrow[d,"\pi_x"] \arrow[r,"i"] & \mathcal{Y} \arrow[d,"\pi"] \\
\Spec K \arrow[r,"x"] & \mathcal{X}
\end{tikzcd}
\end{equation*}
the complex $R{(\pi_x)}_*\mathcal{F}_{|\mathcal{Y}_x}$ is concentrated in degree $0$. Then, the complex $R\pi_*\mathcal{F}$ is also concentrated in degree $0$.
\end{lemma}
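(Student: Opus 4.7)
The plan is to reduce to the classical cohomology and base change statement for a proper morphism of schemes, and then apply a Nakayama-type descending induction.

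First, I would reduce to the case where $\mathcal{X}$ is an (affine) scheme. Choose a smooth atlas $u : U \to \mathcal{X}$ with $U$ a scheme; since $\pi$ is representable and proper, the base change $\pi_U : \mathcal{Y}_U := \mathcal{Y}\times_\mathcal{X} U \to U$ is a proper morphism of schemes (or algebraic spaces). Let $\mathcal{F}_U$ denote the pullback of $\mathcal{F}$; by assumption it is coherent on $\mathcal{Y}_U$ and flat over $U$. The formation of $R\pi_*$ commutes with flat base change on the base, so it suffices to show $R\pi_{U*}\mathcal{F}_U$ is concentrated in degree $0$. (In the algebraic space case the same argument works by passing to an étale cover of $U$, since étale pullback also commutes with $R\pi_*$.) Thus I reduce to the following statement: for a proper morphism $f : X \to S$ of noetherian schemes, and a coherent sheaf $\mathcal{G}$ on $X$ which is flat over $S$ and whose geometric fibers satisfy $H^i(X_s,\mathcal{G}_s)=0$ for all $i>0$, we have $R^if_*\mathcal{G}=0$ for all $i>0$.

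Next I would establish this reduced statement by descending induction on cohomological degree. Since $f$ is proper of some finite relative dimension $N$, one has $R^if_*\mathcal{G}=0$ for $i>N$ for cohomological dimension reasons. For $i=N$, the cohomology and base change theorem (as in Hartshorne III.12.11 or Mumford, \emph{Abelian Varieties}, §5) together with the vanishing of $R^{N+1}f_*\mathcal{G}$ implies that the natural map
\begin{equation*}
R^N f_*\mathcal{G}\otimes_{\Oc_S} k(s) \longrightarrow H^N(X_s,\mathcal{G}_s)
\end{equation*}
is an isomorphism at every point $s\in S$. By hypothesis the right-hand side vanishes, so $R^N f_*\mathcal{G}$ has zero fibers at every point; since it is coherent, Nakayama's lemma forces $R^N f_*\mathcal{G}=0$. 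Now I repeat the same argument with $i=N-1$: knowing $R^Nf_*\mathcal{G}=0$ feeds back into cohomology and base change in degree $N-1$ to yield an isomorphism $R^{N-1}f_*\mathcal{G}\otimes k(s)\simeq H^{N-1}(X_s,\mathcal{G}_s)=0$, hence $R^{N-1}f_*\mathcal{G}=0$ by Nakayama. Iterating down to $i=1$ completes the proof.

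The only genuine difficulty here is setting up the reduction to schemes correctly, and being sure that the classical base change map and its isomorphism criterion apply; the inductive step itself is entirely routine. In particular, one has to know that flatness of $\mathcal{F}$ over $\mathcal{X}$ is preserved under the base change to $U$ (clear, since $U\to\mathcal{X}$ is smooth, hence flat) and that the geometric fibers of $\pi_U$ are the same as those of $\pi$ (they are identified via the cartesian diagram). Once these are in place, the cohomology and base change machine applied fiberwise gives the conclusion.
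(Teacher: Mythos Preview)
Your argument is correct and is the standard proof: reduce to a scheme (or algebraic space) via a smooth atlas using flat base change for $R\pi_*$, then run the descending induction with cohomology and base change plus Nakayama. The paper does not actually prove this lemma but simply cites \cite[Lemma~3.19]{alexandre2022vanishing}; your write-up is precisely the expected unpacking of that reference, and the only caveat worth recording is that the noetherian hypothesis you invoke is harmless here since everything is locally of finite type over the field $k$.
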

\begin{proof}
See \cite[Lemma 3.19]{alexandre2022vanishing}.
\end{proof}

\subsection{$G$-torsors}
In this subsection, $k$ can be an algebraically closed field of any characteristic, $G$ is a connected split reductive group over $k$, $P \subset G$ is a parabolic subgroup and $X$ is a $k$-scheme. If $Y$ is a scheme, we denote $\Mod(\Oc_Y)$ the abelian category of $\Oc_Y$-module on $Y$ and $\Loc(\Oc_Y) \subset \Mod(\Oc_Y)$ the fully faithful additive subcategory of locally free $\Oc_Y$-module of finite rank.
\begin{definition}\label{def1}
Let $E$ be a $G$-torsor over $X$. We define the flag bundle of type $P$ of $E$ to be the scheme $\Fc_P(E)$ over $X$ that represents the functor whose $S$-points are $P$-reduction of $E \times_X S$ over $S$.
\end{definition}
\begin{definition}
Let $V$ be an algebraic representation of $G$ and $E$ a $G$-torsor over $X$. We define the contracted product of $E$ and $V$ over $G$ to be the representable quotient $X$-scheme 
\begin{equation*}
V\times^G E := \underline{V} \times_k E / G
\end{equation*}
where $\underline{V}$ is the $k$-vector space scheme associated to $V$ and $G$ acts on functorial points by $g(v,e) = (gv,ge)$. Note that the structure of $k$-vector space on $V$ endows $V\times^G E$ with a structure of vector bundle of rank $\dim_k V$ over $X$.
\end{definition}
\begin{definition}\label{def_W}
Let $E$ be a $G$-torsor over $X$. We define a functor
\begin{equation*}
\W : \Rep(G) \rightarrow \Loc(\Oc_X)
\end{equation*}
through the formula 
\begin{equation*}
\W(V) = V \times^G E
\end{equation*}
where $V$ is an algebraic representation of $G$.
\end{definition}
\begin{definition}\label{def_L}
Let $E$ be a $G$-torsor over $X$ and $\pi : \Fc_P(E) \rightarrow X$ the flag bundle of type $P$ of $E$. We define a functor
\begin{equation*}
\Lc : \Rep(P) \rightarrow \Loc(\Oc_{\Fc_P(E)})
\end{equation*}
through the formula 
\begin{equation*}
\Lc(V) = V \times^P H
\end{equation*}
where $V$ is an algebraic representation of $P$ and $H$ is the universal $P$-torsor on $\Fc_P(E)$.
\end{definition}
\begin{rmrk}
If $\lambda \in X^*(P)$ is a character of $P$, we simply write $\Lc_{\lambda}$ for the associated line bundle on $\Fc_P(E)$. We also write $\W_{\lambda}$ for the vector bundle associated to the $G$-representation $H^0(G/P,\Lc_{\lambda}) = \nabla(\lambda)$. We simply denote $\St_r$ the image of the Steinberg module by $\W$.
\end{rmrk}
\begin{proposition}
The functor $\W$ and $\Lc$ are monoidal and exact.
\end{proposition}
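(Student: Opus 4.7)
The plan is to reduce both statements to the corresponding facts about tensor products of $k$-vector spaces by trivializing the torsor locally. Since the argument is identical in both cases, I will treat $\W$ and indicate that the same reasoning applies verbatim to $\Lc$ with the $G$-torsor $E$ replaced by the universal $P$-torsor $H$ on $\Fc_P(E)$.

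First I would recall that any $G$-torsor over $X$ (for $G$ smooth affine) is trivial \'etale-locally on $X$. Pick an \'etale cover $\{U_i \to X\}$ together with trivializations $E_{|U_i} \simeq G \times_k U_i$. For each $i$, unwinding the contracted product gives a canonical isomorphism
\begin{equation*}
\W(V)_{|U_i} = V \times^G E_{|U_i} \;\simeq\; V \otimes_k \Oc_{U_i},
\end{equation*}
functorial in the $G$-representation $V$. Under this identification, the functor $\W_{|U_i}$ becomes the extension of scalars functor $(-) \otimes_k \Oc_{U_i} : \Rep(G) \to \Loc(\Oc_{U_i})$, which is plainly $k$-linear, exact (because $\Oc_{U_i}$ is flat over $k$), and symmetric monoidal.

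Next I would upgrade these local statements to global ones. For monoidality, the local isomorphisms
\begin{equation*}
(V \otimes_k W) \otimes_k \Oc_{U_i} \;\simeq\; (V \otimes_k \Oc_{U_i}) \otimes_{\Oc_{U_i}} (W \otimes_k \Oc_{U_i})
\end{equation*}
are compatible with the transition data (which acts through the $G$-action on $V \otimes W$, i.e.\ diagonally), hence glue to a natural isomorphism $\W(V \otimes W) \simeq \W(V) \otimes_{\Oc_X} \W(W)$ on $X$. The unit, associativity and symmetry constraints descend for the same reason. For exactness, a sequence of $G$-modules $0 \to V_1 \to V_2 \to V_3 \to 0$ restricts on each $U_i$ to the exact sequence $0 \to V_1 \otimes_k \Oc_{U_i} \to V_2 \otimes_k \Oc_{U_i} \to V_3 \otimes_k \Oc_{U_i} \to 0$; exactness of a sequence of $\Oc_X$-modules can be checked \'etale-locally, so $0 \to \W(V_1) \to \W(V_2) \to \W(V_3) \to 0$ is exact as desired.

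The only point that requires a little care, and which I would flag as the main (mild) obstacle, is the compatibility of the local isomorphisms with the descent data; this is formal but needs the observation that the diagonal $G$-action on $V \otimes W$ is exactly what matches the tensor product of the transition cocycles. Once this is verified, the same argument applied to the universal $P$-torsor $H$ on $\Fc_P(E)$ establishes that $\Lc$ is monoidal and exact, completing the proof.
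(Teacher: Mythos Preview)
Your argument is correct: \'etale-local trivialization reduces both functors to extension of scalars $(-)\otimes_k \Oc_{U_i}$, which is visibly exact and symmetric monoidal, and the descent compatibility you flag is exactly the observation that the transition cocycle acts diagonally on $V\otimes W$. The paper does not give any argument at all---it simply cites the chapter on associated sheaves in Jantzen's book \cite[Part I, Chap.~5]{MR2015057}---so your proof is an explicit unpacking of precisely the standard reasoning that citation points to; there is no real difference in approach, only in level of detail.
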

\begin{proof}
This is a general result on associated sheaves \cite[Part.\ 1, Chap. \ 5]{MR2015057}.
\end{proof}

\begin{proposition}\label{prop5}
Let $E$ be a $G$-torsor over $X$. Then, the following diagram
\begin{equation*}
\begin{tikzcd}
\Rep(P) \arrow[r,"\Lc"] \arrow[d,"\Ind_P^G"] & \Loc(\Oc_{\Fc_P(E)}) \arrow[d,"\pi_*"] \\
\Rep(G) \arrow[r,"\W"] \arrow[d,"\Res^G_P"] & \Loc(\Oc_X) \arrow[d,"\pi^*"] \\
\Rep(P) \arrow[r,"\Lc"] & \Loc(\Oc_{\Fc_P(E)}) 
\end{tikzcd}
\end{equation*}
commutes where $\Ind_P^G : \Rep_k(P) \rightarrow \Rep_k(G)$ and $\Res^G_P : \Rep_k(G) \rightarrow \Rep_k(P)$ are the induction and restriction functors. Moreover, if $\lambda$ is a dominant character of $P$, then $R\pi_*\mathcal{L}_{\lambda}$ is isomorphic to $\mathcal{W}_{\lambda}$ concentrated in degree 0. 
\end{proposition}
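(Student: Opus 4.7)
The plan is to treat the two squares of the diagram separately and then read off the cohomological statement from the upper square combined with a fibrewise application of Kempf's vanishing.

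For the lower square, I would exploit the canonical $P$-reduction of $\pi^{*}E$ on $\Fc_P(E)$, namely the universal $P$-torsor $H$ of Definition \ref{def1}. Extending the structure group back to $G$ gives a natural isomorphism $\pi^{*}E \simeq H \times^{P} G$, so for any $G$-representation $V$ functoriality of the contracted product yields
\[
\pi^{*}\W(V) = V \times^{G} \pi^{*}E = V \times^{G} (H \times^{P} G) = V \times^{P} H = \Lc(\Res^{G}_{P} V),
\]
and this isomorphism is natural in $V$. This settles commutativity of the lower square.

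For the upper square, the strategy is to reduce to the trivial torsor. Both functors $V \mapsto \pi_{*}\Lc(V)$ and $V \mapsto \W(\Ind^{G}_{P} V)$ are compatible with étale base change on $X$: the first because $\pi$ is proper and flat (so flat base change applies to the direct image of a locally free sheaf), the second because associated bundles are formed via faithfully flat descent. Since every $G$-torsor is étale-locally trivial, I may assume $E = X \times_{k} G$, so $\Fc_P(E) = X \times_{k} G/P$ and $\Lc(V)$ is the pullback of the associated bundle $V \times^{P} G$ on $G/P$. By the projection formula,
\[
\pi_{*}\Lc(V) = \Oc_{X} \otimes_{k} H^{0}(G/P, V \times^{P} G) = \Oc_{X} \otimes_{k} \Ind^{G}_{P} V = \W(\Ind^{G}_{P} V).
\]
The only subtle point here is the identification of $\Ind^{G}_{P} V$ with $H^{0}(G/P, V \times^{P} G)$, which is standard; see \cite[Part I, sect.~5.12]{MR2015057}.

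For the cohomological claim I would apply Lemma \ref{lem1} to $\pi$, which is proper and representable, with $\Lc_{\lambda}$ locally free hence flat over $X$. Each geometric fibre of $\pi$ is isomorphic to $G/P$ over the residue field, and $\Lc_{\lambda}$ restricts to the standard line bundle of weight $\lambda$. Kempf's vanishing (Proposition \ref{prop4}) on $G/B$, combined with the smooth fibration $G/B \to G/P$ whose fibres $P/B$ are flag varieties of the Levi on which $\Lc_{\lambda}$ restricts to a trivial line bundle (since $\lambda$ is a $P$-character), yields $H^{i}(G/P, \Lc_{\lambda}) = 0$ for $i > 0$ and $H^{0}(G/P, \Lc_{\lambda}) = \nabla(\lambda)$. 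Lemma \ref{lem1} then implies that $R\pi_{*}\Lc_{\lambda}$ is concentrated in degree $0$, and the upper square identifies it with $\W(\nabla(\lambda)) = \W_{\lambda}$. The main obstacle is the upper square: once étale descent and the description of $\Ind^{G}_{P}$ as sections of the associated bundle on $G/P$ are in place, the lower square and the cohomological part are essentially formal.
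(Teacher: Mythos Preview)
Your argument is correct, but the upper square is handled differently from the paper. The paper works with the cartesian square
\[
\begin{tikzcd}
\Fc_P(E) \arrow[d,"\pi"] \arrow[r,"\zeta_P"] & \lfloor P \backslash * \rfloor \arrow[d,"\tilde{\pi}"] \\
X \arrow[r,"\zeta"] & \lfloor G \backslash * \rfloor
\end{tikzcd}
\]
and applies flat base change along $\zeta$ to the derived pushforward $R\tilde{\pi}_*$; the identification $\tilde{\pi}_*\tilde{\Lc}(V)=\tilde{\W}(V)$ on classifying stacks is then immediate from the definition of $\Ind_P^G$. This produces a global isomorphism in one stroke. Your route instead trivialises $E$ \'etale-locally and computes over $X\times_k G/P$ via the projection formula. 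That is more elementary and avoids stacks, but as written it only gives isomorphisms over an \'etale cover; to make it airtight you should first exhibit a global comparison map---for instance the one induced by the counit $\Res_P^G\Ind_P^G V\to V$ combined with the lower square and the $(\pi^*,\pi_*)$-adjunction---and then check \'etale-locally that it is an isomorphism. Once that is said, both approaches yield the same conclusion; the paper's stack formulation also packages the derived statement $R\pi_*\Lc_\lambda\simeq\W_\lambda[0]$ slightly more directly, whereas you recover it from Lemma~\ref{lem1} and a Leray argument for $G/B\to G/P$, which is a correct and perfectly acceptable alternative.
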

\begin{proof}
The commutativity of the lower square follows directly from the definitions. We focus on the commutativity of the upper square. Consider a representation $V$ of $P$. We have a cartesian diagram
\begin{equation*}
\begin{tikzcd}
\Fc_P(E) \arrow[d,"\pi"] \arrow[r,"\zeta_P"] & \lfloor P \backslash * \rfloor \arrow[d,"\tilde{\pi}"] \\
X \arrow[r,"\zeta"] & \lfloor G \backslash * \rfloor
\end{tikzcd}
\end{equation*}
where the map $\zeta$ is induced by $E$, the map $\zeta_P$ is induced by the universal $P$-reduction of $E$ on $\Fc_P(E)$ and the vertical arrow $\tilde{\pi}$ between the classifying stacks is induced by the inclusion $P \subset G$. Denote $\tilde{\mathcal{L}}(V)$ the vector bundle on the classifying stack of $P$ associated to $V$ and $\tilde{\mathcal{W}}(V)$ the vector bundle on the classifying stack of $G$ associated to the $G$-module $\Ind_P^G (V)$. It follows directly from the definitions that
\begin{equation*}
\left\{
\begin{aligned}
&\tilde{\pi}_* \tilde{\mathcal{L}}(V) = \tilde{\mathcal{W}}(V) \\
&\zeta_{P}^*\tilde{\mathcal{L}}(V) = \mathcal{L}(V) \\
&\zeta^*\tilde{\mathcal{W}}(V) = \mathcal{W}(V)
\end{aligned}
\right.
\end{equation*}
as sheaves on the stack $ \lfloor G \backslash * \rfloor$. Since $\zeta$ is a flat morphism of algebraic stacks, the base change theorem in the derived category of quasi-coherent sheaves over $X$ tells us that the map
\begin{equation}\label{eq1}
\zeta^* \circ {R}\tilde{\pi}_* \tilde{\mathcal{L}}(V)  \DistTo {R}\pi_*\circ \zeta_{P}^*\tilde{\mathcal{L}}(V)
\end{equation}
is an isomorphism. Taking global sections in equation \ref{eq1} yields an isomorphism
\begin{equation*}
\begin{tikzcd}
\W(V) = \zeta^*\tilde{\pi}_* \tilde{\mathcal{L}}(V) \arrow[r,"\simeq"] & \pi_* \mathcal{L}(V)
\end{tikzcd}
\end{equation*}
over $X$. Assume now that $\lambda$ is $P$-dominant. By Kempf's vanishing theorem from proposition \ref{prop4} combined with lemma \ref{lem1}, we deduce
\begin{equation*}
\left\{
\begin{aligned}
&{R}\pi_* \mathcal{L}_{\lambda} = \pi_* \mathcal{L}_{\lambda} \\
&{R}\tilde{\pi}_* \mathcal{L}_{\lambda} = \tilde{\pi}_*  \mathcal{L}_{\lambda}
\end{aligned}
\right.
\end{equation*}
and we get an isomorphism
\begin{equation*}
R\pi_* \mathcal{L}_{\lambda} \simeq \mathcal{W}_{\lambda} [0].
\end{equation*}
\end{proof}

\begin{proposition}
Let $\lambda$ be a character. For all $r\geq 1$, we have isomorphisms
\begin{equation*}
\pi_*(\Lc_{p^r(\lambda+\rho)-\rho}) = \St_r \otimes \mathcal{W}_{\lambda}^{(p^r)}.
\end{equation*}
\end{proposition}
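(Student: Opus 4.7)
The strategy is to deduce the claim from the global (representation-theoretic) Steinberg identity in Proposition \ref{prop1} by applying the associated-sheaf functor $\W$, using the compatibility between $\W$ and $\pi_*$ established in Proposition \ref{prop5}. The key algebraic rewriting is
\begin{equation*}
p^r(\lambda + \rho) - \rho = p^r\lambda + (p^r - 1)\rho,
\end{equation*}
so that $\Lc_{p^r(\lambda+\rho) - \rho} \cong \Lc_{(p^r-1)\rho} \otimes \Lc_{p^r\lambda}$, matching exactly the line bundle appearing on the left-hand side of Proposition \ref{prop1}.

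I would first reduce to the case where $\lambda$ is dominant. If $\lambda$ is not dominant, then neither is $\mu := (p^r-1)\rho + p^r\lambda$, so $\nabla(\mu) = 0$ and by Proposition \ref{prop5} the left-hand side $\pi_*\Lc_{\mu}$ vanishes; on the other side $\nabla(\lambda)^{(p^r)} = 0$, so the right-hand side also vanishes. Assume now $\lambda$ is dominant. Proposition \ref{prop1} at $i=0$ gives an isomorphism of $G$-modules
\begin{equation*}
\nabla\bigl((p^r-1)\rho + p^r\lambda\bigr) \;\cong\; \St_r \otimes \nabla(\lambda)^{(p^r)}.
\end{equation*}
Applying the exact monoidal functor $\W$ and using Proposition \ref{prop5} to rewrite $\W(\nabla(\mu)) = \pi_*\Lc_{\mu}$ for the dominant $\mu$, I obtain the desired formula, modulo the identification $\W(\nabla(\lambda)^{(p^r)}) \cong \W_\lambda^{(p^r)}$.

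The main point requiring justification is thus that $\W$ intertwines the Frobenius twist on $\Rep(G)$ with the relative Frobenius pullback on $X$, i.e.\ $\W(V^{(p^r)}) \cong \W(V)^{(p^r)}$ functorially in $V$. This is the only step not immediate from previously cited results. I would prove it via the Cartesian square from the proof of Proposition \ref{prop5} involving the classifying stack $\lfloor G \backslash * \rfloor$: Frobenius twist on $G$-modules corresponds to pullback along the Frobenius endomorphism of $\lfloor G \backslash * \rfloor$, and flat base change along $\zeta : X \to \lfloor G \backslash * \rfloor$ identifies this pullback with $(\varphi^r)^*$ on $X$. Applying this compatibility to $V = \nabla(\lambda)$ and combining with the preceding paragraph yields the stated isomorphism.
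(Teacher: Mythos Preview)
Your proposal is correct and follows the same approach as the paper, which simply states that the result is a direct consequence of Propositions \ref{prop1} and \ref{prop5}. Your argument is a careful unpacking of that one-line proof: you identify the character rewriting, apply the $i=0$ case of Proposition \ref{prop1}, and transport it to $X$ via the monoidal functor $\W$ and the compatibility $\pi_*\Lc_\mu = \W(\nabla(\mu))$ from Proposition \ref{prop5}. Your explicit treatment of the non-dominant case and of the Frobenius compatibility $\W(V^{(p^r)}) \cong \W(V)^{(p^r)}$ is more detailed than the paper, which treats these as routine; both are indeed straightforward (the latter because $G$ is split, hence defined over $\mathbb{F}_p$, so the associated-bundle construction commutes with Frobenius pullback).
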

\begin{proof}
This is a direct consequence of proposition \ref{prop1} and \ref{prop5}.
\end{proof}
\begin{proposition}
Let $P_I$ be a standard parabolic subgroup of $G$ of type $I$. Let $E$ be a $G$-torsor over $X$ and $\pi : \Fc_{P_I}(E) \rightarrow X$ the flag bundle ot type $P_I$ of $X$. We have an isomorphism
\begin{equation*}
\Omega^{\text{top}}_{\Fc_{P_I}(E)/X} \simeq \Lc_{-2\rho_I}
\end{equation*}
where top denotes the relative dimension of $\pi$ and $\rho_I = \frac{1}{2} \sum_{\alpha \in \Phi^+_I} \alpha$.
\end{proposition}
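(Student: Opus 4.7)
The plan is to reduce the computation to the model case $G/P_I$ and identify the top exterior power of the relative cotangent bundle as the line bundle associated to an explicit character of $P_I$.

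First, I would recognize that $\Fc_{P_I}(E)$ is isomorphic to the quotient $E/P_I$: the $G$-torsor $E$ is simultaneously a $P_I$-torsor over $E/P_I$, and the projection $E/P_I \to E/G = X$ is identified with $\pi$. Under this identification, the tautological $P_I$-torsor $H$ on $\Fc_{P_I}(E)$ (used in Definition \ref{def_L} to build $\Lc$) is exactly $E \to E/P_I$.

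Next, I would compute $\Omega^1_{\Fc_{P_I}(E)/X}$ via the relative cotangent sequence for the tower $E \to E/P_I \to X$. Because $E \to X$ is a $G$-torsor and $E \to E/P_I$ is a $P_I$-torsor, the sheaves $\Omega^1_{E/X}$ and $\Omega^1_{E/(E/P_I)}$ are the trivial $E$-bundles with fibers $\mathfrak{g}^\vee$ and $\mathfrak{p}_I^\vee$ respectively (realized via left-invariant forms). The kernel of the surjection $\Omega^1_{E/X} \twoheadrightarrow \Omega^1_{E/(E/P_I)}$ is then $E \times (\mathfrak{g}/\mathfrak{p}_I)^\vee$, equipped with the diagonal $P_I$-action (right translation on $E$ and the coadjoint action on $(\mathfrak{g}/\mathfrak{p}_I)^\vee$). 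Descending to $E/P_I$ and applying $\Lc$ yields the identification
\begin{equation*}
\Omega^1_{\Fc_{P_I}(E)/X} \simeq \Lc\bigl((\mathfrak{g}/\mathfrak{p}_I)^\vee\bigr).
\end{equation*}
Since $\Lc$ is monoidal and exact, taking the top exterior power gives $\Omega^{\text{top}}_{\Fc_{P_I}(E)/X} \simeq \Lc_\chi$, where $\chi$ is the character of $P_I$ acting on the one-dimensional $P_I$-module $\det\bigl((\mathfrak{g}/\mathfrak{p}_I)^\vee\bigr)$.

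Finally, I would identify $\chi$ via its restriction to $T \subset P_I$ (this restriction determines $\chi$ because $\chi$ must be trivial on $[P_I,P_I]$ and $T$ surjects onto $P_I/[P_I,P_I]$). Using the conventions of Section \ref{sect_rep}, namely that $B$ contains $U_{-\alpha}$ for $\alpha \in \Phi^+$, one has $\mathfrak{p}_I = \mathfrak{t} \oplus \bigoplus_{\alpha \in \Phi^+}\mathfrak{g}_{-\alpha} \oplus \bigoplus_{\beta \in \Phi_{L_I}^+}\mathfrak{g}_\beta$, so the $T$-weights of $\mathfrak{g}/\mathfrak{p}_I$ are exactly the positive roots outside $L_I$, giving
\begin{equation*}
\chi\big|_T = -\sum_{\alpha \in \Phi^+\setminus \Phi_{L_I}^+}\alpha = -2\rho_I.
\end{equation*}
The minor obstacle is matching conventions — keeping track of the paper's non-standard choice of $\Phi^+$ and confirming that $\rho_I$ refers to the half-sum of positive roots not in $L_I$ — but the argument itself is essentially local on $G/P_I$ and follows from the general principle that $G$-equivariant vector bundles on $G/P_I$ correspond to $P_I$-representations, so no further cohomological input beyond Proposition \ref{prop5} is needed.
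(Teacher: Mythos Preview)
Your argument is correct and essentially the same as the paper's. Both proofs identify $\Omega^1_{\Fc_{P_I}(E)/X}$ with $\Lc\bigl((\mathfrak g/\mathfrak p_I)^\vee\bigr)$ and then read off the determinant character from the root-space decomposition; the only difference is packaging --- the paper pulls back along the cartesian square $\Fc_{P_I}(E)\to BP_I$ over $X\to BG$, whereas you work directly with the tower $E\to E/P_I\to X$ and the associated cotangent sequence, which is the same computation unwound.
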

\begin{proof}
From the cartesian diagram
\begin{equation*}
\begin{tikzcd}
\Fc_{P_I}(E) \arrow[d,"\pi"] \arrow[r,"\tilde{\zeta}"] & \lfloor P_I \backslash * \rfloor \arrow[d,"\tilde{\pi}"] \\
X \arrow[r,"\zeta"] & \lfloor G \backslash * \rfloor
\end{tikzcd}
\end{equation*}
we deduce an isomorphism $\tilde{\zeta}^*\Omega^1_{\tilde{\pi}} = \Omega^1_{\pi}$. We know that
\begin{equation*}
\Omega^{1}_{\tilde{\pi}} \simeq \Lc(\Lie(G)/\Lie(P_I)^{\vee}),
\end{equation*}
hence
\begin{equation*}
\Omega^{top}_{\tilde{\pi}} \simeq \Lc(\Lambda^{top}\Lie(G)/\Lie(P_I)^{\vee}).
\end{equation*}
The weights of the $T$-action on $\Lie(G)/\Lie(P_I)$ are the roots $-\Phi^+_I$, so $\Lambda^{top}\Lie(G)/\Lie(P_I)$ is a one-dimensional module of weight $-2\rho_I$ and by taking the linear dual, we get an isomorphism $\Omega^{top}_{\pi} =\tilde{\zeta}^*\Omega^{top}_{\tilde{\pi}} = \Lc_{-2\rho_I}$.
\end{proof}

\section{Pushforward of positive line bundles}\label{sect_push}
Recall that $k$ is an algebraically closed field of characteristic $p$. Let $X$ be a projective scheme over $k$ and $D$ an effective Cartier divisor on $X$. Let $E$ be a $G$-torsor and write $\pi : Y \rightarrow X$ for the flag bundle of type $B$ of $E$ as defined in definition \ref{def1}. We write also $D$ for the Cartier divisor $\pi^{-1}(D)$ on $Y$. Recall that we have fixed an ample line bundle $\Oc_X(1)$ on $X$ and that we write $\Fc(m)$ instead of $\Fc \otimes \Oc_X(1)^{\otimes m}$ for any coherent sheaf $\Fc$ on $X$ and integer $m$. We start this section with some preliminary results. 
\begin{lemma}\label{classifying_trick}
Consider a finite surjective morphism $g : G^\prime \rightarrow G$ of algebraic groups with central kernel. Then there exists a projective scheme $X^\prime$ and a finite surjective morphism $f : X^\prime \rightarrow X$ such that the pullback of the $G$-torsor $f^*E$ reduces to a $G^\prime$-torsor on $X^\prime$.
\end{lemma}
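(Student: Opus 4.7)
The plan is to recast the existence of a $G'$-reduction as the triviality of a gerbe banded by the kernel $K := \ker(g)$, then to kill this gerbe by a finite projective cover of $X$ using classical constructions depending on the type of $K$. Since $K$ is finite and central in $G'$, it is a finite commutative group scheme over $k$, and the central extension $1 \to K \to G' \to G \to 1$ produces a long exact sequence of pointed fppf cohomology sets on $X$ whose connecting map sends $[E] \in H^1(X,G)$ to an obstruction class $\partial[E] \in H^2(X,K)$ which vanishes exactly when $E$ lifts to a $G'$-torsor. Equivalently, the stack of $G'$-lifts of $E$ is a $K$-banded gerbe $\mathcal{X} \to X$, and I seek a finite surjective morphism $f\colon X' \to X$ with $X'$ projective that factors through $\mathcal{X}$.

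By dévissage along a composition series of $K$ (at each step lifting first through a quotient, then through a kernel, over an iterated finite cover), I reduce to the case where $K$ is simple: either $\mu_\ell$ for a prime $\ell$ (this also covers $\mathbb{Z}/\ell\mathbb{Z}$ for $\ell \neq p$ via $\mathbb{Z}/\ell\mathbb{Z} \simeq \mu_\ell$), or $\mathbb{Z}/p\mathbb{Z}$, or $\alpha_p$. For $K = \alpha_p$, the Frobenius $\varphi\colon X \to X$ is finite surjective, and since the Frobenius endomorphism of $\alpha_p$ is zero the pullback $\varphi^*\partial[E]$ vanishes in $H^2(X,\alpha_p)$, so $f = \varphi$ already works. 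For $K = \mathbb{Z}/p\mathbb{Z}$, Artin-Schreier covers handle the obstruction. For $K = \mu_\ell$, the fppf Kummer sequence yields a short exact sequence $0 \to \mathrm{Pic}(X)/\ell \to H^2(X,\mu_\ell) \to \mathrm{Br}(X)[\ell] \to 0$; the Picard piece is killed by taking a cyclic $\ell$-th root cover of a representative line bundle, and the Brauer piece by passing to the associated Severi-Brauer variety and extracting a finite multisection.

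The main obstacle is this last step, because the Severi-Brauer variety $\pi\colon Y \to X$ attached to a Brauer class is projective over $X$ but not finite. To extract a finite cover, I embed $Y \hookrightarrow \mathbb{P}^N_X$ and cut by $\dim Y - \dim X$ general hyperplane sections; a Bertini-type argument over a projective base produces a closed subscheme $X' \subset Y$ that is proper, generically finite, and surjective over $X$, and Stein factorization then yields a finite surjective projective $X' \to X$ factoring through $Y$, hence through $\mathcal{X}$. All other ingredients (root covers, Artin-Schreier covers, Frobenius) are routine constructions that visibly preserve projectivity, so assembling the finite covers produced at each step of the dévissage completes the proof.
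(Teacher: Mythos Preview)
Your approach is genuinely different from the paper's. The paper forms the fiber product $\mathcal{X} = X \times_{BG} BG'$ as an Artin stack, checks that $\alpha\colon \mathcal{X}\to X$ is proper, quasi-finite and surjective with quasi-finite separated diagonal (all by descent from $BK\to\Spec k$), and then invokes Rydh's approximation theorem to produce a scheme $X'$ with a finite surjection $X'\to\mathcal{X}$; the composite $X'\to X$ is then finite surjective and $X'$ is automatically projective. This is a one-shot, uniform argument that never looks inside $K$. Your route via the obstruction class in $H^2(X,K)$ and d\'evissage on $K$ is the more classical line; it trades the black-box stack input for explicit constructions (root covers, Artin--Schreier, Severi--Brauer plus Bertini multisections), and in particular makes the $\mu_\ell$ case---which is the only one that actually occurs in the paper's application, since the kernel of an isogeny of reductive groups is of multiplicative type---completely concrete.

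There is, however, a gap in your $\alpha_p$ step. The assertion ``since the Frobenius endomorphism of $\alpha_p$ is zero, the pullback $\varphi^*\partial[E]$ vanishes'' conflates two different Frobenii: the endomorphism of the coefficient group $\alpha_p$ and the pullback along $\varphi\colon X\to X$ on cohomology. These do not coincide in general, and the vanishing of $\varphi^*$ on $H^2(X,\alpha_p)$ does not follow from the vanishing of $F\colon\alpha_p\to\alpha_p$ alone; you would need an additional argument (e.g.\ via the sequence $0\to\alpha_p\to\mathbb{G}_a\to\mathbb{G}_a\to 0$ and a careful analysis of how $\varphi^*$ interacts with it, plus a further cover to handle the piece coming from $H^1(X,\mathcal{O}_X)$). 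Your $\mathbb{Z}/p\mathbb{Z}$ case is likewise only gestured at. These cases are irrelevant for the lemma's use in the paper, but since you are proving the lemma as stated for arbitrary algebraic groups, they need to be filled in or else the statement should be restricted to $K$ of multiplicative type.
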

\begin{proof}
Let us denote $BG = \lfloor \Spec k / G \rfloor$ and $BG^\prime = \lfloor \Spec k / G^\prime \rfloor$ the classifying stacks of $G$ and $G^\prime$. The $G$-torsor $E$ on $X$ corresponds to a map $b_E : X \rightarrow  BG$ and $g : G^\prime \rightarrow G$ induces a map $b_g : BG^\prime \rightarrow BG$ on the classifying stacks. Let $K$ denote the kernel of $g$. We consider the following cartesian product
\begin{equation*}
\begin{tikzcd}
X^\prime \arrow[rd, dashed, "h"] \arrow[rdd,bend right, dashed, "\alpha \circ h", swap] & & \\
& \mathcal{X} \arrow[d,"\alpha"] \arrow[r,"\beta"] & BG^\prime \arrow[d, "b_g"]\\
& X \arrow[r,"b_E"] & BG
\end{tikzcd}
\end{equation*}
in the category of Artin stacks over $k$ and the objective is now to prove that there exists a scheme $X^\prime$ over $k$ and a morphism $h : X^\prime \rightarrow \mathcal{X}$ such that $\alpha \circ h : X^\prime \rightarrow X$ is finite surjective. The first step is to show that $\alpha$ is quasi-finite, proper\footnote{See \cite[Sect. 10.1]{MR3495343} for a reference on properness for non-representable morphisms of stacks.} and surjective. By base change along $b_E$, it is enough to show it for $b_g$. Since $K$ is central, we have the following cartesian product
\begin{equation*}
\begin{tikzcd}
 BK \arrow[d] \arrow[r] &  BG^\prime \arrow[d, "b_g"]\\
\Spec k \arrow[r,"b_G"] &  BG
\end{tikzcd}
\end{equation*}
where $b_G$ is the classifying map of the trivial $G$-torsor on $\Spec k$. We claim the map $BK \rightarrow \Spec k$ is proper, quasi-finite and surjective. The only non-trivial part is to show that $BK \rightarrow \Spec k$ is separated, i.e. that its diagonal is proper. We have the following cartesian product
\begin{equation*}
\begin{tikzcd}
K \arrow[d] \arrow[r] & \Spec k  \arrow[d]\\
BK \arrow[r] &  BK \times_k  BK
\end{tikzcd}
\end{equation*}
and since $K$ is finite, the diagonal $BK \rightarrow  BK \times_k  BK$ is also finite by faithfully flat descent. By faithfully flat descent along $b_G$, it implies that the map $b_g$ is quasi-finite, proper and surjective. The second step is to find a finite surjective morphism $h : X^\prime \rightarrow \mathcal{X}$ approximating the Artin stack $\mathcal{X}$. By \cite[Theorem B]{MR3272071}, we have to check that the diagonal of $\alpha : \mathcal{X} \rightarrow X$ is quasi-finite and separated. By base change and faithfully flat descent, it follows from the fact that the diagonal of $BK \rightarrow \Spec k$ is finite. Combining the two steps, the composition $\alpha \circ h$ is finite surjective.
\end{proof}
We give a sufficient cohomological condition for a vector bundle to be ample. 
\begin{proposition}\label{prop2}
Let $\E$ be a vector bundle over $X$. Let $\lambda \in X^*$ be a character. If for all coherent sheaf $\Fc$, there is an integer $r_0$ such that
\begin{equation*}
H^i(X,\Fc \otimes \St_r \otimes \W_{\lambda}^{(p^r)}) = 0
\end{equation*}
for all $i>0$ and $r \geq r_0$, then $\W_{\lambda}$ is ample.
\end{proposition}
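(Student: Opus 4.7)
The strategy is to transfer the cohomological hypothesis along the flag bundle $\pi : Y \to X$ into a vanishing for the line bundle $\Lc_{(p^r-1)\rho + p^r \lambda}$ on $Y$, and then to descend along a Borel-Weil morphism into $\Pb(\W_\lambda)$ in order to conclude ampleness of $\W_\lambda$ via Hartshorne's criterion for $p$-ampleness.

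Propositions \ref{prop1} and \ref{prop5} together identify $\pi_*\Lc_{(p^r-1)\rho + p^r\lambda}$ with $\St_r \otimes \W_\lambda^{(p^r)}$ on $X$; since the character $(p^r-1)\rho + p^r\lambda$ is $B$-dominant for every $r \geq 1$, Kempf's vanishing (proposition \ref{prop4}) combined with lemma \ref{lem1} yields $R^i\pi_*\Lc_{(p^r-1)\rho + p^r\lambda} = 0$ for $i > 0$. The projection formula together with the Leray spectral sequence then give the natural isomorphism
\begin{equation*}
H^i(X, \Fc \otimes \St_r \otimes \W_\lambda^{(p^r)}) \simeq H^i(Y, \pi^*\Fc \otimes \Lc_{(p^r-1)\rho + p^r\lambda})
\end{equation*}
for every coherent sheaf $\Fc$ on $X$, every $i \geq 0$ and every $r \geq 1$. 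The hypothesis therefore translates into the vanishing of the right-hand side for $i > 0$ and $r$ sufficiently large depending on $\Fc$.

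To descend to the target, I would exploit the natural $X$-morphism $\psi : Y \to \Pb(\W_\lambda)$ arising from the extremal-weight quotient line of $\pi^*\W_\lambda$, which factors $\pi = p \circ \psi$ through the projective bundle $p : \Pb(\W_\lambda) \to X$ and satisfies $\psi^*\Oc_{\Pb(\W_\lambda)}(1) \simeq \Lc_\nu$ for an extremal weight $\nu$ lying in the Weyl orbit of $\lambda$. Writing $\Lc_{(p^r-1)\rho + p^r\lambda} = \Lc_{(p^r-1)\rho + p^r(\lambda - \nu)} \otimes \psi^*\Oc_{\Pb(\W_\lambda)}(p^r)$, and pushing forward along $\psi$ whose fibers are partial flag varieties of the Levi subgroup stabilizing $\Lc_\nu$, Kempf's relative vanishing together with the projection formula should transform the vanishing on $Y$ into
\begin{equation*}
H^i(\Pb(\W_\lambda), p^*\Fc \otimes \mathcal{G}_r \otimes \Oc_{\Pb(\W_\lambda)}(p^r)) = 0
\end{equation*}
for $i > 0$ and $r$ large, where $\mathcal{G}_r := \psi_*\Lc_{(p^r-1)\rho + p^r(\lambda - \nu)}$.

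\textbf{Main obstacle.} The technical crux is to show that $\mathcal{G}_r$ is globally generated (or at least a quotient of a pullback from $X$ of such a bundle), which should follow from the representation theory of the Levi fixing the extremal weight line combined with Mathieu's theorem (proposition \ref{prop8_van}). Granted this, the displayed vanishing is exactly the cohomological form of proposition \ref{prop3} applied to the line bundle $\Oc_{\Pb(\W_\lambda)}(1)$, which is therefore $p$-ample in the sense of definition \ref{def2}; proposition \ref{pamptoamp} then gives its ampleness, which by definition means that $\W_\lambda$ is ample on $X$.
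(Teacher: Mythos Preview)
Your approach has a genuine gap, and it also misidentifies where the difficulty lies.

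First, the detour through $Y$ is unnecessary: the hypothesis is already formulated on $X$, and nothing is gained by rewriting it as a vanishing on $Y$. (In the paper this transfer runs the \emph{other} way, in the proof of Theorem~\ref{th1}, where one starts from an ampleness hypothesis on $Y$ and descends to verify the hypothesis of Proposition~\ref{prop2}.)

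Second, the step you flag as the ``main obstacle'' is indeed unresolved, and even granting it your conclusion does not follow. Proposition~\ref{prop3} is a statement about \emph{global generation}, not about cohomology vanishing; and in any case your displayed vanishing is only for sheaves of the special form $p^*\Fc \otimes \mathcal{G}_r$, whereas $p$-ampleness (cohomological or otherwise) of $\Oc_{\Pb(\W_\lambda)}(1)$ requires control over \emph{all} coherent sheaves on $\Pb(\W_\lambda)$. Global generation of $\mathcal{G}_r$ gives a surjection $\Oc^{\oplus N} \twoheadrightarrow \mathcal{G}_r$, which goes the wrong way for transferring cohomology vanishing.

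The paper's argument is entirely different and stays on $X$ throughout. The key device you are missing is the \emph{self-duality of the Steinberg module}: there is a canonical surjection $\St_r \otimes \St_r \twoheadrightarrow \Oc_X$. One first uses the hypothesis with $\Fc = \Oc_X(-m) \otimes \mathcal{I}_x$ to show that $\St_{r_0} \otimes \W_\lambda^{(p^{r_0})}(-m)$ is globally generated near any closed point $x$. Tensoring this with itself and applying the Steinberg surjection kills the $\St_{r_0}$ factors, leaving $(\W_\lambda^{\otimes 2})^{(p^{r_0})}(-2m)$ globally generated on an open set. A Frobenius-twist and quasi-compactness argument then shows $\W_\lambda^{\otimes 2}$ is $p$-ample, hence ample by Propositions~\ref{pamptoamp} and~\ref{squareampclassic}. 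No passage to $Y$ or to $\Pb(\W_\lambda)$ is needed.
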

\begin{proof}
We consider a coherent sheaf $\Fc = \Oc_X(-m)$ with $m\geq 0$ and we write $$\mathcal{G}_r = \St_r\otimes \W_{\lambda}^{(p^r)} \otimes \Oc_X(-m).$$Let $x \in X$ be a closed point. From our hypothesis, there is an integer $r_0$ such that 
\begin{equation*}
H^1(X, \mathcal{G}_{r_0} \otimes \mathcal{I}_x) = 0
\end{equation*}
where $\mathcal{I}_x$ is the ideal sheaf defining the closed point $x$. From the long exact sequence of cohomology associated to the exact sequence 
\begin{equation*}
\begin{tikzcd}[column sep = small]
0 \arrow[r] & \mathcal{G}_{r_0}\otimes \mathcal{I}_x \arrow[r] & \mathcal{G}_{r_0} \arrow[r] & \mathcal{G}_{r_0} \otimes k(x) \arrow[r] & 0
\end{tikzcd}
\end{equation*}
we deduce that the map
\begin{equation*}
H^0(X,\mathcal{G}_{r_0}) \rightarrow H^0(X,\mathcal{G}_{r_0}\otimes k(x))
\end{equation*}
is surjective. In other words, $\mathcal{G}_{r_0}$ is globally generated at $x$. It implies there exists an open $U$ containing $x$ such that $\mathcal{G}_{r_0}$ is globally generated over $U$. Since $\St_{r_0}$ is self dual, there is a canonical surjective map
\begin{equation*}
\St_{r_0}^{\otimes 2} \rightarrow \Oc_X.
\end{equation*}
Since the tensor product of globally generated sheaves over $U$ is again globally generated over $U$, we deduce that 
\begin{equation*}
\mathcal{G}_{r_0}^{\otimes 2} = \St_{r_0}^{\otimes 2} \otimes {(\W_{\lambda}^{\otimes 2})}^{(p^{r_0})} \otimes \Oc_X(-2m)
\end{equation*}
is a globally generated over $U$. Since the quotient of a globally generated sheaf over $U$ is globally generated over $U$, we know that
\begin{equation*}
{(\W_{\lambda}^{\otimes 2})}^{(p^{r_0})}(-2m)
\end{equation*}
is globally generated sheaf over $U$. Now, let $r \geq r_0$ be an integer. From the equality
\begin{equation*}
{(\W_{\lambda}^{\otimes 2})}^{(p^r)}(-2p^{r-r_0}m) = {\left({(\W_{\lambda}^{\otimes 2})}^{(p^{r_0})}(-2m)\right)}^{(p^{r-r_0})}
\end{equation*} 
we deduce that ${(\W_{\lambda}^{\otimes 2})}^{(p^r)}(-2p^{r-r_0}m)$ is globally generated over $U$. Now take $r_1$ large enough to have $\Oc_X((2p^{r_1-r_0}-1)m)$ globally generated. We deduce that
\begin{equation*}
{(\W_{\lambda}^{\otimes 2})}^{(p^r)}(-2p^{r-r_0}m) \otimes \Oc_X((2p^{r-r_0}-1)m) = {(\W_{\lambda}^{\otimes 2})}^{(p^{r})}(-m)
\end{equation*}
is globally generated over $U$ for all $r \geq r_1$. Since $X$ is quasi-compact, we can find an integer $r_2 \geq r_1$ such that ${(\W_{\lambda}^{\otimes 2})}^{(p^{r})}(-m)$ is globally generated over $X$ for all $r\geq r_2$. We use the proposition \ref{prop3} to deduce that ${\W_{\lambda}^{\otimes 2}}$ is $p$-ample. By proposition \ref{pamptoamp} and \ref{squareampclassic}, it implies that $\W_{\lambda}$ is ample.
\end{proof}
We give a sufficient cohomological condition for a vector bundle to be $(\varphi,D)$-ample. 
\begin{proposition}\label{technicalD}
Let $\E$ be a vector bundle over $X$. Let $\lambda \in X^*(T)$ be a character. If there exists an effective Cartier divisor $D$ and an integer $r_ 0 \geq 1$ such that for all $r \geq r_0$ and all coherent sheaf $\Fc$ over $X$, there is an integer $r_1 \geq 1$ such that for all $r^\prime \geq r_1$ and $i>0$, we have
\begin{equation*}
H^i(X,\Fc \otimes \St_{r+r^\prime} \otimes \W_{\lambda}^{(p^{r+r^\prime})}(-p^{r^\prime}D)) = 0,
\end{equation*}
then $\W_{\lambda}$ is $(\varphi,D)$-ample.
\end{proposition}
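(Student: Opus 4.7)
The plan is to adapt the proof of Proposition \ref{prop2}, treating, for each fixed $r\geq r_0$, the vector bundle
$$\mathcal{E}_r := \W_{\lambda}^{(p^r)}(-D)$$
as playing the role of $\W_{\lambda}$ in that earlier argument. The key identity is
$$\W_{\lambda}^{(p^{r+r'})}(-p^{r'}D) = \mathcal{E}_r^{(p^{r'})},$$
so the hypothesis reads: for each coherent $\Fc$ there exists $r_1$ with
$$H^i(X, \Fc \otimes \St_{r+r'} \otimes \mathcal{E}_r^{(p^{r'})}) = 0$$
for all $i>0$ and $r'\geq r_1$. My goal reduces to proving $\mathcal{E}_r$ is ample for every $r\geq r_0$.

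Following the mechanism of Proposition \ref{prop2}, I would fix $r\geq r_0$, take $\Fc = \Oc_X(-m)\otimes \mathcal{I}_x$ with $x\in X$ closed and $m$ large, and use the ideal sheaf short exact sequence to deduce that, for some $r'_0$, the sheaf
$$\mathcal{G}'_{r'_0} := \St_{r+r'_0}\otimes \mathcal{E}_r^{(p^{r'_0})}\otimes \Oc_X(-m)$$
is globally generated at $x$, hence over some open $U_x \ni x$. The self-duality of the Steinberg module $\St_{r+r'_0}$ gives a canonical surjection $\St_{r+r'_0}^{\otimes 2}\twoheadrightarrow \Oc_X$; tensoring $\mathcal{G}'_{r'_0}$ with itself and applying Lemma \ref{prop10}, I obtain that $(\mathcal{E}_r^{\otimes 2})^{(p^{r'_0})}(-2m)$ is globally generated over $U_x$. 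Pulling back by iterates of Frobenius gives the same for $(\mathcal{E}_r^{\otimes 2})^{(p^{r'})}(-2p^{r'-r'_0}m)$ for all $r'\geq r'_0$, and tensoring with $\Oc_X((2p^{r'-r'_0}-1)m)$—which is globally generated for $r'$ large since $\Oc_X(1)$ is ample—yields that $(\mathcal{E}_r^{\otimes 2})^{(p^{r'})}(-m)$ is globally generated on $U_x$ for $r'\geq r'_2(x)$.

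By quasi-compactness of $X$, finitely many $U_x$ cover $X$, and taking the maximum of the corresponding $r'_2(x)$ produces an $r'_3$ such that $(\mathcal{E}_r^{\otimes 2})^{(p^{r'})}(-m)$ is globally generated on all of $X$ for $r'\geq r'_3$. Proposition \ref{prop3} then yields $p$-ampleness of $\mathcal{E}_r^{\otimes 2}$, Proposition \ref{pamptoamp} promotes this to ampleness of $\mathcal{E}_r^{\otimes 2}$, and Proposition \ref{squareampclassic} gives ampleness of $\mathcal{E}_r$. Running $r$ over all integers $\geq r_0$ shows $\W_{\lambda}$ is $(\varphi,D)$-ample.

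The only genuinely delicate point is the bookkeeping of twists: we need the shifted Steinberg index $r+r'$ (rather than $r'$) so that, after pulling out a $(p^{r'})$-twist, what remains behind on the $\W_{\lambda}$-side is precisely $\mathcal{E}_r$. This is exactly the reason the hypothesis is phrased with $\St_{r+r'}$ and the divisor shift $-p^{r'}D$ rather than $-D$; it makes the proof of Proposition \ref{prop2} go through verbatim with $\mathcal{E}_r$ in place of $\W_{\lambda}$.
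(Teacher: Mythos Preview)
Your proposal is correct and follows essentially the same route as the paper's proof. The paper tracks the indices $r$ and $r'$ separately throughout and concludes by showing $\W_{\lambda}^{\otimes 2}$ is $(\varphi,2D)$-ample, then invokes Propositions \ref{nD} and \ref{squarephiD}; your repackaging via $\mathcal{E}_r := \W_{\lambda}^{(p^r)}(-D)$ and the identity $\mathcal{E}_r^{(p^{r'})} = \W_{\lambda}^{(p^{r+r'})}(-p^{r'}D)$ makes the reduction to Proposition \ref{prop2} more transparent, but the substance is identical.
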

\begin{proof}
Using proposition \ref{prop3} and proposition \ref{pamptoamp}, it is sufficient to see that there exists integers $n \geq 1$ and $r_ 0 \geq 1$ such that for all $r \geq r_0$, the bundle $\mathcal{W}_{\lambda}^{(p^r)}(-nD)$ is $p$-ample. In other words, it is sufficient to see that there exists integers $n \geq 1$ and $r_ 0 \geq 1$ such that for all $r \geq r_0$ and all $m\geq 1$, there exists $r_1 \geq 1$, such that for all $r^\prime \geq r_1$, the bundle $$\W_{\lambda}^{(p^{r+r^\prime})}(-p^{r^\prime}nD)(-m)$$ is globally generated over $X$. By hypothesis, we have a Cartier divisor $D$ and $r_ 0 \geq 1$ an integer. Consider two integers $r \geq r_0$, $m\geq 0$ and write $$\mathcal{G}_{r^\prime} =  \St_{r+r^\prime} \otimes \W_{\lambda}^{(p^{r+r^\prime})}(-p^{r^\prime}D) \otimes \Oc_X(-m).$$Let $x \in X$ be a closed point. By hypothesis, we have an integer $r_1 \geq 1$ such that 
\begin{equation*}
H^1(X, \mathcal{G}_{r_1} \otimes \mathcal{I}_x) = 0
\end{equation*}
where $\mathcal{I}_x$ is the ideal sheaf defining the closed point $x$. From the long exact sequence of cohomology associated to the exact sequence 
\begin{equation*}
\begin{tikzcd}[column sep = small]
0 \arrow[r] & \mathcal{G}_{r_1}\otimes \mathcal{I}_x \arrow[r] & \mathcal{G}_{r_1} \arrow[r] & \mathcal{G}_{r_1} \otimes k(x) \arrow[r] & 0
\end{tikzcd}
\end{equation*}
we deduce that the map
\begin{equation*}
H^0(X,\mathcal{G}_{r_1}) \rightarrow H^0(X,\mathcal{G}_{r_1}\otimes k(x))
\end{equation*}
is surjective. In other words, $\mathcal{G}_{r_1}$ is globally generated at $x$. It implies there exists an open $U$ containing $x$ such that $\mathcal{G}_{r_1}$ is globally generated over $U$. Since the Steinberg module is self dual, there is a canonical surjective map
\begin{equation*}
\St_{r+r_1}^{\otimes 2} \rightarrow \Oc_X.
\end{equation*}
Since the tensor product of globally generated sheaves over $U$ is again globally generated over $U$, we deduce that 
\begin{equation*}
\mathcal{G}_{r_1}^{\otimes 2} = \St_{r+r_1}^{\otimes 2} \otimes {(\W_{\lambda}^{\otimes 2})}^{(p^{r+r_1})}(-2p^{r_1}D) \otimes \Oc_X(-2m)
\end{equation*}
is globally generated over $U$. Since the quotient of a globally generated sheaf over $U$ is globally generated over $U$, we know that
\begin{equation*}
 {(\W_{\lambda}^{\otimes 2})}^{(p^{r+r_1})}(-2p^{r_1}D) \otimes \Oc_X(-2m)
\end{equation*}
is a globally generated sheaf over $U$. From the equality
\begin{equation*}
{\left({(\mathcal{W}_{\lambda}^{\otimes 2})}^{(p^{r+r_1})}(-2p^{r_1}D)(-2m)\right)}^{(p^{r^\prime-r_1})} = {(\mathcal{W}_{\lambda}^{\otimes 2})}^{(p^{r+r^\prime})}(-2p^{r^\prime}D)(-2p^{r^{\prime}-r_1}m)
\end{equation*} 
we deduce that ${(\mathcal{W}_{\lambda}^{\otimes 2})}^{(p^{r+r^\prime})}(-2p^{r^\prime}D)(-2p^{r^{\prime}-r_1}m)$ is globally generated over $U$. Now take $r_2 \geq r_1$ large enough to have $\Oc_X((2p^{r^\prime-r_1}-1)m)$ globally generated for all $r^\prime \geq r_2$. We deduce that
\begin{multline*}
{(\mathcal{W}_{\lambda}^{\otimes 2})}^{(p^{r+r^\prime})}(-2p^{r^\prime}D)(-2p^{r^{\prime}-r_1}m) \otimes \Oc_X((2p^{r^\prime-r_1}-1)m) \\= {(\mathcal{W}_{\lambda}^{\otimes 2})}^{(p^{r+r^\prime})}(-2p^{r^\prime}D)(-m)
\end{multline*}
is globally generated over $U$ for all $r^\prime \geq r_2$. Since $X$ is quasi-compact, we can find an integer $r_3 \geq r_2$ such that ${(\mathcal{W}_{\lambda}^{\otimes 2})}^{(p^{r+r^\prime})}(-2p^{r^\prime}D)(-m)$ is globally generated over $X$ for all $r^\prime \geq r_3$. In conclusion, we have proven that $\mathcal{W}_{\lambda}^{\otimes 2}$ is $(\varphi,2D)$-ample, which is equivalent to $(\varphi,D)$-ample by proposition \ref{nD}. Then, we use the proposition \ref{squarephiD} to deduce that $\mathcal{W}_{\lambda}$ is $(\varphi,D)$-ample.
\end{proof}
\begin{theorem}\label{th1}
Let $\lambda \in X^*(T)$ be character. If $\mathcal{L}_{2\lambda+2\rho}$ is ample on $Y$, then $\W_{\lambda}$ is ample on $X$.
\end{theorem}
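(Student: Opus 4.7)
My strategy is to verify the cohomological ampleness criterion of Proposition \ref{prop2}: it suffices to show that for every coherent sheaf $\Fc$ on $X$ and every $i>0$, there is an integer $r_0$ with $H^i(X,\Fc\otimes \St_r\otimes \W_\lambda^{(p^r)})=0$ for all $r\geq r_0$. The ampleness of $\Lc_{2\lambda+2\rho}$ on $Y$ restricts to ampleness on every fibre $\pi^{-1}(x)\simeq G/B$, so $2\lambda+2\rho$ must be regular dominant; consequently $\langle \lambda+\rho,\alpha^\vee\rangle\geq 1$ for every simple coroot $\alpha^\vee$, which in particular makes $\lambda$ itself dominant (so that $\W_\lambda$ is nonzero and the statement is non-vacuous) and makes $p^r(\lambda+\rho)-\rho$ dominant for every $r\geq 1$.

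The next step is to translate cohomology on $X$ into cohomology on $Y$. Proposition \ref{prop5} combined with Kempf vanishing (Proposition \ref{prop4}) and Lemma \ref{lem1} yields, for every $r\geq 1$, the identities $R^j\pi_*\Lc_{p^r(\lambda+\rho)-\rho}=0$ for $j>0$ together with $\pi_*\Lc_{p^r(\lambda+\rho)-\rho}=\St_r\otimes \W_\lambda^{(p^r)}$. Feeding this through the projection formula and the Leray spectral sequence, which degenerates because the higher direct images vanish, produces the identification
$$H^i\bigl(X,\Fc\otimes \St_r\otimes \W_\lambda^{(p^r)}\bigr)\;\simeq\;H^i\bigl(Y,\pi^*\Fc\otimes \Lc_{p^r(\lambda+\rho)-\rho}\bigr)$$
for every $i\geq 0$ and every $r\geq 1$.

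It remains to kill the right-hand side by Serre vanishing using the ampleness hypothesis. For $p$ odd, the identity $p^r(\lambda+\rho)-\rho=\lambda+\tfrac{p^r-1}{2}(2\lambda+2\rho)$ in $X^*(T)$ gives $\Lc_{p^r(\lambda+\rho)-\rho}\simeq \Lc_\lambda\otimes \Lc_{2\lambda+2\rho}^{\otimes (p^r-1)/2}$, so ampleness of $\Lc_{2\lambda+2\rho}$ combined with Serre vanishing applied to the coherent sheaf $\pi^*\Fc\otimes\Lc_\lambda$ on $Y$ delivers the desired vanishing for $r$ large. In characteristic $2$ the convention under which $\St_r$ is defined forces $\rho$ itself to be an integral character; then $\Lc_{\lambda+\rho}^{\otimes 2}=\Lc_{2\lambda+2\rho}$ is ample, hence $\Lc_{\lambda+\rho}$ is itself ample by Proposition \ref{squareampclassic}, and the same conclusion follows via $\Lc_{p^r(\lambda+\rho)-\rho}\simeq \Lc_{-\rho}\otimes \Lc_{\lambda+\rho}^{\otimes p^r}$. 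The only genuine subtlety in the whole argument is this parity bookkeeping needed to rewrite $p^r(\lambda+\rho)-\rho$ as a large positive multiple of the ample character $2\lambda+2\rho$ up to a bounded correction; the algebraic heart of the theorem is Proposition \ref{prop5}, which arranges for the Steinberg twist on $X$ to appear as the pushforward of a single line bundle on $Y$.
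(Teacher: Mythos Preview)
Your proof follows the same architecture as the paper's: identify $\St_r\otimes \W_\lambda^{(p^r)}$ as $\pi_*$ of a single line bundle on $Y$, collapse the Leray spectral sequence, apply Serre vanishing on $Y$, and invoke Proposition~\ref{prop2}. There are two tactical differences worth noting. First, you obtain $R^j\pi_*\Lc_{p^r(\lambda+\rho)-\rho}=0$ from Kempf vanishing via Proposition~\ref{prop5}, after observing that ampleness on the fibres forces $2\lambda+2\rho$ to be regular dominant and hence $p^r(\lambda+\rho)-\rho$ to be dominant; the paper instead argues that $\Lc_{\lambda+\rho}$ is $\pi$-ample and gets the vanishing only for $r$ large. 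Second, rather than passing to a finite cover of $X$ on which $\rho$ becomes an honest character (Lemma~\ref{classifying_trick} in the paper), you handle the integrality issue by the rewrite $p^r(\lambda+\rho)-\rho=\lambda+\tfrac{p^r-1}{2}(2\lambda+2\rho)$ for $p$ odd and treat $p=2$ separately. This is neater for $p$ odd, but your $p=2$ paragraph is not quite complete: you argue that the paper's definition of $\St_r$ forces $\rho\in X^*(T)$, yet the theorem itself carries no such hypothesis, so groups with $p=2$ and $\rho\notin X^*(T)$ (e.g.\ $\mathrm{PGL}_2$) are not covered by your argument. The paper closes exactly this gap with Lemma~\ref{classifying_trick} together with descent of ampleness along finite surjective maps (Proposition~\ref{prop11}); adding that one reduction step at the start would make your proof complete.
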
 
\begin{proof}
Consider a semi-simple cover of $G$ (the existence is proved in \cite{MR2015057}), i.e. a finite surjective morphism $h : G^\prime \rightarrow G$ of reductive groups with central kernel such that $G^\prime = G_{\text{sc}} \times T_1$ is a product of a semisimple simply-connected group $G_{\text{sc}}$ with a torus $T_1$. Since ampleness can be tested after a pullback by a finite surjective morphism by proposition \ref{prop11}, we can use lemma \ref{classifying_trick} to assume that $\rho$ is a genuine character. Assume that $\Lc_{\lambda+\rho}$ is ample and consider a coherent sheaf $\Fc$ on $X$. We have a Leray spectral sequence starting at second page
\begin{equation*}
E_2^{i,j} = H^i(X,\Fc \otimes R^j\pi_*( \Lc_{\lambda+\rho}^{\otimes p^r} \otimes \Lc_{-\rho} ) ) \Rightarrow H^{i+j}(Y,\pi^*\Fc \otimes \Lc_{\lambda+\rho}^{\otimes p^r} \otimes \Lc_{-\rho} )
\end{equation*}
Since $\Lc_{\lambda+\rho}$ is ample on $Y$, it is also $\pi$-ample and we have 
\begin{equation*}
R^j\pi_*( \Lc_{\lambda+\rho}^{\otimes p^r} \otimes \Lc_{-\rho} ) = 0
\end{equation*}
for all $j>0$ and $r$ large enough. We deduce that the spectral sequence degenerates at page $2$ and we get isomorphisms:
\begin{equation*}
H^i(X,\Fc \otimes \pi_*( \Lc_{\lambda+\rho}^{\otimes p^r} \otimes \Lc_{-\rho} ) ) = H^{i}(Y,\pi^*\Fc \otimes \Lc_{\lambda+\rho}^{\otimes p^r} \otimes \Lc_{-\rho} )
\end{equation*}
for all $i\geq 0$ and $r$ large enough. Moreover, since $\Lc_{\lambda+\rho}$ is ample, the right hand side vanishes for $i>0$ and $r$ large enough. From the proposition \ref{prop1}, we know that 
\begin{equation*}
\pi_*(\Lc_{p^r(\lambda+\rho)-\rho}) = \St_r \otimes \mathcal{W}_{\lambda}^{(p^r)}
\end{equation*} 
and from proposition \ref{prop2}, we deduce that $\mathcal{W}_{\lambda}$ is ample.
\end{proof}
\begin{theorem}\label{th2}
Let $\lambda \in X^*(T)$ be character. If $\mathcal{L}_{2\lambda+2\rho}$ is $D$-ample over $Y$, then $\W_{\lambda}$ is $(\varphi,D)$-ample on $X$.
\end{theorem}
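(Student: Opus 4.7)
The plan is to mimic the strategy used for Theorem \ref{th1}, but replacing the ampleness criterion of Proposition \ref{prop2} by the $(\varphi,D)$-ampleness criterion of Proposition \ref{technicalD}. Since $(\varphi,D)$-ampleness descends along finite surjective morphisms whose target Cartier divisor remains well-defined (Proposition \ref{prop18}), I first reduce via the semisimple cover trick of Lemma \ref{classifying_trick} to the case where $G$ is simply connected, so that $\rho$ is a genuine character and $\Lc_{\lambda+\rho}$ is a line bundle on $Y$. The flag bundle $\pi$ is flat, so the pullback of $D$ is automatically a well-defined effective Cartier divisor, and no obstruction arises at this step.

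Next, from the hypothesis that $\Lc_{2\lambda+2\rho}=\Lc_{\lambda+\rho}^{\otimes 2}$ is $D$-ample, I apply the tensor-root stability result (Proposition \ref{squarephiD}) to deduce that $\Lc_{\lambda+\rho}$ itself is $(\varphi,D)$-ample. Thus there exists $r_0 \ge 1$ such that for every $r \ge r_0$ the line bundle $\mathcal{M}_r := \Lc_{p^r(\lambda+\rho)}(-D)$ is ample on $Y$. Observe also that the restriction of $\Lc_{2\lambda+2\rho}$ to any fiber of $\pi$ (where $D$ becomes trivial) must be ample, so $\lambda+\rho$ is regular dominant and in particular $\lambda$ is dominant; this will allow Kempf's vanishing to apply.

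Now fix $r \ge r_0$ and a coherent sheaf $\Fc$ on $X$. For any $r' \ge 1$, the key identity
\begin{equation*}
\Lc_{p^{r+r'}(\lambda+\rho)-\rho}\otimes\Oc_Y(-p^{r'}D) \;=\; \mathcal{M}_r^{\otimes p^{r'}}\otimes \Lc_{-\rho}
\end{equation*}
holds on $Y$. Using Proposition \ref{prop5} together with Lemma \ref{lem1} and Kempf's vanishing (applicable since $p^{r+r'}(\lambda+\rho)-\rho = p^{r+r'}\lambda + (p^{r+r'}-1)\rho$ is dominant), the sheaf $R^j\pi_*\Lc_{p^{r+r'}(\lambda+\rho)-\rho}$ vanishes for $j>0$ and equals $\St_{r+r'}\otimes \W_{\lambda}^{(p^{r+r'})}$ in degree zero. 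The Leray spectral sequence, combined with the projection formula applied to $\Oc_Y(-p^{r'}D)=\pi^*\Oc_X(-p^{r'}D)$, then degenerates at the second page and yields
\begin{equation*}
H^i\bigl(X,\Fc\otimes \St_{r+r'}\otimes \W_{\lambda}^{(p^{r+r'})}(-p^{r'}D)\bigr) \;=\; H^i\bigl(Y,\pi^*\Fc\otimes \mathcal{M}_r^{\otimes p^{r'}}\otimes \Lc_{-\rho}\bigr).
\end{equation*}

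Finally, since $\mathcal{M}_r$ is ample and $\pi^*\Fc\otimes \Lc_{-\rho}$ is coherent on $Y$, Serre's vanishing theorem produces an integer $r_1=r_1(\Fc,r)$ such that the right-hand side vanishes for all $i>0$ and all $r'\ge r_1$. This is exactly the hypothesis of Proposition \ref{technicalD}, whose conclusion is that $\W_\lambda$ is $(\varphi,D)$-ample on $X$. The only subtle step is checking that the reductions (to $\rho$ being a character, and from $2\lambda+2\rho$ to $\lambda+\rho$) cohere with the hypotheses of the cohomological criterion and with the dominance requirement for Kempf's vanishing; the rest is a routine adaptation of the argument of Theorem \ref{th1}.
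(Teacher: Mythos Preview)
Your proof is correct and follows essentially the same route as the paper's: reduce to $\rho$ a character via Lemma \ref{classifying_trick}, use the ampleness of $\Lc_{\lambda+\rho}^{\otimes p^r}(-D)$ together with the Leray spectral sequence and the Steinberg identity to verify the cohomological hypothesis of Proposition \ref{technicalD}. The only minor variation is that you obtain the vanishing of $R^j\pi_*$ via Kempf's vanishing (after observing that the hypothesis forces $\lambda$ to be dominant), whereas the paper uses the $\pi$-ampleness of $\Lc_{\lambda+\rho}^{\otimes p^r}(-D)$ for the same purpose; both arguments are valid and lead to the identical conclusion.
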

\begin{proof}
Since $(\varphi,D)$-ampleness can be tested after a pullback by a finite surjective morphism by proposition \ref{prop18}, we use the same trick as in theorem \ref{th1} to assume that $\rho$ is a genuine character. Consider $r_0\geq 1$ large enough such that $\Lc_{\lambda+\rho}^{\otimes p^{r}}(-D)$ is ample for all $r\geq r_0$. Let $\Fc$ be a coherent sheaf on $X$ and $r \geq r_0$ integer. For all integer $r^\prime \geq 1$, we have a Leray spectral sequence starting at second page
\begin{multline*}
E_2^{i,j} = H^i(X,\Fc \otimes R^j\pi_*(\Lc_{\lambda+\rho}^{\otimes p^{r+r^\prime}}(-p^{r^\prime}D) \otimes \Lc_{-\rho}))\\ \Rightarrow H^{i+j}(Y,\pi^*\Fc \otimes \Lc_{\lambda+\rho}^{\otimes p^{r+r^\prime}}(-p^{r^\prime}D) \otimes \Lc_{-\rho}).
\end{multline*}
Since $\Lc_{\lambda+\rho}^{\otimes p^{r}}(-D)$ is $\pi$-ample, there is a $r_1 \geq 1$ large enough such that
\begin{equation*}
R^j\pi_*(\Lc_{\lambda+\rho}^{\otimes p^{r+r^\prime}}(-p^{r^\prime}D) \otimes \Lc_{-\rho}) = R^j\pi_*((\Lc_{\lambda+\rho}^{\otimes p^{r}}(-D))^{\otimes p^{r^\prime}} \otimes \Lc_{-\rho}) = 0
\end{equation*}
for all $j>0$ and $r^\prime \geq r_1$. We deduce that the spectral sequence degenerates at page $2$ and we get isomorphisms:
\begin{equation*}
H^i(X,\Fc \otimes \pi_*(\Lc_{\lambda+\rho}^{\otimes p^{r+r^\prime}}(-p^{r^\prime}D) \otimes \Lc_{-\rho})) = H^{i}(Y,\pi^*\Fc \otimes \Lc_{\lambda+\rho}^{\otimes p^{r+r^\prime}}(-p^{r^\prime}D) \otimes \Lc_{-\rho})
\end{equation*}
for all $i\geq 0$ and $r^\prime \geq r_1$. Since $\Lc_{\lambda+\rho}^{\otimes p^{r}}(-D)$ is ample on $Y$, there exists $r_2 \geq r_1$ such that we have 
\begin{equation*}
H^{i}(Y,\pi^*\Fc \otimes (\Lc_{\lambda+\rho}^{\otimes p^{r}}(-D))^{\otimes p^{r^\prime}}\otimes \Lc_{-\rho}) = 0
\end{equation*} 
for all $i>0$ and $r^\prime \geq r_2$. From proposition \ref{prop1}, we know that 
\begin{equation*}
\pi_*(\Lc_{p^{r+r^\prime}(\lambda+\rho)-\rho}(-p^{r^\prime}D)) = \St_{r+r^\prime} \otimes \mathcal{W}_{\lambda}^{(p^{r+r^\prime})}(-p^{r^\prime}D)
\end{equation*}
which implies that 
\begin{equation*}
H^i(X,\Fc \otimes \St_{r+r^\prime} \otimes \mathcal{W}_{\lambda}^{(p^{r+r^\prime})}(-p^{r^\prime}D)) = 0
\end{equation*}
for all $r^\prime \geq r_2$. We deduce with the technical proposition \ref{technicalD} that $\W_{\lambda}$ is $(\varphi,D)$-ample.
\end{proof}
\section{Positivity of automorphic vector bundles on the Siegel variety}\label{sect_auto}
In this section, we prove that certain automorphic bundles on the Siegel modular variety are $(\varphi,D)$-ample for some effective Cartier divisor $D$.
\subsection{Recollection on Siegel modular varieties}
We start by recalling some well-known results from \cite{MR1083353} on Siegel modular varieties and their toroidal compactifications. We denote $\Sch_{R}$ the category of schemes over a ring $R$.
\begin{definition}\label{def7}
Let $V$ be the $\mathbb{Z}$-module $\mathbb{Z}^{2g}$ endowed with the standard non-degenerate symplectic pairing
\begin{equation*}
\begin{tikzcd}[row sep =tiny]
\psi : V \times V \arrow[r]& \mathbb{Z} \\
(x,y) \arrow[r,mapsto] & {}^txJy
\end{tikzcd}
\end{equation*}
where
\begin{equation*}
J = \begin{pmatrix}
0 & I_g \\
-I_g & 0 
\end{pmatrix}.
\end{equation*}
\vspace*{0.0cm}

We denote $\Sp_{2g}$ the algebraic group over $\Z$ of $2g \times 2g$ matrices $M$ that preserve the symplectic pairing $\psi$, i.e. such that
\begin{equation*}
{}^tMJM = J.
\end{equation*}
\end{definition}
\begin{definition}[\cite{MR1083353}]
Let $N$ be a positive integer such that $p \nmid N$. Recall that $k$ is an algebraically closed field of characteristic $p$. Consider the fibered category in groupoids $\mathcal{A}_{g,N}$ on $\Sch_{k}$ whose $S$-points are groupoids with
\begin{itemize}
\item Objects: $(A,\lambda,\psi_N)$ where $A \rightarrow S$ is abelian scheme over $S$ of relative dimension $g$, $\lambda : A \rightarrow A^{\vee}$ is a principal polarization and
\begin{equation*}
\psi_N : A[N] \DistTo \underline{\left(\Z/N\Z\right)}_S^2
\end{equation*}
is a basis over $S$ of the $N$-torsion of $A$.
\item Morphisms: A morphism $(A,\lambda,\psi_N) \rightarrow (A^\prime,\lambda^\prime,\psi^\prime_N)$ is a scheme morphism $\alpha : A \rightarrow A^\prime$ over $S$ such that the diagram
\begin{equation*}
\begin{tikzcd}
A \arrow[r,"\alpha"] \arrow[d,"\lambda"] & A^{\prime} \arrow[d,"\lambda^\prime"] \\
A^{\vee} & {A^\prime}^{\vee} \arrow[l,"\alpha^{\vee}"]
\end{tikzcd}
\end{equation*}
is commutative and the pullback of $\psi_N$ by $\alpha$ is $\psi^\prime_N$.
\end{itemize}
\end{definition}
\begin{proposition}[\cite{MR1083353}]\label{prop1_sieg}
For any integer $N \geq 3$ such that $p \nmid N$, the fibered category in groupoids $\mathcal{A}_{g,N}$ is representable by a smooth integral quasi-projective scheme over $k$.
\end{proposition}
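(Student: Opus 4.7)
The strategy is the standard one: first rigidify via level structure, then represent by a Hilbert-scheme construction, and finally establish smoothness by deformation theory and quasi-projectivity by an ample line bundle. The key reason the hypothesis $N \geq 3$ (with $p \nmid N$) enters is that it kills all automorphisms, turning the a priori stack into a sheaf.

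The first step is to observe that for $N \geq 3$ coprime to $p$, any $S$-automorphism $\alpha : (A,\lambda,\psi_N) \to (A,\lambda,\psi_N)$ is the identity. One reduces to geometric fibers, where a standard application of Serre's lemma (an automorphism of an abelian variety that preserves a principal polarization and fixes the $N$-torsion pointwise has all eigenvalues on the Tate module equal to roots of unity congruent to $1$ mod $N$, hence is $\id$) gives $\alpha = \id$. This implies that $\mathcal{A}_{g,N}$ is equivalent to its sheafification, i.e.\ an fppf sheaf of sets on $\Sch_k$.

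Next I would establish representability. By Mumford's results the principal polarization $\lambda$ produces a symmetric ample line bundle $\mathcal{M}$ on each abelian scheme, and for any fixed $n \geq 3$ with $p \nmid n$ the sheaf $\mathcal{M}^{\otimes n}$ is relatively very ample with Hilbert polynomial depending only on $g$ and $n$. Choosing $n$ coprime to $Np$ and refining the level structure to one of level $Nn$, one embeds the rigidified moduli problem as a locally closed subscheme of a suitable Hilbert scheme parametrizing $n$-th power polarized embeddings equipped with bases of the $Nn$-torsion; the conditions ``abelian scheme,'' ``principal polarization,'' and ``compatibility of $\psi_N$'' are all locally closed, and finite étale quotient by the auxiliary level structure descends the construction back to $\mathcal{A}_{g,N}$.

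Smoothness is then obtained by infinitesimal deformation theory of polarized abelian schemes: for a square-zero thickening $S \hookrightarrow S'$ with ideal $I$, the obstruction to deforming $(A,\lambda)$ lies in a group of the form $H^2(A, \underline{\mathrm{Sym}}^2 \omega_A^{\vee}) \otimes I$ restricted by the symmetry imposed by $\lambda$, and classical theory (Grothendieck--Oort--Mumford) shows this obstruction vanishes while the tangent space is free of the expected dimension $g(g+1)/2$; the level structure deforms uniquely because $N$ is invertible on $S$. The main subtlety, which is the technical heart of the argument, is handling the polarization in the deformation problem correctly so as to conclude smoothness of pure relative dimension $g(g+1)/2$. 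Quasi-projectivity follows because the determinant of the Hodge bundle $\omega = \det \Omega$ becomes ample after a suitable power (via the Baily--Borel / minimal compactification, or Moret-Bailly's theta-constant construction in characteristic $p$), and geometric integrality follows from the well-known fact that $\mathcal{A}_{g,N}$ is geometrically connected (which can be deduced by lifting to characteristic zero via a universal deformation and invoking connectedness of the Siegel upper half space, or by the irreducibility results of Chai--Faltings).
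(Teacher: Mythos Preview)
The paper does not give a proof of this proposition at all: it is stated with a bare citation to \cite{MR1083353} (Faltings--Chai) and immediately followed by a block of notation. Your sketch is a faithful outline of the classical argument that one finds in that reference (and in Mumford's GIT): rigidity via Serre's lemma, construction inside a Hilbert scheme, smoothness by deformation theory of polarized abelian schemes, and quasi-projectivity via the Hodge bundle. So there is nothing to compare against on the paper's side, and your proposal is correct in substance; in the context of this paper the appropriate ``proof'' would simply be to cite \cite{MR1083353}.
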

\begin{notation}\label{not1}
We denote $G$ the base change of the algebraic group $\Sp_{2g}$ over $k$. We fix a genus $g \geq 1$ and a level $N \geq 3$ such that $p \nmid N$. We denote simply $\Sh$ the Siegel modular variety $\mathcal{A}_{g,N}$. Let $\mu$ be the following minuscule cocharacter of $G$
\begin{equation*}
\begin{tikzcd}[row sep =tiny]
\mu : \mathbb{G}_m \arrow[r] & G \\
z \arrow[r,mapsto] &\begin{pmatrix}
zI_g & 0 \\ 0 & z^{-1}I_g
\end{pmatrix}.
\end{tikzcd}
\end{equation*}
We denote $P^+ := P_{\mu}$ and $P:=P_{-\mu}$ the associated opposite parabolic subgroups with common Levi subgroup $L = \GL_g$ over $k$. We denote $B \subset P$ the Borel of upper triangular matrices in $G = \Sp_{2g}$ over $k$. We denote $\Phi_L$ (resp. $\Phi_L^+$) the corresponding roots of $L$ (resp. positive roots of $L$).
\end{notation}
\begin{definition}[\cite{MR1083353}]
As a fine moduli space, the Siegel variety $\Sh$ is endowed with a universal principally polarized abelian scheme of relative dimension $g$
\begin{equation*}
\begin{tikzcd}
A \arrow[r, "f"] & \Sh\arrow[l, "e", bend left]
\end{tikzcd}
\end{equation*}
where $e : \Sh \rightarrow A$ is the neutral section. Recall the following associated objects on $\Sh$.
\begin{enumerate}
\item We denote $\mathcal{H}^1_{\dR} := R^1f_*(\Omega^{\bullet}_{A/\Sh})$ the de Rham cohomology vector bundle of rank $2g$ over $\Sh$.
\item We denote $\Omega =e^*\Omega^1_{A/\Sh}$ the Hodge vector bundle of rank $g$ over $\Sh$.
\end{enumerate}
Note that the Weil paring and the principal polarization on the universal abelian scheme $f : A \rightarrow \Sh$ induce a symplectic pairing of the same type as $\psi$ on $\mathcal{H}^1_{\dR}$. In other words, the de Rham cohomology is equivalent to the data of a $G$-torsor on $\Sh$.
\end{definition}
\vspace*{0.2cm}
\begin{proposition}[\cite{MR894379}]
The Hodge-de Rham spectral sequence 
\begin{equation*}
E_1^{i,j} = R^jf_*(\Omega^i_{A/\Sh}) \Rightarrow R^{i+j}f_*(\Omega^{\bullet}_{A/\Sh})
\end{equation*}
degenerates at page $1$ which proves the existence of the Hodge-de Rham filtration
\begin{equation*}
\begin{tikzcd}
0 \arrow[r] & \Omega \arrow[r] & \mathcal{H}^1_{\dR} \arrow[r] & R^1f_*\mathcal{O}_{A} \arrow[r] & 0.
\end{tikzcd}
\end{equation*}
Moreover, the Hodge bundle $\Omega$ is totally isotropic for the symplectic pairing on $\mathcal{H}^1_{\dR}$ which implies that the Hodge-de Rham filtration is equivalent to the data of a $P$-reduction of the $G$-torsor $\mathcal{H}^1_{\dR}$ on the Siegel variety. 
\end{proposition}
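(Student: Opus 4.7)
The plan is to reduce everything to the fact that $f : A \to \Sh$ is an abelian scheme, and then exploit the self-duality coming from the principal polarization. First, since $A$ is a group scheme over $\Sh$, the sheaf of relative differentials is the pullback of its restriction along the zero section: $\Omega^1_{A/\Sh} \simeq f^{*}\Omega$, and consequently $\Omega^i_{A/\Sh} \simeq f^{*}\Lambda^i\Omega$. By the projection formula, this yields
\begin{equation*}
E_1^{i,j} = R^j f_*\Omega^i_{A/\Sh} \simeq \Lambda^i\Omega \otimes_{\Oc_\Sh} R^j f_*\Oc_A.
\end{equation*}

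The next step is the relative Künneth-type isomorphism $R^j f_*\Oc_A \simeq \Lambda^j R^1 f_*\Oc_A$, which is specific to abelian schemes and is obtained by pulling back along the multiplication $m : A\times_\Sh A \to A$, extracting the primitive part, and using that the cohomology ring of an abelian variety is the exterior algebra on $H^1$. Granting this, the rank of $E_1^{i,j}$ equals $\binom{g}{i}\binom{g}{j}$, and the sum over $i+j=n$ is $\binom{2g}{n}$. Since this matches the rank of the abutment $R^n f_*\Omega^{\bullet}_{A/\Sh}$ (computable by comparison with $\ell$-adic cohomology of a geometric fiber, or by deformation-theoretic lifting to characteristic zero), all differentials of the spectral sequence must vanish and the sequence degenerates at $E_1$. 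In particular, the filtration on $\mathcal{H}^1_{\dR}$ has graded pieces $\Omega$ and $R^1 f_*\Oc_A$, giving the announced short exact sequence.

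For the second half, the principal polarization $\lambda : A \to A^{\vee}$ induces an isomorphism $\mathcal{H}^1_{\dR}(A) \to \mathcal{H}^1_{\dR}(A^{\vee})$ together with the duality $\mathcal{H}^1_{\dR}(A^{\vee}) \simeq \mathcal{H}^1_{\dR}(A)^{\vee}$ coming from the Weil/Poincaré pairing; the composite is the symplectic form on $\mathcal{H}^1_{\dR}$ of type $\psi$ from Definition \ref{def7}. Under Serre duality on abelian schemes one has $R^1 f_*\Oc_A \simeq \Omega^{\vee}$, and the symplectic form is compatible with the Hodge filtration in the sense that $\Omega$ and $R^1 f_*\Oc_A$ are exchanged by the isomorphism $\mathcal{H}^1_{\dR} \simeq (\mathcal{H}^1_{\dR})^{\vee}$. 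Consequently the pairing of two sections of $\Omega$ factors through the quotient $R^1 f_*\Oc_A$, in which they vanish, so $\Omega$ is totally isotropic. Since $\Omega$ has rank $g = \tfrac{1}{2}\rk \mathcal{H}^1_{\dR}$, it is a Lagrangian subbundle; because $P$ was defined as the stabilizer of the standard Lagrangian of $(V,\psi)$, a Lagrangian subbundle of the $G$-torsor $\mathcal{H}^1_{\dR}$ is precisely the datum of a $P$-reduction.

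The main obstacle lies in the two inputs used above: the relative Künneth-type formula $R^j f_*\Oc_A \simeq \Lambda^j R^1 f_*\Oc_A$ and the identification of the symplectic form on $\mathcal{H}^1_{\dR}$ with the one induced by $\lambda$ in a way compatible with the Hodge filtration. Both are classical but delicate in characteristic $p$, and are established in the reference \cite{MR894379}; in our exposition we will simply invoke them, as they are orthogonal to the new content of the paper.
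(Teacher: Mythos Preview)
The paper does not give a proof of this proposition at all; it is stated with the citation \cite{MR894379} and used as a black box. There is therefore no argument in the paper to compare your proposal against.

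Your sketch is a reasonable outline of the classical proof for abelian schemes, and you correctly flag the two nontrivial inputs and defer them to the cited reference. One small correction: invoking ``comparison with $\ell$-adic cohomology of a geometric fiber'' to determine $\rk \mathcal{H}^n_{\dR}$ is not valid as stated, since there is no comparison isomorphism between de Rham and $\ell$-adic cohomology in characteristic $p$. The lifting route you mention is the right one, but it in turn requires knowing that $R^nf_*\Omega^{\bullet}_{A/\Sh}$ is locally free and commutes with base change, which is itself part of the package proved in \cite{MR894379} (via crystalline methods or the universal vector extension). So the rank-counting step is not quite self-contained; it ultimately rests on the same reference. Since the paper treats the whole statement as input, your level of detail already exceeds what is needed.
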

\vspace*{0.2cm}
In the next definition, we recall the main properties of toroidal compactifications of Siegel varieties.
\begin{definition}[{\cite[Chapter 4]{MR1083353}\cite[Th. 2.15]{MR2968629}}]\label{prop5_sieg}
Let $C$ denote the cone of all positive semi-definite symmetric bilinear forms on $X^* \otimes_{\mathbb{Z}} \mathbb{R}$ with radicals defined over $\mathbb{Q}$. Following the definitions \cite[Chapter 4, Definition 2.2/2.3]{MR1083353}, we consider a smooth $GL(X^*)$-admissible decomposition $\Sigma = \{ \sigma_{\alpha} \}_{\alpha}$ in polyhedral cones of $C$. Following the definition \cite[Chapter 4, Definition 2.4]{MR1083353}, we assume furthermore that $\Sigma$ admits a $GL(X^*(T))$-equivariant polarization function. The existence of a polyhedral cone decomposition $\Sigma$ satisfying these assumptions is ensured by  \cite{MR2590897} and \cite{MR0335518}. Denote $\Sh^{\tor}$ the toroidal compactification of the Siegel variety associated to $\Sigma$. It follows from the assumptions on $\Sigma$ that $\Sh^{\tor}$ is a smooth projective scheme over $k$ satisfying the following assertions.

\begin{enumerate}
\item The boundary $D_{\red} = \Sh^{\tor} - \Sh$ with its reduced structure is an effective Cartier divisor with normal crossings.
\item The universal abelian scheme $f : A \rightarrow \Sh$ extends to a semi-abelian scheme $f^{\tor} : A^{\tor} \rightarrow \Sh^{\tor}$.
\item The sheaf $\Omega^{\tor} := e^*\Omega^1_{A^{\tor}/\Sh^{\tor,\Sigma}}$ is a vector bundle of rank $g$ that extends the Hodge bundle $\Omega$ to $\Sh^{\tor,\Sigma}$.
\item By \cite[Chapter 4]{MR1083353} or \cite[Th. 2.15, (2)]{MR2968629} there exists a log-smooth projective compactification $\bar{f}^{\tor} : \bar{A}^{\tor} \rightarrow \Sh^{\tor}$ of the semi-abelian scheme $f^{\tor} : A^{\tor} \rightarrow \Sh^{\tor}$ and we denote again $D_{\red}$ the divisor with normal crossings $\bar{A}^{\tor} - A$.
\item By \cite[Chapter 4]{MR1083353} or \cite[Th. 2.15, (3)]{MR2968629}, the log-de Rham cohomology
\begin{equation*}
\mathcal{H}^1_{\log-\dR} := R^1{(\bar{f}^{\tor})}_*\bar{\Omega}^{\bullet}_{\bar{A}^{\tor}/\Sh^{\tor}}
\end{equation*}
where $\bar{\Omega}^{\bullet}_{\bar{A}^{\tor}/\Sh^{\tor}}$ is the complex of log-differentials
\begin{equation*}
\begin{aligned}
\bar{\Omega}^i_{\bar{A}^{\tor}/\Sh^{\tor}} &= \Lambda^i\bar{\Omega}^1_{\bar{A}^{\tor}/\Sh^{\tor}} \\
&=\Lambda^i\Omega^1_{\bar{A}^{\tor}}(\log D_{\red}) /  {(\bar{f}^{\tor})}^{*}  \Omega^1_{\Sh^{\tor}}(\log D_{\red})
\end{aligned}
\end{equation*}
is a $\Sp_{2g}$-torsor that extends the de Rham cohomology $\mathcal{H}^1_{\dR}$ to $\Sh^{\tor}$.
\item The logarithmic Hodge-de Rham spectral
sequence 
\begin{equation*}
E^{i,j}_1 = R^j{(\bar{f}^{\tor})}_*\bar{\Omega}^{i}_{\bar{A}^{\tor}/\Sh^{\tor}} \Rightarrow \mathcal{H}^i_{\log-\dR} := R^i{(\bar{f}^{\tor})}_*\bar{\Omega}^{\bullet}_{\bar{A}^{\tor}/\Sh^{\tor}}
\end{equation*}
degenerates at page $1$, which proves the existence of a $P$-reduction of the $\Sp_{2g}$-torsor $\mathcal{H}^1_{\log-\dR}$ extending the Hodge-de Rham filtration to $\Sh^{\tor}$.
\end{enumerate}
\end{definition}
The Hodge line bundle $\omega = \det \Omega^{\tor}$ is usually not ample on the Siegel variety $\Sh^{\tor}$ but it satisfies a weaker positivity result we will explain. We recall the definition of the minimal compactification of the Siegel variety.
\begin{definition}[{\cite[Chap. V]{MR1083353}}]\label{def4_sieg}
The minimal compactification $\Sh^{\min}$  of the Siegel variety $\Sh$ is defined as the scheme 
\begin{equation*}
\Proj(\oplus_{n\geq 0} H^0(\Sh^{\tor},\omega^{\otimes n})),
\end{equation*}
where $\omega = \det \Omega^{\tor}$ is the Hodge line bundle. 
\end{definition}
\begin{proposition}[{\cite[Chap. IX, Theorem 2.1, p. 208]{MR797982}}]\label{prop_sa}
The Hodge line bundle $\omega$ is semi-ample on $\Sh^{\tor}$, i.e. there exists an integer $m \geq 1$ such that $w^{\otimes m}$ is globally generated over $\Sh^{\tor}$. In particular, the Hodge line bundle descends to an ample line bundle on the minimal compactification.
\end{proposition}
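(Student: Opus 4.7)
The plan is to build the minimal compactification by hand as a $\Proj$ and exhibit $\omega^{\otimes m}$ as the pullback of its tautological ample line bundle. The key inputs are finite generation of the graded ring of modular forms together with the Koecher principle relating sections on $\Sh^{\tor}$ and on $\Sh$, plus separation of boundary strata by these sections.

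First I would show that the graded $k$-algebra
\begin{equation*}
R = \bigoplus_{n \geq 0} H^0(\Sh^{\tor},\omega^{\otimes n})
\end{equation*}
is finitely generated. The Koecher principle (trivial here since $g \geq 2$ means $\Sh^{\tor}$ is projective, while for $g=1$ one reasons directly with level structure) identifies $H^0(\Sh^{\tor},\omega^{\otimes n})$ with weight-$n$ Siegel modular forms of genus $g$ and level $N$. Finite generation of this ring follows either from the arithmetic toroidal theory of Faltings--Chai (constructing $R$ as the global sections of a line bundle over a proper scheme and using boundedness of Fourier--Jacobi expansions at each cusp), or by specialization from characteristic zero together with a flat arithmetic model of $\Sh^{\tor}$.

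Next I would define $\Sh^{\min} := \Proj R$ and construct the canonical morphism $f : \Sh^{\tor} \to \Sh^{\min}$. For each $n$ large enough that $\omega^{\otimes n}$ has no base points over the open part $\Sh$ (this holds already by $\omega$-embedding of $\Sh$ into a moduli of suitable theta functions / Mumford's theta embedding in characteristic $p$), one obtains a morphism $\Sh \to \Pb^{N}$ whose image pulls back $\mathcal{O}(1)$ to $\omega^{\otimes n}$. The delicate point is to extend this over the boundary: at each boundary stratum, use Fourier--Jacobi expansions of Eisenstein-type modular forms to produce sections of $\omega^{\otimes n}$ (for some fixed large $n$ independent of the stratum) that do not vanish identically on the stratum and are compatible under the stratification. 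Concretely, for each cusp one produces a weight-$n$ form whose constant Fourier--Jacobi coefficient is nonzero, so that the complete linear system $|\omega^{\otimes n}|$ is base-point-free along the boundary too. This gives a morphism $f : \Sh^{\tor} \to \Pb(H^0(\Sh^{\tor},\omega^{\otimes n})^{\vee})$ whose image is $\Sh^{\min}$, and $f^* \mathcal{O}_{\Sh^{\min}}(1) = \omega^{\otimes n}$.

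Base-point-freeness of the complete linear system $|\omega^{\otimes n}|$ is exactly the statement that $\omega^{\otimes n}$ is globally generated, so the construction above immediately yields semi-ampleness; passing to a further multiple we may also assume $\mathcal{O}_{\Sh^{\min}}(1)$ itself is very ample on $\Sh^{\min}$ (it is ample by construction of $\Proj$, and then globally generated after replacing by a suitable power). The second assertion — that $\omega$ descends to an ample line bundle on $\Sh^{\min}$ — is then built into the construction: $\mathcal{O}_{\Sh^{\min}}(1)$ is ample on $\Sh^{\min}$ and satisfies $f^* \mathcal{O}_{\Sh^{\min}}(1) = \omega^{\otimes n}$. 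The main obstacle in the whole argument is the positive-characteristic construction of enough global sections of $\omega^{\otimes n}$ to separate and cover every boundary stratum; this is precisely the content of the Faltings--Chai machinery and is the reason one cites \cite{MR797982} rather than reproving it here.
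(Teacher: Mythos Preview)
The paper does not give its own proof of this proposition; it simply records the statement with a citation to Moret-Bailly \cite[Chap.\ IX, Thm.\ 2.1]{MR797982}, so there is no argument in the paper to compare your attempt against. Your sketch is a reasonable outline of how the cited literature (Moret-Bailly, Faltings--Chai) actually establishes semi-ampleness: finite generation of the ring of Siegel modular forms, together with the construction of enough sections (via Fourier--Jacobi and theta expansions) to cover every boundary stratum.

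Two small points. First, your parenthetical on the Koecher principle is garbled: Koecher's principle is the assertion that for $g \geq 2$ every section of $\omega^{\otimes n}$ over $\Sh$ extends automatically to $\Sh^{\tor}$; it has nothing to do with projectivity of $\Sh^{\tor}$ (which in the paper comes from the choice of a polarized admissible cone decomposition). Second, be aware of the mild circularity in your outline: constructing a morphism $\Sh^{\tor} \to \Proj R$ is essentially equivalent to proving base-point-freeness of $|\omega^{\otimes n}|$, so the genuine content lies entirely in the boundary analysis you flag at the end. You have correctly identified that as the substantive step and correctly observed that this is precisely why the paper cites the result rather than reproving it.
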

\begin{proposition}[{\cite[Chap. V, Theorem 5.8]{MR1083353}}]
The toroidal compactification $\Sh^{\tor}$ is the normalization of the blow-up of $\Sh^{\min}$ 
\begin{equation*}
\nu : \Sh^{\tor} \rightarrow \Sh^{\min}
\end{equation*}
along a coherent sheaf of ideals $\mathcal{I}$ of $\mathcal{O}_{\Sh^{\min}}$.
\end{proposition}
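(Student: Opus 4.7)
The plan is to exploit the semi-ampleness of the Hodge line bundle $\omega$ on $\Sh^{\tor}$ (Proposition \ref{prop_sa}) together with the $\Proj$ construction of $\Sh^{\min}$ (Definition \ref{def4_sieg}). Since $\omega$ is semi-ample and $\Sh^{\tor}$ is projective, the graded ring $R = \bigoplus_{n \geq 0} H^0(\Sh^{\tor}, \omega^{\otimes n})$ is finitely generated over $k$. By definition $\Sh^{\min} = \Proj R$, and the sections of a sufficiently divisible power of $\omega$ define a canonical morphism $\nu : \Sh^{\tor} \to \Sh^{\min}$ whose pullback of $\Oc_{\Sh^{\min}}(1)$ (on a chosen Veronese) is a power of $\omega$. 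The morphism $\nu$ is projective, and it is birational because its restriction to the open $\Sh \subset \Sh^{\tor}$ is an open immersion onto the non-boundary locus of $\Sh^{\min}$.

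Next, to exhibit $\Sh^{\tor}$ as a normalized blow-up, I would invoke the general principle that any projective birational morphism $f : X \to Y$ to a Noetherian scheme $Y$ equipped with a relatively ample line bundle is, up to canonical isomorphism, the blow-up of $Y$ along a coherent ideal sheaf (see \cite[\href{https://stacks.math.columbia.edu/tag/0BFN}{Tag 0BFN}]{stacks-project}). The required $\nu$-ample line bundle on $\Sh^{\tor}$ is produced by the $GL(X^*(T))$-equivariant polarization function attached to $\Sigma$ in Definition \ref{prop5_sieg}: concretely, this polarization gives an effective Cartier divisor $E$ supported on the boundary $D_{\red}$ such that $\omega^{\otimes n}(-E)$ is $\nu$-ample for $n$ large enough. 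Setting $\mathcal{I} := \nu_*(\omega^{\otimes n}(-E)) \otimes \nu_*(\omega^{\otimes n})^{-1}$, viewed as a coherent ideal of $\Oc_{\Sh^{\min}}$, the blow-up $\mathrm{Bl}_{\mathcal{I}} \Sh^{\min}$ comes with a canonical birational morphism to $\Sh^{\tor}$ that is an isomorphism over the generic point. Since $\Sh^{\tor}$ is smooth and therefore normal, normalizing the blow-up produces an isomorphism with $\Sh^{\tor}$.

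The principal obstacle is the production of the effective boundary divisor $E$ on $\Sh^{\tor}$ that makes $\omega^{\otimes n}(-E)$ relatively ample over $\Sh^{\min}$. This is exactly the point at which the combinatorial hypothesis on $\Sigma$ (the existence of an equivariant polarization function) enters the argument, and it is the content of the projectivity statement for toroidal compactifications that is proved in Faltings--Chai. Once the existence of such $E$ is granted, the identification of $\Sh^{\tor}$ with the normalized blow-up is formal from the universal property of $\Proj$ and the functorial characterization of blow-ups along coherent ideals.
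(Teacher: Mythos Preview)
The paper does not supply its own proof of this proposition; it is recorded as a citation to Faltings--Chai \cite[Chap.~V, Theorem~5.8]{MR1083353}. So there is no in-paper argument to compare against, and your proposal is really a sketch of why the cited result is plausible.

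Your outline is sound in broad strokes: $\nu$ is the morphism associated to the semi-ample line bundle $\omega$, it is projective and birational, and the abstract principle that a projective birational morphism with a relatively ample line bundle arises as a blow-up (hence as a normalized blow-up when the source is normal) does the job. Two small comments. First, your formula $\mathcal{I} = \nu_*(\omega^{\otimes n}(-E)) \otimes \nu_*(\omega^{\otimes n})^{-1}$ is more elaborate than needed: since a suitable power of $\omega$ is pulled back from $\Sh^{\min}$, relative ampleness of $\omega^{\otimes n}(-E)$ over $\Sh^{\min}$ is equivalent to relative ampleness of $\mathcal{O}_{\Sh^{\tor}}(-E)$ itself, and one may simply take $\mathcal{I} = \nu_*\mathcal{O}_{\Sh^{\tor}}(-E) \subset \nu_*\mathcal{O}_{\Sh^{\tor}} = \mathcal{O}_{\Sh^{\min}}$ (the last equality using that $\Sh^{\min}$ is normal, another fact from Faltings--Chai). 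Second, the actual proof in Faltings--Chai is more hands-on than your abstract route: they build $\mathcal{I}$ explicitly from the polarization function on $\Sigma$ and verify the blow-up description on the local toric models, rather than invoking the general characterization of projective birational morphisms as blow-ups. Your approach is cleaner as a post-hoc explanation, but it hides the dependence on the toroidal combinatorics that Faltings--Chai make explicit.
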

\vspace*{0.2cm}
In particular, the pullback $\nu^*\mathcal{I}$ is of the form $\mathcal{O}_{\Sh^{\tor}}(\shortminus D)$ where $D$ is an effective Cartier divisor whose associated reduced Cartier divisor is the boundary $D_{\red}$. It follows from the ampleness of $\omega$ on $\Sh^{\min}$ and the $\nu$-ampleness of $\mathcal{O}_{\Sh^{\tor}}(\shortminus D)$ that there exists $\eta_0 > 0$ such that $\omega^{\otimes \eta}(\shortminus D)$ is ample for every $\eta \geq \eta_0$. In other words, we have
\begin{corollary}
The Hodge line bundle $\omega = \det \Omega^{\tor}$ is $D$-ample on the toroidal compactification $\Sh^{\tor}$.
\end{corollary}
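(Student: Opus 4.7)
The plan is to deduce the corollary directly from the characterization of $D$-ampleness for line bundles given in Proposition \ref{D-amp}: it suffices to verify that $\omega$ is nef and that some positive tensor power $\omega^{\otimes n_0}(-D)$ is ample on $\Sh^{\tor}$.

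Nefness of $\omega$ is immediate from Proposition \ref{prop_sa}. Indeed, semi-ampleness furnishes an integer $m \geq 1$ such that $\omega^{\otimes m}$ is globally generated on $\Sh^{\tor}$; a globally generated line bundle is nef because its degree on every projective curve equals the degree of a quotient of a trivial bundle, hence is non-negative. Since nefness is preserved under tensor roots, $\omega$ itself is nef.

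For the second condition, I would exploit the factorization $\nu : \Sh^{\tor} \rightarrow \Sh^{\min}$ through the minimal compactification. By Definition \ref{def4_sieg}, $\Sh^{\min} = \Proj(\oplus_n H^0(\Sh^{\tor},\omega^{\otimes n}))$, so $\omega$ descends to an ample line bundle on $\Sh^{\min}$. The morphism $\nu$ is the normalization of the blow-up of an ideal $\mathcal{I}$ with $\nu^*\mathcal{I}\cdot \Oc_{\Sh^{\tor}} = \Oc_{\Sh^{\tor}}(-D)$, and the tautological line bundle of a blow-up is relatively ample; relative ampleness is preserved under normalization, so $\Oc_{\Sh^{\tor}}(-D)$ is $\nu$-ample. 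It is then a standard fact (ample on the base tensored with relatively ample over the base becomes absolutely ample after a sufficiently large twist) that $\nu^*\omega^{\otimes \eta} \otimes \Oc_{\Sh^{\tor}}(-D) = \omega^{\otimes \eta}(-D)$ is ample for all $\eta$ greater than some $\eta_0 > 0$. Taking $n_0 = \eta_0$ finishes the verification.

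There is no real obstacle; both ingredients are essentially stated in the paragraph preceding the corollary. The only point requiring mild care is the transition from the relative ampleness of $\Oc_{\Sh^{\tor}}(-D)$ over $\Sh^{\min}$ to the absolute ampleness of the twist $\omega^{\otimes \eta}(-D)$, but this is the classical principle that if $f : Y \to X$ is projective and $\Lc$ is $f$-ample while $\mathcal{A}$ is ample on $X$, then $f^*\mathcal{A}^{\otimes n} \otimes \Lc$ is ample on $Y$ for $n \gg 0$, applied with $f = \nu$, $\mathcal{A} = \omega_{|\Sh^{\min}}$ and $\Lc = \Oc_{\Sh^{\tor}}(-D)$.
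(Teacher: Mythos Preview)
Your proposal is correct and follows essentially the same line as the paper: the paragraph preceding the corollary already records that $\omega$ descends to an ample bundle on $\Sh^{\min}$, that $\Oc_{\Sh^{\tor}}(-D)$ is $\nu$-ample, and hence that $\omega^{\otimes \eta}(-D)$ is ample for all $\eta \geq \eta_0$. The only cosmetic difference is that you invoke Proposition~\ref{D-amp} and therefore separately verify nefness via semi-ampleness, whereas the paper simply observes that ``$\omega^{\otimes \eta}(-D)$ ample for all $\eta \geq \eta_0$'' already matches Definition~\ref{def3} directly (take $r_0$ with $p^{r_0} \geq \eta_0$), making the nefness check redundant.
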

\begin{rmrk}
The effective Cartier divisor $D$ appearing in the corollary obviously depends on the choice of the $GL(X^*)$-equivariant polarization function on the decomposition in polyhedral cones $\Sigma$.
\end{rmrk}

\subsection{Automorphic vector bundles}
We define the automorphic vector bundles over the Siegel variety. We choose an intermediary parabolic subgroup $P_0 \subset P$ of type $I_0 \subset I \subset \Delta$ and we denote $P_{0,L} := P_0 \cap L \subset L$ the parabolic subgroup of $L$. 
\begin{definition}
We define the flag bundle $\pi : Y^{\tor}_{I_0} \rightarrow \Sh^{\tor}$ of type $I_0$ as the flag bundle $\mathcal{F}_{P_{0,L}}(\Omega^{\tor})$ (as in definition \ref{def1}) of type $P_{0,L}$ of the $L$-torsor $\Omega^{\tor}$.
\end{definition}
\begin{definition}
From definitions \ref{def_W} and \ref{def_L}, we have functors 
\begin{equation*}
\mathcal{W} : \Rep(L) \rightarrow \Loc(\Oc_{\Sh^{\tor}}),
\end{equation*}
\begin{equation*}
\Lc : \Rep(P_{0,L}) \rightarrow \Loc(\Oc_{Y_{I_0}^{\tor}})
\end{equation*}
and we call automorphic bundle any vector bundle in the essential image of these functors. Moreover, if $\lambda$ is a character of $P_0$, we denote $\nabla(\lambda)$ the automorphic vector bundle $\mathcal{W}(\Ind_{P_{0,L}}^L \lambda)$ on $\Sh^{\tor}$ and $\Lc_{\lambda}$ the automorphic line bundle $\Lc(\lambda)$ on $Y_{I_0}^{\tor}$. With our conventions the module $\Ind_{P_{0,L}}^L \lambda$ is isomorphic to the costandard representation of highest weight $w_0w_{0,L}\lambda$.
\end{definition}
\begin{corollary}\label{prop13_sieg}
Let $\lambda$ be a dominant character of $P_0$. We have an isomorphism of vector bundles
\begin{equation*}
R\pi_* \Lc_{\lambda} = \nabla(\lambda)[0].
\end{equation*}
\end{corollary}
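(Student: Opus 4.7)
The plan is to derive this corollary as a direct application of Proposition \ref{prop5}. That proposition applies to an arbitrary connected split reductive group, its parabolic subgroups, and a torsor over an arbitrary base; here I would invoke it with the reductive group taken to be the Levi $L = \GL_g$, the parabolic taken to be $P_{0,L} \subset L$, the base scheme taken to be $\Sh^{\tor}$, and the $L$-torsor taken to be $\Omega^{\tor}$. With these choices the flag bundle $\mathcal{F}_{P_{0,L}}(\Omega^{\tor})$ of Definition \ref{def1} is by construction equal to $Y^{\tor}_{I_0}$, and the functor $\Lc$ from Definition \ref{def_L} produces the same line bundle $\Lc_\lambda$ on $Y^{\tor}_{I_0}$ that appears in the statement.

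Once the dictionary is set up, the computational content of Proposition \ref{prop5} gives $R\pi_*\Lc_\lambda = \W_\lambda[0]$, where on the right $\W_\lambda$ denotes the vector bundle on $\Sh^{\tor}$ associated via $\W$ (Definition \ref{def_W}) to the induced $L$-representation $\Ind_{P_{0,L}}^L \lambda$. By the definition of $\nabla(\lambda)$ given immediately before the corollary, this bundle is exactly $\nabla(\lambda)$, which finishes the identification.

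The only point that requires verification is the dominance hypothesis: Proposition \ref{prop5} requires $\lambda$ to be dominant as a character of $P_{0,L}$ (in order for Kempf vanishing in $L/P_{0,L}$ to give the concentration in degree zero), whereas our hypothesis says $\lambda$ is dominant as a character of $P_0 \subset G$. This is automatic, since the positive roots of $L$ with respect to $T$ form a subset of the positive roots of $G$ with respect to $T$, so the inequalities $\langle \lambda,\alpha^\vee\rangle \geq 0$ for $\alpha \in \Phi_L^+$ are a subset of the inequalities expressing $G$-dominance. No genuine obstacle is expected; the entire content is already packaged in Proposition \ref{prop5}.
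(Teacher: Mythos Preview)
Your proposal is correct and follows exactly the approach of the paper, whose proof reads in full: ``It is a direct consequence of proposition \ref{prop5}.'' Your explicit identification of the data ($L$ playing the role of the reductive group, $P_{0,L}$ the parabolic, $\Omega^{\tor}$ the torsor) and your remark on why the dominance hypothesis transfers are both accurate elaborations of that one-line reference.
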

\begin{proof}
It is a direct consequence of proposition \ref{prop5}.
\end{proof}
\begin{example}
We have the following special cases.
\begin{enumerate}
\item If $\lambda = (0, \ldots, 0, \shortminus 1)$, then $\nabla(\lambda) = \Omega^{\tor}$
\item If $\lambda = (0, \ldots, 0, \shortminus n)$ with $n \geq 1$, then $\nabla(\lambda) = \Sym^n \Omega^{\tor}$
\item if $\lambda = (\shortminus 1, \ldots, \shortminus 1)$, then $\nabla(\lambda) = \Lambda^g\Omega^{\tor} = \omega$
\end{enumerate}
\end{example}
We recall the Kodaira-Spencer isomorphism. 
\begin{proposition}[{\cite[Chap. 3, sect. 9]{MR1083353}}]\label{prop19_sieg}
The Kodaira-Spencer map on the toroidal compactification of the Siegel variety
\begin{equation*}
\rho_{\text{KS}} : \Sym^2 \Omega^{\tor} \DistTo \Omega^1_{\Sh^{\tor}}(\log D)
\end{equation*}
is an isomorphism between the automorphic bundle $\nabla(0, \cdots, 0,\shortminus 2)$ and the sheaf of logarithmic $1$-differentials $\Omega^1_{\Sh^{\tor}}(\log D)$. Taking the determinant yields an isomorphism of line bundles
\begin{equation*}
\Omega^d_{\Sh^{\tor}}(\log D) \simeq \nabla({-2\rho^L})
\end{equation*} 
where $d$ is the dimension of $\Sh^{\tor}$ and
\begin{equation*}
\rho^L = \frac{1}{2} \sum_{\alpha \in \Phi^{+} \backslash \Phi_L^{+}} \alpha.
\end{equation*}
\end{proposition}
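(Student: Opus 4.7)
The plan is to construct $\rho_{\mathrm{KS}}$ from the logarithmic Gauss-Manin connection, reduce bijectivity to a classical deformation-theoretic statement, and settle the determinant formula by a direct weight computation. Concretely, I would first consider the logarithmic Gauss-Manin connection
\[
\nabla_{\mathrm{GM}} : \mathcal{H}^1_{\log-\dR} \longrightarrow \mathcal{H}^1_{\log-\dR} \otimes \Omega^1_{\Sh^{\tor}}(\log D)
\]
associated with the log-smooth proper morphism $\bar{f}^{\tor} : \bar{A}^{\tor} \to \Sh^{\tor}$ from definition \ref{prop5_sieg}. Composing the inclusion $\Omega^{\tor} \hookrightarrow \mathcal{H}^1_{\log-\dR}$ with $\nabla_{\mathrm{GM}}$, then with the projection onto $\mathcal{H}^1_{\log-\dR}/\Omega^{\tor}$, and finally using the polarization-induced isomorphism $\mathcal{H}^1_{\log-\dR}/\Omega^{\tor} \simeq (\Omega^{\tor})^{\vee}$ (which comes from $\Omega^{\tor}$ being a Lagrangian subbundle for the symplectic form on $\mathcal{H}^1_{\log-\dR}$), one obtains a morphism $\Omega^{\tor} \otimes \Omega^{\tor} \to \Omega^1_{\Sh^{\tor}}(\log D)$. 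Self-adjointness of $\nabla_{\mathrm{GM}}$ with respect to this symplectic pairing forces the morphism to be symmetric, so it factors through $\mathrm{Sym}^2 \Omega^{\tor}$ and defines $\rho_{\mathrm{KS}}$.

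The hard step will be bijectivity. Both source and target are locally free of the same rank $d = g(g+1)/2$ on the smooth integral scheme $\Sh^{\tor}$, so it suffices to check that $\rho_{\mathrm{KS}}$ is an isomorphism on each geometric stalk. Over the open locus $\Sh \subset \Sh^{\tor}$, this is the classical Kodaira-Spencer isomorphism: Grothendieck-Messing theory identifies the tangent space $T_x \Sh$ at a point parametrizing a principally polarized abelian variety $(A_x,\lambda)$ with $\mathrm{Hom}_k(\mathrm{Sym}^2 \Omega_{A_x}, k)$, and the Gauss-Manin connection matches this moduli identification. Extending the argument across the boundary $D_{\red}$ is where the log structure becomes crucial: near a boundary stratum, $\bar{A}^{\tor}$ degenerates to a semi-abelian scheme whose Mumford datum involves a symmetric bilinear pairing, and a direct computation from the local description of Mumford's construction shows the log Kodaira-Spencer map remains a pointwise isomorphism along the boundary. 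This is the content of the classical result \cite[Chap. 3, sect. 9]{MR1083353}, which I would cite rather than reproduce.

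Finally, taking determinants in $\rho_{\mathrm{KS}}$ yields a line bundle isomorphism $\Omega^d_{\Sh^{\tor}}(\log D) \simeq \det \mathrm{Sym}^2 \Omega^{\tor}$, and it remains to identify the right-hand side with $\nabla(-2\rho^L)$. Since the functor $\mathcal{W}$ is monoidal and exact (proposition \ref{prop5}), it is enough to compute the character of the $L = \GL_g$-representation $\det \mathrm{Sym}^2$ applied to $\Omega^{\tor}$, whose weights are $\{-e_1, \ldots, -e_g\}$. The weights of $\mathrm{Sym}^2 \Omega^{\tor}$ are $\{-e_i - e_j : 1 \leq i \leq j \leq g\}$; summing them, the character of the determinant equals $-(g+1)\sum_k e_k$. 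On the other hand, from $\Phi^+ \setminus \Phi_L^+ = \{e_i + e_j : 1 \leq i \leq j \leq g\}$ a direct computation gives $2\rho^L = (g+1)\sum_k e_k$, so $\det \mathrm{Sym}^2 \Omega^{\tor} \simeq \nabla(-2\rho^L)$, completing the plan.
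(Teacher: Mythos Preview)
The paper does not supply its own proof of this proposition: it is stated as a citation to Faltings--Chai \cite[Chap.~3, sect.~9]{MR1083353} and nothing more. Your proposal is therefore strictly more detailed than what the paper offers, and it follows the standard route that reference contains: build $\rho_{\mathrm{KS}}$ from the logarithmic Gauss--Manin connection, invoke deformation theory over the interior $\Sh$, and appeal to the local Mumford-type description at the boundary. Citing Faltings--Chai for the boundary analysis is exactly what the paper does, so on the main isomorphism there is nothing to compare.

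Your determinant computation is a welcome addition the paper omits. One caution on conventions: in this paper the symbol $\nabla(\lambda)$ for an automorphic bundle is defined as $\mathcal{W}(\Ind_{P_{0,L}}^{L}\lambda)$, which corresponds to the costandard $L$-module of highest weight $w_0 w_{0,L}\lambda$, not $\lambda$ itself. Concretely $\Omega^{\tor}=\mathcal{W}(\std_{\GL_g})$ (weights $e_1,\dots,e_g$ under $\mathcal{W}$), so $\det\Sym^2\Omega^{\tor}=\mathcal{W}\bigl((g+1)\sum_k e_k\bigr)=\mathcal{W}(2\rho^L)$, and the twist $w_0 w_{0,L}$ (which acts as $-1$ on parallel weights since $w_0=-1$ for $\Sp_{2g}$) converts this to $\nabla(-2\rho^L)$. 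Your computation reaches the same endpoint because you effectively absorbed this twist into the sign of the weights of $\Omega^{\tor}$; the argument is correct, but if you write it up you should make the passage through $w_0 w_{0,L}$ explicit to match the paper's notation.
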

We recall a result on the $D$-ampleness of automorphic line bundles $\Lc_{\lambda}$ that admits generalized Hasse invariants.
\begin{definition}
Let $\lambda$ be a character of $T$. For every coroot such that $\langle \lambda,\alpha^{\vee}\rangle \neq 0$, we set:
\begin{equation*}
\text{Orb}(\lambda,\alpha^{\vee}) = \left\{ \frac{ | \langle \lambda,w\alpha^{\vee}\rangle |}{|\langle \lambda,\alpha^{\vee} \rangle | } \ | \ w \in W \right\}
\end{equation*}
and we say that $\lambda$ is 
\begin{enumerate}
\item orbitally $p$-close if $\max_{\alpha \in \Phi} \text{Orb}(\lambda,\alpha^{\vee}) \leq p-1$
\item $\mathcal{Z}_{\emptyset}$-ample if $\langle \lambda, \alpha^{\vee}\rangle >0$ for all $\alpha \in I$ and $\langle \lambda, \alpha^{\vee}\rangle <0$ for all $\alpha \in \Phi^+ \backslash \Phi^+_L$.
\end{enumerate}
\end{definition}
\vspace*{0.2cm}
The following result is due to \cite{StrohPrep}.
\begin{proposition}[{\cite[Theorem 5.11]{alexandre2022vanishing}}]\label{prop_automorphic_line}
Let $\lambda$ be a character of $T$. If $\lambda$ is orbitally $p$-close and $\mathcal{Z}_B$-ample, then $\mathcal{L}_{\lambda}$ is $D$-ample on $Y^{\tor}$.
\end{proposition}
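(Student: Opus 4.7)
The plan is to reduce the $D$-ampleness of $\mathcal{L}_{\lambda}$ on $Y^{\tor}$ to a statement on the stack of $G$-zip flags, and then to construct sufficiently many generalized Hasse invariants out of the two numerical hypotheses on $\lambda$. First I would use the smooth morphism $\zeta : Y^{\tor} \to \mathcal{X}^{\mathrm{flag}}$ to the $G$-zip flag stack of Goldring-Koskivirta that underlies Stroh's approach. Each line bundle $\mathcal{L}_{\lambda}$ on $Y^{\tor}$ is the pullback along $\zeta$ of a tautological line bundle $\mathcal{V}_{\lambda}$ on $\mathcal{X}^{\mathrm{flag}}$, and $\zeta$ sends the complement of $D_{\red}$ into the open $\mu$-ordinary stratum. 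Consequently, $D$-ampleness of $\mathcal{L}_{\lambda}$ reduces to finding, for some $r_{0}$ and every $r \geq r_{0}$, a finite family of global sections of $\mathcal{V}_{p^{r}\lambda}$ on $\mathcal{X}^{\mathrm{flag}}$ whose simultaneous non-vanishing locus contains the ordinary stratum; the pullback of such sections will then have zero-locus set-theoretically inside $D_{\red}$.

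Second, I would construct one generalized Hasse invariant per Ekedahl-Oort stratum, following the Goldring-Koskivirta recipe. For each $w \in W$ indexing a stratum, the $\mathcal{Z}_{\emptyset}$-ampleness hypothesis ensures that the translate $w\lambda$ has the right sign pattern across $I$ and across $\Phi^{+} \setminus \Phi_{L}^{+}$, so that the associated weight lies in a cone where one expects vanishing cycles. The orbitally $p$-close condition is the numerical input that, after a single Frobenius twist by $p^{r}$, keeps the induced $L$-representation $\Ind_{P_{0}}^{G}\mathcal{L}_{p^{r}\lambda}$ restricted to the stratum inside the $p$-small range of Proposition \ref{prop_psmall}. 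Kempf's vanishing theorem (Proposition \ref{prop4}) combined with this $p$-smallness then lifts a suitably normalized highest-weight vector to a global section of $\mathcal{V}_{p^{r}\lambda}$ that does not vanish along the stratum labelled by $w$.

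Third, an inductive argument along the Bruhat order assembles these sections stratum-by-stratum: the first Hasse invariant covers the open ordinary stratum, and each subsequent one cuts out precisely the next deeper stratum, so that the common non-vanishing locus is exactly the ordinary stratum. Pulling the whole family back to $Y^{\tor}$ and combining with the semi-ampleness of $\omega$ on $\Sh^{\tor}$ (Proposition \ref{prop_sa}) — which controls the positivity direction coming from the Hodge line bundle on $\Sh^{\min}$ — the boundary divisor $D$ absorbs exactly the discrepancy introduced by the normalization of the blow-up $\nu : \Sh^{\tor} \to \Sh^{\min}$. This yields ampleness of $\mathcal{L}_{\lambda}^{\otimes p^{r}}(-D)$ for all $r \geq r_{0}$, i.e.\ $D$-ampleness.

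The main obstacle is the construction of the generalized Hasse invariants themselves. For each $w$, one must show that $\Ind_{P_{0}}^{G}\mathcal{L}_{p^{r}\lambda}$ has a section whose vanishing order along the $w$-stratum is exactly one. Outside the orbitally $p$-close range, additional composition factors appear in the induced module (a purely modular-representation-theoretic pathology) and can cancel the expected section; this is exactly where the $p-1$ bound on the orbital quotients enters, and it is also the reason the argument does not trivially extend to arbitrary dominant $\lambda$.
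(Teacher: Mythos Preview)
The paper itself gives no proof here; it simply cites \cite[Theorem~5.11]{alexandre2022vanishing}, crediting the idea to \cite{StrohPrep}. Your outline has the right ingredients---zip-flag stack, generalized Hasse invariants \`a la Goldring--Koskivirta, induction along the Bruhat order---but it rests on a false geometric claim that would make the argument collapse.

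You assert that $\zeta$ sends the complement of $D_{\red}$ into the open $\mu$-ordinary stratum, and then conclude that sections non-vanishing on the ordinary stratum pull back to sections whose zero locus is contained in $D_{\red}$. This is not true: the complement of $D_{\red}$ in $Y^{\tor}$ is the open flag variety $Y$ over $\Sh$, and $\zeta|_{Y}$ is smooth and \emph{surjective} onto the entire zip-flag stack. Every Ekedahl--Oort stratum is hit by the interior; the non-ordinary locus sits inside $\Sh$, not inside the toroidal boundary. So the Hasse invariants you construct vanish along interior strata, not along $D_{\red}$, and the reduction you describe does not produce $D$-ampleness.

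The actual mechanism is different. The stratum-by-stratum Hasse invariants show that $\mathcal{L}_{\lambda}$ is nef on $Y^{\tor}$ and that suitable powers have enough sections on each stratum closure (vanishing exactly along the next-lower strata). This is what lets one compare $\mathcal{L}_{\lambda}^{\otimes N}$ with a positive multiple of the pullback of the Hodge line bundle $\omega$. Since $\omega$ is genuinely ample on $\Sh^{\min}$ and the passage from the minimal to the toroidal compactification is the normalized blow-up introducing $-D$, one gets $\mathcal{L}_{\lambda}^{\otimes N}(-D)$ ample, i.e.\ $D$-ampleness. The divisor $D$ enters through the compactification geometry, not through the vanishing locus of the Hasse invariants.
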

\vspace{0.2cm}
We can now state and prove one of our main results.
\begin{theorem}\label{th_automorphic_bundle}
Let $\lambda$ be a dominant character of $T$. 
\begin{enumerate}
\item If $\lambda$ is a positive parallel weight, i.e. $\lambda = k(1,\cdots,1)$ with $k<0$, or
\item if $2\lambda+2\rho_L$ is orbitally $p$-close and $\mathcal{Z}_{\emptyset}$-ample
\end{enumerate}
then the automorphic vector bundle $\nabla(\lambda)$ is $(\varphi,D)$-ample on $\Sh^{\tor}$. 
\end{theorem}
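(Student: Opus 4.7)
The plan is to handle the two cases separately, each reducing to a previously established result.

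Case (1) is immediate: for $\lambda = k(1,\ldots,1)$ with $k = -n < 0$, the examples preceding proposition \ref{prop19_sieg} give $\nabla(\lambda) = \omega^{\otimes n}$, where $\omega = \det \Omega^{\tor}$ is the Hodge line bundle. By the corollary following definition \ref{def4_sieg}, $\omega$ is $D$-ample, and iterated application of proposition \ref{prop17} shows that the tensor power $\omega^{\otimes n}$ is $(\varphi,D)$-ample.

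For case (2), the strategy is to apply theorem \ref{th2} to the $L$-torsor $\Omega^{\tor}$ on $\Sh^{\tor}$, where $L = \GL_g$ is the Levi of the Siegel parabolic. Concretely, I take the intermediate parabolic $P_0$ to be the Borel $B$, so that $P_{0,L}$ is a Borel of $L$, and consider the full flag bundle $\pi : Y^{\tor}_{\emptyset} \rightarrow \Sh^{\tor}$ associated to $\Omega^{\tor}$. By corollary \ref{prop13_sieg} we have $R\pi_*\Lc_\lambda = \nabla(\lambda)[0]$, which places $\nabla(\lambda) = \W_\lambda$ into the framework of theorem \ref{th2}. That theorem then reduces $(\varphi,D)$-ampleness of $\W_\lambda$ on $\Sh^{\tor}$ to $D$-ampleness of the automorphic line bundle $\Lc_{2\lambda + 2\rho_L}$ on $Y^{\tor}_{\emptyset}$, which is exactly the conclusion of proposition \ref{prop_automorphic_line} applied to the character $\gamma = 2\lambda + 2\rho_L$ under the two hypotheses of orbital $p$-closeness and $\mathcal{Z}_{\emptyset}$-ampleness supplied in our assumption.

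The only point requiring attention is matching conventions between the two black boxes. Theorem \ref{th2} is stated for an arbitrary $G$-torsor and involves the half-sum of positive roots of \emph{that} ambient group; applying it to the $L$-torsor $\Omega^{\tor}$ (rather than to the $\Sp_{2g}$-torsor encoded by the log de Rham filtration) forces the relevant $\rho$ to be $\rho_L$, not the $\rho$ of $\Sp_{2g}$. This is precisely the reason the hypothesis is formulated in terms of $2\lambda + 2\rho_L$ rather than $2\lambda + 2\rho$. Once this identification is made, the theorem is essentially a direct packaging of theorem \ref{th2} with proposition \ref{prop_automorphic_line}; no further technical input is expected to be needed, and consequently no genuinely hard step arises in this argument.
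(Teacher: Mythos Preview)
Your proof is correct and follows the same approach as the paper, which simply records that the result is ``a direct consequence from theorem \ref{th2} and proposition \ref{prop_automorphic_line}.'' Your treatment is in fact more careful: you spell out case~(1) explicitly via $\nabla(\lambda)=\omega^{\otimes(-k)}$ and the $D$-ampleness of $\omega$, and you correctly flag that the $\rho$ appearing in theorem~\ref{th2} must be read as $\rho_L$ because one is applying that theorem to the $L$-torsor $\Omega^{\tor}$ rather than to a $G$-torsor.
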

\begin{proof}
This a direct consequence from theorem \ref{th2} and proposition \ref{prop_automorphic_line}.
\end{proof}
To illustrate our result when $g = 2$, we represent the weights $\lambda = ( k_1, k_2)$ such that the automorphic bundle $\nabla(\lambda)$ is $(\varphi,D)$-ample on the Siegel threefold for different values of $p$ in the figure \ref{figure_amp}.
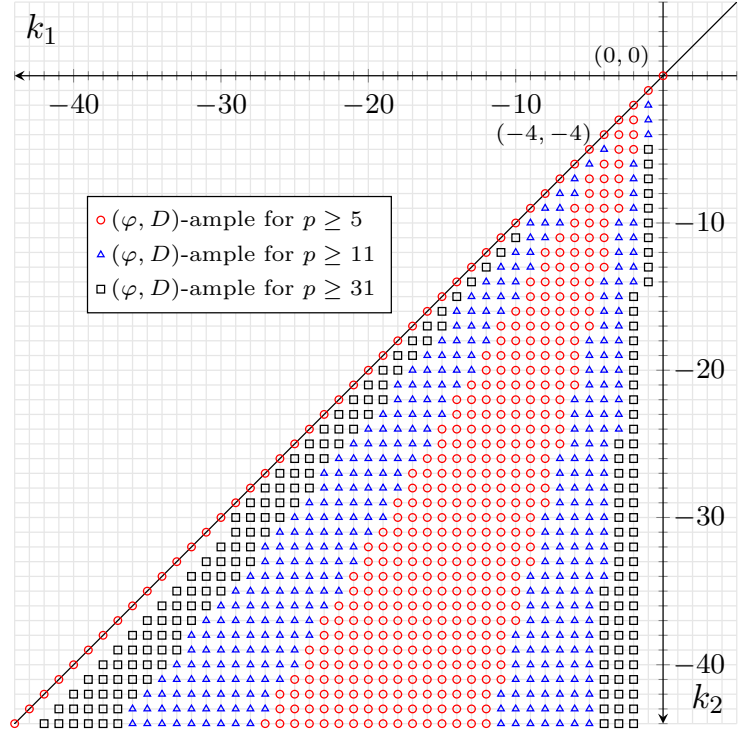
\begin{figure}[h]
\centering
\begin{tikzpicture}[scale = 1.2]
\begin{axis}[
 axis lines=middle,
	grid=both,
	grid style={black!10},
	xmin=-44,
	xmax=5,
	ymin=-44,
	ymax=5,
	legend style={at={(0.1,0.65)},anchor=west, font = \tiny},
 	xlabel=$k_2$,
	ylabel=$k_1$,
	minor tick num=9,
	x axis line style = {stealth-},
	y axis line style = {stealth-},
	xticklabel shift={0.0cm},
	xlabel style={yshift=-7.2cm},
	yticklabel shift={-0.9cm},
	ylabel style={xshift=-7.2cm},
]

\addplot [only marks,
	color=red,
	mark=o,
	mark options={scale=0.6, fill=white}]
	table{results_ample_auto/g=2p=5.txt};
	\addlegendentry[align = left]{$(\varphi,D)$-ample for $p \geq 5$}

\addplot [only marks,
	color=blue,
	mark=triangle,
	mark options={scale=0.6, fill=white}]
	table{results_ample_auto/g=2p=11.txt};
	\addlegendentry[align = left]{$(\varphi,D)$-ample for $p \geq 11$}
	
	\addplot [only marks,
	color=black,
	mark=square,
	mark options={scale=0.6, fill=white}]
	table{results_ample_auto/g=2p=31.txt};
	\addlegendentry[align = left]{$(\varphi,D)$-ample for $p \geq 31$}
	
\draw[scale=0.5, domain=-70:70, smooth, variable=\x, black] plot ({\x}, {\x});

\node at (-2.8,1.3) {\tiny $(0,0)$};
\node at (-8.1,-4.0) {\tiny $(-4,-4)$};

\end{axis}

\end{tikzpicture}
\caption{$(\varphi,D)$-ampleness of automorphic bundles $\nabla(\lambda)$ when $g = 2$.}
\label{figure_amp}
\end{figure}
\section{Hyperbolicity of the Siegel variety}\label{sect_hyp}
\subsection{The supersingular pencil of Moret-Bailly}
Recall that $k$ is an algebraically closed field of characteristic $p$. Denote $\Sh_g$ the Siegel variety of genus $g$ and full level $N \geq 3$ (with $p \nmid N$) over $k$ and $\Sh_g^{\tor}$ a smooth toroidal compactification with boundary a normal crossing divisor $D_{\red}$. Recall that $D$ denotes the effective divisor supported on the boundary that appears as the exceptional divisor of the blow-up from $\Sh_g^{\tor}$ to the minimal compactification of $\Sh_g$. In \cite{MR3618576}, Moret-Bailly constructs a non-isotrivial family $A \rightarrow \Pb^1_{k}$ of principally polarized supersingular abelian surfaces over the projective line with a full level $N$-structure. This family yields a closed immersion $\iota_2 : \Pb_k^1\hookrightarrow \Sh_2$ whose image belongs to the supersingular locus of the Siegel threefold. In particular, we already know that $\Sh_g^{\tor}$ is not hyperbolic when $g = 2$. This family can be used to contradict the hyperbolicity of the Siegel variety for all $g\geq 2$: Take an abelian variety $A_0$ of dimension $g-2$ over $k$ and consider the closed immersion $\iota := \iota_{A_0} \circ \iota_2$
\begin{equation*}
\begin{tikzcd}
\Pb^1_{k} \arrow[rd,"\iota"] \arrow[r,"\iota_2"] & \Sh_{2}^{\tor} \arrow[d,"\iota_{A_0}"] \\
& \Sh_{g}^{\tor} 
\end{tikzcd}
\end{equation*}
where $\iota_{A_0}$ sends an abelian surface $A$ to the fibre product $A \times_{k} A_0$. It also shows that the logarithmic cotangent bundle $\Omega^1_{\Sh_g^{\tor}}(\log D_{\red})$ cannot be nef. Indeed, $\iota$ induces a surjective morphism
\begin{equation*}
\iota^*\Omega^1_{\Sh_g^{\tor}}(\log D_{\red}) \rightarrow \Omega^1_{\Pb^1}
\end{equation*}
and if $\Omega^1_{\Sh^{\tor}}(\log D_{\red})$ was nef, it would imply that $\Omega^1_{\Pb^1} = \Oc_{\Pb^1}(-1)$ is nef. In the rest of this subsection, we will study more closely the non-positivity of certain automorphic bundles. Our goal is to show the following.
\begin{proposition}\label{prop_not_nef}
Assume that $g \in \{2,3\}$. Any automorphic bundle $\nabla(k_1,\cdots,k_g)$ on $\Sh^{\tor}$ where $k_1 = 0$ is not nef.
\end{proposition}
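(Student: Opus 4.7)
My plan is to pull $\nabla(k_1,\ldots,k_g)$ back along the Moret--Bailly pencil to $\Pb^1$, exhibit a line-bundle quotient of negative degree, and conclude the bundle is not nef (since nefness is preserved under pullback). Using the identification $\nabla(-w_0\lambda) = S_\lambda \Omega^{\tor}$ from Section 1, and the dominance $0 = k_1 \geq k_2 \geq \cdots \geq k_g$, I rewrite the bundle as $S_\lambda\Omega^{\tor}$ where $\lambda = (-k_g, -k_{g-1},\ldots,-k_2,0)$ is a nontrivial partition of height at most $g-1$ with first part $n := -k_g \geq 1$ (the case of all $k_i = 0$ giving the trivial bundle is excluded).

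For the pencil itself I use $\iota_2 : \Pb^1 \hookrightarrow \Sh_2^{\tor}$ when $g=2$, and the composition $\iota := \iota_{A_0}\circ\iota_2 : \Pb^1 \hookrightarrow \Sh_3^{\tor}$ when $g=3$, for a fixed elliptic curve $A_0$ with level structure. Because Moret--Bailly's family consists of honest (supersingular) abelian surfaces, $\iota$ factors through the open part and avoids the boundary. Writing $\iota_2^*\Omega_2 = \Oc(a_1)\oplus\Oc(a_2)$ with $a_1 \geq a_2$, the pullbacks are
\[
\iota_2^*\Omega_2 = \Oc(a_1)\oplus\Oc(a_2), \qquad \iota^*\Omega_3 = \Oc(a_1)\oplus\Oc(a_2)\oplus\Oc_{\Pb^1},
\]
the trivial summand in the $g=3$ case coming from the cotangent space of $A_0$.

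The key numerical input is $a_2 \leq -1$. Since Moret--Bailly's family is non-isotrivial, the Kodaira--Spencer isomorphism of Proposition \ref{prop19_sieg} induces a non-zero morphism
\[
\iota_2^*\Sym^2\Omega_2 = \Oc(2a_1)\oplus\Oc(a_1+a_2)\oplus\Oc(2a_2) \longrightarrow \Omega^1_{\Pb^1} = \Oc(-2),
\]
and on $\Pb^1$ such a map forces the minimum summand degree $2a_2$ to be at most $-2$. Next, I use the standard formula for Schur functors on a split vector bundle,
\[
S_\lambda\Bigl(\bigoplus_i \Oc(c_i)\Bigr) = \bigoplus_T \Oc\Bigl(\sum_{c \in \lambda} c_{T(c)}\Bigr),
\]
indexed by semistandard Young tableaux $T$ of shape $\lambda$ in the alphabet $\{1,\ldots,g\}$. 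Labeling the summands so that $c_1 = a_2$ (and, for $g=3$, $c_2 = 0$), the tableau with all $1$'s in the top row (and, for $g=3$, all $2$'s in the bottom row) is semistandard thanks to $\mathrm{ht}(\lambda)\leq g-1$, and its weight is $n a_2 + m\cdot 0 = n a_2 \leq -n < 0$. Hence $\iota^*\nabla(k_1,\ldots,k_g) = S_\lambda(\iota^*\Omega^{\tor})$ has a line-bundle direct summand of negative degree, providing the desired quotient.

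The main obstacle is the Kodaira--Spencer inequality $a_2 \leq -1$: this is the only step where the arithmetic information (non-isotriviality and supersingularity) of the Moret--Bailly pencil enters. Everything else is formal: the identification of $\nabla$ with a Schur functor, the splitting of pullbacks to $\Pb^1$, and the combinatorial selection of a minimum-weight SSYT, all of which rely only on the assumption $k_1 = 0$ that makes $\mathrm{ht}(\lambda)\leq g-1$.
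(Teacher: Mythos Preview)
Your proof is correct and takes a genuinely different route from the paper's. The paper first reduces to the non-nefness of the line bundle $\Lc_\lambda$ on the full flag bundle $Y_\emptyset^{\tor}$ (via the relatively-ample surjection $\pi^*\nabla(n\lambda)\twoheadrightarrow\Lc_\lambda^{\otimes n}$), and then produces a negative intersection number $c_1(\Lc_\lambda)^{l(w)}\cdot[\overline{Y^{\tor}_{\emptyset,w}}]$ with an Ekedahl--Oort stratum closure, using the explicit tautological-ring computations of Appendix~\ref{appendix1}. You instead stay on $\Sh^{\tor}$ and exploit the Moret--Bailly pencil directly: the Kodaira--Spencer argument forcing $a_2\le -1$ is clean (the conormal map $\iota_2^*\Omega^1_{\Sh_2}\to\Omega^1_{\Pb^1}$ is even surjective since $\iota_2$ is an immersion, so non-vanishing is automatic), and the SSYT summand you single out is legitimate because the weight-space decomposition of $\nabla(\lambda)$ under the maximal torus is characteristic-free, hence $S_\lambda$ of a split bundle splits accordingly.

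Your approach is more elementary---it avoids the EO stratification and the Chow-ring machinery entirely---and it has a bonus: nothing in your argument is specific to $g\in\{2,3\}$. Taking $A_0$ of dimension $g-2$ gives $\iota^*\Omega_g=\Oc(a_1)\oplus\Oc(a_2)\oplus\Oc^{\,g-2}$, and with the labeling $c_1=a_2$, $c_2=\cdots=c_{g-1}=0$, $c_g=a_1$, the tableau with constant label $i$ in row $i$ (for $1\le i\le\mathrm{ht}(\lambda)\le g-1$) still yields a summand of degree $\lambda_1 a_2<0$. So your method in fact proves the generalization to all $g\ge 2$ that the remark after the proposition only conjectures. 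The paper's method, by contrast, gives explicit intersection numbers and plugs into a systematic Chow-ring framework, which is useful elsewhere but requires case-by-case computation for each $g$.
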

\begin{rmrk}
In particular, we recover that the bundle $\Omega^1_{\Sh^{\tor}}(\log D_{\red}) = \nabla(0,\cdots, 0,\shortminus 2)$ is not nef. We believe that this result generalizes to every $g \geq 2$.
\end{rmrk}
\vspace{0.2cm}
\begin{proof}
Consider a dominant character $\lambda$ of $T$ and write $I_0 \subset I$ for the set of simple roots such that $\langle \lambda,\alpha^{\vee} \rangle = 0$. As a consequence, the line bundle $\Lc_{\lambda}$ on $Y_{I_0}^{\tor}$ is relatively $\pi$-ample which implies that we have a surjective map for some $n \geq 1$ large enough
\begin{equation*}
\pi^*\pi_*\Lc^{\otimes n}_{\lambda} = \pi^*\nabla(n\lambda) \rightarrow \Lc^{\otimes n}_{\lambda}.
\end{equation*}
In particular, if $\nabla(\lambda)$ was nef, it would imply that $\nabla(\lambda)^{\otimes n}$, hence $\nabla(n\lambda)$ and $\Lc_{\lambda}$ would be nef. We are reduced to show the non-nefness of $\Lc_{\lambda}$ on $Y_{I_0}^{\tor}$, which can be tested on $Y_{\emptyset}^{\tor}$. We claim that we can always find a EO stratum $Y_{I_0,w}^{\tor}$ such that the following intersection product is negative
\begin{equation*}
c_1(\Lc_{\lambda})^{l(w)} \cdot [\overline{Y_{I_0}^{\tor}}] < 0.
\end{equation*}
These intersection computations are done in the appendix \ref{appendix1}.
\end{proof}
\subsection{Understanding the failure of hyperbolicity in positive characteristic}
We have seen that $\Omega^1_{\Sh^{\tor}}(\log D_{\red})$ cannot be nef as we can always see $\mathbb{P}^1$ as a closed curve in $\Sh^{\tor}$. Consider a partition $\lambda$ with height $\text{ht}(\lambda) \leq \dim \Sh^{\tor}$ and denote $S_{\lambda}$ the corresponding Schur functor 
\begin{equation*}
S_{\lambda} : \Loc(\Oc_{\Sh^{\tor}}) \rightarrow \Loc(\Oc_{\Sh^{\tor}})
\end{equation*}
as a strict polynomial functor on the category of locally free modules of finite rank over $\Sh^{\tor}$. We start with the following lemma.
\begin{lemma}\label{lem_log_general}
If $S_{\lambda} \Omega^1_{\Sh^{\tor}}(\log D_{\red})$ is $(\varphi,D)$-ample and $\iota : V \hookrightarrow \Sh^{\tor}$ is any subvariety such that
\begin{enumerate}
\item $V$ is smooth,
\item $\iota^{-1}D_{\red}$ is a normal crossing divisor,
\item $\dim V \geq \text{ht}(\lambda)$,
\end{enumerate}
then the logarithmic canonical bundle $\omega_{V}(\iota^{-1}D_{\red})$ is $(\varphi,\iota^{-1}D)$-ample. In particular, it is nef and big with exceptional locus contained in the boundary and $V$ is of log general type with respect to $D$. 
\end{lemma}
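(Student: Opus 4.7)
The plan is to transfer $(\varphi,D)$-ampleness from $\Sh^{\tor}$ down to $V$, pass it through the Schur functor $S_\lambda$, and then extract it from the determinant line bundle.

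First I would pull back along the closed immersion $\iota$. Since closed immersions are finite and the hypothesis that $\iota^{-1}D_{\red}$ is a normal crossing divisor on $V$ in particular ensures that $V$ is not contained in any component of the boundary, the Cartier divisor $\iota^{-1}D$ is well-defined on $V$. Proposition \ref{prop18} then gives that $\iota^*S_\lambda \Omega^1_{\Sh^{\tor}}(\log D_{\red})$ is $(\varphi,\iota^{-1}D)$-ample on $V$. Smoothness of $V$ together with the normal crossing assumption yields the standard surjection
\begin{equation*}
\iota^*\Omega^1_{\Sh^{\tor}}(\log D_{\red}) \twoheadrightarrow \Omega^1_V(\log \iota^{-1}D_{\red}),
\end{equation*}
and since strict polynomial functors commute with arbitrary base change and send surjections to surjections, applying $S_\lambda$ produces a surjection
\begin{equation*}
\iota^*S_\lambda \Omega^1_{\Sh^{\tor}}(\log D_{\red}) = S_\lambda \iota^*\Omega^1_{\Sh^{\tor}}(\log D_{\red}) \twoheadrightarrow S_\lambda\Omega^1_V(\log \iota^{-1}D_{\red}).
\end{equation*}
By Proposition \ref{prop16} the target $S_\lambda\Omega^1_V(\log \iota^{-1}D_{\red})$ is therefore $(\varphi,\iota^{-1}D)$-ample.

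Next I would descend to the log canonical bundle. Set $n = \dim V$ and $r = \operatorname{rk} S_\lambda \Omega^1_V(\log \iota^{-1}D_{\red})$; the assumption $\operatorname{ht}(\lambda) \leq n$ guarantees $r > 0$. Symmetry of the Schur polynomial $s_\lambda$ in $n$ variables forces the sum of all $T$-weights of $S_\lambda$ on a rank $n$ bundle to be $(|\lambda|r/n,\ldots,|\lambda|r/n)$, so
\begin{equation*}
\det S_\lambda\Omega^1_V(\log \iota^{-1}D_{\red}) \;\simeq\; \omega_V(\iota^{-1}D_{\red})^{\otimes c}, \qquad c := |\lambda|r/n \geq 1.
\end{equation*}
The determinant is a quotient of the top tensor power, hence combining Propositions \ref{prop17} (tensor product) and \ref{prop16} (quotient) shows that $\omega_V(\iota^{-1}D_{\red})^{\otimes c}$ is $(\varphi,\iota^{-1}D)$-ample. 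Proposition \ref{squarephiD} then extracts the $c$-th tensor root, giving $(\varphi,\iota^{-1}D)$-ampleness of $\omega_V(\iota^{-1}D_{\red})$ itself.

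Finally I would invoke the line bundle characterization. By Propositions \ref{D-amp} and \ref{prop20}, a $(\varphi,\iota^{-1}D)$-ample line bundle is nef and big, with the complement of the support of $\iota^{-1}D$ containing the non-ample locus; since that support is exactly the boundary of $V$, we conclude that $V$ is of log general type with exceptional locus contained in $\iota^{-1}D_{\red}$. The only non-formal input is the determinant formula $\det S_\lambda E \simeq (\det E)^{|\lambda|r/n}$ and the compatibility of $S_\lambda$ with pullback and surjections, which is the main point where one must be careful to work with $S_\lambda$ as a strict polynomial functor rather than as a representation-theoretic construct.
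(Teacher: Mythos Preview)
Your proposal is correct and follows essentially the same route as the paper: pull back along $\iota$, use the surjection of log cotangent sheaves and functoriality of $S_\lambda$ to transfer $(\varphi,\iota^{-1}D)$-ampleness to $S_\lambda\Omega^1_V(\log\iota^{-1}D_{\red})$, take the determinant, and extract the tensor root via Proposition~\ref{squarephiD}. You are in fact more careful than the paper in two places: you justify why $\iota^{-1}D$ is a well-defined effective Cartier divisor, and your exponent $c=|\lambda|r/n$ with $n=\dim V$ is the correct one (the paper writes $|\lambda|\dim\nabla(\lambda)/g$, which is a slip).
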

\begin{proof}
The surjective morphism
\begin{equation*}
\iota^*\Omega^1_{\Sh^{\tor}}(\log D_{\red}) \rightarrow \Omega^1_{V}(\log \iota^{-1}D_{\red})
\end{equation*}
induces a surjective morphism
\begin{equation*}
\iota^*S_{\lambda}\Omega^1_{\Sh^{\tor}}(\log D_{\red}) \rightarrow S_{\lambda}\Omega^1_{V}(\log \iota^{-1}D_{\red})
\end{equation*}
and by proposition \ref{prop18} and \ref{prop16}, we deduce that $S_{\lambda}\Omega^1_{V}(\log \iota^{-1}D_{\red})$ is $(\varphi,\iota^{-1}D)$-ample. Since $\text{ht}(\lambda) \leq \dim V$, the bundle
\begin{equation*}
\det S_{\lambda}\Omega^1_{V}(\log \iota^{-1}D_{\red}) = \left(\omega_{V}(\iota^{-1}D_{\red})\right)^{\otimes \frac{|\lambda|\dim\nabla(\lambda)}{g}}
\end{equation*}
is non-zero and $(\varphi,\iota^{-1}D)$-ample. We conclude with proposition \ref{squarephiD}.
\end{proof} 
With this fundamental lemma in mind, the aim is to find partitions $\lambda$ that ensure the $(\varphi,D)$-ampleness of $S_{\lambda} \Omega^1_{\Sh^{\tor}}(\log D_{\red})$, which is isomorphic to $S_{\lambda} \Sym^2 \Omega^{\tor}$ by the Kodaira-Spencer isomorphism (proposition \ref{prop19_sieg}). Recall that under the assumption $p \geq 2|\lambda |-1$, the plethysm $S_{\lambda} \circ \Sym^2$ is filtered by Schur functors $S_{\eta}$ by proposition \ref{th_plethysm}. This allows us to state the following lemma.
\begin{lemma}\label{lem_filter_phiD}
Let $\lambda$ be a partition and assume that $p \geq 2|\lambda |-1$. If $S_{\lambda} \circ \Sym^2$ is filtered by Schur functors $S_{\eta}$ such that $S_{\eta}\Omega^{\tor}$ is $(\varphi,D)$-ample or zero, then $S_{\lambda} \Omega^1_{\Sh^{\tor}}(\log D_{\red})$ is $(\varphi,D)$-ample.
\end{lemma}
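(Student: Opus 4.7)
The plan is to reduce the statement, via Kodaira--Spencer and plethysm, to a finite iteration of the extension property of $(\varphi,D)$-ampleness on the smooth variety $\Sh^{\tor}$.

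First, by the Kodaira--Spencer isomorphism (proposition \ref{prop19_sieg}), there is a canonical isomorphism
\begin{equation*}
S_{\lambda} \Omega^1_{\Sh^{\tor}}(\log D_{\red}) \simeq S_{\lambda}(\Sym^2 \Omega^{\tor}) = (S_{\lambda}\circ \Sym^2)(\Omega^{\tor}).
\end{equation*}
Since $p \geq 2|\lambda|-1$, proposition \ref{th_plethysm} yields a finite filtration
\begin{equation*}
0 = T^n \subsetneq T^{n-1} \subsetneq \cdots \subsetneq T^0 = S_{\lambda}\circ \Sym^2
\end{equation*}
in the category $\Pol$, whose graded pieces $T^i/T^{i+1}$ are Schur functors $S_{\eta_i}$.

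Next, I would evaluate this filtration at the rank $g$ vector bundle $\Omega^{\tor}$ on $\Sh^{\tor}$. Since strict polynomial functors can be applied to locally free sheaves of finite rank (via the $\GL_g$-torsor of frames of $\Omega^{\tor}$ and the functor $\mathcal{W}$ of definition \ref{def_W}, using the equivalence of proposition \ref{prop_eval} in rank $\geq |\lambda||\mu|$), and since the evaluation of a subfunctor inclusion in $\Pol$ at $\Omega^{\tor}$ is the corresponding subbundle inclusion, I obtain a filtration of locally free sheaves on $\Sh^{\tor}$
\begin{equation*}
0 = T^n(\Omega^{\tor}) \subsetneq T^{n-1}(\Omega^{\tor}) \subsetneq \cdots \subsetneq T^0(\Omega^{\tor}) = (S_{\lambda}\circ\Sym^2)(\Omega^{\tor})
\end{equation*}
with short exact sequences
\begin{equation*}
0 \longrightarrow T^{i+1}(\Omega^{\tor}) \longrightarrow T^i(\Omega^{\tor}) \longrightarrow S_{\eta_i}(\Omega^{\tor}) \longrightarrow 0.
\end{equation*}

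Finally, by hypothesis each non-zero graded piece $S_{\eta_i}(\Omega^{\tor})$ is $(\varphi,D)$-ample, while the zero graded pieces make the corresponding inclusion an isomorphism. Since $\Sh^{\tor}$ is smooth over $k$, hence regular, $(\varphi,D)$-ampleness is stable under extensions by proposition \ref{prop19}. Proceeding by descending induction on $i$, starting from $T^{n-1}(\Omega^{\tor}) = S_{\eta_{n-1}}(\Omega^{\tor})$ (which is $(\varphi,D)$-ample, or else we start the induction later), each $T^i(\Omega^{\tor})$ is $(\varphi,D)$-ample. In particular $T^0(\Omega^{\tor}) = S_{\lambda}\Omega^1_{\Sh^{\tor}}(\log D_{\red})$ is $(\varphi,D)$-ample, as desired.

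The only subtlety to verify carefully is that evaluation of a subfunctor inclusion in $\Pol_{|\lambda||\mu|}$ at $\Omega^{\tor}$ produces a subbundle inclusion: this follows from the fact that under proposition \ref{prop_eval} evaluation corresponds to an exact functor from $\Pol_{d}$ to $\GL_g$-representations (and then through the frame torsor of $\Omega^{\tor}$ to locally free sheaves), so short exact sequences of strictly polynomial functors remain short exact after evaluation; this is the only genuinely non-formal point in the argument.
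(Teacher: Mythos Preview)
Your proposal is correct and follows the same approach as the paper: the paper's one-line proof (``Since $\Sh^{\tor}$ is smooth, this is a direct consequence of proposition \ref{prop19}'') is exactly the induction on the filtration via the extension stability of $(\varphi,D)$-ampleness that you have spelled out in detail, with the Kodaira--Spencer identification implicit. One small remark: your justification for exactness of evaluation via proposition \ref{prop_eval} is slightly off, since that equivalence requires the rank to be at least the degree, which need not hold for $\Omega^{\tor}$ of rank $g$; however, evaluation $\ev_V : \Pol \to \Rep_k(\GL(V))$ is exact for \emph{any} $V$ (kernels and cokernels in $\Pol$ are computed pointwise), so the short exact sequences do survive evaluation and your argument goes through.
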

\begin{proof}
Since $\Sh^{\tor}$ is smooth, this is a direct consequence of proposition \ref{prop19}.
\end{proof}
\begin{rmrk}
If $\eta$ has more than $g$ parts, then $S_{\eta}\Omega^{\tor} = 0$. Otherwise, $S_{\eta}\Omega^{\tor} = \nabla(w_{0}w_{0,L}\eta)$.
\end{rmrk}
\vspace{0.2cm}
By theorem \ref{th_automorphic_bundle}, we are reduced to find a partition $\lambda$ such that all the partition $\eta$ with at most $g$ parts appearing in the plethysm $S_{\lambda} \circ \Sym^2$ are such that $2\rho_L+2w_{0}w_{0,L}\eta$ is orbitally $p$-close and $\mathcal{Z}_{\emptyset}$-ample. See the appendix \ref{appendix2} for some explicit plethysm computations in the cases $g = 2, 3, 4$ which helped us to build some intuition about the general case.

\subsubsection{The general case}
Consider the Siegel variety $\Sh^{\tor}$ of genus $g$ over $k$. In this section, we prove the following result.
\begin{theorem}\label{th_hyperbolic}
Assume that $p \geq g^2+3g+1$. For all $k \geq g(g-1)/2+1$, the bundle $\Omega^k_{\Sh^{\tor}}(\log D_{\red})$ is $(\varphi,D)$-ample.
\end{theorem}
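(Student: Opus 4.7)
The strategy is to combine the Kodaira--Spencer isomorphism with the plethysm filtration of Section~\ref{sect_schur} and the positivity criterion for automorphic bundles of Theorem~\ref{th_automorphic_bundle}.

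First I would use the Kodaira--Spencer isomorphism (Proposition~\ref{prop19_sieg}) to rewrite
\[
\Omega^k_{\Sh^{\tor}}(\log D_{\red}) \simeq \Lambda^k \Omega^1_{\Sh^{\tor}}(\log D_{\red}) \simeq \Lambda^k \Sym^2 \Omega^{\tor},
\]
reducing the problem to showing that $\Lambda^k \Sym^2 \Omega^{\tor}$ is $(\varphi,D)$-ample. By Proposition~\ref{squarephiD} on tensor roots, it is sufficient to establish this for some large enough tensor power $(\Lambda^k \Sym^2 \Omega^{\tor})^{\otimes n}$; having $n \gg 1$ at our disposal is important because it makes the entries of the Schur partitions that will ultimately arise dominate the fixed shift by $2\rho_L$ appearing in the orbital estimate of Theorem~\ref{th_automorphic_bundle}.

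Since $k \leq g(g+1)/2$, the assumption $p \geq g^2 + 3g + 1$ implies $p \geq 2k-1$, so Proposition~\ref{th_plethysm} provides a finite filtration of $\Lambda^k \circ \Sym^2$ whose graded pieces are Schur functors $S_\eta$ indexed by the partitions $\eta \vdash 2k$ that appear in the characteristic-zero formula for $\Lambda^k \circ \Sym^2$ (cf.~\cite[Lemma~7]{MR2497590}), and iterating yields an analogous filtration on any tensor power. By the extension-stability of $(\varphi,D)$-ampleness on the regular scheme $\Sh^{\tor}$ (Proposition~\ref{prop19}, as packaged in Lemma~\ref{lem_filter_phiD}), it remains to check that each graded piece $S_\eta \Omega^{\tor}$ is either zero or $(\varphi,D)$-ample. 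Partitions $\eta$ with $\text{ht}(\eta) > g$ give the zero bundle; for $\text{ht}(\eta) \leq g$ the piece $S_\eta \Omega^{\tor}$ identifies with the automorphic bundle $\nabla(-w_0\eta)$, and Theorem~\ref{th_automorphic_bundle} reduces the task to showing that $\gamma := -2w_0\eta + 2\rho_L$ is simultaneously $\mathcal{Z}_\emptyset$-ample and orbitally $p$-close.

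These two requirements on $\gamma$ are what the two numerical hypotheses of the theorem are tailored to deliver. The bound $k \geq g(g-1)/2 + 1$ is what forces every partition $\eta \vdash 2k$ occurring in $\Lambda^k \circ \Sym^2$ to have height exactly $g$, making $\gamma$ strictly negative in every non-Levi positive root direction and thereby securing $\mathcal{Z}_\emptyset$-ampleness. The bound $p \geq g^2 + 3g + 1$ arises as the smallest prime for which one can maintain a uniform upper estimate $|\langle \gamma, w\alpha^{\vee}\rangle/\langle \gamma, \alpha^{\vee}\rangle| \leq p - 1$ across all admissible $\eta$ and all $w \in W$, $\alpha \in \Phi$. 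The hard part of the argument will be these two combinatorial statements: exploiting the explicit description of the partitions in $\Lambda^k \circ \Sym^2$ to show that $k = g(g-1)/2 + 1$ is precisely the threshold at which all surviving Schur summands acquire full height $g$, and deriving a sharp upper bound on the orbital ratio of the relevant weights $\gamma$ in which the large tensor power $n$ is used to amplify the dominant partition entries relative to the bounded-size shift $2\rho_L$, after which an elementary estimate in $g$ yields the required bound $p - 1 \geq g^2 + 3g$.
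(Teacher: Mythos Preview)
Your proposal is correct and follows essentially the same route as the paper: Kodaira--Spencer, the explicit plethysm formula for $\Lambda^k \circ \Sym^2$ in terms of the partitions $2[\lambda]$, the height-threshold lemma at $k=g(g-1)/2+1$, and the tensor-power amplification leading to the bound $p\geq g^2+3g+1$. One small correction of attribution: the tensor power is needed not only for the orbital estimate but already for $\mathcal{Z}_\emptyset$-ampleness, since height exactly $g$ only gives $\eta_1\leq -1$ whereas one needs $2\eta_1+(g-1)<0$; this is precisely how the paper organizes it in Proposition~\ref{prop_amp_bound}, taking $n>(g-1)/2$ first and then reading off both conditions.
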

\begin{corollary}\label{cor_hyperbolic}
Assume that $p \geq g^2+3g+1$. Any subvariety $\iota : V \hookrightarrow \Sh^{\tor}$ of codimension $\leq g-1$ satisfying
\begin{enumerate}
\item $V$ is smooth,
\item $\iota^{-1}D_{\red}$ is a normal crossing divisor,
\end{enumerate}
is of log general type with respect to $D$.
\end{corollary}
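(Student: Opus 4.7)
The plan is to deduce the corollary as a straightforward application of Theorem \ref{th_hyperbolic} combined with Lemma \ref{lem_log_general}, applied to the column partition $\lambda = (1, 1, \ldots, 1)$ of height $k := g(g-1)/2 + 1$, for which the associated Schur functor is the exterior power $S_\lambda = \Lambda^k$.

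The first step is a dimension count on the ambient space. Recall that $\dim \Sh^{\tor} = g(g+1)/2$. The codimension hypothesis $\mathrm{codim}(V) \leq g-1$ therefore gives
\begin{equation*}
\dim V \;\geq\; \frac{g(g+1)}{2} - (g-1) \;=\; \frac{g(g-1)}{2} + 1 \;=\; k \;=\; \mathrm{ht}(\lambda).
\end{equation*}
This is exactly the height condition (3) of Lemma \ref{lem_log_general}, while hypotheses (1) and (2) of the corollary match the smoothness and normal crossing assumptions (1) and (2) of that lemma verbatim.

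The second step is to input the positivity provided by Theorem \ref{th_hyperbolic}. Since $p \geq g^2+3g+1$ and $k \geq g(g-1)/2+1$, the theorem yields that
\begin{equation*}
\Omega^k_{\Sh^{\tor}}(\log D_{\red}) \;=\; \Lambda^k \,\Omega^1_{\Sh^{\tor}}(\log D_{\red}) \;=\; S_\lambda\, \Omega^1_{\Sh^{\tor}}(\log D_{\red})
\end{equation*}
is $(\varphi,D)$-ample on $\Sh^{\tor}$. Lemma \ref{lem_log_general} then applies and shows that the log canonical bundle $\omega_V(\iota^{-1} D_{\red})$ is $(\varphi, \iota^{-1} D)$-ample on $V$. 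In the case of a line bundle, Proposition \ref{D-amp} together with Proposition \ref{prop20} translates $(\varphi, \iota^{-1} D)$-ampleness into the statement that $\omega_V(\iota^{-1} D_{\red})$ is nef and big, with exceptional locus contained in the support of $\iota^{-1} D_{\red}$. This is precisely the definition of $V$ being of log general type with exceptional locus contained in the boundary.

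There is no real obstacle to overcome: the substantive content of the argument lies in Theorem \ref{th_hyperbolic}, which establishes the positivity of $\Omega^k_{\Sh^{\tor}}(\log D_{\red})$ on the ambient compactification, and in Lemma \ref{lem_log_general}, which handles the descent of $(\varphi,D)$-ampleness through the Schur functor $S_\lambda$ to smooth subvarieties of sufficient dimension. The corollary merely matches the numerology: the threshold $g(g-1)/2+1$ on the height of $\lambda$ corresponds exactly, via $\dim \Sh^{\tor} = g(g+1)/2$, to the codimension bound $g-1$ on admissible subvarieties.
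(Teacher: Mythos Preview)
Your proof is correct and follows precisely the approach implicit in the paper: the corollary is the immediate combination of Theorem \ref{th_hyperbolic} (positivity of $\Lambda^k\Omega^1_{\Sh^{\tor}}(\log D_{\red})$ for $k=g(g-1)/2+1$) with Lemma \ref{lem_log_general}, after the dimension count $\dim V \geq g(g+1)/2-(g-1)=g(g-1)/2+1=\text{ht}(\lambda)$. The appeal to Propositions \ref{D-amp} and \ref{prop20} is slightly redundant since Lemma \ref{lem_log_general} already records the ``nef, big, exceptional locus in the boundary'' conclusion, but it does no harm.
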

\vspace*{0.2cm}
Recall the following conjecture.
\begin{conjecture}[Green-Griffiths-Lang]\label{GGL}
Let X be an irreducible projective complex variety. Denote $\text{Exc}(X)$ the Zariski closure of the union of the images of all non-constant holomorphic maps $\C \rightarrow X$. Then $X$ is of general type if and only if $\text{Exc}(X) \neq  X$.
\end{conjecture}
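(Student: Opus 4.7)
The Kodaira--Spencer isomorphism (Proposition \ref{prop19_sieg}) identifies $\Omega^k_{\Sh^{\tor}}(\log D_{\red}) \simeq \Lambda^k(\Sym^2 \Omega^{\tor})$, so the question reduces to showing that the strict polynomial functor $\Lambda^k \circ \Sym^2$ of degree $2k$, evaluated on the Hodge bundle, is $(\varphi,D)$-ample. Because $p \geq g^2+3g+1 > g(g+1)/2 \geq k$, the exterior-power analogue of Lemma \ref{lem7} applies (remark following that lemma) and Proposition \ref{th_plethysm} yields a finite filtration of $\Lambda^k \circ \Sym^2$ whose graded pieces are the Schur functors $S_\eta$ occurring in the characteristic-zero plethysm, whose indexing set is described combinatorially by the classical formula of \cite[Lemma 7]{MR2497590}. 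Evaluating on the rank-$g$ bundle $\Omega^{\tor}$ kills the pieces with $\text{ht}(\eta) > g$, and since $\Sh^{\tor}$ is smooth, extension-stability of $(\varphi,D)$-ampleness (Proposition \ref{prop19}) reduces the problem to showing that each non-zero $\nabla(-w_0\eta) = S_\eta \Omega^{\tor}$ is $(\varphi,D)$-ample.

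\textbf{Application of Theorem \ref{th_automorphic_bundle}.} For each surviving $\eta$, set $\gamma := -2 w_0 \eta + 2\rho_L$; by Theorem \ref{th_automorphic_bundle} it suffices to check that $\gamma$ is both $\mathcal{Z}_\emptyset$-ample and orbitally $p$-close. The lower bound $k \geq g(g-1)/2+1$ is tuned to control the shape of $\eta$ (the staircase $(g-1,g-2,\dots,1,0)$ of size $g(g-1)/2$ is precisely the obstruction to $\eta$ having full height $g$), ensuring that every $\eta$ appearing in the plethysm satisfies $\eta_g \geq 1$; after the shift by $2\rho_L$, this produces the strict inequalities $\gamma_i + \gamma_j < 0$ and $\gamma_i < 0$ required by $\mathcal{Z}_\emptyset$-ampleness for the non-Levi coroots $\varepsilon_i^\vee + \varepsilon_j^\vee$ and $\varepsilon_i^\vee$ of $\Sp_{2g}$. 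The prime bound $p \geq g^2+3g+1$ arises from the orbitally $p$-close estimate: bounding the numerator $\max_{w,\alpha}|\langle \gamma, w\alpha^\vee\rangle|$ by $2\max_i|\gamma_i|$ using the symplectic Weyl-group orbits, and the denominator $|\langle\gamma,\alpha^\vee\rangle|$ from below using the full-height property of $\eta$ and $\eta_1 \leq 2k \leq g(g+1)$, yields exactly $p-1 \geq g^2+3g$.

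\textbf{Borderline cases, tensor-power trick, and main obstacle.} For certain extremal $\eta$ whose $\rho_L$-shift is comparable to the entries of $-w_0\eta$, the two positivity conditions may hold only marginally; as announced in the introduction, one bypasses this by first proving that $(\Lambda^k\Sym^2\Omega^{\tor})^{\otimes n}$ is $(\varphi,D)$-ample for $n$ large enough and then descending to $\Lambda^k\Sym^2\Omega^{\tor}$ via Proposition \ref{squarephiD}. The filtration of the tensor power is obtained from the Littlewood--Richardson decomposition of iterated tensor products of Schur functors, lifted to characteristic $p$ through Corollary \ref{cor2}; its graded pieces are costandard bundles $\nabla(\mu)$ whose entries scale linearly in $n$, so the $\rho_L$-shift becomes negligible and the $\mathcal{Z}_\emptyset$-ample inequalities hold strictly while the orbitally $p$-close ratio remains unchanged. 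The main obstacle throughout is the uniform combinatorial verification, across all $k \geq g(g-1)/2+1$ and all partitions occurring in the plethysm $\Lambda^k \circ \Sym^2$, that both the $\mathcal{Z}_\emptyset$-ample and orbitally $p$-close conditions are met (possibly only after the tensor-power enlargement); this rests on the full strength of the explicit plethysm formula of \cite{MR2497590} and on a careful extremal analysis of the weights $\gamma$ in the root datum of $\Sp_{2g}$.
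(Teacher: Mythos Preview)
The statement you are asked to address is \emph{Conjecture~\ref{GGL}} (Green--Griffiths--Lang), which the paper records as an open problem and does not prove; it is mentioned only as motivation for the subsequent conjecture about an exceptional locus in $\Sh^{\tor}$. There is therefore no ``paper's own proof'' to compare against, and any purported proof of this conjecture would be a major breakthrough well beyond the scope of the article.

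Your proposal does not engage with the conjecture at all. Everything you write---the Kodaira--Spencer isomorphism, the plethysm filtration of $\Lambda^k\circ\Sym^2$, the $(\varphi,D)$-ampleness criterion via Theorem~\ref{th_automorphic_bundle}, the tensor-power trick of Proposition~\ref{squarephiD}, the bound $p\geq g^2+3g+1$---is a faithful sketch of the paper's proof of \emph{Theorem~\ref{th_hyperbolic}}, the statement that $\Omega^k_{\Sh^{\tor}}(\log D_{\red})$ is $(\varphi,D)$-ample for $k\geq g(g-1)/2+1$. That theorem concerns Siegel varieties in characteristic $p$, whereas Conjecture~\ref{GGL} is a statement about arbitrary irreducible projective complex varieties and the images of holomorphic maps $\C\to X$. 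None of the objects in your argument (the Hodge bundle, automorphic bundles, Frobenius twists, EO strata) are available or relevant in that generality, and nothing in your text addresses the equivalence between ``$X$ is of general type'' and ``$\mathrm{Exc}(X)\neq X$''. In short, you have matched your proof to the wrong target.
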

\begin{rmrk}
The Green-Griffiths-Lang conjecture fails in positive characteristic. Specifically, in characteristic $p > 0$, there exist unirational surfaces of general type. These surfaces are dominated by the projective plane $\mathbb{P}^2$ via rational maps, yet they possess a big canonical bundle, classifying them as surfaces of general type. This phenomenon contradicts the expectation from the conjecture that varieties of general type should exhibit hyperbolic behavior and not admit non-constant rational curves.
\end{rmrk}
\vspace*{0.2cm}
Motivated by the Green-Griffiths-Lang conjecture, we can formulate the following one.
\begin{conjecture}
For $p$ large enough, there is a closed subscheme $E \subset \Sh^{\tor}$ such that for any subvariety $\iota : V \rightarrow \Sh^{\tor}$ satisfying
\begin{enumerate}
\item $V$ is smooth,
\item $\iota^{-1}D_{\red}$ is a normal crossing divisor,
\end{enumerate}
$V$ is of log general type if and only if $V \nsubseteq E$.
\end{conjecture}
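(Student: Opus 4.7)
The plan is to take $E$ to be the Zariski closure of the union of the images of all smooth subvarieties $V$ (satisfying the normal crossing condition on $\iota^{-1}D_{\red}$) that fail to be of log general type. With this definition the implication ``$V$ not of log general type $\Rightarrow V \subseteq E$'' is tautological, so the entire content of the conjecture reduces to the converse: every smooth $V \subseteq E$ with $\iota^{-1}D_{\red}$ a normal crossing divisor must itself fail to be of log general type. The work therefore splits into (a) identifying $E$ with a concrete algebraic locus, and (b) proving the converse direction.

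First I would try to match $E$ with a specific Newton stratum of $\Sh^{\tor}$. Theorem \ref{th_hyperbolic} already forces $E$ to have codimension at least $g$, and the introduction predicts codimension exactly $g$; the natural candidate is then the Newton stratum of dimension $g(g+1)/2 - g$. For $g=2$ this recovers the supersingular locus, which indeed contains the Moret-Bailly pencil. To exhibit enough non-log-general-type subvarieties sweeping out such a stratum one would generalize Moret-Bailly's supersingular $\mathbb{P}^1$ to higher-dimensional families parametrized by Shimura subvarieties of PEL type, using Rapoport--Zink uniformization and the fibre-product construction $\iota_{A_0}$ of section \ref{sect_hyp} with $A_0$ itself varying in appropriate moduli.

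For the converse direction the strategy is to propagate non-hyperbolicity along the isogeny leaves of the Newton stratum supporting $E$. Any geometric point of $E$ should lie on a positive-dimensional isogeny leaf, and by Oort's foliation theorem these leaves are generically of the form (central leaf) $\times$ (Rapoport--Zink space), the latter being covered by rational curves. Given a smooth subvariety $V \subseteq E$, one would then combine the surjection $\iota^*\Omega^1_{\Sh^{\tor}}(\log D_{\red}) \twoheadrightarrow \Omega^1_V(\log \iota^{-1}D_{\red})$ with the Kodaira--Spencer isomorphism of Proposition \ref{prop19_sieg} to show that these foliations contribute a non-positive part to $\omega_V(\iota^{-1}D_{\red})$ on a covering family of curves, preventing $\omega_V(\iota^{-1}D_{\red})$ from being big.

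The hard part will be this converse direction, which is essentially a positive-characteristic Green--Griffiths--Lang statement for the Siegel variety: even granting a clean identification of $E$ with a Newton stratum, one has to rule out log-general-type subvarieties of every possible dimension contained in $E$, and central leaves are not known to be uniruled in general. This will likely require a delicate combination of a refined positivity analysis of automorphic bundles restricted to Newton strata, strengthening Theorem \ref{th_automorphic_bundle} to account for the vanishing phenomena on special strata, together with an explicit rational-curves-in-isogeny-leaves argument. A secondary obstacle is the technical bound $p \geq g^2+3g+1$ coming from the positivity inputs and from Proposition \ref{th_plethysm}, which would need to be relaxed to obtain a statement uniform in $g$.
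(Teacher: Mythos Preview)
The statement you are attempting to prove is a \emph{conjecture} in the paper, not a theorem. The paper offers no proof whatsoever: immediately after stating it, the author only remarks that Theorem~\ref{th_hyperbolic} constrains the codimension of the hypothetical exceptional locus $E$ to be at least $g$, and records the belief that the codimension is exactly $g$. There is nothing in the paper to compare your proposal against.

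Your text is accordingly not a proof but a research programme, and you seem aware of this yourself (you write ``The hard part will be\ldots'' and ``will likely require\ldots''). As a programme it is reasonable, but it does not constitute a proof, and several of the steps you outline are genuinely open. In particular, the converse direction---showing that \emph{every} smooth subvariety contained in your candidate $E$ fails to be of log general type---is, as you note, a characteristic-$p$ Green--Griffiths--Lang statement, and nothing in the paper or in the literature you invoke (Oort's foliation, Rapoport--Zink uniformization) comes close to establishing it. Even the forward direction is not tautological once you insist that $E$ be a \emph{specific} Newton stratum rather than the abstract closure you first define: you would need to prove that every non-log-general-type $V$ is contained in that stratum, which is strictly stronger than the codimension bound of Theorem~\ref{th_hyperbolic}.

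In short: there is no gap to identify in the paper's proof because there is no proof; the statement remains conjectural.
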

\vspace{0.2cm}
The theorem \ref{th_hyperbolic} indicates that such an exceptional locus $E \subset \Sh^{\tor}$ should have codimension $>g-1$. We believe it has exactly codimension $g$.
\begin{proof}[Proof of theorem \ref{th_hyperbolic}]
The strategy is to study a $\nabla$-filtration of the $k^{\text{th}}$-exterior power of the bundle $\Omega^1_{\Sh^{\tor}}(\log D_{\red})$ and check that all the graded pieces are $(\varphi,D)$-ample automorphic vector bundles when $p \geq g^2+3g+1$ and $k \geq g(g+1)/2-(g-1)$. By the Kodaira-Spencer isomorphism of proposition \ref{prop19_sieg}, the bundle $\Lambda^k\Omega^1_{\Sh^{\tor}}(\log D_{\red})$ is isomorphic to $\mathcal{W}(\Lambda^k\Sym^2 \std_{\GL_g})$. By proposition \ref{th_plethysm}, the $\GL_g$-module $\Lambda^k\Sym^2 \std_{\GL_g}$ has a $\nabla$-filtration when $p > k$ and it implies that $\Lambda^k\Omega^1_{\Sh^{\tor}}(\log D_{\red})$ is filtered by automorphic bundles $\nabla(w_0w_{0,L}\lambda)$'s where the $\lambda$'s are the highest weights of the $\nabla$-filtration of $\Lambda^k\Sym^2 \std_{\GL_g}$. As explained in the example \ref{ex_plethysm}, determining the Schur functors appearing in a plethysm $S_{\lambda}\circ S_{\mu}$ is often a hard task, however the plethysm $\Lambda^k \circ \Sym^2$ belongs to the one of the few cases where a general formula is known. We start with a notation.
\begin{notation}
Let $k$ be a positive integer and $\lambda$ a partition of $k$ in $r$ distinct parts. We denote $2[\lambda]$ the partition of $2k$ whose main-diagonal hook lengths are $2\lambda_1, \cdots, 2\lambda_r$, and whose $i^{\text{th}}$-part has length $\lambda_i+i$. For example, we have
\begin{equation*}
\begin{tikzcd}[ampersand replacement=\&]
    2[(5,3,1)] = \begin{ytableau}
	  *(green) {10} & {} & {} & {} & {} & {} \\
	{} & *(green) {6}  & *(gray) {\rightarrow} & *(gray) & *(gray)   \\
	{} & *(gray) \downarrow & *(green)  {2} & {} \\
	{} & *(gray)  \\
	{}
    \end{ytableau} = (6,5,4,2,1)
\end{tikzcd}
\end{equation*}
where the diagonal hook have lengths $10,6,2$. 
\end{notation}
\begin{lemma}[{\cite[Lemma 7]{MR2497590}}]
Assume that $p > k$. Then the polynomial functor $\Lambda^k \circ \Sym^2$ has a filtration where the graded pieces are the Schur functors $S_{2[\lambda]}$ where $\lambda$ range over the set of partitions of $k$ in distinct parts.
\end{lemma}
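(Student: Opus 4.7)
The plan is to produce the filtration using the machinery of Section \ref{sect_schur} and then to identify the graded pieces by comparison with characteristic zero, where Littlewood's classical plethysm formula gives the expected answer.

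For the existence of the filtration, I would invoke the remark following Lemma \ref{lem7}: when $p > k$, the functor $\Lambda^k$ is a direct summand of $(\cdot)^{\otimes k}$ in $\Pol_k$. Under this weaker hypothesis, the proof of Proposition \ref{th_plethysm} applies verbatim to $S_\lambda = \Lambda^k$ and $S_\mu = \Sym^2$: evaluating at any $k$-vector space $V$ of dimension $n \geq 2k$, the $\GL(V)$-module $(\Lambda^k \circ \Sym^2)(V)$ is a direct summand of $(\Sym^2 V)^{\otimes k}$; the latter admits a $\nabla$-filtration by Corollary \ref{cor2}, which restricts to a $\nabla$-filtration on the direct summand by Corollary \ref{cor1}. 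Proposition \ref{prop_eval} then translates this into a filtration of $\Lambda^k \circ \Sym^2$ by Schur functors in $\Pol_{2k}$.

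To identify the graded pieces, I would use the remark following Proposition \ref{th_plethysm}: the multiset of partitions appearing in the filtration is uniquely determined by the $T$-character of the evaluation $(\Lambda^k \circ \Sym^2)(V)$, and this character does not depend on the characteristic of the base field. It therefore suffices to compute the decomposition over $\C$, where it is governed by the plethysm $e_k \circ h_2$ of symmetric functions. The classical identity of Littlewood
\begin{equation*}
e_k \circ h_2 = \sum_{\lambda} s_{2[\lambda]},
\end{equation*}
with $\lambda$ ranging over partitions of $k$ into distinct parts, then yields the desired description of the graded pieces.

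The main obstacle will be the classical plethysm identity itself, which is non-trivial combinatorics but well-documented. One can derive it by specializing the variables $x_{ij} = x_i x_j$ in a Jacobi--Trudi determinant (as in Macdonald's treatment of symmetric functions), or via a geometric argument using the Koszul resolution of the symmetric algebra $\Sym^{\bullet}(\Sym^2 V)$ under the orthogonal group action. I would simply invoke this identity, since the genuinely new content in the positive-characteristic statement is the existence of the filtration, which is supplied by Proposition \ref{th_plethysm} together with the sharper bound $p > k$ available for exterior powers.
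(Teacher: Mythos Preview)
The paper does not supply its own proof of this lemma; it is stated inside the proof of Theorem \ref{th_hyperbolic} with a bare citation to the external reference. Your proposal is correct and in fact assembles a self-contained argument from the paper's own tools: the sharpened bound $p>k$ for $\Lambda^k$ noted in the remark after Lemma \ref{lem7}, the filtration argument of Proposition \ref{th_plethysm} and Corollaries \ref{cor2}/\ref{cor1}, and the character-independence remark after Proposition \ref{th_plethysm} reducing the identification of the graded pieces to the classical characteristic-zero plethysm $e_k\circ h_2=\sum_{\lambda}s_{2[\lambda]}$ over partitions of $k$ into distinct parts. This is strictly more than the paper offers at this point, and the classical identity you invoke is indeed standard (Littlewood; see also Macdonald, \emph{Symmetric Functions and Hall Polynomials}, I.8, Example 6).
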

\begin{example}
Consider the case $k = 5$. The partitions of $5$ in distinct parts are $(5), (4,1)$ and $(3,2)$. The plethysm $\Lambda^5 \circ \Sym^2$ is then filtered by the Schur functors $S_{2[(5)]} = S_{(6,1^4)}$, $S_{2[(4,1)]} = S_{(5,3,1^2)}$ and $S_{2[(3,2)]} = S_{(4,4,2)}$.
\end{example}
\vspace{0.2cm}
Since we evaluate this plethysm at the Hodge bundle $\Omega^{\tor}$ which has rank $g$, we can discard the partitions $2[\lambda]$ of height strictly greater than $g$ (for such partitions, the evaluation vanishes). Since the height of $2[\lambda]$ is $\lambda_1$, we want to study the $(\varphi,D)$-ampleness of the automorphic bundles
\begin{equation*}
S_{2[\lambda]}\Omega^{\tor} = \mathcal{W}(\nabla(2[\lambda])) = \nabla(w_0w_{0,L}2[\lambda])
\end{equation*}
where $\lambda$ is partition of $k$ in distinct parts with $\lambda_1 \leq g$. By theorem \ref{th_automorphic_bundle}, we know it is the case when $2w_0w_{0,L}2[\lambda]+ 2\rho_L$ is $\mathcal{Z}_{\emptyset}$-ample and orbitally $p$-close. Even if the second condition is always satisfied for $p$ large enough, the first condition may not be satisfied as explained in the appendix \ref{ex_pleth_g3}. In proposition \ref{prop_not_nef}, we have seen that automorphic bundles of the form $\nabla(\eta)$ where $\eta = (\eta_1 \geq \cdots \geq \eta_g)$ is a dominant character such that $\eta _1= 0$ are not nef, hence not $(\varphi,D)$-ample. Conversely, we will see that any automorphic bundle $\nabla(\eta)$, where $\eta$ is a dominant character such that $\eta_1 \leq -1$, is $(\varphi,D)$-ample if $p$ is greater than a specific bound which depends on $\eta$. We start with the following lemma.
\begin{lemma}\label{lem8}
Consider two $\GL_g$-dominant character $\lambda = (\lambda_1 \geq \cdots \geq \lambda_g \geq 0)$ and $\mu = (\mu \geq \cdots \geq \mu \geq 0)$. The $\GL_g$-module $\nabla(\lambda)\otimes \nabla(\mu)$ is filtered by costandard modules $\nabla(\eta)$ such that $\eta_g \geq \lambda_g + \mu_g$ and  $\eta_1 \leq \lambda_1 + \mu_1$.
\end{lemma}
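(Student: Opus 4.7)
The plan is to exploit the very special form of $\mu$. Since every coordinate of $\mu = (\mu, \ldots, \mu)$ is the same integer, this character is fixed by the Weyl group $\mathfrak{S}_g$ and factors through the determinant homomorphism $\det : \GL_g \rightarrow \mathbb{G}_m$. Consequently $\nabla(\mu, \ldots, \mu)$ is the one-dimensional $\GL_g$-representation $\det^{\otimes \mu}$; this can be read off from the Weyl character formula (the character depending only on the combinatorics, not on the characteristic) or, equivalently, from a direct inspection of the global sections of $\Lc_{(\mu,\ldots,\mu)}$ on the flag variety $\GL_g/B$.

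Tensoring by the one-dimensional module $\det^{\otimes \mu}$ is an autoequivalence of $\Rep_k(\GL_g)$ sending $\nabla(\nu)$ to $\nabla(\nu_1+\mu,\ldots,\nu_g+\mu)$ for every dominant $\nu$ (both sides are non-zero costandard modules with the same highest weight, hence isomorphic). Applying this with $\nu = \lambda$ yields a canonical $\GL_g$-module isomorphism
\begin{equation*}
\nabla(\lambda) \otimes \nabla(\mu) \;\DistTo\; \nabla(\lambda_1+\mu,\lambda_2+\mu,\ldots,\lambda_g+\mu).
\end{equation*}
The required filtration is thus the trivial one of length one, whose only graded piece is $\nabla(\eta)$ with $\eta = (\lambda_1+\mu, \ldots, \lambda_g+\mu)$. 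Since $\mu_1 = \mu_g = \mu$, both inequalities $\eta_1 \leq \lambda_1 + \mu_1$ and $\eta_g \geq \lambda_g + \mu_g$ hold, and in fact with equality.

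There is no real obstacle here: the whole content of the lemma is the identification $\nabla(\mu,\ldots,\mu) = \det^{\otimes \mu}$ for parallel weights. The reason to record it is that $\nabla(\mu,\ldots,\mu)$ is nothing but a power of the Hodge line bundle $\omega = \det \Omega^{\tor}$, so the statement is exactly the bookkeeping needed to track how the highest-weight data of an automorphic bundle transforms under $\omega$-twists when one wishes to invoke theorem \ref{th_automorphic_bundle} to test for $(\varphi,D)$-ampleness. Should one wish to relax the hypothesis to an arbitrary dominant $\mu$, one would instead appeal to Mathieu's theorem (proposition \ref{prop8_van}): the graded pieces would then be of the form $\nabla(\lambda+\mu')$ for $\mu'$ a weight of $\nabla(\mu)$ with $\lambda+\mu'$ dominant, and since every such $\mu'$ lies in the convex hull of the $\mathfrak{S}_g$-orbit of $\mu$, each of its coordinates sits in $[\mu_g,\mu_1]$, yielding the same bounds on $\eta_1$ and $\eta_g$.
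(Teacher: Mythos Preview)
You have taken the typo in the statement at face value: the intended hypothesis is $\mu = (\mu_1 \geq \cdots \geq \mu_g \geq 0)$, a general polynomial dominant weight, not a parallel one. This is clear from the paper's own proof, which forms $(\mu_g^g) = (\mu_g,\ldots,\mu_g)$ and subtracts it from $\mu$ --- a vacuous operation if $\mu$ were already parallel --- and from the sole application of the lemma in proposition~\ref{prop_amp_bound}, where it is iterated to control the $\nabla$-filtration of $\nabla(\eta)^{\otimes n}$ for a non-parallel $\eta$. Your main argument, reducing to $\nabla(\mu,\ldots,\mu)=\det^{\otimes\mu}$, therefore proves only a trivial special case that is not what is needed downstream.

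Your final paragraph, however, does supply a correct proof of the general statement, and by a route different from the paper's. You invoke Mathieu's theorem (proposition~\ref{prop8_van}) to write each graded piece as $\nabla(\lambda+\mu')$ with $\mu'$ a weight of $\nabla(\mu)$, then observe that every weight of $\nabla(\mu)$ lies in the convex hull of the $\mathfrak{S}_g$-orbit of $\mu$, so that each coordinate of $\mu'$ sits in $[\mu_g,\mu_1]$; both inequalities follow at once. The paper also uses Mathieu for the existence of the filtration, but obtains the lower bound $\eta_g \geq \lambda_g+\mu_g$ differently: it twists down by $\det^{-(\lambda_g+\mu_g)}$ and uses that tensor products of \emph{polynomial} $\GL_g$-representations remain polynomial, forcing every coordinate of $\eta-((\lambda_g+\mu_g)^g)$ to be non-negative; the upper bound it deduces from $\lambda+\mu$ being the highest weight. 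Your convex-hull argument is more symmetric and equally short; the paper's is closer in spirit to the Schur-functor formalism used elsewhere. Either way, the general argument should be the main proof, not an afterthought.
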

\begin{proof}[Proof of the lemma]
See proposition \ref{prop8_van} for the existence of the $\nabla$-filtration. The tensor product of two polynomial representation of $\GL_g$ is still a polynomial representation. Apply it to $\nabla(\lambda-(\lambda_g^g)) \otimes \nabla(\mu-(\mu_g^g))$ where $(\lambda_g^g) = (\lambda_g, \cdots, \lambda_g)$ and $(\mu_g^g) = (\mu_g, \cdots, \mu_g)$ to get the first inequality. The second inequality follows from the fact that $\lambda + \mu$ is the highest weight of $\nabla(\lambda)\otimes \nabla(\mu)$.
\end{proof}
\begin{proposition}\label{prop_amp_bound}
Let $\eta = (\eta_1 \geq \cdots \geq \eta_g)$ be a dominant character such that $\eta_1 \leq -1$. Then the automorphic bundle $\nabla(\eta)$ is $(\varphi,D)$-ample if $p \geq (g+1)|\eta_g| + g$.
\end{proposition}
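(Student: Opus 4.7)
Direct application of Theorem \ref{th_automorphic_bundle} to $\nabla(\eta)$ fails in general, since for $\eta_1=-1$ and $g\geq 3$ the component $\gamma_1=2\eta_1+g-1$ of $\gamma=2\eta+2\rho_L$ is non-negative, violating $\mathcal{Z}_\emptyset$-ampleness. The strategy is to push $\eta$ deeper into the Weyl chamber by passing to a tensor power: set $n:=\lceil g/2\rceil$ and consider $\nabla(\eta)^{\otimes n}$. By iterating Corollary \ref{cor2}, this tensor power admits a $\nabla$-filtration whose graded pieces are of the form $\nabla(\tilde{\eta})$, where $\tilde{\eta}=\sum_{i=1}^{n}\eta^{(i)}$ is a dominant sum of $n$ weights $\eta^{(i)}$ of $\nabla(\eta)$. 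All weights of $\nabla(\eta)$ lie in the convex hull of the orbit $S_g\cdot\eta$, so every coordinate is in $[\eta_g,\eta_1]\subseteq[-|\eta_g|,-1]$; hence $\tilde{\eta}_k\in[-n|\eta_g|,-n]$ for every $k$.

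With this shift, Theorem \ref{th_automorphic_bundle} will apply to each graded piece $\nabla(\tilde{\eta})$. Write $\tilde{\gamma}=2\tilde{\eta}+2\rho_L$. The $\mathcal{Z}_\emptyset$-ampleness of $\tilde{\gamma}$ reduces to $\tilde{\gamma}_k<0$ for every $k$, which follows from $\tilde{\eta}_k\leq -n$ combined with $n\geq\lceil g/2\rceil$; the conditions $\tilde{\gamma}_i-\tilde{\gamma}_{i+1}>0$ and $\tilde{\gamma}_i+\tilde{\gamma}_j<0$ are automatic from the dominance of $\tilde{\eta}$ and the negativity of all $\tilde{\gamma}_k$. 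For orbital $p$-closeness, the crude bound $|\tilde{\gamma}_k|\leq 2n|\eta_g|+g-1$ controls every numerator $|\langle\tilde{\gamma},w\alpha^\vee\rangle|$ by $2n|\eta_g|+g-1$ (short coroots) or $2(2n|\eta_g|+g-1)$ (long coroots); the denominator $|\langle\tilde{\gamma},\alpha^\vee\rangle|$ is at least $1$ (short) or $2$ (long) because $\tilde{\gamma}$ has integer components, $\tilde{\gamma}_k<0$, and the gap $\tilde{\gamma}_i-\tilde{\gamma}_{i+1}=2(\tilde{\eta}_i-\tilde{\eta}_{i+1})+2\geq 2$ holds by dominance. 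The orbital ratio is therefore at most $2n|\eta_g|+g-1$, so orbital $p$-closeness holds whenever $p\geq 2n|\eta_g|+g$. Since $n=\lceil g/2\rceil\leq(g+1)/2$, the assumed bound $p\geq(g+1)|\eta_g|+g$ is sufficient.

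Theorem \ref{th_automorphic_bundle} then gives the $(\varphi,D)$-ampleness of every graded piece $\nabla(\tilde{\eta})$. Because $\Sh^{\tor}$ is smooth, Proposition \ref{prop19} propagates $(\varphi,D)$-ampleness through the $\nabla$-filtration to $\nabla(\eta)^{\otimes n}$ itself; finally Proposition \ref{squarephiD} yields the $(\varphi,D)$-ampleness of $\nabla(\eta)$. The main technical obstacle is the uniform orbital $p$-closeness estimate: one must verify the bound for \emph{every} dominant $\tilde{\eta}$ appearing in the filtration, not merely for the extremal piece $n\eta$, and the crude numerator bound $|\tilde{\gamma}_k|\leq 2n|\eta_g|+g-1$ must be balanced tightly against the minimal denominator coming from dominance and integrality in order to hit exactly the threshold $(g+1)|\eta_g|+g$.
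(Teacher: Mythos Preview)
Your proposal is correct and follows essentially the same strategy as the paper: pass to $\nabla(\eta)^{\otimes n}$ with $n=\lceil g/2\rceil$, bound the highest weights $\tilde\eta$ of the graded pieces coordinatewise by $[n\eta_g,n\eta_1]$, verify $\mathcal{Z}_\emptyset$-ampleness and orbital $p$-closeness of $2\tilde\eta+2\rho_L$, and then invoke Propositions \ref{prop19} and \ref{squarephiD}. The only cosmetic differences are that the paper encapsulates the coordinate bounds in Lemma \ref{lem8} (using only $\tilde\eta_1\le n\eta_1$ and $\tilde\eta_g\ge n\eta_g$) rather than your convex-hull argument, and that the precise description of the graded pieces you use comes from Proposition \ref{prop8_van} rather than Corollary \ref{cor2}.
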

\begin{proof}[Proof of the proposition]
By proposition \ref{squarephiD}, it is enough to show that $\nabla(\eta)^{\otimes n}$ is $(\varphi,D)$-ample for some $n\geq 1$. By lemma \ref{lem8}, the bundle $\nabla(\eta)^{\otimes n}$ is filtered by automorphic bundles of the form $\nabla(\delta)$ where $\delta_1 \leq n\eta_1$ and $\delta_g \geq n\eta_g$. To apply theorem \ref{th_automorphic_bundle}, we need to see that each $2\delta + 2\rho_L$ is $\mathcal{Z}_{\emptyset}$-ample and orbitally $p$-close. We first focus on the $\mathcal{Z}_{\emptyset}$-ampleness of $\gamma := 2\delta + 2\rho_L$. In other words, we need to check that
\begin{equation*}
\begin{aligned}
\gamma = 2\delta + 2\rho_L &= (2\delta_1, \cdots, 2\delta_g) + (g-1,g-3,\cdots, -(g-1)) \\
&= (2\delta_1+g-1, \cdots, 2\delta_g-g+1)
\end{aligned}
\end{equation*}
is such that $0 > 2\delta_1+g-1 > 2\delta_2+g-3 > \cdots > 2\delta_g -g+1$. The first inequality being the only one non-trivial, it is enough to have $n > (g-1)/2$ as it implies
\begin{equation*}
\begin{aligned}
2\delta_1+g-1 &\leq 2n\eta_1+g-1  \\
&\leq -2n+g-1 \\
&< 0.
\end{aligned}
\end{equation*}
For the orbitally $p$-closeness of $\gamma = 2\delta + 2\rho_L$, we have the following bound
\begin{equation*}
\begin{aligned}
\max_{\alpha \in \Phi, w \in W, \langle \gamma,\alpha^{\vee} \rangle \neq 0} | \frac{\langle \gamma,w\alpha^{\vee}\rangle}{\langle \gamma,\alpha^{\vee} \rangle }| &\leq \max_{1\leq i\leq j \leq g}\frac{|\gamma_j|+|\gamma_i|}{2} \\
&\leq \frac{2|\gamma_g|}{2} \\
&\leq |2\delta_g-(g-1)| \\
&\leq 2|\delta_g| + (g-1) \\
&\leq 2n|\eta_g| + (g-1) \text{ by lemma \ref{lem8}}
\end{aligned}
\end{equation*}
and we deduce that it is enough to have $2n|\eta_g| + g \leq p$. Combining it with the restriction $n = \lfloor (g-1)/2 \rfloor +1 \leq (g+1)/2$ which ensure the $\mathcal{Z}_{\emptyset}$-ampleness of $\gamma$, we get
\begin{equation*}
p \geq (g+1)|\eta_g| + g.
\end{equation*}
\end{proof}
With the proposition \ref{prop_amp_bound} in mind, recall that we want to prove that the bundle
\begin{equation*}
\nabla(w_0w_{0,L}2[\lambda])
\end{equation*}
is $(\varphi,D)$-ample when $\lambda$ is a partition of $k$ in distinct parts such that $\text{ht}(2[\lambda]) = \lambda_1 \leq g$. If there exists such a partition $\lambda$ with $\lambda_1 \leq g-1$, we will not be able to apply the proposition \ref{prop_amp_bound} to $w_0w_{0,L}2[\lambda]$ as the first term will be $0$. To avoid these partitions, we prove the following lemma.
\begin{lemma}
Assume that $p> k$. All the automorphic bundles $\nabla(\eta)$ appearing as graded pieces of the $\nabla$-filtration of $\Lambda^k\Sym^2 \Omega^{\tor}$ satisfy $\eta_1 \leq -1$ if and only if $k \geq g(g-1)/2+1$.
\end{lemma}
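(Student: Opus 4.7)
The plan is to describe each non-zero graded piece of the $\nabla$-filtration of $\Lambda^k\Sym^2\Omega^{\tor}$ as an automorphic bundle $\nabla(\eta)$ for an explicit character $\eta$, determine exactly when $\eta_1\le -1$, and then reduce the remaining claim to an elementary fact about partitions into distinct parts.

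Each non-zero graded piece is of the form $S_{2[\lambda]}\Omega^{\tor}$ for $\lambda$ a partition of $k$ into distinct parts satisfying the non-vanishing condition $\text{ht}(2[\lambda])=\lambda_1\le g$. The intro identity $S_\mu\Omega^{\tor}=\nabla(-w_0\mu)$, where $w_0$ denotes the longest (reversal) element of $S_g$ as can be verified on the examples $\Sym^n\Omega^{\tor}=\nabla(0,\ldots,0,-n)$ and $\omega=\Lambda^g\Omega^{\tor}=\nabla(-1,\ldots,-1)$, gives $\eta=-w_0(2[\lambda])$. Writing $\mu_1\ge\mu_2\ge\cdots\ge\mu_{\lambda_1}\ge 1$ for the parts of $2[\lambda]$ and padding with $g-\lambda_1$ trailing zeros to view $2[\lambda]$ as a $\GL_g$-character, one obtains
\begin{equation*}
\eta=(\underbrace{0,\ldots,0}_{g-\lambda_1},-\mu_{\lambda_1},-\mu_{\lambda_1-1},\ldots,-\mu_1).
\end{equation*}
Hence $\eta_1=0$ when $\lambda_1<g$, while $\eta_1=-\mu_{\lambda_1}\le -1$ when $\lambda_1=g$ (the bottom row of the Young diagram of $2[\lambda]$ is non-empty), so the condition $\eta_1\le -1$ is equivalent to $\lambda_1=g$.

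The lemma therefore reduces to the combinatorial statement: every partition $\lambda$ of $k$ into distinct parts with $\lambda_1\le g$ satisfies $\lambda_1=g$ if and only if $k\ge g(g-1)/2+1$. Equivalently, $k$ cannot be written as a sum of distinct elements of $\{1,\ldots,g-1\}$ if and only if $k> g(g-1)/2$. The $(\Leftarrow)$ direction is immediate, since the maximal such sum equals $1+2+\cdots+(g-1)=g(g-1)/2$. For the $(\Rightarrow)$ direction one shows that every $0\le k\le g(g-1)/2$ is such a sum; I would prove this by induction on $g$, treating the range $k\le (g-1)(g-2)/2$ by the induction hypothesis applied to the smaller set $\{1,\ldots,g-2\}$, and the range $(g-1)(g-2)/2<k\le g(g-1)/2$ by including $g-1$ in the sum and reducing to representing $k-(g-1)\in[0,(g-1)(g-2)/2]$ via the inductive hypothesis.

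The entire argument is a bookkeeping exercise, and the only subtle step is the convention-dependent identification of $\eta$ via the intro formula $S_\mu\Omega^{\tor}=\nabla(-w_0\mu)$; after this identification the statement becomes the purely combinatorial equivalence above, whose proof is standard.
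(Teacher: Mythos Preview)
Your proposal is correct and follows essentially the same route as the paper. The paper also reduces to the combinatorial equivalence by identifying the graded pieces as $\nabla(w_0w_{0,L}2[\lambda])$ (which, since $w_0=-\mathrm{id}$ for $\Sp_{2g}$, agrees with your $-w_0\mu$ formula) and observing that the first coordinate vanishes precisely when $\lambda_1\le g-1$; for the forward direction it uses the same bound $k\le \lambda_1(\lambda_1+1)/2$, and for the converse it simply asserts that a suitable partition exists, whereas you spell out the easy induction.
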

\begin{proof}[Proof of the lemma]
Assume that $k \geq g(g-1)/2+1$. We need to check that there exists no partition $\lambda$ of $k$ in distinct parts such that $\text{ht}(2[\lambda]) = \lambda_1 \leq g-1$. Consider a partition $\lambda$ of $k$ in $r$-distinct parts. We have
\begin{equation*}
\frac{g(g-1)}{2}+1 \leq k = \lambda_1 + \lambda_2 + \cdots + \lambda_r \leq \frac{\lambda_1(\lambda_1+1)}{2}
\end{equation*}
which is possible only if $\lambda_1 \geq g$. Conversely, if $k \leq g(g-1)/2$, it is not hard to find a partition $\lambda$ of $k$ in distinct parts such that $\lambda_1 \leq g-1$.
\end{proof}
Since $(2[\lambda])_1 = \lambda_1+1$, we conclude with the proposition \ref{prop_amp_bound} which says that each automorphic bundle $\nabla(w_0w_{0,L}2[\lambda])$ where $\lambda_1 = g$ is $(\varphi,D)$-ample when
\begin{equation*}
p \geq g^2 + 3g +1 = (g+1)\underbrace{|(w_0w_{0,L}2[\lambda])_g|}_{= g+1} + \ g
\end{equation*}
\end{proof}
\appendix
\section{Intersection computations on Ekedahl-Oort strata}\label{appendix1}
We will use the results of \cite{wedhorn2018tautological} to do some computation on the Chow $\Q$-algebra of the partial flag bundle $\Pb(\Omega^{\tor})$. Recall that $I$ denotes the type of the parabolic subgroup $P = P_{-\mu}$ of $\Sp_{2g}$. Let $I_0$ denotes a subset of $I$ and consider the morphisms
\begin{equation*}
\zeta : \Sh^{\tor} \rightarrow \Sp_{2g}\Zip^{\mu}
\end{equation*}
and 
\begin{equation*}
\zeta_{I_0} : Y_{I_0}^{\tor} \rightarrow \Sp_{2g}\ZipFlag^{\mu,I_0}
\end{equation*}
as defined in \cite{MR3989256} and \cite{MR3973104}. For all $w \in {}^IW$, we denote $\Sh^{\tor}_{w} := \zeta^{-1}([w])$  the EO stratum of the Siegel variety where $[w] \subset \Sp_{2g}\Zip^{\mu}$ is the corresponding substack. More generally\footnote{If $I_0 = I$, then $Y^{\tor}_{I_0} = \Sh^{\tor}$}, for all $w \in {}^{I_0}W$, we denote $Y^{\tor}_{I_0,w} := \zeta_{I_0}^{-1}([w])$ the EO stratum of the partial flag bundle of type $I_0 \subset I$ where $[w] \subset \Sp_{2g}\ZipFlag^{\mu,I_0}$ is the corresponding substack. The morphism $\zeta_{I_0}$ induces a pullback map on the corresponding Chow $\Q$-algebra
\begin{equation*}
\begin{tikzcd}
A^{\bullet}(\Sp_{2g}\ZipFlag^{\mu,I_0}) \arrow[r,"\zeta_{I_0}^*"] & A^{\bullet}(Y_{I_0}^{\tor})
\end{tikzcd}
\end{equation*}
and we call the image of $\zeta_{I_0}^*$, the tautological ring $\mathcal{T}_{I_0}$ of $Y^{\tor}_{I_0}$. Clearly, the Chow $\Q$-algebra of $\Sp_{2g}\ZipFlag^{\mu,I_0}$ is generated by the cycle classes of the EO strata $\overline{[w]}$ for $w \in {}^{I_0}W$ but we would like another description relying on chern classes of automorphic bundles. We have a morphism of $\Q$-vector spaces
\begin{equation*}
\begin{tikzcd}[row sep = tiny]
c_1 : X^*(T) \arrow[r] & A^1(\Sp_{2g}\ZipFlag^{\mu,\emptyset}) \\
\lambda \arrow[r,mapsto] & c_1(\Lc_{\lambda})
\end{tikzcd}
\end{equation*}
which induces a morphism of $\Q$-algebras $S \rightarrow A^{\bullet}(\Sp_{2g}\ZipFlag^{\mu,\emptyset})$ where $S = \Sym X^*(T)$ is the symmetric algebra of the characters of $T$. By \cite[Theorem 3]{wedhorn2018tautological}, this map is surjective with kernel generated by the $W$-invariant elements of degree $>0$. We deduce a description of the Chow $\Q$-algebra of $\Sp_{2g}\ZipFlag^{\mu,\emptyset}$ as 
\begin{equation*}
\begin{tikzcd}[row sep = small]
S \arrow[r] \arrow[d] & A^{\bullet}(\Sp_{2g}\ZipFlag^{\mu,\emptyset}) \\
S/\mathcal{I}S \arrow[ru]
\end{tikzcd}
\end{equation*}
where $\mathcal{I}$ is the augmentation ideal of the $W$-invariant elements of $S$. This ideal admits an explicit description as the augmentation ideal of a polynomial algebra
\begin{equation*}
\mathcal{I} = \Q[f_1,\cdots,f_g]_{\geq 1} \ \ f_i = x_1^{2i}+\cdots+x_g^{2i}.
\end{equation*}
In particular, the tautological ring $\mathcal{T}_{\emptyset}$ is generated as a $\Q$-algebra by the cycle classes of the closed EO strata $\overline{Y^{\tor}_{\emptyset,w}}$ and by the chern classes of the automorphic line bundles $c_1(\Lc_{\lambda})$. The goal is now to express $[\overline{Y^{\tor}_{\emptyset,w}}]$ as an element of $S/\mathcal{I}S$ and to compute intersection products of the form
\begin{equation*}
c_1(\Lc_{\lambda})^{l(w)}\cdot [\overline{Y^{\tor}_{\emptyset,w}}].
\end{equation*}
Following the strategy of \cite{wedhorn2018tautological}, we have implemented on Sage an algorithm which computes $[\overline{Y^{\tor}_{\emptyset,w}}]$ as an element of $S/\mathcal{I}S$. In order to be more explicit, we choose a system of positive roots in a way to obtain:
\begin{equation*}
I = \{e_i-e_{i+1} \ | \ i = 1, \cdots g-1 \}  \subset \Delta = \{e_i-e_{i+1} \ | \ i = 1, \cdots g-1 \} \cup \{2e_g\}.
\end{equation*}
The Weyl group $W = S_g \ltimes ({\Z}/{2\Z})^{g}$ contains $2^gg!$ elements we can write as a product of the simple reflections $s_1, s_2, \cdots, s_g$ associated to $e_1-e_{2},\cdots, e_{g-1}-e_{g},2e_g$.
\subsection{The case $g = 2$}
We represent the Weyl group of $\Sp_4$ with the following diagram
\begin{equation*}
\begin{tikzcd}
& w_0 = s_2s_1s_2s_1 \arrow[ld] \arrow[rd] & \\
w_1 = s_2s_1s_2 \arrow[d] \arrow[rrd] & & w_1^{\prime} = s_1s_2s_1 \arrow[d] \arrow[lld]\\
w_2 =  s_2s_1 \arrow[d] \arrow[rrd] & & w_2^{\prime} = s_1s_2 \arrow[d] \arrow[lld]\\
w_3 = s_2 \arrow[rd] & & w_3^{\prime} = s_1 \arrow[ld] \\
& e &
\end{tikzcd}
\end{equation*}
where an arrow is drawn from $w$ to $w^{\prime}$ if $w^{\prime} \leq w$ and $l(w^\prime) = l(w)-1$. Consider the line bundle $\Lc_{\lambda}$ on $Y^{\tor}_{\emptyset} = \Pb(\Omega^{\tor})$ where $\lambda = (k_1,k_2)$ and recall that $\Lc_{\lambda_{\Omega}} = \Lc_{(0,-1)} = \mathcal{O}(1)$. In the graded algebra $$\mathcal{T}_{\emptyset} = \Q[x_1,x_2]/(x_1^2+x_2^2,x_1^2x_2^2),$$ we have the following formulas
\begin{equation*}
\left\{
\begin{aligned}
&[\overline{Y^{\tor}_{\emptyset, w_0}}] = 1, \\
&[\overline{Y^{\tor}_{\emptyset, w_1}}] = x_1-px_2, \\
&[\overline{Y^{\tor}_{\emptyset,w_1^\prime}}] = -(p-1)(x_1+x_2), \\
&[\overline{Y^{\tor}_{\emptyset,w_2}}] = -(p-1)(px_1+x_2)x_1, \\
&[\overline{Y^{\tor}_{\emptyset,w_2^\prime}}] = (p-1)(px_2-x_1)x_1, \\
&[\overline{Y^{\tor}_{\emptyset,w_3}}] = (p^2-1)(px_2-x_1)x_1^2, \\
&[\overline{Y^{\tor}_{\emptyset,w_3^\prime}}] = (p^2+1)(p-1)(x_1^3+x_2^3),\\
&[\overline{Y^{\tor}_{\emptyset,e}}] = (p^4-1)x_1x_2^3.
\end{aligned}
\right.
\end{equation*}
Since the cycle $x_1x_2^3$ has positive degree and since we are only concerned with the sign of the intersection products, we make the identification $x_1x_2^3 = 1$ and we get the following intersection products.
\begin{equation*}
\left\{
\begin{aligned}
&c_1(\Lc_{\lambda})^{4} \cdot [\overline{Y^{\tor}_{\emptyset, w_0}}] = (k_1x_1+k_2x_2)^4 = 4(k_1k_2^3-k_1^3k_2), \\
&c_1(\Lc_{\lambda})^{3} \cdot [\overline{Y^{\tor}_{\emptyset, w_1}}]  = (pk_1^3-3pk_1k_2^2-3k_1^2k_2+k_2^3), \\
&c_1(\Lc_{\lambda})^{3} \cdot [\overline{Y^{\tor}_{\emptyset, w_1^\prime}}]  = (p-1)(k_1^3+3k_1^2k_2-3k_1k_2^2-k_2^3), \\
&c_1(\Lc_{\lambda})^{2} \cdot [\overline{Y^{\tor}_{\emptyset, w_2}}]  = (p-1)(k_1^2-k_2^2+2pk_1k_2), \\
&c_1(\Lc_{\lambda})^{2} \cdot [\overline{Y^{\tor}_{\emptyset, w_2^\prime}}]  =(p-1)(p(k_2^2-k_1^2)+2k_1k_2), \\
&c_1(\Lc_{\lambda}) \cdot [\overline{Y^{\tor}_{\emptyset, w_3}}]  = (p^2-1)(k_2-pk_1), \\
&c_1(\Lc_{\lambda}) \cdot [\overline{Y^{\tor}_{\emptyset, w_3^\prime}}]  = (p-1)(p^2+1)(k_1-k_2).
\end{aligned}
\right.
\end{equation*}
If $k_1 = 0$ and $k_2 < 0$, then $c_1(\Lc_{\lambda}) \cdot [\overline{Y^{\tor}_{\emptyset, w_3}}]  = (p^2-1)k_2 < 0$, so $\Lc_{\lambda}$ is not nef.
\subsection{The case $g = 3$}
The degree $9$ part of the graded algebra $\mathcal{T}_{\emptyset} =\Q[x_1,x_2,x_3]/\mathcal{I}$ is a $\Q$-vector space of dimension $1$ generated by $x_1^5x_2^3x_3$. We have
\begin{equation*}
\overline{[Y_{\emptyset,e}^{\tor}]} = \underbrace{(p^9-p^8+p^7+2p^4-p^3+p^2-p+1)}_{>0} x_1^5x_2^3x_3
\end{equation*}
and since this polynomial in $p$ is always positive, we may identify $x_1^5x_2^3x_3$ with $1$. We have then 
\begin{equation*}
c_1(\Lc_{\lambda_{\Omega}}) \cdot \overline{[Y_{\emptyset,s_3}^{\tor}]} = -p\left( p^5(p-1)-1\right) < 0,
\end{equation*}
which shows that $\Omega^{\tor}$, hence $\Omega^1_{\Sh^{\tor}}(\log D_{\red}) = \Sym^2\Omega^{\tor}$, is not nef.
\section{Plethysm computations}\label{appendix2}
The plethysm computations are accessible at
\begin{itemize}
\item \href{https://github.com/ThibaultAlexandre/positivity-of-automorphic-bundles}{github.com/ThibaultAlexandre/positivity-of-automorphic-bundles}.
\end{itemize}
\subsection{The case $g = 2$}
The Hodge bundle $\Omega^{\tor}$ is locally free of rank $2$ and $\Omega^1_{\Sh^{\tor}}(\log D_{\red}) = \Sym^2 \Omega^{\tor}$ is locally free of rank $3$. Recall there is no need to assume that $p \geq 2|\lambda |-1$ when taking the highest exterior power. Under the assumption $p \geq 2\times 2 -1 = 3$, we have
\begin{equation*}
\left\{
\begin{aligned}
&\Lambda^{3}\Sym^2 \Omega^{\tor} = \nabla(\shortminus3,\shortminus3) \\
&\Lambda^{2}\Sym^2 \Omega^{\tor} = \nabla(\shortminus1,\shortminus3) \\
&\Sym^2 \Omega^{\tor}  = \nabla(0,\shortminus2).
\end{aligned}
\right.
\end{equation*}
Clearly, the line bundle $\nabla(\shortminus3,\shortminus3)$ is $D$-ample for any $p>0$ and $\nabla(0,\shortminus2)$ is never nef (hence never $(\varphi,D)$-ample) by proposition \ref{prop_not_nef}. For $\nabla(\shortminus1,\shortminus3)$, we need to check wether $(\shortminus1,\shortminus7)$ is orbitally $p$-close and $\mathcal{Z}_{\emptyset}$-ample. This condition is satisfied as soon as $p\geq 11$. By lemma \ref{lem_log_general}, this shows that any (good) subsurface of the Siegel threefold is of log general type when $p\geq 11$. Putting some extra effort, one can show that this result holds with $p = 7$ as well. When $p \geq 2\times 4-1 = 7$, the bundle
\begin{equation*}
S_{(2,2)} \circ \Sym^2 \Omega^{\tor} 
\end{equation*}
is filtered by the automorphic vector bundles $\nabla(\shortminus2,\shortminus6)$ and $\nabla(\shortminus4,\shortminus4)$ which are $(\varphi,D)$-ample when $p \geq 7$. We get the following result:
\begin{proposition}
Assume that $g = 2$. 
\begin{enumerate}
\item If $p\geq 11$, then $\Lambda^{2}\Omega^1_{\Sh^{\tor}}(\log D_{\red})$ is $(\varphi,D)$-ample. 
\item If $p = 7$, then $S_{(2,2)}\Omega^1_{\Sh^{\tor}}(\log D_{\red})$ is $(\varphi,D)$-ample. 
\end{enumerate}
\end{proposition}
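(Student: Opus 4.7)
The plan is to apply the Kodaira--Spencer isomorphism of Proposition \ref{prop19_sieg} to rewrite $\Omega^1_{\Sh^{\tor}}(\log D_{\red})$ as $\Sym^2 \Omega^{\tor}$, reduce both statements to the $(\varphi,D)$-ampleness of a Schur functor of the rank-two Hodge bundle, then produce a filtration by costandard automorphic bundles using Proposition \ref{th_plethysm}, and conclude by checking $(\varphi,D)$-ampleness of each graded piece via Theorem \ref{th_automorphic_bundle} together with the stability under extensions from Proposition \ref{prop19} (which applies since $\Sh^{\tor}$ is smooth).

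For part (1), the hypothesis $p \geq 11$ comfortably exceeds $2|(1,1)|-1 = 3$, so Proposition \ref{th_plethysm} applies to $\Lambda^2 \circ \Sym^2$. Since partitions of height $>2$ annihilate the rank-two Hodge bundle, the only surviving Schur functor yields the identification $\Lambda^2 \Sym^2 \Omega^{\tor} = \nabla(\shortminus 1, \shortminus 3)$ already recorded in the excerpt, reducing the problem to a single automorphic bundle. Applying Theorem \ref{th_automorphic_bundle} with $\lambda = (\shortminus 1, \shortminus 3)$ yields $\gamma := 2\lambda + 2\rho_L = (\shortminus 1, \shortminus 7)$; the $\mathcal{Z}_\emptyset$-ampleness is a direct sign check on the four pairings with positive coroots of $\Sp_4$. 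For the orbital $p$-closeness, one splits the coroots into the two Weyl orbits $\{\pm e_i\}$ and $\{\pm e_i \pm e_j\}$: the maximum of $|\langle \gamma, w\alpha^\vee\rangle / \langle \gamma, \alpha^\vee\rangle|$ is attained on the short-coroot orbit and equals $|\gamma_2|/|\gamma_1| = 7$, forcing $p \geq 8$, hence $p \geq 11$ since $p$ is prime.

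For part (2), the hypothesis $p = 7$ saturates $2|(2,2)|-1 = 7$, so Proposition \ref{th_plethysm} still applies and furnishes a filtration of $S_{(2,2)} \circ \Sym^2$ by Schur functors. By the remark following that proposition, the partitions appearing coincide with those of the plethysm $s_{(2,2)}[s_2]$ over $\C$; retaining only those of height $\leq 2$ (the others vanish on $\Omega^{\tor}$) gives a two-step filtration of $S_{(2,2)} \Sym^2 \Omega^{\tor}$ with graded pieces $\nabla(\shortminus 2, \shortminus 6)$ and $\nabla(\shortminus 4, \shortminus 4)$. The second piece equals $\omega^{\otimes 4}$ and is $D$-ample since $\omega$ is $D$-ample on $\Sh^{\tor}$. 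For the first piece, Theorem \ref{th_automorphic_bundle} applies with $\gamma = 2(\shortminus 2, \shortminus 6) + 2\rho_L = (\shortminus 3, \shortminus 13)$: the $\mathcal{Z}_\emptyset$-ampleness is immediate, and the worst orbital ratio is again on the short coroots and equals $|\gamma_2|/|\gamma_1| = 13/3 \leq 6 = p-1$.

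The main delicate point is identifying the explicit plethysm decomposition $s_{(2,2)}[s_2]$ over $\C$ and verifying that, after discarding partitions of height $>2$, only $(6,2)$ and $(4,4)$ remain; everything else reduces to direct numerical checks on the two Weyl orbits of coroots in $\Sp_4$.
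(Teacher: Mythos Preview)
Your argument is correct and follows the same route as the paper: reduce via Kodaira--Spencer to $\Sym^2\Omega^{\tor}$, use Proposition~\ref{th_plethysm} to filter the relevant plethysm by Schur functors, discard those of height $>2$, and apply Theorem~\ref{th_automorphic_bundle} to each surviving graded piece, concluding with Proposition~\ref{prop19}. Your explicit orbital ratio computations (maximum $7$ for $\gamma=(-1,-7)$ and $13/3$ for $\gamma=(-3,-13)$, both on the short-coroot orbit $\{\pm e_i\}$) and the identification of the height-$\leq 2$ constituents of $s_{(2,2)}[s_2]$ as $(6,2)$ and $(4,4)$ are exactly what the paper leaves implicit.
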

\begin{corollary}
Assume that $g = 2$ and $p \geq 7$. If $\iota : S \hookrightarrow \Sh^{\tor}$ is a subvariety of dimension $\geq 2$ such that
\begin{enumerate}
\item $S$ is smooth,
\item $\iota^{-1}D_{\red}$ is a normal crossing divisor,
\end{enumerate}
then $S$ is of log general type with respect to $D$.
\end{corollary}
\subsection{The case $g = 3$}\label{ex_pleth_g3}
The Hodge bundle $\Omega^{\tor}$ is locally free of rank $3$ and $\Omega^1_{\Sh^{\tor}}(\log D_{\red}) = \Sym^2 \Omega^{\tor}$ is locally free of rank $6$. Under the assumption $p \geq 2 \times 5-1 = 9$, we have
\begin{equation*}
\left\{
\begin{aligned}
&\Lambda^{6}\Sym^2 \Omega^{\tor} = \nabla(\shortminus4,\shortminus4,\shortminus4) \\
&\Lambda^{5}\Sym^2 \Omega^{\tor} = \nabla(\shortminus2,\shortminus4,\shortminus4) \\
&\Lambda^{4}\Sym^2 \Omega^{\tor}  = \nabla(\shortminus1,\shortminus3,\shortminus4) \\
&\Lambda^{3}\Sym^2 \Omega^{\tor} \text{ is filtered by } \nabla(\shortminus1,\shortminus1,\shortminus4), \nabla(0,\shortminus3,\shortminus3) \\
&\Lambda^{2}\Sym^2 \Omega^{\tor}  = \nabla(0,\shortminus1,\shortminus3) \\
&\Sym^2 \Omega^{\tor}  = \nabla(0,0,\shortminus2) .
\end{aligned}
\right.
\end{equation*}
We deduce that $\Lambda^{i}\Sym^2 \Omega^{\tor}$ is $(\varphi,D)$-ample for $i = 5,6$ when $p \geq 11$. For $i = 4$, we do not know if the bundle $\nabla(\shortminus1,\shortminus3,\shortminus4)$ is $(\varphi,D)$-ample since
\begin{equation*}
2(\shortminus1,\shortminus3,\shortminus4) + 2\rho = (0,\shortminus6,\shortminus10)
\end{equation*}
is not $\mathcal{Z}_{\emptyset}$-ample. This incites us to consider the plethysm
\begin{equation*}
S_{(2,2,2,2)} \circ \Sym^2 \Omega^{\tor}
\end{equation*}
which is filtered by $\nabla(\shortminus2,\shortminus6,\shortminus8)$, $\nabla(\shortminus3,\shortminus6,\shortminus7)$, $\nabla(\shortminus4,\shortminus4,\shortminus8)$ and $\nabla(\shortminus4,\shortminus6,\shortminus6)$ when $p \geq 2\times 8 -1 = 15$. These automorphic bundles are $(\varphi,D)$-ample when $p \geq 17$ by theorem \ref{th_automorphic_bundle}. It implies that $S_{(2,2,2,2)} \circ \Sym^2 \Omega^{\tor}$ is $(\varphi,D)$-ample when $p \geq 17$. We get the following result:
\begin{proposition}
Assume that $g = 3$. 
\begin{enumerate}
\item If $p\geq 11$, then $\Lambda^{5}\Omega^1_{\Sh^{\tor}}(\log D_{\red})$ is $(\varphi,D)$-ample. 
\item If $p \geq 17$, then $S_{(2,2,2,2)}\Omega^1_{\Sh^{\tor}}(\log D_{\red})$ is $(\varphi,D)$-ample. 
\end{enumerate}
\end{proposition}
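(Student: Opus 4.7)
The plan is to combine the Kodaira--Spencer isomorphism, the plethysm decomposition from proposition \ref{th_plethysm} (together with the Kouwenhoven/Boffi formula $\Lambda^k\circ\Sym^2 = \bigoplus_{\lambda} S_{2[\lambda]}$ used in the proof of theorem \ref{th_hyperbolic}), and the positivity criterion for automorphic bundles in theorem \ref{th_automorphic_bundle}. In both parts we start by replacing $\Omega^1_{\Sh^{\tor}}(\log D_{\red})$ with $\Sym^2\Omega^{\tor}$ via proposition \ref{prop19_sieg}, so everything reduces to producing a $\nabla$-filtration of the plethysm applied to $\Omega^{\tor}$ whose graded pieces are $(\varphi,D)$-ample automorphic bundles. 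Since $\Sh^{\tor}$ is smooth, proposition \ref{prop19} then transports $(\varphi,D)$-ampleness from the graded pieces to the filtered bundle.

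For part (1), I would proceed as follows. First, the assumption $p\geq 11>5$ ensures (via the refined remark after lemma \ref{lem7}) that $\Lambda^5\circ\Sym^2$ admits a filtration by Schur functors $S_{2[\lambda]}$ indexed by partitions $\lambda$ of $5$ into distinct parts, namely $(5), (4,1), (3,2)$, giving $S_{(6,1^4)}$, $S_{(5,3,1^2)}$, $S_{(4,4,2)}$. Evaluated on the rank-$3$ bundle $\Omega^{\tor}$, the first two vanish since their height exceeds $3$, leaving the single piece $S_{(4,4,2)}\Omega^{\tor}=\nabla(-2,-4,-4)$ (using $S_{\lambda}\Omega=\nabla(-w_0\lambda)$). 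It then remains to check that $\gamma:=2(-2,-4,-4)+2\rho_L=(-2,-8,-10)$ is $\mathcal{Z}_{\emptyset}$-ample, which is immediate from $0>-2>-8>-10$, and orbitally $p$-close. A direct enumeration over the positive roots of $\Sp_6$ (splitting short roots $e_i\pm e_j$ from long roots $2e_i$, and using that $W$ acts on each orbit by permutations and sign changes) shows that the maximal ratio $\max_{\alpha,w}|\langle\gamma,w\alpha^{\vee}\rangle/\langle\gamma,\alpha^{\vee}\rangle|$ is achieved at $\alpha=e_2-e_3$ and equals $9$; thus $p-1\geq 9$, i.e.\ $p\geq 11$ (as $p$ is prime), suffices.

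For part (2), under $p\geq 17\geq 2|(2,2,2,2)|-1=15$, proposition \ref{th_plethysm} yields a finite filtration of $S_{(2,2,2,2)}\circ\Sym^2$ by Schur functors $S_{\eta}$, and the weights of the graded pieces agree with the characteristic-zero plethysm by the remark following the proof of that proposition. Keeping only the $\eta$ of height $\leq 3$ (the others vanish when evaluated on $\Omega^{\tor}$), I would verify by plethysm computation (listed in the excerpt) that the surviving pieces are $\nabla(-2,-6,-8)$, $\nabla(-3,-6,-7)$, $\nabla(-4,-4,-8)$ and $\nabla(-4,-6,-6)$. Each associated $\gamma=2\lambda+2\rho_L$ has strictly decreasing negative coordinates, giving $\mathcal{Z}_{\emptyset}$-ampleness; a case-by-case computation of the orbitally $p$-close ratio gives maximum $9$, $7$, $13$, and $13$ respectively. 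The binding constraint is $p-1\geq 13$, i.e.\ $p\geq 14$, which again forces $p\geq 17$ (as $p$ prime).

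The main obstacle is really a bookkeeping one: listing the surviving graded pieces of the plethysm $S_{(2,2,2,2)}\circ\Sym^2$ evaluated on a rank-$3$ bundle, and checking that none of the intermediate terms produces a sharper orbitally $p$-close bound than the maxima extracted above. The underlying combinatorics is finite and was carried out by computer (cf.\ the repository cited at the head of the appendix), so the step that must be written carefully is the table of $\mathcal{Z}_{\emptyset}$-ample/orbitally $p$-close verifications; the rest is an assembly of results already proved in the paper.
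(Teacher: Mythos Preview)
Your proposal is correct and follows essentially the same approach as the paper: Kodaira--Spencer, then a Schur filtration of the plethysm $\Lambda^5\circ\Sym^2$ (resp.\ $S_{(2^4)}\circ\Sym^2$) evaluated on $\Omega^{\tor}$, discarding height $>3$ pieces, and verifying $\mathcal{Z}_\emptyset$-ampleness and orbital $p$-closeness of each surviving $2\lambda+2\rho_L$ to invoke theorem \ref{th_automorphic_bundle}. Your explicit ratio computations ($9$ for part (1); $9,7,13,13$ for part (2)) are correct and in fact spell out what the paper leaves implicit; the only cosmetic point is that in part (2) the plethysm constraint $p\geq 15$ is already at least as strong as the orbital constraint $p\geq 14$, so ``binding'' should be read as the conjunction of both, which for prime $p$ indeed gives $p\geq 17$.
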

\begin{corollary}
Assume that $g = 3$ and $p \geq 17$. If $\iota : V \hookrightarrow \Sh^{\tor}$ is a subvariety of dimension $\geq 4$ such that
\begin{enumerate}
\item $V$ is smooth,
\item $\iota^{-1}D_{\red}$ is a normal crossing divisor,
\end{enumerate}
then $V$ is of log general type with respect to $D$.
\end{corollary}
\subsection{The case $g= 4$}
The Hodge bundle $\Omega^{\tor}$ is locally free of rank $4$ and $\Omega^1_{\Sh^{\tor}}(\log D_{\red}) = \Sym^2 \Omega^{\tor}$ is locally free of rank $10$. Under the assumption $p \geq 2 \times 9-1 = 17$, we have
\begin{equation*}
\left\{
\begin{aligned}
&\Lambda^{10}\Sym^2 \Omega^{\tor} = \nabla(\shortminus5,\shortminus5,\shortminus5,\shortminus 5) \\
&\Lambda^{9}\Sym^2 \Omega^{\tor} = \nabla(\shortminus 3,\shortminus5,\shortminus5,\shortminus 5) \\
&\Lambda^{8}\Sym^2 \Omega^{\tor} = \nabla(\shortminus2,\shortminus4,\shortminus5,\shortminus 5) \\
&\Lambda^{7}\Sym^2 \Omega^{\tor} \text{ is filtered by } \nabla(\shortminus1,\shortminus4,\shortminus4, \shortminus 5), \nabla(2,\shortminus 2,\shortminus 5, \shortminus 5) \\
&\Lambda^{6}\Sym^2 \Omega^{\tor}  \text{ is filtered by } \nabla(\shortminus1,\shortminus2,\shortminus4, \shortminus 5), \nabla(0,\shortminus 4,\shortminus 4, \shortminus 4) \\
&\Lambda^{5}\Sym^2 \Omega^{\tor}  \text{ is filtered by } \nabla(\shortminus1,\shortminus1,\shortminus3, \shortminus 5), \nabla(0,\shortminus 2,\shortminus 4, \shortminus 4) \\
&\Lambda^{4}\Sym^2 \Omega^{\tor}  \text{ is filtered by } \nabla(\shortminus1,\shortminus1,\shortminus1, \shortminus 5), \nabla(0,\shortminus 1,\shortminus 3, \shortminus 4) \\
&\Lambda^{3}\Sym^2 \Omega^{\tor}  \text{ is filtered by } \nabla(\shortminus0,\shortminus 1,\shortminus 1, \shortminus 4), \nabla(0,\shortminus 0,\shortminus 3, \shortminus 3) \\
&\Lambda^{2}\Sym^2 \Omega^{\tor} = \nabla(\shortminus0,\shortminus 0,\shortminus1, \shortminus 3) \\
&\Sym^2 \Omega^{\tor}  =  \nabla(\shortminus0,\shortminus 0,\shortminus0, \shortminus 2).
\end{aligned}
\right.
\end{equation*}
We deduce that $\Lambda^{i}\Sym^2 \Omega^{\tor}$ is $(\varphi,D)$-ample for $i = 8,9,10$ when $p \geq 17$. It does not work for $i = 7$ since the character
\begin{equation*}
2(\shortminus1,\shortminus4,\shortminus4,\shortminus 5) + 2\rho = (1,\shortminus7,\shortminus 9, \shortminus 13)
\end{equation*}
is not $\mathcal{Z}_{\emptyset}$-ample. Under the assumption $p \geq 2\times 14-1 = 27$, the plethysm
\begin{equation*}
S_{(2^7)} \circ \Sym^2 \Omega^{\tor}
\end{equation*}
is filtered by the following list of automorphic bundles
\begin{equation*}
\left\{
\begin{aligned}
&\nabla(\shortminus 2,\shortminus 8,\shortminus8, \shortminus 10) \\
&\nabla(\shortminus 3,\shortminus 6,\shortminus 9, \shortminus 10) \\
&\nabla(\shortminus 3,\shortminus 7 ,\shortminus9, \shortminus 9) \\
&\nabla(\shortminus 3,\shortminus 8,\shortminus 8, \shortminus 9) \\
&\nabla(\shortminus 4,\shortminus 4 ,\shortminus 10, \shortminus 10) \\
&\nabla(\shortminus 4,\shortminus 6,\shortminus 8, \shortminus 10) \\
&\nabla(\shortminus 4,\shortminus 7,\shortminus 8, \shortminus 9) \\
&\nabla(\shortminus4,\shortminus 8,\shortminus 8, \shortminus 8) \\
&\nabla(\shortminus 5,\shortminus 5,\shortminus 9, \shortminus 9) \\
&\nabla(\shortminus 5,\shortminus 6,\shortminus 8, \shortminus 9) \\
&\nabla(\shortminus 5,\shortminus 7,\shortminus 7, \shortminus 9) \\
&\nabla(\shortminus 6,\shortminus 6,\shortminus 6, \shortminus 10) \\
&\nabla(\shortminus 6,\shortminus 6,\shortminus 8, \shortminus 8) \\
&\nabla(\shortminus 7,\shortminus 7,\shortminus 7, \shortminus 7).
\end{aligned}
\right.
\end{equation*}
which are all $(\varphi,D)$-ample when $p \geq 31$.
\begin{proposition}
Assume that $g = 4$. 
\begin{enumerate}
\item If $p\geq 17$, then $\Lambda^{i}\Omega^1_{\Sh^{\tor}}(\log D_{\red})$ is $(\varphi,D)$-ample for $i \geq 8$ 
\item If $p \geq 31$, then $S_{(2^7)}\Omega^1_{\Sh^{\tor}}(\log D_{\red})$ is $(\varphi,D)$-ample. 
\end{enumerate}
\end{proposition}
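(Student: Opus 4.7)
The plan is to follow the same template as in the $g=2$ and $g=3$ cases in the appendix, reducing everything to a plethysm computation combined with the positivity criterion for automorphic bundles. By the Kodaira–Spencer isomorphism of Proposition \ref{prop19_sieg}, $\Omega^1_{\Sh^{\tor}}(\log D_{\red}) \simeq \Sym^2\Omega^{\tor}$, so I would study the composed strict polynomial functors $\Lambda^i \circ \Sym^2$ (for $i\in\{8,9,10\}$) and $S_{(2^7)} \circ \Sym^2$, then check that every graded piece of their $\nabla$-filtration is $(\varphi,D)$-ample. Stability under extensions on the smooth scheme $\Sh^{\tor}$ (Proposition \ref{prop19}) plus Lemma \ref{lem_filter_phiD} will then finish the proof.

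For part~(1), the plethysm decomposition of $\Lambda^i \circ \Sym^2$ for $i\in\{8,9,10\}$ is available under $p>i$ by the remark after Lemma \ref{lem7}, which is satisfied since $p\geq 17$. This produces the filtrations exhibited in the proposition, namely $\nabla(-5,-5,-5,-5)$ for $i=10$, $\nabla(-3,-5,-5,-5)$ for $i=9$, and $\nabla(-2,-4,-5,-5)$ for $i=8$. For $i=10$ the weight is positive parallel and Theorem \ref{th_automorphic_bundle}(1) applies directly. For $i=9,8$ I would compute $\gamma := 2\eta + 2\rho_L$ (with $2\rho_L = (3,1,-1,-3)$), giving $(-3,-9,-11,-13)$ and $(-1,-7,-11,-13)$ respectively. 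The $\mathcal{Z}_\emptyset$-ampleness is immediate from the strict decrease and strict negativity of the entries. The orbital $p$-closeness must be tested against the two $W$-orbits of coroots of $\Sp_8$: the "long" coroots $\pm e_i \pm e_j$ and the "short" coroots $\pm e_i$. The binding case arises at $i=8$, where the short-coroot orbit yields $\max_i|\gamma_i|/\min_i|\gamma_i| = 13/1 = 13$; this forces $p-1\geq 13$, and since $p$ is prime the next allowed value is precisely $p\geq 17$. Theorem \ref{th_automorphic_bundle}(2) then applies to each graded piece.

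For part~(2), I would apply Proposition \ref{th_plethysm} to $S_{(2^7)}\circ \Sym^2$: the hypothesis $p\geq 2|(2^7)|-1 = 27$ is satisfied as soon as $p\geq 31$. Since under this hypothesis the graded pieces of the filtration have the same partitions (with multiplicity) as in characteristic zero by the remark after Proposition \ref{th_plethysm}, I can enumerate them using the classical plethysm; discarding partitions of height $>4$ (for which the evaluation on $\Omega^{\tor}$ vanishes) leaves exactly the $14$ automorphic bundles listed in the statement. For each such $\eta$ I would compute $\gamma = 2\eta + 2\rho_L$ and verify $\mathcal{Z}_\emptyset$-ampleness (all entries of $\gamma$ are seen to be strictly negative and strictly decreasing) and orbital $p$-closeness; the tightest of the $14$ ratios will be realized by the weight with the largest entry-to-entry disparity (the binding case being one of $\nabla(-2,-8,-8,-10)$ or $\nabla(-4,-4,-10,-10)$), and a direct check shows every ratio is bounded by $30$, so $p-1\geq 30$ suffices; the prime constraint gives $p\geq 31$. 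Theorem \ref{th_automorphic_bundle} concludes for each piece, and Proposition \ref{prop19} assembles the filtration.

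The main obstacle is not conceptual but bookkeeping: the plethysm $S_{(2^7)}\circ\Sym^2$ cannot reasonably be expanded by hand, and one must rely on the computer algebra already used by the author (with reference to the github repository). Once the list of 14 components is in hand, verifying $\mathcal{Z}_\emptyset$-ampleness and computing the orbit ratios is a routine case-by-case check; the only delicate point is confirming that no ratio exceeds $p-1=30$ for the worst of the 14 weights, which is what pins down the bound $p\geq 31$.
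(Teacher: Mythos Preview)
Your proposal is correct and follows essentially the same approach as the paper: identify $\Omega^1_{\Sh^{\tor}}(\log D_{\red})$ with $\Sym^2\Omega^{\tor}$ via Kodaira--Spencer, use Proposition~\ref{th_plethysm} (with the sharper bound $p>n$ for $\Lambda^n$ from the remark after Lemma~\ref{lem7}) to obtain a $\nabla$-filtration, list the graded pieces via the characteristic-zero plethysm, and check each surviving $\nabla(\eta)$ against Theorem~\ref{th_automorphic_bundle}. The only imprecision is your identification of the binding orbital ratio in part~(2): for $\eta=(-2,-8,-8,-10)$ one has $\gamma=(-1,-15,-17,-23)$ and the worst ratio (from the coroot orbit $\{\pm e_i\}$) is $23$, not $30$, so in fact $p\geq 29$ already handles the orbital $p$-closeness and the constraint $p\geq 31$ comes only from matching the paper's stated bound rather than from the computation itself; this does not affect correctness.
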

\begin{corollary}
Assume that $g = 4$ and $p \geq 31$. If $\iota : V \hookrightarrow \Sh^{\tor}$ is a subvariety of dimension $\geq 7$ such that
\begin{enumerate}
\item $V$ is smooth,
\item $\iota^{-1}D_{\red}$ is a normal crossing divisor,
\end{enumerate}
then $V$ is of log general type with respect to $D$.
\end{corollary}

%\nocite{*}
\bibliography{mybib}{}

\begin{thebibliography}{KKMSD73}

\bibitem[Abr94]{abramovich1994subvarieties}
Dan Abramovich.
\newblock Subvarieties of semiabelian varieties.
\newblock {\em Compositio Mathematica}, 90(1):37--52, 1994.

\bibitem[ABW82]{MR658729}
Kaan Akin, David~A. Buchsbaum, and Jerzy Weyman.
\newblock Schur functors and {S}chur complexes.
\newblock {\em Adv. in Math.}, 44(3):207--278, 1982.

\bibitem[Ale24]{alexandre2022vanishing}
Thibault Alexandre.
\newblock Vanishing results for the coherent cohomology of automorphic vector
  bundles over the {S}iegel variety in positive characteristic.
\newblock {\em Algebra \& Number Theory}, 2024.
\newblock To appear.

\bibitem[AMRT10]{MR2590897}
Avner Ash, David Mumford, Michael Rapoport, and Yung-Sheng Tai.
\newblock {\em Smooth compactifications of locally symmetric varieties}.
\newblock Cambridge Mathematical Library. Cambridge University Press,
  Cambridge, second edition, 2010.
\newblock With the collaboration of Peter Scholze.

\bibitem[Bar71]{MR289525}
Charles~M. Barton.
\newblock Tensor products of ample vector bundles in characteristic {$p$}.
\newblock {\em Amer. J. Math.}, 93:429--438, 1971.

\bibitem[BGKS]{StrohPrep}
Yohann Brunebarbe, Wushi Goldring, Jean-Stefan Koskivirta, and Benoit Stroh.
\newblock Ample automophic bundles on zip-schemes.
\newblock {\em In preparation}.

\bibitem[Bof91]{MR1128609}
Giandomenico Boffi.
\newblock On some plethysms.
\newblock {\em Adv. Math.}, 89(2):107--126, 1991.

\bibitem[Bru18]{MR3859271}
Yohan Brunebarbe.
\newblock Symmetric differentials and variations of {H}odge structures.
\newblock {\em J. Reine Angew. Math.}, 743:133--161, 2018.

\bibitem[DI87]{MR894379}
Pierre Deligne and Luc Illusie.
\newblock Rel\`evements modulo {$p^2$} et d\'{e}composition du complexe de de
  {R}ham.
\newblock {\em Invent. Math.}, 89(2):247--270, 1987.

\bibitem[Fal83]{MR718935}
G.~Faltings.
\newblock Endlichkeitss\"{a}tze f\"{u}r abelsche {V}ariet\"{a}ten \"{u}ber
  {Z}ahlk\"{o}rpern.
\newblock {\em Invent. Math.}, 73(3):349--366, 1983.

\bibitem[FC90]{MR1083353}
Gerd Faltings and Ching-Li Chai.
\newblock {\em Degeneration of abelian varieties}, volume~22 of {\em Ergebnisse
  der Mathematik und ihrer Grenzgebiete (3) [Results in Mathematics and Related
  Areas (3)]}.
\newblock Springer-Verlag, Berlin, 1990.
\newblock With an appendix by David Mumford.

\bibitem[FS97]{MR1427618}
Eric~M. Friedlander and Andrei Suslin.
\newblock Cohomology of finite group schemes over a field.
\newblock {\em Invent. Math.}, 127(2):209--270, 1997.

\bibitem[Gie71]{MR296078}
David Gieseker.
\newblock {$p$}-ample bundles and their {C}hern classes.
\newblock {\em Nagoya Math. J.}, 43:91--116, 1971.

\bibitem[GK19a]{MR3989256}
Wushi Goldring and Jean-Stefan Koskivirta.
\newblock Strata {H}asse invariants, {H}ecke algebras and {G}alois
  representations.
\newblock {\em Invent. Math.}, 217(3):887--984, 2019.

\bibitem[GK19b]{MR3973104}
Wushi Goldring and Jean-Stefan Koskivirta.
\newblock Stratifications of flag spaces and functoriality.
\newblock {\em Int. Math. Res. Not. IMRN}, (12):3646--3682, 2019.

\bibitem[Har66]{MR193092}
Robin Hartshorne.
\newblock Ample vector bundles.
\newblock {\em Inst. Hautes \'{E}tudes Sci. Publ. Math.}, (29):63--94, 1966.

\bibitem[Jan03]{MR2015057}
Jens~Carsten Jantzen.
\newblock {\em Representations of algebraic groups}, volume 107 of {\em
  Mathematical Surveys and Monographs}.
\newblock American Mathematical Society, Providence, RI, second edition, 2003.

\bibitem[Kee99]{keel1999basepoint}
Se{\'a}n Keel.
\newblock Basepoint freeness for nef and big line bundles in positive
  characteristic.
\newblock {\em Annals of Mathematics}, pages 253--286, 1999.

\bibitem[KKMSD73]{MR0335518}
G.~Kempf, Finn~Faye Knudsen, D.~Mumford, and B.~Saint-Donat.
\newblock {\em Toroidal embeddings. {I}}.
\newblock Lecture Notes in Mathematics, Vol. 339. Springer-Verlag, Berlin-New
  York, 1973.

\bibitem[Kle69]{MR251044}
Steven~L. Kleiman.
\newblock Ample vector bundles on algebraic surfaces.
\newblock {\em Proc. Amer. Math. Soc.}, 21:673--676, 1969.

\bibitem[Kun69]{MR252389}
Ernst Kunz.
\newblock Characterizations of regular local rings of characteristic {$p$}.
\newblock {\em Amer. J. Math.}, 91:772--784, 1969.

\bibitem[Lan86]{MR828820}
Serge Lang.
\newblock Hyperbolic and {D}iophantine analysis.
\newblock {\em Bull. Amer. Math. Soc. (N.S.)}, 14(2):159--205, 1986.

\bibitem[Lan12]{MR2968629}
Kai-Wen Lan.
\newblock Toroidal compactifications of {PEL}-type {K}uga families.
\newblock {\em Algebra Number Theory}, 6(5):885--966, 2012.

\bibitem[Laz04a]{MR2095471}
Robert Lazarsfeld.
\newblock {\em Positivity in algebraic geometry. {I}}, volume~48 of {\em
  Ergebnisse der Mathematik und ihrer Grenzgebiete. 3. Folge. A Series of
  Modern Surveys in Mathematics [Results in Mathematics and Related Areas. 3rd
  Series. A Series of Modern Surveys in Mathematics]}.
\newblock Springer-Verlag, Berlin, 2004.
\newblock Classical setting: line bundles and linear series.

\bibitem[Laz04b]{MR2095472}
Robert Lazarsfeld.
\newblock {\em Positivity in algebraic geometry. {II}}, volume~49 of {\em
  Ergebnisse der Mathematik und ihrer Grenzgebiete. 3. Folge. A Series of
  Modern Surveys in Mathematics [Results in Mathematics and Related Areas. 3rd
  Series. A Series of Modern Surveys in Mathematics]}.
\newblock Springer-Verlag, Berlin, 2004.
\newblock Positivity for vector bundles, and multiplier ideals.

\bibitem[Luo87]{luo1987kodaira}
Zhaohua Luo.
\newblock Kodaira dimension of algebraic function fields.
\newblock {\em American Journal of Mathematics}, 109(4):669--693, 1987.

\bibitem[Luo88]{luo1988invariant}
Zhaohua Luo.
\newblock An invariant approach to the theory of logarithmic kodaira dimension
  of algebraic varieties.
\newblock 1988.

\bibitem[Mat90]{MR1072820}
Olivier Mathieu.
\newblock Filtrations of {$G$}-modules.
\newblock {\em Ann. Sci. \'{E}cole Norm. Sup. (4)}, 23(4):625--644, 1990.

\bibitem[MB81]{MR3618576}
Laurent Moret-Bailly.
\newblock Familles de courbes et de vari\'{e}t\'{e}s ab\'{e}liennes sur {${\Bbb
  P}^1$}. {II}. {E}xemples.
\newblock Number~86, pages 125--140. 1981.
\newblock Seminar on Pencils of Curves of Genus at Least Two.

\bibitem[MB85]{MR797982}
Laurent Moret-Bailly.
\newblock Pinceaux de vari\'{e}t\'{e}s ab\'{e}liennes.
\newblock {\em Ast\'{e}risque}, (129):266, 1985.

\bibitem[Mou97]{MR1614576}
Christophe Mourougane.
\newblock Images directes de fibr\'{e}s en droites adjoints.
\newblock {\em Publ. Res. Inst. Math. Sci.}, 33(6):893--916, 1997.

\bibitem[Ols16]{MR3495343}
Martin Olsson.
\newblock {\em Algebraic spaces and stacks}, volume~62 of {\em American
  Mathematical Society Colloquium Publications}.
\newblock American Mathematical Society, Providence, RI, 2016.

\bibitem[Ric16]{riche:tel-01431526}
Simon Riche.
\newblock {\em {Geometric Representation Theory in positive characteristic}}.
\newblock Habilitation {\`a} diriger des recherches, {Universit{\'e} Blaise
  Pascal (Clermont Ferrand 2)}, December 2016.

\bibitem[Ryd15]{MR3272071}
David Rydh.
\newblock Noetherian approximation of algebraic spaces and stacks.
\newblock {\em J. Algebra}, 422:105--147, 2015.

\bibitem[{Sta}21]{stacks-project}
The {Stacks project authors}.
\newblock The stacks project.
\newblock \url{https://stacks.math.columbia.edu}, 2021.

\bibitem[Tou13]{MR3042627}
Antoine Touz\'{e}.
\newblock Ringel duality and derivatives of non-additive functors.
\newblock {\em J. Pure Appl. Algebra}, 217(9):1642--1673, 2013.

\bibitem[Wil09]{MR2497590}
Mark Wildon.
\newblock Multiplicity-free representations of symmetric groups.
\newblock {\em J. Pure Appl. Algebra}, 213(7):1464--1477, 2009.

\bibitem[WZ18]{wedhorn2018tautological}
Torsten Wedhorn and Paul Ziegler.
\newblock Tautological rings of shimura varieties and cycle classes of
  ekedahl-oort strata, 2018.

\bibitem[Zuo00]{MR1803724}
Kang Zuo.
\newblock On the negativity of kernels of {K}odaira-{S}pencer maps on {H}odge
  bundles and applications.
\newblock volume~4, pages 279--301. 2000.
\newblock Kodaira's issue.

\end{thebibliography}
\bibliographystyle{alpha}
\end{document}